\newcolumntype{C}{>{$}c<{$}}
\DeclareFontFamily{OMX}{MnSymbolE}{}
\DeclareSymbolFont{MnLargeSymbols}{OMX}{MnSymbolE}{m}{n}
\DeclareFontShape{OMX}{MnSymbolE}{m}{n}{
    <-6>  MnSymbolE5
   <6-7>  MnSymbolE6
   <7-8>  MnSymbolE7
   <8-9>  MnSymbolE8
   <9-10> MnSymbolE9
  <10-12> MnSymbolE10
  <12->   MnSymbolE12
}{}
\DeclareFontShape{OMX}{MnSymbolE}{b}{n}{
    <-6>  MnSymbolE-Bold5
   <6-7>  MnSymbolE-Bold6
   <7-8>  MnSymbolE-Bold7
   <8-9>  MnSymbolE-Bold8
   <9-10> MnSymbolE-Bold9
  <10-12> MnSymbolE-Bold10
  <12->   MnSymbolE-Bold12
}{}
\let\llangle\@undefined
\let\rrangle\@undefined
\DeclareMathDelimiter{\llangle}{\mathopen}%
                     {MnLargeSymbols}{'164}{MnLargeSymbols}{'164}
\DeclareMathDelimiter{\rrangle}{\mathclose}%
                     {MnLargeSymbols}{'171}{MnLargeSymbols}{'171}
  \def\tikz@plane@origin{\pgfpointxyz{0}{0}{#1}}%
  \def\tikz@plane@x{\pgfpointxyz{1}{0}{#1}}%
  \def\tikz@plane@y{\pgfpointxyz{0}{1}{#1}}%
\tikzset{surface/.style={draw=black, left color=orange,right color=orange,middle
color=orange!60!#1, fill opacity=1},surface/.default=white}
\pgfplotsset{compat=1.17}
\newcommand{\eps}{\varepsilon}
\newcommand{\ZZ}{\mathbb{Z}}
\def\multiset#1#2{\ensuremath{\left(\kern-.3em\left(\genfrac{}{}{0pt}{}{#1}{#2}\right)\kern-.3em\right)}}
\newif\ifpgfcirclecrosssplitcustomfill
\tikzset{%
  circle cross split part fill/.code=\def\pgf@lib@sh@ccs@list@fill{#1}\pgfcirclecrosssplitcustomfilltrue,%
  circle cross split uses custom fill/.is if=pgfcirclecrosssplitcustomfill}
  \savedanchor\centerpoint{%
    \pgfmathsetlength\pgf@xa{\pgfkeysvalueof{/pgf/inner xsep}}%
    \pgfmathsetlength\pgf@ya{\pgfkeysvalueof{/pgf/inner ysep}}%
    \pgf@x\wd\pgfnodeparttextbox
    \pgf@yb\dp\pgfnodeparttextbox
    \pgf@yc\dp\pgfnodeparttwobox
    \ifdim\pgf@yb>\pgf@yc
      \pgf@yc\pgf@yb
    \fi
    \advance\pgf@y-\pgf@yc
    \advance\pgf@x\pgf@xa
    \advance\pgf@y-\pgf@ya
    \advance\pgf@x.5\pgflinewidth
    \advance\pgf@y-.5\pgflinewidth
  }%
  \savedanchor\twoanchor{%
    \pgfmathsetlength\pgf@xa{\pgfkeysvalueof{/pgf/inner xsep}}%
    \pgfmathsetlength\pgf@ya{\pgfkeysvalueof{/pgf/inner ysep}}%
    \advance\pgf@x.5\pgflinewidth
    \advance\pgf@x\pgf@xa
    \advance\pgf@y.5\pgflinewidth
    \advance\pgf@y\pgf@ya
    \pgf@yb\dp\pgfnodeparttextbox
    \pgf@yc\dp\pgfnodeparttwobox
    \ifdim\pgf@yb>\pgf@yc
      \pgf@yc\pgf@yb
    \fi
    \advance\pgf@y\pgf@yc
  }%
  \savedanchor\threeanchor{%
    \pgfmathsetlength\pgf@ya{\pgfkeysvalueof{/pgf/inner ysep}}%
    \pgf@x\wd\pgfnodeparttextbox
    \pgf@yb\dp\pgfnodeparttextbox
    \pgf@yc\dp\pgfnodeparttwobox
    \ifdim\pgf@yb>\pgf@yc
      \pgf@yc\pgf@yb
    \fi
    \advance\pgf@y-\pgf@yc
    \advance\pgf@y-2\pgf@ya
    \advance\pgf@y-\pgflinewidth
    \pgf@yb\ht\pgfnodepartthreebox
    \pgf@yc\ht\pgfnodepartfourbox
    \ifdim\pgf@yb>\pgf@yc
      \pgf@yc\pgf@yb
    \fi
    \advance\pgf@y-\pgf@yc
    \advance\pgf@x-\wd\pgfnodepartthreebox
  }%
  \savedanchor\fouranchor{%
    \pgfmathsetlength\pgf@xa{\pgfkeysvalueof{/pgf/inner xsep}}%
    \advance\pgf@x\wd\pgfnodepartthreebox
    \advance\pgf@x2\pgf@xa
    \advance\pgf@x\pgflinewidth
  }%
  \saveddimen\radius{%
    \pgf@y\ht\pgfnodeparttextbox
    \pgf@yb\ht\pgfnodeparttwobox
    \ifdim\pgf@yb>\pgf@y
      \pgf@y\pgf@yb
    \fi
    \pgf@yc\dp\pgfnodeparttextbox
    \pgf@yb\dp\pgfnodeparttwobox
    \ifdim\pgf@yc>\pgf@yb
      \advance\pgf@y\pgf@yc
    \else
      \advance\pgf@y\pgf@yb
    \fi
    \pgf@yb\ht\pgfnodepartthreebox
    \ifdim\pgf@yb<\ht\pgfnodepartfourbox
      \pgf@yb\ht\pgfnodepartfourbox
    \fi
    \pgf@yc\dp\pgfnodepartthreebox
    \ifdim\pgf@yc<\dp\pgfnodepartfourbox
      \advance\pgf@yb\dp\pgfnodepartfourbox
    \else
      \advance\pgf@yb\pgf@yc
    \fi
    \ifdim\pgf@yc>\pgf@y
      \pgf@y\pgf@yc
    \fi
    \pgfmathsetlength\pgf@ya{\pgfkeysvalueof{/pgf/inner ysep}}%
    \advance\pgf@y2\pgf@ya
    \pgf@x\wd\pgfnodeparttextbox
    \pgf@xa\wd\pgfnodepartthreebox
    \pgf@xb\wd\pgfnodeparttwobox
    \pgf@xc\wd\pgfnodepartfourbox
    \ifdim\pgf@xa>\pgf@x
      \pgf@x\pgf@xa
    \fi
    \ifdim\pgf@xb>\pgf@x
      \pgf@x\pgf@xb
    \fi
    \ifdim\pgf@xc>\pgf@x
      \pgf@x\pgf@xc
    \fi
    \pgfmathsetlength\pgf@xa{\pgfkeysvalueof{/pgf/inner xsep}}%
    \advance\pgf@x2\pgf@xa
    \ifdim\pgf@y>\pgf@x
      \pgf@x\pgf@y
    \fi
    \advance\pgf@x.5\pgflinewidth
    \pgfmathsetlength{\pgf@xb}{\pgfkeysvalueof{/pgf/minimum width}}%
    \pgfmathsetlength{\pgf@yb}{\pgfkeysvalueof{/pgf/minimum height}}%
    \ifdim\pgf@x<.5\pgf@xb
        \pgf@x=.5\pgf@xb
    \fi
    \ifdim\pgf@x<.5\pgf@yb
        \pgf@x=.5\pgf@yb
    \fi
    \pgfmathsetlength{\pgf@xb}{\pgfkeysvalueof{/pgf/outer xsep}}%
    \pgfmathsetlength{\pgf@yb}{\pgfkeysvalueof{/pgf/outer ysep}}%
    \ifdim\pgf@xb<\pgf@yb
      \advance\pgf@x\pgf@yb
    \else
      \advance\pgf@x\pgf@xb
    \fi
  }%
    \pgfmathsetlength{\pgf@xb}{\pgfkeysvalueof{/pgf/outer xsep}}%
    \pgfmathsetlength{\pgf@yb}{\pgfkeysvalueof{/pgf/outer ysep}}%
    \pgfmathsetlength{\pgf@xb}{\pgfkeysvalueof{/pgf/outer xsep}}%
    \pgfmathsetlength{\pgf@yb}{\pgfkeysvalueof{/pgf/outer ysep}}%
      \pgf@lib@sh@rs@process@list{\pgf@lib@sh@ccs@list@fill}{4}%
      {%
        \pgfmathloop
           \ifnum\pgfmathcounter>4%
           \else%
             \pgf@lib@sh@getalpha\pgf@lib@sh@rs@number{\pgfmathcounter}%
              \edef\pgf@tempa{\csname pgf@lib@sh@rs@\pgf@lib@sh@rs@number @item\endcsname}%
              \ifx\pgf@tempa\pgf@lib@sh@rs@nonetext\else
                \pgfsetfillcolor{\pgf@tempa}%
                \pgf@lib@sh@ccs@angles{\pgfmathcounter}%
                \pgfpathmoveto{\centerpoint}%
                \pgfpathlineto{\pgfpointadd{\centerpoint}{\pgfqpointpolar{\pgf@lib@sh@ccs@angle}{\pgfutil@tempdima}}}%
                \pgfpatharc{\pgf@lib@sh@ccs@angle}{\pgf@lib@sh@ccs@angle@}{\pgfutil@tempdima}%
                \pgfpathclose
                \pgfusepathqfill
              \fi
        \repeatpgfmathloop
      }%
\def\pgf@lib@sh@ccs@angles#1{%
  \ifcase#1\or\def\pgf@lib@sh@ccs@angle{90}%
           \or\def\pgf@lib@sh@ccs@angle{0}%
           \or\def\pgf@lib@sh@ccs@angle{180}%
           \else\def\pgf@lib@sh@ccs@angle{270}%
  \fi
  \edef\pgf@lib@sh@ccs@angle@{\number\numexpr\pgf@lib@sh@ccs@angle+90\relax}%
}
\theoremstyle{plain}
\newtheorem{thm}{Theorem}
\newtheorem{lemma}[thm]{Lemma}
\newtheorem{cor}[thm]{Corollary}
\newtheorem{prop}[thm]{Proposition}
\theoremstyle{definition}
\newtheorem{defn}{Definition}
\theoremstyle{remark}
\newtheorem{rem}{Remark}
\numberwithin{equation}{section}
\numberwithin{thm}{section}
\title{On Finite Fields and Higher Reciprocity}
\author{Matias Carl Relyea}
\date{Summer 2024}
\begin{document}

\begin{abstract}
    Cubic and biquadratic reciprocity have long since been referred to as ``the forgotten reciprocity laws", largely since they provide special conditions that are widely considered to be unnecessary in the study of number theory. However, this paper aims to approach reciprocity with ample detail to motivate its existence. In this exposition of finite fields and higher reciprocity, we will begin by introducing concepts in abstract algebra and elementary number theory. This will motivate our approach toward understanding the structure and then existence of finite fields, especially with a focus on understanding the multiplicative group $\mathbb{F}^{*}$. While surveying finite fields we will provide another proof of quadratic reciprocity. We will proceed to investigate properties of the general multiplicative character, covering the concept of a general Gauss sum as well as basic notions of the Jacobi sum. From there we will begin laying the foundations for the cubic reciprocity law, beginning with a classification of the primes and units of the Eisenstein integers, denoted $\mathbb{Z}[\omega]$, and further investigations into the residue class ring $\mathbb{Z}[\omega]/\pi\mathbb{Z}[\omega]$ for $\pi$ prime, which is predominantly the world in which cubic reciprocity lies. 

    \indent We will then use multiplicative characters to define the cubic residue character and state cubic reciprocity in its entirety. Following this, we provide a proof of the cubic reciprocity law as well as its supplementary theorems using cubic Gauss sums. We will finish the section on cubic reciprocity with a brief survey of the cubic residue character of the even prime $2$ and state a significant result due to Gauss that summarizes the conditions for $2$ to be a cubic residue. 

    \indent We conclude with the statement of biquadratic reciprocity and provide a brief discussion on how it relates to cubic reciprocity in both its proof and usage of the analogy between the Eisenstein integers, $\ZZ[\omega]$, and the Gaussian integers, $\ZZ[i]$. 
\end{abstract}

\maketitle

\tableofcontents

\section*{Introduction}
\noindent Reciprocity laws have been studied for over 200 years. As of now, there exist over 300 proofs of quadratic reciprocity, but there exist far fewer proofs of cubic and biquadratic reciprocity. Quadratic reciprocity, the first reciprocity law, was proven by Carl Friedrich Gauss in 1796, and he deemed it the ``Theorema Aureum”, or ``Golden Theorem”. Gauss went on to prove quadratic reciprocity in 6 different ways, with 2 more posthumously. Unlike other techniques, one particular technique used, which certainly wouldn’t have been named what it is named now, was quadratic Gauss sums. The formulation of quadratic Gauss sums would become the first step toward investigating higher reciprocity laws. Gotthold Eisenstein first published his proof of cubic reciprocity in 1844, and it used the techniques of Gauss and Jacobi sums. In 1850, Eisenstein published his paper on generalized higher reciprocity, a result now known as Eisenstein reciprocity. Even though Eisenstein reciprocity eventually became a direct corollary to work completed by Emil Artin on higher reciprocity using class field theory in the earlier 20th century, it became the first formal law for reciprocity of odd primes. Though Eisenstein reciprocity is a generalisation of cubic and biquadratic reciprocity and is highly relevant to modern research regarding reciprocity laws and algebraic number theory, it lies beyond the scope of this paper. 

\indent Quadratic reciprocity asks the question: under what conditions does the congruence $x^{2}\equiv a\pmod{p}$ have solutions? Cubic reciprocity asks a similar question: under what conditions does the congruence $x^{3}\equiv a\pmod{p}$ have solutions? The difference is subtle, but cubic reciprocity demands significantly more mechanics, and this is primarily what we will address in this paper. 

\indent This paper seeks to explore 3 reciprocity laws: a proof of quadratic reciprocity, a more familiar result in the context of finite fields; a proof of cubic reciprocity; and finally the statement of the law of biquadratic reciprocity. The preliminary section introduces necessary technical and conceptual preliminaries. Section 1 defines, states, and proves facts concerning the structure and existence of finite fields. Section 2 focuses primarily on motivating the study of multiplicative characters alongside Gauss and Jacobi sums. Section 3 states and proves cubic reciprocity and provides a sketch for the cubic character of 2. Section 4 concludes the paper with an overview of the different components needed for the proof of biquadratic reciprocity.  

\indent As we see in section 1, finite fields play an important role in higher reciprocity. In this paper, we survey the finite field $\mathbb{F}$ of order $p$ and its multiplicative subgroup $\mathbb{F}^{*}$ of order $p-1$. While the construction and existence of this finite field are important facts of theory covered in section 1, we use them primarily to show that a residue class ring involving the Eisenstein integers $\mathbb{Z}[\omega]$ and an element of $\ZZ[\omega]$, introduced in section 3, are a finite field. This forms an important connection between algebra and cubic reciprocity and uses objects such as associates, norms, etc. that we are already familiar with. Since cubic reciprocity is considered over the finite field $\mathbb{Z}[\omega]/\pi\mathbb{Z}[\omega]$ for $\pi$ a prime in $\mathbb{Z}[\omega]$, it is only logical that biquadratic reciprocity will be considered over the finite field $\mathbb{Z}[i]/\pi\mathbb{Z}[i]$ for $\pi$ a prime in $\mathbb{Z}[i]$, where $\mathbb{Z}[i]$ is the Gaussian integers. We will omit many of the details relating to Gaussian integers as it lies beyond the general scope of this paper, but \cite{ireland1990classical} gives a strict yet enlightening overview of its intricacies. 

\indent Though this paper aims to introduce finite fields and higher reciprocity to a fairly new reader, it also assumes a certain degree of abstract mathematics knowledge. We assume fluency with elementary number theory and most of algebra with the exception of some facts from ring theory that are introduced when needed. \cite{Gallian_2002} is an excellent resource for understanding and gaining useful insight on the group-theoretic and ring-theoretic portions of this paper. We also assume familiarity with the definitions of the algebraic integers and algebraic numbers as well as their algebraic structures; for instance, that the algebraic integers form a ring and that the algebraic numbers form a field. Everything in relation to finite fields is constructed from elementary principles with the exception of some facts and definitions about fields. As stated in the abstract, immediately following finite fields - the backbone of much of what we will do here - we will survey multiplicative characters and their uses, especially in the context of cubic residue characters. This leads naturally into an elegant proof of cubic reciprocity after surveying the Eisenstein integers.

\indent This expository work is a continuation of previous work done in \cite{relmat2022quadrec} related to quadratic reciprocity. For more insightful information about the history of early higher reciprocity, \cite{collison1977origin} is an excellent introduction. As mentioned earlier, \cite{ireland1990classical} is an extensive text not only for higher reciprocity but for algebraic number theory as well, and contains a proof for Eisenstein reciprocity for interested and advanced readers. \cite{Rousseau2012reciprocity} provides even more detailed proofs for cubic reciprocity and its supplements. Many algebraic results that we do not prove may be found in \cite{Gallian_2002}, and even so it is a wonderful text to gain further insight into finite fields. 

\section*{Preliminaries}

\subsection{Algebra}
\noindent Much of the algebra used in this paper is self-contained, in that any algebraic definitions or theorems used are largely either stated or proven before they are needed. Readers who are unfamiliar with more basic algebra, for instance that of groups, may find the opening chapters of \cite{Gallian_2002} illuminating. Many later chapters of this textbook also cover the theory of finite fields in more detail and are interesting in their own right. Knowledge of ring theory is also assumed, but we will state and prove several of the larger results when they are needed. 

\begin{defn}[Homomorphism]
    We define a \textit{homomorphism} to be a structure preserving mapping from one set to another set of the same type. In other words, if $A$ and $B$ are two sets of the same type, then under the mapping  $\phi:A\rightarrow B$, it is true for all $(x,y)\in A$ that 
    \begin{equation*}
        \phi(xy) = \phi(x)\phi(y). 
    \end{equation*}
\end{defn}
\noindent In this paper we will be utilising multiple types of homomorphisms, namely \textit{group homomorphisms} and \textit{ring homomorphisms}, which in practice are functionally identical. Each homomorphism also possesses a \textit{kernel}. 
\begin{defn}[Kernel of a Homomorphism]
    Let $\phi$ be a homomorphism defined with $\phi: A \rightarrow B$, and let $B$ have some identity element $e$. We define the \textit{kernel} of $\phi$ to be 
    \begin{equation*}
        \text{Ker}(\phi) = \{ x\in A | \phi(x)=e \}.
    \end{equation*}
    \noindent In other words, the kernel of $\phi$ is the set of elements in $A$ that map to the identity element of $B$. 
\end{defn}
\noindent Much of the results that we are concerned with utilize ring theory, so we will define some basic objects. Ideals are analogous to normal subgroups in group theory. In the following definitions we let $R$ be a commutative ring. 
\begin{defn}[Associate]
     We say two members of $R$ are \textit{associate} if for $r,s\in R$ there exists some unit $u\in R$ such that $r=us$. In this case we say that $r$ is associate to $s$. 
\end{defn}
\begin{defn}[Ideal]
    We define an \textit{ideal} to be a subring $\overline{R}\subset R$ such that for every $r\in R$ and every $a\in \overline{R}$, both $ar,ra\in \overline{R}$. 
\end{defn}

\begin{defn}[Maximal Ideal]
    Some ideal $\overline{R}$ is a \textit{maximal ideal} if whenever there is some ideal $B$ of $R$ with $\overline{R}\subseteq B \subseteq R$, either $B=\overline{R}$ or $B=R$. In other words, it is the largest ideal of $R$ that is not $R$. An ideal of $R$ is \textit{proper} if it is not the entirety of $R$. 
\end{defn}

\begin{defn}[Prime Ideal]
    Some ideal $\overline{R}$ is a \textit{prime ideal} if for the product $ab\in R$, then either $a$ or $b$ is in $\overline{R}$. 
\end{defn}
\noindent A prime ideal is in fact a ring-theoretic analogue to Euclid's Lemma. 
\begin{defn}[Principal Ideal]
    Some ideal $\overline{R}$ is a \textit{principal ideal} if it can be generated by a singular element. 
\end{defn}

\begin{defn}[Integral Domain]
    We define an \textit{integral domain} to be a nonzero commutative ring. 
\end{defn}
\noindent Integral domains are generalizations of the ring of integers, and the product of any two members yields a nonzero output.
\begin{defn}[PID]
    An integral domain is a \textit{principal ideal domain}, conventionally a \textit{PID}, if every proper ideal of the integral domain is principal. 
\end{defn}

\begin{defn}[Euclidean Domain]
    Some integral domain $R$ is a Euclidean domain if there exists some function $\lambda$, known as the \textit{norm}, that maps the nonzero elements of $R$ to $\mathbb{Z}_{\geq 0}$ such that if there exist some $a,b\in R$ with $b\neq 0$, then there also exist some $c,d\in R$ with the property that $a=cb+d$ and either $d=0$ or $\lambda(d)<\lambda(b)$.
\end{defn}
\noindent Essentially, a Euclidean domain asserts the existence of a division algorithm (more specifically, the Euclidean algorithm) over a ring. 
\begin{rem}
\noindent It can be seen that $k[x]$ for some field $k$ is a Euclidean domain, as we can map polynomials from $k[x]$ to the degrees of polynomials from $\mathbb{Z}_{\geq 0}$ via the norm. 
\end{rem}

\begin{defn}[UFD]
    An integral domain $R$ is a \textit{unique factorization domain}, conventionally \textit{UFD}, if every nonzero element of $x$ in $R$ may be written as a product of some unit $u\in R$ and some finite number of irreducible elements $p_{i}$ as follows:
    \begin{equation*}
        x = up_{1}p_{2}\cdots p_{n}.
    \end{equation*}
    \noindent Furthermore, this representation is unique in that any other $x$ that can represented in the same way must have a bijection between its irreducible elements and $p_{i}$ with $1\leq i\leq n$. 
\end{defn}

\begin{rem} \label{Remark: class inclusions}
    Integral domains, Euclidean domains, PIDs, and UFDs are ultimately what we will use to study finite fields later in section 1. A diagram of their class inclusions may be described as follows.
\begin{figure}[H]
\begin{framed}
\begin{center}
field $\subset$ Euclidean domain $\subset$ PID $\subset$ UFD $\subset$ integral domain
\end{center}
\end{framed}
\caption{Diagram of class inclusions for algebraic structures}
\end{figure}
    \noindent Class inclusions are useful because they allow us to make statements about complex algebraic structures when working with simpler algebraic structures.
    
    \indent We show in Theorem \ref{Every Euclidean domain is a PID} that every Euclidean domain is a PID. It is also possible to show that every PID is a UFD and that every UFD is an integral domain. We will not do this, but in the case of $\mathbb{Z}[\omega]$, the Eisenstein integers, we note that since it is a UFD it is also an integral domain. 
\end{rem} 
\noindent We define a field more rigorously in section 1. 

\begin{thm}[Every Euclidean domain is a PID] \label{Every Euclidean domain is a PID}
    If $R$ is an integral domain and $I\subseteq R$ is an ideal, then there exists some element $a\in R$ such that $I=Ra=aR=\{ ra=ar | r\in R \}$. This is necessarily the requirement to be a PID.
\end{thm}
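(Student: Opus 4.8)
The plan is to exploit the norm function $\lambda$ that comes with the Euclidean domain structure on $R$, together with the well-ordering principle on $\mathbb{Z}_{\geq 0}$. Given an ideal $I \subseteq R$, I would first dispose of the trivial case $I = \{0\}$, where one may simply take $a = 0$, so that $I = R0 = 0R$. Assuming $I \neq \{0\}$, the central idea is to choose a generator of \emph{minimal norm}: the set $\{\lambda(b) : b \in I,\ b \neq 0\}$ is a nonempty subset of $\mathbb{Z}_{\geq 0}$, hence has a least element by well-ordering; fix some $a \in I$ with $a \neq 0$ attaining this minimum.

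The next step is to prove $I = Ra$. The inclusion $Ra \subseteq I$ is immediate from the definition of an ideal, since $a \in I$ forces $ra \in I$ for every $r \in R$. For the reverse inclusion, take an arbitrary $b \in I$ and apply the division property of the Euclidean domain to the pair $(b, a)$, which is legitimate because $a \neq 0$: there exist $c, d \in R$ with $b = ca + d$ and either $d = 0$ or $\lambda(d) < \lambda(a)$. Since $b \in I$ and $ca \in I$, we obtain $d = b - ca \in I$. If $d$ were nonzero, then $\lambda(d) < \lambda(a)$ would contradict the minimality in the choice of $a$; therefore $d = 0$, whence $b = ca \in Ra$. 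This gives $I \subseteq Ra$, and combined with the previous inclusion, $I = Ra$.

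Finally, since $R$ is commutative we have $Ra = aR = \{ra = ar : r \in R\}$, which is precisely the principal-ideal condition stated in the theorem. As $I$ was an arbitrary ideal, every ideal of $R$ is principal, so $R$ is a PID. I do not expect a genuine obstacle in this argument: the only points that require a little care are remembering to handle the zero ideal separately and applying the division step with the correct nonzero divisor $a$, after which the minimality of $\lambda(a)$ does all the work.
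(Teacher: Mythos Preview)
Your proof is correct and follows essentially the same approach as the paper: choose a nonzero element $a \in I$ of minimal norm via well-ordering, obtain $Ra \subseteq I$ from the ideal property, and then use the division algorithm together with the minimality of $\lambda(a)$ to force the remainder to be zero, giving $I \subseteq Ra$. Your explicit treatment of the zero ideal is a small refinement over the paper's version, but otherwise the arguments coincide.
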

\begin{proof}
    We proceed by double inclusion. First consider the set of nonnegative integers given by $\{ \lambda(b) | b\in I, b\neq 0 \}$. By well-ordering, every nonnegative set of integers must have a lowest term; we call this term $a\in I,a\neq 0$ with the property that $\lambda(a)\leq \lambda(b)$ for all $b\in I,b\neq 0$. We claim that $I=Ra=aR$, namely that the ideal $I$ is generated by $a$. By the definition of $I$, we know that $Ra=aR\subseteq I$. We want to show that $I\subseteq Ra=aR$. To begin, we know that since $I$ is a subring of $R$, it retains the properties of $R$ and is thus a Euclidean domain. Namely, for any $b\in I$, there exist $c,d\in R$ such that $b=ca+d$ with either $d=0$ or $\lambda(d)<\lambda(a)$. Clearly $d=b-ca\in I$, so it is not possible for $\lambda(d)<\lambda(a)$. Therefore $d=0$, and so $b=ca$. Then $b=ca\in I$. Since we showed this for any $b$, it follows that $I=Ra=aR$, and thus every Euclidean domain is a PID. 
\end{proof}
\noindent In developing finite fields in section 1 we will need the following results.

\begin{thm}[Lagrange's Theorem; Gallian]\label{Lagrange's Theorem}
    Let $G$ be a finite group and let $H$ be a subgroup of $G$. Then the order of $H$ divides the order of $G$. Furthermore, there are exactly $|G|/|H|$ distinct left or right cosets of $H$ in $G$. 
\end{thm}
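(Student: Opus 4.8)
The plan is to prove both assertions at once by partitioning $G$ into left cosets of $H$ and showing all of these cosets have the same size. First I would define, for each $g \in G$, the left coset $gH = \{gh : h \in H\}$, and note immediately that these cosets cover $G$, since $g = ge \in gH$ using the fact that $e \in H$.

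The central step is the dichotomy that any two left cosets $gH$ and $g'H$ are either identical or disjoint. I would obtain this from the standard equivalence-relation argument: define $g \sim g'$ to mean $g^{-1}g' \in H$, and verify reflexivity (from $e \in H$), symmetry (from closure under inverses), and transitivity (from closure under products) using the subgroup axioms. One then checks that the equivalence class of $g$ is precisely $gH$, so that the left cosets, being the equivalence classes of $\sim$, partition $G$. (Equivalently, one can argue directly that if $x \in gH \cap g'H$ then $gH = xH = g'H$.)

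Next I would show that $|gH| = |H|$ for every $g$, by exhibiting the bijection $\varphi_g : H \to gH$ given by $\varphi_g(h) = gh$: it is surjective by the definition of $gH$, and injective because $gh_1 = gh_2$ forces $h_1 = h_2$ by left cancellation in the group. Since $G$ is finite, there are finitely many distinct left cosets, say $k$ of them; as they are pairwise disjoint, cover $G$, and each contains exactly $|H|$ elements, we get $|G| = k\,|H|$. Hence $|H|$ divides $|G|$ and $k = |G|/|H|$. Running the same argument with right cosets $Hg$ (using right cancellation for the bijection) shows the number of right cosets is likewise $|G|/|H|$, which completes the proof.

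The only step that is more than bookkeeping is the equal-or-disjoint property of cosets; once that is in hand, the size computation and the counting are immediate from the group axioms and the finiteness of $G$, so I expect that dichotomy to be the main (and essentially the sole) obstacle.
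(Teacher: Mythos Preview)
Your proof is correct and is the standard coset-partition argument for Lagrange's Theorem. The paper does not actually supply its own proof of this statement; it simply refers the reader to Chapter 7 of Gallian, where precisely the argument you outline (partition $G$ into left cosets, show each has size $|H|$ via the bijection $h \mapsto gh$, and count) is given. So your proposal matches the intended reference.
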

\begin{proof}
    The proof may be found in Chapter 7 of \cite{Gallian_2002}. 
\end{proof} 

\noindent An important result that is necessary to prove Proposition \ref{an important isomoprhism by the First Isomorphism Theorem} is the First Isomorphism Theorem, alternatively referred to as the Fundamental Theorem of Group Homomorphisms. The theorem can easily be extended to rings for our purposes, but we will only prove the theorem for groups.
\begin{thm}[First Isomorphism Theorem; Gallian] \label{First Isomorphism Theorem}
    Let $\phi$ be a group homomorphism from a group $G$ to $\overline{G}$. Then the mapping from the quotient group $G/\text{Ker}(\phi)$ to the image of $\phi$ given as $\phi(G)$ is an isomorphism, i.e.
    \begin{equation*}
        G/\text{Ker}(\phi) \approx \phi(G). 
    \end{equation*}
\end{thm}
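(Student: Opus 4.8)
The plan is to construct an explicit isomorphism from $G/\text{Ker}(\phi)$ onto $\phi(G)$ and check it has all the required properties. Write $K = \text{Ker}(\phi)$. Before anything else I would verify that $K$ is a normal subgroup of $G$, so that the quotient group $G/K$ is even defined: for $g \in G$ and $k \in K$ we have $\phi(gkg^{-1}) = \phi(g)\phi(k)\phi(g)^{-1} = \phi(g)\, e\, \phi(g)^{-1} = e$, hence $gkg^{-1} \in K$.

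Next I would define the candidate map $\psi \colon G/K \to \phi(G)$ by $\psi(gK) = \phi(g)$. The first and most delicate step is to check that $\psi$ is well-defined, i.e.\ that the value does not depend on the chosen coset representative: if $gK = g'K$, then $g^{-1}g' \in K$, so $\phi(g)^{-1}\phi(g') = \phi(g^{-1}g') = e$, giving $\phi(g) = \phi(g')$. With well-definedness in hand, the homomorphism property is immediate from that of $\phi$: $\psi(gK \cdot g'K) = \psi(gg'K) = \phi(gg') = \phi(g)\phi(g') = \psi(gK)\,\psi(g'K)$.

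It then remains to show $\psi$ is a bijection. Surjectivity onto $\phi(G)$ is essentially by construction, since every element of the image is of the form $\phi(g) = \psi(gK)$. For injectivity I would compute the kernel of $\psi$: if $\psi(gK) = e$ then $\phi(g) = e$, so $g \in K$, meaning $gK = K$ is the identity element of $G/K$; hence $\psi$ has trivial kernel and is injective. A bijective homomorphism is an isomorphism, so $G/\text{Ker}(\phi) \cong \phi(G)$.

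The main obstacle here is conceptual rather than computational: one must be careful that the object $G/K$ is legitimate — this is exactly the normality of $K$ — and that the natural assignment $gK \mapsto \phi(g)$ genuinely respects the equivalence relation defining the cosets. Once those two points are secured, the homomorphism, surjectivity, and injectivity checks are routine consequences of the defining property $\phi(xy) = \phi(x)\phi(y)$. (The same argument carries over verbatim to rings, replacing ``normal subgroup'' by ``ideal'', which is why the excerpt remarks the ring version needs no separate proof.)
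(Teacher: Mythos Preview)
Your proof is correct and follows essentially the same route as the paper: define $\psi(gK)=\phi(g)$, verify well-definedness via $g^{-1}g'\in K$, check the homomorphism property, and establish injectivity. You are in fact slightly more thorough---you explicitly verify normality of $K$ and surjectivity onto $\phi(G)$, and you prove injectivity by showing $\psi$ has trivial kernel rather than the paper's direct argument that $\psi(g_1K)=\psi(g_2K)\Rightarrow g_1K=g_2K$; these are cosmetic differences, not a different approach.
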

\noindent Before we can prove this theorem, let us first examine what the different components are. We will first examine the quotient group $G/\text{Ker}(\phi)$. This set is defined as $\{ gH | g\in\text{Ker}(\phi) \}$ for $H$ a normal subgroup of $G$, so we can write it as $g\text{Ker}(\phi)$ for all $g\in G$. The set $\phi(G)$ is also known as the image of $G$ or $\text{im}(\phi)$. If we consider these simplifications, then we can write that the mapping is now defined as $g\text{Ker}(\phi) \rightarrow \phi(g)$ for all $g\in G$. Now we proceed with the proof.
\begin{proof}[Proof of Theorem \ref{First Isomorphism Theorem}]
    For the sake of convenience, we will use the function $\psi$ to denote the mapping $g\text{Ker}(\phi) \rightarrow \phi(g)$. An isomorphism requires that the mapping between two groups preserve group operations and is one-to-one (namely, an injective function, so we necessarily need to show that the homomorphism is a function). To begin, we must first show that $\psi$ is well-defined, or that for any $g$, the \textit{LHS} of the mapping remains unique: that is, $g$ is the only such coset representative that generates the coset. Suppose that there exist $x,y$ such that $x\text{Ker}(\phi)=y\text{Ker}(\phi)$. Then, multiplying both sides by $y^{-1}$ we have that $y^{-1}x\in \text{Ker}(\phi)$. Then, by the definition of the kernel, we have $e=\phi(y^{-1}x)=\phi(y^{-1})\phi(x)=(\phi(y))^{-1}\phi(x)$. Multiplying both sides by $\phi(y)$, this means that $\phi(x)=\phi(y)$, so we have shown that $\psi$ is well-defined and $\psi$ is a function. Next we need to show that $\psi$ preserves operations. Notice that 
    \begin{equation*}
        \psi(x\text{Ker}(\phi)y\text{Ker}(\phi))=\psi(xy\text{Ker}(\phi))=\phi(xy)=\phi(x)\phi(y)=\psi(x\text{Ker}(\phi))\psi(y\text{Ker}(\phi)),
    \end{equation*}
    \noindent which indeed shows that the group operation is preserved. Finally, we need to show that $\psi$ is one-to-one. Note that $\psi(g_{1}\text{Ker}(\phi))=\psi(g_{2}\text{Ker}(\phi))\implies \phi(g_{1})=\phi(g_{2})$. Multiplying both sides by $\phi(g_{2})^{-1}$ we have $e=(\phi(g_{2})^{-1}\phi(g_{1})=\phi(g_{2}^{-1})\phi(g_{1})=\phi(g_{2}^{-1}g_{1})$. Therefore, by the definition of the kernel, $g_{2}^{-1}g_{1}\in \text{Ker}(\phi)$. Hence $g_{1}\text{Ker}(\phi)=g_{2}\text{Ker}(\phi)$, proving that $\psi$ is one-to-one. Thus the mapping $\psi$ is an isomorphism. 
\end{proof}

\noindent To conclude this subsection, we include some definitions in elementary field theory that will be useful to the language we use in our investigation of the existence of finite fields. We formally define a field in section 1.

\begin{defn}[Field Extension]
    Let $K$ and $L$ be fields such that $K\subseteq L$ is a subfield of $L$. We define a field extension $K$ of $L$, which we denote as $L/K$, to be a field such that $K$ is a subfield of $L$. In this way, $L$ is referred to as a $K$\textit{-vector space} as it forms a vector space over the scalar field $K$. 
\end{defn}

\noindent We might say that $L$ is a field extension, or simply \textit{extension}, of $K$. A useful concept is the idea of an \textit{intermediate field extension}. If $L$ is an extension of $F$, and $F$ is an extension of $K$, then $F$ is an intermediate field extension. We now have a definition for the degree of a field extension. Let $K$ and $L$ be the same fields.

\begin{defn}[Degree of a Field Extension]
    We define the degree of a field extension, denoted $[K:L]$, to be the dimension of the vector space $L$ over its scalar field $K$. 
\end{defn}

\subsection{Elementary Number Theory} 
\noindent In this paper we assume a general knowledge of elementary number theory, including results such as Bézout's Lemma, Fermat's Little Theorem, quadratic reciprocity, and including other results concerning quadratic residues and nonresidues. Some specific concepts such as primitive roots and units will be introduced as needed. Let $(a/p)$ denote the Legendre symbol. 
\begin{lemma} \label{Legendre symbol theorem 1}
Let $\gcd(a,p)=1$ and $a,b\in \ZZ$ for $p$ prime. Then
    \begin{enumerate}
        \item
            \begin{equation*}
                a\equiv b \pmod{p} \iff \bigg(\frac{a}{p}\bigg)=\bigg(\frac{b}{p}\bigg)
            \end{equation*}
        \item
            \begin{equation*} 
                \bigg(\frac{0}{p}\bigg)=0
            \end{equation*}
        \item
            \begin{equation*} 
                \bigg(\frac{a^{2}}{p}\bigg)=1.
            \end{equation*} 
    \end{enumerate}
\end{lemma}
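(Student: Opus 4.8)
The plan is to reduce all three items to \emph{Euler's criterion} — for an odd prime $p$ and $\gcd(a,p)=1$ one has $\left(\frac{a}{p}\right)\equiv a^{(p-1)/2}\pmod p$ — together with the defining convention that $\left(\frac{a}{p}\right)=0$ when $p\mid a$ and $\left(\frac{a}{p}\right)=\pm 1$ otherwise. (The prime $p=2$ is a degenerate case, since every integer coprime to $2$ is a square mod $2$, so I would tacitly restrict to odd $p$, which is the only case of interest later.) Euler's criterion is a standard fact about quadratic residues and may be quoted from the elementary number theory assumed in the Preliminaries.

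First I would dispose of (2), which is immediate: $p\mid 0$, so by the very definition of the Legendre symbol $\left(\frac{0}{p}\right)=0$. For (3), since $\gcd(a,p)=1$ the residue class of $a^2$ modulo $p$ is a nonzero square, hence a quadratic residue, so $\left(\frac{a^2}{p}\right)=1$ by definition; alternatively, Euler's criterion together with Fermat's little theorem gives $\left(\frac{a^2}{p}\right)\equiv (a^2)^{(p-1)/2}=a^{p-1}\equiv 1\pmod p$, and since the symbol takes values in $\{-1,1\}$ this forces it to equal $1$.

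The substance of the lemma is (1). For the forward implication, $a\equiv b\pmod p$ gives $a^{(p-1)/2}\equiv b^{(p-1)/2}\pmod p$, so by Euler's criterion $\left(\frac{a}{p}\right)\equiv\left(\frac{b}{p}\right)\pmod p$; both sides lie in $\{-1,1\}$ and $1\not\equiv -1\pmod p$ for $p$ odd, hence the two symbols are in fact equal. The point I would flag — and the only real obstacle — is that the converse as literally written fails in general (for instance $1\not\equiv 4\pmod 5$ yet $\left(\frac{1}{5}\right)=\left(\frac{4}{5}\right)=1$), so I would state (1) as the forward implication only, i.e.\ ``the Legendre symbol depends only on the residue class of its numerator,'' which is the correct and sufficient content for everything used later; the remaining direction should simply be dropped. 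With that adjustment, the whole lemma is a two-line corollary of Euler's criterion and the defining conventions, with no genuine computation involved.
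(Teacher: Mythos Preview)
Your proposal is correct, and in fact more careful than the paper. The paper's own ``proof'' is simply a pointer to an external reference together with the remark that ``everything follows through the definition of the Legendre symbol''; no argument is given in the paper itself. Your approach via Euler's criterion is a perfectly standard and adequate way to fill in the details for (2), (3), and the forward direction of (1).

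More importantly, you have caught a genuine error in the \emph{statement} of (1) that the paper does not acknowledge: the biconditional is false as written, since distinct residue classes can certainly share the same Legendre symbol (your example $1\not\equiv 4\pmod 5$ with $\bigl(\tfrac{1}{5}\bigr)=\bigl(\tfrac{4}{5}\bigr)=1$ is decisive). Only the forward implication $a\equiv b\pmod p\Rightarrow \bigl(\tfrac{a}{p}\bigr)=\bigl(\tfrac{b}{p}\bigr)$ holds, and that is all that is ever used. Your suggestion to drop the converse and retain only ``the Legendre symbol depends on the residue class of its numerator'' is exactly the right fix.
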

\begin{proof}
    The proof of this may be found in the preliminary section of \cite{relmat2022quadrec}. Everything follows through the definition of the Legendre symbol. 
\end{proof}
\begin{lemma} \label{Legendre symbol theorem 2}
Let $p$ prime, $\gcd(a,p)=1$, and $a,b\in \ZZ$. Then
\begin{enumerate}
    \item
            \begin{equation*}
                a^{\frac{p-1}{2}}\equiv \bigg(\frac{a}{p}\bigg) \pmod{p}
            \end{equation*}
    \item     
            \begin{equation*}
                \bigg(\frac{a}{p}\bigg)\bigg(\frac{b}{p}\bigg)=\bigg(\frac{ab}{p}\bigg). 
            \end{equation*}
\end{enumerate}
\end{lemma}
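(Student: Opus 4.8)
The plan is to prove part (1) — Euler's criterion — first, and then obtain part (2) as an immediate consequence. Throughout, $p$ is an odd prime (as is implicit in the definition of the Legendre symbol), and I silently use that $\ZZ/p\ZZ$ is a field. For part (1), first I would apply Fermat's Little Theorem: since $\gcd(a,p)=1$ we have $a^{p-1}\equiv 1\pmod p$, hence $p\mid\big(a^{(p-1)/2}-1\big)\big(a^{(p-1)/2}+1\big)$, and as $p$ is prime this forces $a^{(p-1)/2}\equiv\pm1\pmod p$. It then remains to identify the sign. If $a$ is a quadratic residue, say $a\equiv x^2\pmod p$ with $p\nmid x$, then $a^{(p-1)/2}\equiv x^{p-1}\equiv 1\pmod p$, which agrees with $(a/p)=1$. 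For the converse I would count: the squaring map on the nonzero residues is two-to-one, because $x^2\equiv y^2\pmod p$ exactly when $x\equiv\pm y\pmod p$ and $x\not\equiv -x$ for $p$ odd, so precisely $(p-1)/2$ of the nonzero residues are quadratic residues. Each such residue is a root of $T^{(p-1)/2}-1$ over the field $\ZZ/p\ZZ$, a polynomial with at most $(p-1)/2$ roots; therefore the quadratic residues are exactly its roots, and every quadratic nonresidue $a$ must satisfy $a^{(p-1)/2}\equiv -1\pmod p$, matching $(a/p)=-1$. This yields $a^{(p-1)/2}\equiv(a/p)\pmod p$ in every case. (Alternatively, once a primitive root $g$ modulo $p$ is available one can write $a\equiv g^k$ and observe that $a^{(p-1)/2}\equiv g^{k(p-1)/2}$ equals $1$ precisely when $k$ is even, i.e.\ precisely when $a$ is a residue.)

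For part (2), I would apply part (1) three times. If $p\mid b$ then $(b/p)=0=(ab/p)$ by part (2) of Lemma~\ref{Legendre symbol theorem 1}, so assume $\gcd(b,p)=1$ as well. Then
\[
\Big(\frac{ab}{p}\Big)\equiv(ab)^{(p-1)/2}=a^{(p-1)/2}\,b^{(p-1)/2}\equiv\Big(\frac{a}{p}\Big)\Big(\frac{b}{p}\Big)\pmod p .
\]
Both sides lie in $\{-1,1\}$ and $p>2$, so $1\not\equiv-1\pmod p$ and the congruence is forced to be an equality.

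The only genuine obstacle is the counting argument inside part (1): one must know both that a polynomial of degree $d$ over a field has at most $d$ roots and that there are exactly $(p-1)/2$ nonzero squares modulo $p$. These facts (or, equivalently, the existence of a primitive root modulo $p$) can also be cited from \cite{relmat2022quadrec} or deduced from the structure theory of $\mathbb{F}^{*}$ developed in Section 1; everything else here is formal manipulation of congruences modulo $p$.
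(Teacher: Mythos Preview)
Your proof is correct and complete: the factorization via Fermat's Little Theorem, the counting of quadratic residues, and the root bound for $T^{(p-1)/2}-1$ over $\mathbb{F}_p$ together pin down the sign in part (1), and part (2) follows formally. The paper itself does not supply an argument here at all --- it simply refers the reader to the preliminary section of \cite{relmat2022quadrec} --- so there is no in-paper proof to compare against. What you have written is the standard proof of Euler's criterion and is presumably close to what that reference contains; your aside about primitive roots is also a valid alternative and aligns with the cyclicity results the paper develops later in Section~1.
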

\begin{proof}
    The proof of this may be found in the preliminary section of \cite{relmat2022quadrec}.
\end{proof}
\noindent One result that will be of utmost use to us in this paper is the law of quadratic reciprocity. Though it ultimately resides in elementary number theory, it is a stepping stone for higher reciprocity.
\begin{thm}[The Law of Quadratic Reciprocity] \label{the law of quadratic reciprocity}
    Let $p,q\in\mathbb{Z}$ be odd primes. Then
        \begin{equation*}
            \bigg(\frac{p}{q}\bigg)\bigg(\frac{q}{p}\bigg)=(-1)^{\frac{p-1}{2}\cdot\frac{q-1}{2}}.
        \end{equation*}
    \noindent Alternatively, we can express this as 
    \begin{equation*}
\bigg(\frac{p}{q}\bigg)= \left\{
        \begin{array}{ll}
            (\frac{q}{p}) & \text{if $p\equiv 1 \pmod{4}$ or $q\equiv 1 \pmod{4}$} \\
            -(\frac{q}{p}) & \text{if $p\equiv 3 \pmod{4}$ and $q\equiv 3 \pmod{4}$}.
        \end{array}
    \right.
\end{equation*}
\end{thm}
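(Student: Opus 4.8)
\noindent The plan is to give the quadratic Gauss sum argument, which has the additional virtue of being a template for the cubic Gauss sum proof in Section 3. Fix distinct odd primes $p$ and $q$, let $\zeta$ denote a primitive $p$-th root of unity, and introduce the quadratic Gauss sum
\[
  g \;=\; \sum_{t=0}^{p-1}\bigg(\frac{t}{p}\bigg)\zeta^{t}.
\]
All of the arithmetic takes place in a field of characteristic $q$ containing $\zeta$; the existence of such a field (a splitting field of $x^{p}-1$ over $\FF_{q}$, equivalently some $\FF_{q^{n}}$ with $p \mid q^{n}-1$) is precisely the finite field machinery developed in Section 1, and I would invoke it at the outset.

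First I would compute $g^{2}$ as an identity in $\ZZ[\zeta]$. Expanding $g^{2}=\sum_{s,t}\big(\tfrac{st}{p}\big)\zeta^{s+t}$, substituting $s=tr$ for $t\not\equiv 0$, and using that $\sum_{t}\zeta^{t(1+r)}$ equals $p-1$ when $r\equiv -1\pmod{p}$ and $-1$ otherwise, together with $\sum_{r}\big(\tfrac{r}{p}\big)=0$ and $\big(\tfrac{-1}{p}\big)=(-1)^{(p-1)/2}$ (a consequence of Lemma \ref{Legendre symbol theorem 2}), gives $g^{2}=p^{*}$, where $p^{*}:=(-1)^{(p-1)/2}p$. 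In particular $g$ is a unit in our characteristic-$q$ field, since $q\nmid p^{*}$.

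Next I would evaluate $g^{q}$ in two ways modulo $q$. Because $x\mapsto x^{q}$ is a ring homomorphism in characteristic $q$ and each $\big(\tfrac{t}{p}\big)$ is $\pm 1$, we obtain $g^{q}=\sum_{t}\big(\tfrac{t}{p}\big)\zeta^{qt}$; reindexing via $t\mapsto q^{-1}t\pmod{p}$ and using multiplicativity of the Legendre symbol (Lemma \ref{Legendre symbol theorem 2}) rewrites this as $\big(\tfrac{q}{p}\big)g$. On the other hand, $g^{q}=g\cdot(g^{2})^{(q-1)/2}=g\cdot(p^{*})^{(q-1)/2}$, and Euler's criterion (Lemma \ref{Legendre symbol theorem 2}) identifies $(p^{*})^{(q-1)/2}$ with $\big(\tfrac{p^{*}}{q}\big)$ in $\FF_{q}$. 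Cancelling the unit $g$ yields $\big(\tfrac{q}{p}\big)=\big(\tfrac{p^{*}}{q}\big)$, an equality in $\{\pm 1\}$. Then multiplicativity gives $\big(\tfrac{p^{*}}{q}\big)=\big(\tfrac{-1}{q}\big)^{(p-1)/2}\big(\tfrac{p}{q}\big)=(-1)^{\frac{p-1}{2}\cdot\frac{q-1}{2}}\big(\tfrac{p}{q}\big)$, and multiplying both sides by $\big(\tfrac{p}{q}\big)$ produces the stated identity.

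The hard part is not any individual computation — each is short — but rather arranging the ambient ring correctly: one must be certain that the symbol values, the root of unity $\zeta$, and reduction modulo $q$ all live coherently inside a single field of characteristic $q$, and that $\zeta$ genuinely has order $p$ there, which is why $q\neq p$ is needed. Once Section 1 supplies this field, the three steps go through essentially verbatim. An alternative, wholly elementary route is to simply cite the lattice-point argument of \cite{relmat2022quadrec}, but the Gauss sum proof is the one that fits the arc of this paper and foreshadows the higher laws.
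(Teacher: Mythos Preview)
Your proposal is correct and is essentially the same as the paper's proof in Section~1.5: both work inside a finite field $\FF_{q^{n}}$ containing a primitive $p$-th root of unity, define the Gauss-sum analogue $\tau$ there, establish $\tau^{2}=p^{*}$, and then compare $\tau^{q}$ computed via the Frobenius with $\big(\tfrac{q}{p}\big)\tau$. The only cosmetic difference is that the paper phrases the comparison through the subfield criterion $\tau\in\FF_{q}\iff\tau^{q}=\tau$ (Corollary~\ref{first corollary to first proposition in section about finite fields}), whereas you invoke Euler's criterion on $(p^{*})^{(q-1)/2}$ directly; these are equivalent rearrangements of the same computation.
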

\begin{proof}
    Several proofs of this may be found in \cite{relmat2022quadrec} as well as Chapter 5 of \cite{ireland1990classical}. We will also provide a proof for this result using finite fields in section 1 due to Hausner. 
\end{proof}
\noindent There is also a supplement to quadratic reciprocity concerning whether $-1$ or $2$ is a quadratic residue or nonresidue modulo $p$. In elementary number theory, the case of $(-1/p)$ is known as Euler's Criterion; when considering cubic residues, the case of $-1$ is trivial as $-1^{3}=-1$, implying that it is always a cubic residue. The case of $2$ will provide useful insights when considering whether $2$ is a cubic residue or nonresidue in a similar sense.

\begin{thm}[Supplement to Theorem \ref{the law of quadratic reciprocity}] \label{supplement to quadratic reciprocity}
    Let $p$ be an odd prime. Then
    \begin{enumerate}
        \item \begin{equation*}
            \bigg( \frac{-1}{p}\bigg) = (-1)^{\frac{p-1}{2}},
                \end{equation*}
        \item \begin{equation*}
            \bigg(\frac{2}{p}\bigg) = (-1)^{\frac{p^{2}-1}{8}}.
                \end{equation*}
    \end{enumerate}
\end{thm}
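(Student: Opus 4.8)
The plan is to obtain both congruences from Euler's criterion, Lemma~\ref{Legendre symbol theorem 2}(1), which says $a^{(p-1)/2} \equiv \left(\frac{a}{p}\right) \pmod{p}$. For part~(1) this is immediate: taking $a = -1$ gives $(-1)^{(p-1)/2} \equiv \left(\frac{-1}{p}\right) \pmod{p}$, and since both sides lie in $\{1,-1\}$ while $1 \not\equiv -1 \pmod p$ for the odd prime $p$, the congruence upgrades to an equality.

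For part~(2), Euler's criterion reduces the claim to showing $2^{(p-1)/2} \equiv (-1)^{(p^2-1)/8} \pmod p$, i.e.\ to pinning down the sign of $2^{(p-1)/2}$ modulo $p$. I would do this with Gauss's lemma: for $\gcd(a,p)=1$ one has $\left(\frac{a}{p}\right) = (-1)^{\mu}$, where $\mu$ is the number of $k \in \{1,2,\dots,\frac{p-1}{2}\}$ whose least positive residue modulo $p$ exceeds $p/2$ (if the lemma is not taken as given, it follows by forming the product of the signed residues $\pm ka$ lying in $(-p/2,p/2)$, comparing with the factorial $\left(\frac{p-1}{2}\right)!$, and cancelling). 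Applying this with $a = 2$: for $1 \le k \le \frac{p-1}{2}$ we have $2k \le p-1 < p$, so the residue of $2k$ is $2k$ itself, and $2k > p/2$ exactly when $k > p/4$; hence $\mu = \frac{p-1}{2} - \lfloor p/4 \rfloor$.

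The remaining work, and the \emph{main obstacle}, is the arithmetic bookkeeping: writing $p = 8m + r$ with $r \in \{1,3,5,7\}$, evaluating $\lfloor p/4\rfloor$ in each case, and checking that the parity of $\mu = \tfrac{p-1}{2} - \lfloor p/4\rfloor$ agrees with that of $(p^2-1)/8$ — which is even for $p \equiv \pm 1 \pmod 8$ and odd for $p \equiv \pm 3 \pmod 8$. This is elementary but error-prone. An alternative route that meshes with the Gauss-sum material developed later would be to work in $\overline{\FF_p}$, choose a primitive eighth root of unity $\zeta$ (which exists since $8 \mid p^2-1$), put $y = \zeta + \zeta^{-1}$ so that $y^2 = \zeta^2 + 2 + \zeta^{-2} = 2$, and note that $y^p = \zeta^p + \zeta^{-p}$ equals $y$ when $p \equiv \pm 1 \pmod 8$ and $-y$ when $p \equiv \pm 3 \pmod 8$; dividing by $y$ and invoking Euler's criterion then reads off $\left(\frac{2}{p}\right)$ directly. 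There the obstacle is instead justifying the existence of $\zeta$ and the characteristic-$p$ identity $y^p = \zeta^p + \zeta^{-p}$, so at this early stage of the paper I would favor the Gauss's-lemma argument.
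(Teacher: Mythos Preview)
Your argument for part~(1) is exactly the paper's: apply Euler's criterion (Lemma~\ref{Legendre symbol theorem 2}(1)) with $a=-1$, and your extra remark that the congruence in $\{\pm 1\}$ upgrades to equality is a welcome clarification the paper leaves implicit.

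For part~(2), the paper does not actually give a proof here; it simply defers to an external reference. Your Gauss's-lemma computation is correct and self-contained: the count $\mu = \tfrac{p-1}{2} - \lfloor p/4 \rfloor$ is right, and the four-case check modulo $8$ goes through as you describe. So you are supplying strictly more than the paper does at this point. Your alternative via a primitive eighth root of unity $\zeta$ with $y = \zeta + \zeta^{-1}$ is also correct and in fact aligns well with the Gauss-sum machinery the paper develops later (compare the proof of quadratic reciprocity in Section~1.5); the justification you flag --- existence of $\zeta$ in a suitable extension and the Frobenius identity $y^{p} = \zeta^{p} + \zeta^{-p}$ --- is handled by Proposition~\ref{binomial theorem analogue for finite field with characteristic p} and the cyclicity of $\mathbb{F}^{*}$, so either route is fine.
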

\begin{proof}
    The proof for (1) follows immediately by letting $a=-1$ in Lemma \ref{Legendre symbol theorem 2}. The proof for (2) may be found in sections 2.2 and 4.2 of \cite{relmat2022quadrec}. 
\end{proof}

\noindent As we will investigate later in section 2, Gauss sums generalize the notion of a quadratic Gauss sum to be expressed in terms of a multiplicative character of higher degree. In \cite{relmat2022quadrec}, we developed notions of quadratic Gauss sums in order to prove quadratic reciprocity. As we will see later in section 1.5, another proof of quadratic reciprocity can be given by combining the theory of finite fields and quadratic Gauss sums. As such, we state some elementary properties of the quadratic Gauss sum that will be useful later. 
\begin{defn}[Quadratic Gauss sum]
    \begin{equation*}
        g_{a} = \sum_{t}\bigg( \frac{t}{p} \bigg)\zeta_{p}^{at},
    \end{equation*}
    \noindent where $\zeta_{p}$ is a $p$th root of unity.
\end{defn}
\noindent For the sake of notational convention, we denote the quadratic Gauss sum when $a=1$, or $g_{1}$, as simply $g$. Proofs for the following identities may be found in section 4.3 of \cite{relmat2022quadrec}.
\begin{prop} \label{property 1 of quadratic Gauss sums}
    \begin{equation*}
        g_{a} = \bigg( \frac{a}{p} \bigg)g.
    \end{equation*}
\end{prop}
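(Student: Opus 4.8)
The plan is to split into the two cases $p \nmid a$ and $p \mid a$, since the Legendre symbol $(a/p)$ behaves very differently in each.

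First I would handle the generic case $\gcd(a,p) = 1$. Here the key observation is that multiplication by $a$ permutes the residues modulo $p$, so substituting $s = at$ (equivalently $t = a^{-1}s$, with $a^{-1}$ the inverse of $a$ mod $p$) merely reindexes the sum:
\begin{equation*}
    g_a = \sum_{t}\bigg(\frac{t}{p}\bigg)\zeta_p^{at} = \sum_{s}\bigg(\frac{a^{-1}s}{p}\bigg)\zeta_p^{s}.
\end{equation*}
By the multiplicativity of the Legendre symbol (part (2) of Lemma \ref{Legendre symbol theorem 2}), $(a^{-1}s/p) = (a^{-1}/p)(s/p)$, and since $(a/p)(a^{-1}/p) = (a \cdot a^{-1}/p) = (1/p) = 1$ we get $(a^{-1}/p) = (a/p)$ (both being $\pm 1$; one can also cite part (3) of Lemma \ref{Legendre symbol theorem 1} applied suitably). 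Pulling the constant $(a/p)$ out of the sum gives $g_a = (a/p)\sum_s (s/p)\zeta_p^{s} = (a/p)\, g$, as desired.

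Next I would dispose of the case $p \mid a$. Then $\zeta_p^{at} = 1$ for every $t$, so $g_a = \sum_{t}(t/p)$, which is the sum of the Legendre symbol over a complete residue system modulo $p$; since there are exactly $(p-1)/2$ quadratic residues and $(p-1)/2$ nonresidues among the nonzero classes (and $(0/p) = 0$ by part (2) of Lemma \ref{Legendre symbol theorem 1}), this sum vanishes. On the other hand $(a/p) = 0$ in this case, so both sides of the claimed identity equal $0$ and the equation holds trivially.

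I do not anticipate a serious obstacle here: the only mildly delicate point is making the reindexing substitution $s = at$ rigorous (one must note it is a bijection on $\mathbb{Z}/p\mathbb{Z}$ precisely because $a$ is invertible mod $p$), and remembering to treat $p \mid a$ separately so that the statement reads correctly with $(a/p) = 0$. Everything else is a direct appeal to the elementary properties of the Legendre symbol recorded in Lemmas \ref{Legendre symbol theorem 1} and \ref{Legendre symbol theorem 2}.
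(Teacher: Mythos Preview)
Your proposal is correct and is the standard argument. The paper itself defers the proof of this proposition to an external reference, but the identical idea appears later in the paper as the proof of part~(1) of Lemma~\ref{lemma with some important properties of the Gauss sum and its evaluation} for a general character: there one multiplies $g_a(\chi)$ by $\chi(a)$, uses multiplicativity $\chi(a)\chi(t)=\chi(at)$, and recognizes the reindexed sum as $g_1(\chi)$---exactly your substitution $s=at$ specialized to $\chi=(\cdot/p)$. Your separate treatment of the case $p\mid a$ is likewise the standard way to make the identity hold with $(a/p)=0$.
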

\begin{prop} \label{property 2 of quadratic Gauss sums}
    \begin{equation*}
        g^{2} = (-1)^{\frac{p-1}{2}}p.
    \end{equation*}
\end{prop}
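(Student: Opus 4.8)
The plan is to evaluate the sum $S = \sum_{a=0}^{p-1} g_a g_{-a}$ in two different ways and compare the results. This is the classical trick for pinning down the square of a Gauss sum, and here it works cleanly because we already have Proposition \ref{property 1 of quadratic Gauss sums} in hand.

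First I would compute $S$ using Proposition \ref{property 1 of quadratic Gauss sums}. For each $a$ we have $g_a = \left(\frac{a}{p}\right)g$ and $g_{-a} = \left(\frac{-a}{p}\right)g$, hence $g_a g_{-a} = \left(\frac{a}{p}\right)\left(\frac{-a}{p}\right)g^2$. By multiplicativity of the Legendre symbol (Lemma \ref{Legendre symbol theorem 2}) together with part (3) of Lemma \ref{Legendre symbol theorem 1}, this equals $\left(\frac{-1}{p}\right)\left(\frac{a^2}{p}\right)g^2 = \left(\frac{-1}{p}\right)g^2$ whenever $a \not\equiv 0 \pmod p$, while the $a=0$ term vanishes since $g_0 = \sum_t \left(\frac{t}{p}\right) = 0$. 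Summing over the $p-1$ nonzero residues and invoking part (1) of Theorem \ref{supplement to quadratic reciprocity} to write $\left(\frac{-1}{p}\right) = (-1)^{\frac{p-1}{2}}$, I get $S = (p-1)(-1)^{\frac{p-1}{2}}g^2$.

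Second, I would expand everything directly: $g_a g_{-a} = \sum_{s,t}\left(\frac{s}{p}\right)\left(\frac{t}{p}\right)\zeta_p^{a(s-t)}$, and then interchange the order of summation to write $S = \sum_{s,t}\left(\frac{s}{p}\right)\left(\frac{t}{p}\right)\sum_{a=0}^{p-1}\zeta_p^{a(s-t)}$. Since $\zeta_p$ is a primitive $p$th root of unity, the inner sum is a finite geometric series equal to $p$ when $s \equiv t \pmod p$ and $0$ otherwise. Thus only the diagonal survives and $S = p\sum_{s}\left(\frac{s}{p}\right)^2 = p(p-1)$, again using part (3) of Lemma \ref{Legendre symbol theorem 1} and the fact that $\left(\frac{0}{p}\right) = 0$. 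Comparing the two evaluations gives $(p-1)(-1)^{\frac{p-1}{2}}g^2 = p(p-1)$; dividing by the nonzero integer $p-1$ and multiplying through by $(-1)^{\frac{p-1}{2}}$ yields $g^2 = (-1)^{\frac{p-1}{2}}p$.

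The only real care needed — more bookkeeping than genuine obstacle — is the treatment of the $a=0$ term and the justification of the orthogonality relation $\sum_{a=0}^{p-1}\zeta_p^{ab} = p$ if $p \mid b$ and $0$ otherwise; the latter follows from summing a geometric series (or from $x^p - 1 = \prod_{a}(x - \zeta_p^a)$). One should also be explicit that $\zeta_p$ is primitive so that $s \not\equiv t$ forces the inner sum to vanish, and that the interchange of the finite sums over $a$ and over $(s,t)$ is unproblematic.
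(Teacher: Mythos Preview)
Your proposal is correct and follows precisely the approach the paper indicates: evaluating $\sum_{a=0}^{p-1} g_a g_{-a}$ in two different ways (the paper refers to this method explicitly when proving the analogous identity $\tau^2 = (-1)^{(p-1)/2}\overline{p}$ in the finite-field proof of quadratic reciprocity, and defers the full details to \cite{relmat2022quadrec}). Your bookkeeping on the $a=0$ term and the orthogonality relation is exactly what is needed.
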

\noindent A useful tool we will use later is the Kronecker delta, $\delta(x,y)$, which is defined to be $1$ if $x\equiv y\pmod{p}$ and $0$ otherwise. 

\subsection{Möbius Inversion} A useful tool that will be used throughout this paper is the Möbius function and a powerful result known as Möbius inversion. Let $n\in\mathbb{Z}^{+}$. 
\begin{defn}[Möbius function]
\begin{equation*}
        \mu(n) = \left\{
                \begin{array}{lll}
                    0 & \text{if} \ n \ \text{non-squarefree}, \\
                    1 & \text{if} \ n=1, \\
                    (-1)^{k} & \text{if} \ n=p_{1}p_{2}\cdots p_{k} \ \text{for distinct} \ p_{i}.
                \end{array}
                 \right.
\end{equation*}
\end{defn}
\noindent An important property of the Möbius function is that it is multiplicative, or $\mu(mn)=\mu(m)\mu(n)$, the proof of which requires that we show it holds for $m=n=1$ and then show that it also holds for $m$ and $n$ squarefree. There are many other interesting properties of the Möbius function, but we are most interested in Möbius inversion. We first need the following result.
\begin{lemma}
    Consider the summatory function $F(n)=\sum_{d|n}\mu(d)$. Then
    \begin{equation*}
        F(n) = \left\{
                \begin{array}{ll}
                    1 & \text{if} \ n=1, \\
                    0 & \text{if} \ n>1.
                \end{array}
                 \right.
\end{equation*}
\end{lemma}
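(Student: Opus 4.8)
The plan is to prove the identity $F(n) = \sum_{d \mid n} \mu(d)$ equals $1$ when $n = 1$ and $0$ when $n > 1$, by exploiting the multiplicativity of $\mu$ that was just established. First I would dispose of the base case: when $n = 1$, the only divisor is $d = 1$, so $F(1) = \mu(1) = 1$ directly from the definition of the M\"obius function. This is immediate and requires no work.

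For the main case $n > 1$, the key observation is that $F$ is itself a multiplicative function, since it is the Dirichlet-style divisor sum of the multiplicative function $\mu$; concretely, if $\gcd(m,n) = 1$ then every divisor of $mn$ factors uniquely as a product of a divisor of $m$ and a divisor of $n$, and multiplicativity of $\mu$ gives $F(mn) = \bigl(\sum_{d_1 \mid m} \mu(d_1)\bigr)\bigl(\sum_{d_2 \mid n} \mu(d_2)\bigr) = F(m)F(n)$. Hence it suffices to evaluate $F$ on prime powers. Write $n = p_1^{a_1} \cdots p_k^{a_k}$ with $k \ge 1$; then $F(n) = \prod_{i=1}^{k} F(p_i^{a_i})$, so it is enough to show $F(p^a) = 0$ for every prime $p$ and every $a \ge 1$.

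To compute $F(p^a)$, I would list the divisors of $p^a$, namely $1, p, p^2, \dots, p^a$, and note that $\mu(p^j) = 0$ for all $j \ge 2$ since $p^j$ is non-squarefree, while $\mu(1) = 1$ and $\mu(p) = -1$. Therefore
\begin{equation*}
    F(p^a) = \sum_{j=0}^{a} \mu(p^j) = \mu(1) + \mu(p) + \sum_{j=2}^{a} \mu(p^j) = 1 + (-1) + 0 = 0.
\end{equation*}
Combining this with the multiplicativity reduction gives $F(n) = 0$ whenever $n$ has at least one prime factor, i.e.\ whenever $n > 1$, completing the proof.

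The only genuinely substantive step is justifying that $F$ is multiplicative from the multiplicativity of $\mu$; the rest is bookkeeping. I expect that step to be the main (mild) obstacle, since it requires the bijection between divisors of $mn$ and pairs of divisors of $m$ and $n$ under coprimality, together with $\mu(d_1 d_2) = \mu(d_1)\mu(d_2)$ for such coprime $d_1, d_2$. An alternative route that sidesteps the multiplicativity lemma entirely would be a direct combinatorial argument: let $m = \rad(n) = p_1 \cdots p_k$ be the radical of $n$; since $\mu$ vanishes on non-squarefree arguments, $F(n) = \sum_{d \mid m} \mu(d) = \sum_{j=0}^{k} (-1)^j \binom{k}{j} = (1-1)^k = 0$ for $k \ge 1$ by the binomial theorem. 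I would likely present whichever of these two the surrounding text makes most convenient, but the binomial-coefficient version is cleaner and self-contained.
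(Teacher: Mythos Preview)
Your proposal is correct and follows essentially the same route as the paper: handle $n=1$ trivially, compute $F(p^k)=\mu(1)+\mu(p)+0=0$ on prime powers, and then use the multiplicativity of $\mu$ to extend to arbitrary $n>1$. The paper's proof sketch is in fact terser than yours---it simply says to verify the prime-power case and then invoke multiplicativity of $\mu$---so your added justification of why $F$ inherits multiplicativity (and your alternative binomial argument via the radical) only makes the argument more complete.
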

\begin{proof}
    The result is obvious for $n=1$. To prove it for $n>1$, prove that it holds for $p^{k}$ for some $k>0$, and then use the multiplicativity of $\mu(n)$ to prove the result for any integer $n=p_{1}^{a_{1}}\cdots p_{t}^{a_{t}}>1$ using the information gained from computing $F(p^{k})$. 
\end{proof}

\noindent Recall that an arithmetic function, or number-theoretic function, is a function that maps $\ZZ$ to $\ZZ$. We now introduce Möbius inversion. 

\begin{thm}[Möbius Inversion]\label{Theorem: Mobius Inversion}
    Suppose that $f$ is an arithmetic function and that $F$ is its summatory function. Then
    \begin{equation*}
        f(n) = \sum_{d|n}\mu(d)F\bigg(\frac{n}{d}\bigg).
    \end{equation*}
\end{thm}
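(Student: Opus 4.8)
The plan is to verify the identity by starting from the right-hand side, $\sum_{d|n}\mu(d)F(n/d)$, unfolding the definition of the summatory function $F$, and then interchanging the order of summation so that the preceding lemma on $\sum_{d|n}\mu(d)$ can be applied.

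First I would write $F(n/d) = \sum_{e\,|\,(n/d)} f(e)$, giving
\[
\sum_{d|n}\mu(d)F\left(\frac{n}{d}\right) = \sum_{d|n}\ \sum_{e\,|\,(n/d)}\mu(d)f(e).
\]
The key observation is that a pair $(d,e)$ satisfies $d\,|\,n$ and $e\,|\,(n/d)$ precisely when $de\,|\,n$; this symmetric description of the index set lets me reverse the order of summation, summing first over $d$ for each fixed divisor $e$ of $n$. Re-indexing yields
\[
\sum_{d|n}\ \sum_{e\,|\,(n/d)}\mu(d)f(e) = \sum_{e|n} f(e)\ \sum_{d\,|\,(n/e)}\mu(d).
\]
Now I would invoke the lemma: the inner sum $\sum_{d\,|\,(n/e)}\mu(d)$ equals $1$ when $n/e = 1$ and $0$ otherwise. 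Hence every term with $e\neq n$ vanishes, and the double sum collapses to the single surviving term $f(n)\cdot 1 = f(n)$, which is exactly the asserted formula.

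I expect the only genuine subtlety to be the justification of the interchange of summation, namely verifying carefully that $\{(d,e) : d\,|\,n,\ e\,|\,(n/d)\}$ and $\{(d,e) : de\,|\,n\}$ describe the same finite set of ordered pairs, so that re-indexing from ``sum over $d$, then over $e$'' to ``sum over $e$, then over $d$'' is legitimate; since both sums are finite there are no convergence issues. Everything else is a direct substitution followed by an appeal to the lemma. One could alternatively package this argument in the language of Dirichlet convolution---observing that $F = f * \mathbf{1}$ and $\mathbf{1} * \mu = \varepsilon$, whence $F * \mu = f$---but the elementary double-counting argument above is self-contained given what has already been established.
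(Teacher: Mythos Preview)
Your proposal is correct and matches the paper's proof essentially step for step: expand $F$, interchange the order of summation via the characterization $d\mid n,\ e\mid n/d \iff de\mid n$, and apply the lemma on $\sum_{d\mid m}\mu(d)$ to collapse the sum to $f(n)$. If anything, you are more explicit than the paper about justifying the swap of summation, and your aside on Dirichlet convolution is a nice alternative viewpoint that the paper does not mention.
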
 
\begin{proof}
    \begin{align*}
        \sum_{d|n}\mu(d)F\bigg(\frac{n}{d}\bigg) & = \sum_{d|n}\mu(d)\sum_{e|\frac{n}{d}}f(e) &&& \text{(By definition of $F$.)} \\ & 
        = \sum_{d|n}\sum_{e|\frac{n}{d}}\mu(d)f(e) &&& \text{(By combining double sums.)} \\ & 
        = \sum_{e|n}\sum_{d|\frac{n}{e}}\mu(d)f(e) &&& \text{(By divisibility in indices.)}\\ &
        = \sum_{e|n}f(e)\sum_{d|\frac{n}{e}}\mu(d) &&& \text{(By rearranging the double sums.)}
    \end{align*}
\noindent Notice that $\sum_{d|\frac{n}{e}}\mu(d)=0$ since $n/e>1$. If we allow $n/e=1$, then $n=e$, so $\sum_{d|\frac{n}{e}}\mu(d)=1$. Then
    \begin{equation*}
        \sum_{e|n}f(e)\sum_{d|\frac{n}{e}}\mu(d) = f(n)(1)=f(n).
    \end{equation*}
\end{proof}
\noindent The importance of Möbius inversion is that it allows us to form an algebraic relationship between arithmetic functions and their summatory functions. This will become evident in the next section and following sections. 

\section{Finite Fields}
\noindent The language of cubic, biquadratic, and higher reciprocity is expressed in the language of finite fields, so naturally we will explore notions of a finite field in regard to both construction and existence as well as classification of its elements. The reason that finite fields are so fundamental to the study of reciprocity will be evident later. 

\indent Finite fields may also be referred to as ``Galois fields" as they were created by Évariste Galois and are used widely in Galois theory and higher reciprocity. In this paper, we refer to such a field as a finite field. 

\indent A field is defined as follows. 
\begin{defn}
    We say that a set $\mathbb{F}$ is a field if it has two operations $+$, or addition (sometimes denoted as $\oplus$), and $*$, or multiplication, and it satisfies the following axioms:
    \begin{enumerate}
        \item $\mathbb{F}$ is an abelian group under $\oplus$ with identity element $0$,
        \item the multiplicative set $\mathbb{F}^{*}=\mathbb{F}/\{ 0 \}$ is an abelian group under $*$ with identity element $1$,
        \item and it satisfies the distributive law that $\forall a,b,c\in\mathbb{F}, (a\oplus b)*c = (a*c)\oplus(b*c)$. 
    \end{enumerate}
\end{defn}
\noindent Naturally, we can define a finite field to be one such field with a finite number of elements, say $q$. Then, since we exclude the additive identity, the multiplicative group $\mathbb{F}^{*}$ has $q-1$ elements. Therefore every element $\alpha\in \mathbb{F}^{*}$ satisfies the relation $\alpha^{q-1}=1$. Similarly, every element $\beta \in \mathbb{F}^{+}$ the additive group satisfies the relation $\beta^{q}=\beta+\cdots +\beta=\beta$. In either case, $\alpha$ behaves like a generator of the multiplicative or additive group, but we only consider the multiplicative group of a finite field in this paper. 

\subsection{The Multiplicative Group of a Finite Field is Cyclic}
\indent We denote the finite field of single-variable polynomials in $x$ as $\mathbb{F}[x]$. 
\begin{prop} \label{factorization of x^q-x}
\noindent Suppose that $\mathbb{F}$ is a finite field of order $q$. Then 
    \begin{equation*}
        x^{q}-x = \prod_{\alpha\in\mathbb{F}}(x-\alpha).
    \end{equation*}
\end{prop}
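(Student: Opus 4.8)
The plan is to prove the identity by showing that every element of $\mathbb{F}$ is a root of the polynomial $x^q - x$, and then using a degree-and-leading-coefficient argument to conclude that the product $\prod_{\alpha\in\mathbb{F}}(x-\alpha)$ exhausts all the roots. First I would observe that for $\alpha = 0$ we trivially have $0^q - 0 = 0$, so $0$ is a root. For the $q-1$ nonzero elements $\alpha \in \mathbb{F}^*$, I would invoke the fact (noted just before this proposition) that $\mathbb{F}^*$ is a group of order $q-1$, so by Lagrange's Theorem (Theorem \ref{Lagrange's Theorem}) the order of any $\alpha$ divides $q-1$, whence $\alpha^{q-1} = 1$ and therefore $\alpha^q - \alpha = \alpha(\alpha^{q-1} - 1) = 0$. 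Thus all $q$ elements of $\mathbb{F}$ are roots of $x^q - x$.

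Next I would use the standard fact that a polynomial of degree $n$ over a field has at most $n$ roots in that field, together with the factor theorem: since the distinct elements $\alpha \in \mathbb{F}$ are $q$ distinct roots of the degree-$q$ polynomial $x^q - x$, each linear factor $(x - \alpha)$ divides $x^q - x$, and since $\mathbb{F}[x]$ is a UFD (indeed a Euclidean domain) these pairwise non-associate linear factors multiply to a divisor $\prod_{\alpha\in\mathbb{F}}(x-\alpha)$ of $x^q - x$. This product has degree exactly $q$, matching the degree of $x^q - x$, so the quotient is a constant. Comparing leading coefficients — both polynomials are monic of degree $q$ — forces that constant to be $1$, giving
\begin{equation*}
    x^q - x = \prod_{\alpha\in\mathbb{F}}(x-\alpha).
\end{equation*}

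The only real subtlety, and the step I would be most careful about, is the claim that a degree-$n$ polynomial over a field has at most $n$ roots — equivalently that distinct roots give coprime (hence independently dividing) linear factors. This rests on $\mathbb{F}[x]$ being an integral domain (no zero divisors), which is exactly why we work over a field rather than a general commutative ring; I would state this as a known fact from ring theory, consistent with the paper's stated convention of importing basic facts about $k[x]$ as a Euclidean domain. Everything else — Lagrange's Theorem, the factor theorem, and the leading-coefficient comparison — is routine and already available in the text.
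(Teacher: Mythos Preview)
Your proof is correct and follows essentially the same approach as the paper: show every $\alpha\in\mathbb{F}$ is a root of $x^q-x$, then conclude by a degree/factorization argument. The paper's version is terser---it cites the pre-proposition remark that $\beta^q=\beta$ for all $\beta\in\mathbb{F}$ rather than invoking Lagrange explicitly, and it leaves the leading-coefficient comparison implicit---but the substance is the same.
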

\begin{proof}
    By the construction of $\mathbb{F}$, notice that every element $\alpha\in\mathbb{F}$ is a root of $x^{q}-x$ by the definition of $\alpha$ as a generator of $\mathbb{F}$. Since the polynomial on the \textit{RHS} runs through all $q$ elements of the additive group $\mathbb{F}$, its maximum degree must be $q$. Therefore the result follows from the factorization of the \textit{LHS}. 
\end{proof}
\noindent From this result we can prove the following about subfields. 
\begin{cor} \label{first corollary to first proposition in section about finite fields}
    Let $\mathbb{K}$ be a field with $\mathbb{F}\subset \mathbb{K}$ a subfield of $\mathbb{K}$. An element $\alpha\in\mathbb{K}$ is also contained within the subfield $\mathbb{F}$ if and only if $\alpha^{q}=\alpha$. 
\end{cor}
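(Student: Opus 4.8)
The plan is to prove both directions of the biconditional using Proposition \ref{factorization of x^q-x}. The forward direction is nearly immediate: if $\alpha \in \mathbb{F}$, then since $\mathbb{F}$ is a finite field of order $q$, every element of $\mathbb{F}$ satisfies $\alpha^q = \alpha$ (as noted in the discussion following the definition of a field, each $\alpha \in \mathbb{F}^*$ satisfies $\alpha^{q-1}=1$, hence $\alpha^q = \alpha$, and the identity $\alpha^q = \alpha$ trivially holds for $\alpha = 0$). Alternatively, one invokes Proposition \ref{factorization of x^q-x} directly: $\alpha$ is one of the factors on the right-hand side, so it is a root of $x^q - x$, i.e. $\alpha^q = \alpha$.

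For the reverse direction, suppose $\alpha \in \mathbb{K}$ satisfies $\alpha^q = \alpha$. Then $\alpha$ is a root of the polynomial $x^q - x \in \mathbb{F}[x] \subseteq \mathbb{K}[x]$. By Proposition \ref{factorization of x^q-x}, we have the factorization $x^q - x = \prod_{\beta \in \mathbb{F}}(x - \beta)$ inside $\mathbb{F}[x]$, and this factorization persists in $\mathbb{K}[x]$ since $\mathbb{F} \subseteq \mathbb{K}$. Substituting $x = \alpha$ gives $0 = \alpha^q - \alpha = \prod_{\beta \in \mathbb{F}}(\alpha - \beta)$. Since $\mathbb{K}$ is a field and hence an integral domain, a product of elements of $\mathbb{K}$ vanishes only if one of the factors vanishes; thus $\alpha - \beta = 0$ for some $\beta \in \mathbb{F}$, which means $\alpha = \beta \in \mathbb{F}$. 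This completes the argument.

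The main subtlety — though it is minor — is ensuring that the factorization from Proposition \ref{factorization of x^q-x}, originally an identity of polynomials over $\mathbb{F}$, may legitimately be read as an identity over $\mathbb{K}$ and then evaluated at an element $\alpha \in \mathbb{K}$; this is justified because the inclusion $\mathbb{F} \hookrightarrow \mathbb{K}$ induces a ring homomorphism $\mathbb{F}[x] \to \mathbb{K}[x]$ fixing $x$, and evaluation at $\alpha$ is a ring homomorphism $\mathbb{K}[x] \to \mathbb{K}$. The only genuine algebraic input beyond Proposition \ref{factorization of x^q-x} is that $\mathbb{K}$, being a field, has no zero divisors, which is what lets us pass from the vanishing of the product to the vanishing of a single factor.
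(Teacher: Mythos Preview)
Your proof is correct and follows the same approach as the paper, using Proposition~\ref{factorization of x^q-x} to identify the roots of $x^{q}-x$ in $\mathbb{K}$ with the elements of $\mathbb{F}$. Your version is in fact more careful than the paper's: you make explicit that the factorization lifts to $\mathbb{K}[x]$ and that the integral-domain property of $\mathbb{K}$ forces $\alpha$ to coincide with some $\beta\in\mathbb{F}$, whereas the paper leaves this step implicit.
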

\begin{proof}
    By our original construction, any root $\alpha\in\mathbb{F}$ of $x^{q}-x$ must satisfy the relation $\alpha^{q}=\alpha$. By Proposition \ref{factorization of x^q-x}, the roots of some polynomial $x^{q}-x$ are exactly the elements of $\mathbb{F}$ (by its construction from $\alpha\in\mathbb{F}$), so we have proven the forward direction. We now prove the backward direction. If $\alpha^{q}=\alpha$, then $\alpha$ must be a root of $x^{q}-x$ by our original construction. Since the condition for an element to be a root of $x^{q}-x$ is for it to be contained within $\mathbb{F}$, we have $\alpha\in\mathbb{F}$, which proves the result.
\end{proof}
\noindent In order to develop another necessary corollary toward our result about $\mathbb{F}^{*}$, we must establish a result for polynomials in a field. Let $k$ denote an arbitrary field.\footnote{While the notation $R[x]$ to denote the ring of single-variable polynomials for some field $R$ is often convention, we prefer the notation $k[x]$ to distinguish the fact $k$ is a field for which polynomials in $k[x]$ take coefficients.}
\begin{prop} \label{most number of roots for f in k[x]}
    Let $f(x)\in k[x]$. Suppose that $\deg(f(x))=n$. Then $f(x)$ has at most $n$ distinct roots. 
\end{prop}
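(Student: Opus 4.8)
The plan is to prove this by induction on $n = \deg(f(x))$, using the factor theorem, which in turn rests on polynomial division in $k[x]$. For the base case $n = 0$, the polynomial $f(x)$ is a nonzero constant and hence has no roots at all, so the bound holds vacuously.

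For the inductive step, I would assume the claim for all polynomials of degree $n-1$ and take $f(x) \in k[x]$ with $\deg(f(x)) = n \geq 1$. If $f$ has no root in $k$, there is nothing to prove, so suppose $a \in k$ satisfies $f(a) = 0$. Since $k$ is a field, the remark following the definition of a Euclidean domain tells us $k[x]$ is a Euclidean domain with norm given by the degree, so I may divide $f(x)$ by $x - a$: there exist $q(x), r(x) \in k[x]$ with $f(x) = (x-a)q(x) + r(x)$ where $r(x) = 0$ or $\deg(r(x)) < \deg(x - a) = 1$; in either case $r(x)$ is a constant $c \in k$. Evaluating at $x = a$ forces $c = f(a) = 0$, so $f(x) = (x-a)q(x)$ with $\deg(q(x)) = n - 1$.

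Next I would show that any root $b \in k$ of $f$ with $b \neq a$ must be a root of $q$: from $0 = f(b) = (b-a)q(b)$ and $b - a \neq 0$, the fact that $k$ (being a field, hence an integral domain) has no zero divisors gives $q(b) = 0$. Thus the distinct roots of $f$ are $a$ together with some distinct roots of $q$, and the inductive hypothesis bounds the latter by $n - 1$, so $f$ has at most $(n-1) + 1 = n$ distinct roots, completing the induction.

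The one place to tread carefully — the genuine content of the statement — is the passage from a root of the product $(x-a)q(x)$ to a root of the cofactor $q(x)$: this uses the absence of zero divisors, and is precisely where the hypothesis that $k$ is a field (rather than an arbitrary commutative ring) is indispensable, the conclusion being false in general otherwise. Everything else is routine bookkeeping with the division algorithm.
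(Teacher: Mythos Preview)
Your proposal is correct and follows essentially the same approach as the paper: induction on the degree, the division algorithm to extract a linear factor $(x-a)$ from a root, and the integral-domain property of $k$ to conclude that any other root must be a root of the cofactor. The only cosmetic differences are that the paper starts the induction at $n=1$ rather than $n=0$ and is less explicit about where the field hypothesis is used, but the substance is the same.
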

\begin{proof}
    We prove this by induction on the degree of $f(x)\in k[x]$. If $n=1$, then the assertion is clearly true, as a monic polynomial clearly has both a minimum and maximum equivalent number of roots, hence exactly 1 distinct root. Assume that the assertion is true for polynomials of degree $n-1$. This allows us to extend the assertion to degree $n$ later. 
    
    \indent To begin, if $f(x)$ has no roots in the field $k$, then clearly we are done as $f(x)$ therefore has $0$ roots. However, if $\alpha$ is a root, then by the division algorithm for polynomials, we can write $f(x)=q(x)(x-\alpha)+r$, where $r$ is some constant and $q(x)\in k[x]$. If we let $x=\alpha$ then $f(\alpha)=q(\alpha)(\alpha-\alpha)+r=r$. We assumed that $\alpha$ was a root of $f(x)$, so $r=0$. Therefore $q(x)|f(x)$ and $f(x)=q(x)(x-\alpha)$ and $\deg(q(x))=\deg(f(x))-1=n-1$. Let $\beta\neq\alpha$ be another such root of $f(x)$. Then $f(\beta)=0=(\beta-\alpha)q(\beta)$, which is only possible when $q(\beta)=0$. Therefore we have shown that $q(x)$ has at most $n-1$ distinct roots as we can repeat the process for distinct roots $\beta_{1}\neq\beta_{2}\neq\cdots\neq\beta_{n-1}$. Thus, since $q(x)$ has at most $n-1$ roots, $f(x)$ has at most $n$ roots, and we are done. 
\end{proof}
\noindent We now present one final corollary that relates polynomials to the polynomial $x^{q}-x$ and then prove that the multiplicative group of a finite field is cyclic.
\begin{cor} \label{number of roots for f(x) that divides x^q-x}
    If some polynomial $f(x)|x^{q}-x$, then $f(x)$ has exactly $d$ distinct roots, where $\deg(f(x))=d$. 
\end{cor}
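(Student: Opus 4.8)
The plan is to leverage the factorization from Proposition~\ref{factorization of x^q-x} together with the degree bound from Proposition~\ref{most number of roots for f in k[x]}, working in the field $k = \mathbb{F}$ of order $q$. Since $f(x) \mid x^q - x$, I would write $x^q - x = f(x)g(x)$ for some $g(x) \in \mathbb{F}[x]$, and record that $\deg(g(x)) = q - d$ by additivity of degrees.

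The key observation is that, by Proposition~\ref{factorization of x^q-x}, we have $x^q - x = \prod_{\alpha \in \mathbb{F}}(x - \alpha)$, so $x^q - x$ has exactly $q$ \emph{distinct} roots, namely all of $\mathbb{F}$. For each such $\alpha$, the equation $f(\alpha)g(\alpha) = 0$ forces $f(\alpha) = 0$ or $g(\alpha) = 0$, since $\mathbb{F}$ has no zero divisors. Hence if $R_f$ and $R_g$ denote the sets of distinct roots of $f$ and $g$ respectively, then $R_f \cup R_g \supseteq \mathbb{F}$, so $|R_f| + |R_g| \geq |R_f \cup R_g| \geq q$.

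Now I would invoke Proposition~\ref{most number of roots for f in k[x]} twice: $|R_f| \leq d$ and $|R_g| \leq q - d$. Combining, $|R_f| \geq q - |R_g| \geq q - (q - d) = d$, and together with $|R_f| \leq d$ this gives $|R_f| = d$ exactly, which is the claim. The argument is short, and I do not anticipate a real obstacle; the only point requiring a little care is ensuring that $x^q - x$ is genuinely squarefree (its roots are distinct), which is exactly what Proposition~\ref{factorization of x^q-x} delivers, and confirming $\deg(g) = q - d$ so that the two bounds add up to precisely $q$ and pin down $|R_f|$.
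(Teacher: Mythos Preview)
Your proof is correct and follows essentially the same approach as the paper: write $x^{q}-x=f(x)g(x)$ with $\deg g=q-d$, use Proposition~\ref{factorization of x^q-x} to see that $x^{q}-x$ has $q$ distinct roots, and then apply Proposition~\ref{most number of roots for f in k[x]} to both factors so that the inequalities force $f$ to have exactly $d$ roots. The only cosmetic difference is that the paper phrases this as a contradiction (assuming $f$ has fewer than $d$ roots) whereas you argue directly via $|R_f|+|R_g|\geq q$; the substance is the same.
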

\begin{proof}
    Let the product $f(x)g(x)=x^{q}-x$, where $g(x)=(x^{q}-x)/f(x)$ has the property $\deg(g(x))=q-d$. Assume that $f(x)$ has fewer than $d$ distinct roots. Then by Proposition \ref{most number of roots for f in k[x]}, the product $f(x)g(x)$ would have fewer than $d+(q-d)=q$ roots, noninclusive. However, by the definition of the product, the product has at most $q$ roots, contradicting our assumption. Therefore $f(x)$ has $d$ distinct roots. 
\end{proof}

\noindent The final proof of this section requires that we are familiar with some basic facts about cyclic groups. The following lemma can be proven using the Fundamental Theorem of Cyclic Groups and other results in elementary group theory. 

\begin{lemma}[Gallian] \label{result for cyclic groups to prove Theorem 0.3}
    If $d$ is a positive divisor of $n$, the number of elements of order $d$ in a cyclic group of order $n$ is $\phi(d)$.\footnote{\noindent Recall that the \textit{Euler totient function} $\phi(n)$ returns the number of integers coprime to $n$, i.e. $\phi(n)=|\{ a|\gcd(a,n)=1 \}|$.}
\end{lemma}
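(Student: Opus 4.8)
The plan is to reduce the count to the number of generators of a cyclic group of order $d$, using the Fundamental Theorem of Cyclic Groups. Write $G=\langle g\rangle$ for the cyclic group of order $n$, and fix a positive divisor $d\mid n$.

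First I would invoke the Fundamental Theorem of Cyclic Groups, which gives that $G$ has \emph{exactly one} subgroup of order $d$; explicitly this subgroup is $H=\langle g^{n/d}\rangle$. The key observation is then that every element $x\in G$ of order $d$ must lie in $H$: the subgroup $\langle x\rangle$ it generates is cyclic of order $d$, hence equals $H$ by uniqueness. Conversely, an element of $H$ has order $d$ precisely when it generates $H$. Therefore the number of elements of order $d$ in $G$ equals the number of generators of the cyclic group $H$ of order $d$, and the problem is now purely internal to $H$.

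Next I would count the generators of $H=\langle h\rangle$, where $h=g^{n/d}$ has order $d$. Here one uses the standard fact that $h^{k}$ has order $d/\gcd(k,d)$, so $h^{k}$ generates $H$ if and only if $\gcd(k,d)=1$. Running $k$ over $0\le k<d$, the number of such $k$ is exactly $\phi(d)$ by the definition of the Euler totient function, which completes the proof.

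The argument has no serious obstacle; the only point needing care is the claim that all elements of order $d$ are confined to a single subgroup, which rests entirely on the uniqueness clause of the Fundamental Theorem of Cyclic Groups together with the elementary formula for the order of a power of a group element. As an alternative, one could sidestep uniqueness by a counting argument: since the element orders partition $G$ and $\sum_{d\mid n}\phi(d)=n=|G|$, it would suffice to show that the number of elements of order $d$ never exceeds $\phi(d)$, after which equality is forced for every $d\mid n$ simultaneously; but the subgroup-based argument above is cleaner and matches the hint accompanying the statement.
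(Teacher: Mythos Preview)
Your proof is correct and is precisely the approach the paper has in mind: the paper does not give its own argument but defers to Chapter~4 of Gallian and explicitly remarks just before the lemma that it ``can be proven using the Fundamental Theorem of Cyclic Groups and other results in elementary group theory,'' which is exactly what you carry out. The alternative counting route via $\sum_{d\mid n}\phi(d)=n$ that you mention is also standard, but the subgroup-uniqueness argument matches the paper's hint.
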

\begin{proof}
    The proof may be found in Chapter 4 of \cite{Gallian_2002}. 
\end{proof}

\noindent We now proceed with the final proof.

\begin{thm} \label{multiplicative group of finite field is cyclic}
    The multiplicative group of a finite field is cyclic. 
\end{thm}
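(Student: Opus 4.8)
The plan is to run the classical counting argument. For each positive divisor $d$ of $q-1$, let $\psi(d)$ denote the number of elements of $\mathbb{F}^{*}$ of order exactly $d$; the goal is to show $\psi(q-1)\neq 0$, since an element of order $q-1$ generates the group of order $q-1$. First I would observe that, by Theorem \ref{Lagrange's Theorem}, every element of $\mathbb{F}^{*}$ has order dividing $|\mathbb{F}^{*}|=q-1$. Since each element has a well-defined order, grouping elements by their order yields $\sum_{d\mid q-1}\psi(d)=q-1$.

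The crux is the dichotomy $\psi(d)\in\{0,\phi(d)\}$ for every $d\mid q-1$. Suppose $\psi(d)>0$, so some $a\in\mathbb{F}^{*}$ has order $d$; then the cyclic subgroup $\langle a\rangle$ has exactly $d$ elements, every one of which is a root of $x^{d}-1$. Since $d\mid q-1$ we have $x^{d}-1\mid x^{q-1}-1$, and $x^{q-1}-1\mid x^{q}-x$ because $x^{q}-x=x(x^{q-1}-1)$; hence $x^{d}-1\mid x^{q}-x$, and Corollary \ref{number of roots for f(x) that divides x^q-x} gives that $x^{d}-1$ has \emph{exactly} $d$ distinct roots in $\mathbb{F}$. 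These $d$ roots are therefore precisely the elements of $\langle a\rangle$. In particular every element of $\mathbb{F}^{*}$ whose order divides $d$ already lies in $\langle a\rangle$, so the elements of order exactly $d$ are exactly the generators of $\langle a\rangle$, of which there are $\phi(d)$ by Lemma \ref{result for cyclic groups to prove Theorem 0.3}. Thus $\psi(d)=\phi(d)$.

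To conclude, I would use the identity $\sum_{d\mid n}\phi(d)=n$, which itself follows from Lemma \ref{result for cyclic groups to prove Theorem 0.3} applied to the cyclic group $\mathbb{Z}/n\mathbb{Z}$. Since $\psi(d)\leq\phi(d)$ for all $d\mid q-1$ by the previous paragraph, we get $q-1=\sum_{d\mid q-1}\psi(d)\leq\sum_{d\mid q-1}\phi(d)=q-1$, and equality of the totals forces equality term by term: $\psi(d)=\phi(d)$ for every $d\mid q-1$. Taking $d=q-1$ gives $\psi(q-1)=\phi(q-1)\geq 1$, so $\mathbb{F}^{*}$ contains an element of order $q-1$ and is cyclic.

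I expect the main obstacle to be the middle step, i.e.\ the dichotomy $\psi(d)\in\{0,\phi(d)\}$: this is the only place where the field structure is genuinely used, through the sharp root count of Corollary \ref{number of roots for f(x) that divides x^q-x}, and one must argue carefully that "exactly $d$ roots of $x^{d}-1$" pins down the \emph{set} of elements of order dividing $d$ as $\langle a\rangle$, rather than merely bounding its cardinality. The remaining steps are formal group theory together with the elementary totient identity.
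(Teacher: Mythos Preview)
Your proof is correct, but it takes a different route from the paper's. Both arguments establish $\psi(d)=\phi(d)$ for every $d\mid q-1$ and then read off a generator from $\psi(q-1)=\phi(q-1)\geq 1$, and both rely on Corollary \ref{number of roots for f(x) that divides x^q-x} for the sharp root count of $x^{d}-1$. The difference is in how that root count is leveraged. The paper observes that the set $\{x\in\mathbb{F}^{*}:x^{d}=1\}$ has exactly $d$ elements, deduces the summatory identity $\sum_{c\mid d}\psi(c)=d$ for every $d\mid q-1$, and then applies M\"obius inversion (Theorem \ref{Theorem: Mobius Inversion}) together with the formula $\phi(d)=\sum_{c\mid d}\mu(c)\,d/c$ to obtain $\psi(d)=\phi(d)$ directly. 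You instead prove the dichotomy $\psi(d)\in\{0,\phi(d)\}$ and then squeeze with the totient identity $\sum_{d\mid q-1}\phi(d)=q-1$ against $\sum_{d\mid q-1}\psi(d)=q-1$. Your approach is a bit more elementary in that it sidesteps M\"obius inversion entirely; the paper's approach, on the other hand, gives $\psi(d)=\phi(d)$ in one stroke without the intermediate inequality and dichotomy, and fits the paper's broader theme of using M\"obius inversion as a tool.
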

\begin{proof}
    In order to prove that the multiplicative group of a finite field is cyclic, we must prove several important properties about its generator. Note first that some multiplicative group of a finite field $\mathbb{F}^{*}$ must have order $q-1$ in this proof. First, if there exists some subgroup with order $d|q-1$, then $x^{d}-1|x^{q-1}-1$ because we are using $\mathbb{F}[x]$ and degrees of polynomials in $\mathbb{F}[x]$ as an analogue for considering orders of the multiplicative group $\mathbb{F}^{*}$ and its subgroups. By Corollary \ref{number of roots for f(x) that divides x^q-x}, we know that since $x^{q-1}-1$ and $x^{q}-x$ are equivalent forms (this is true because Corollary \ref{number of roots for f(x) that divides x^q-x} also makes an assertion about divisibility in $\mathbb{F}^{*}$), we therefore can say that $x^{d}-1$ has $d$ distinct roots. Therefore the subgroup of $\mathbb{F}^{*}$ with elements satisfying the relation $x^{d}-1=0$ or $x^{d}=1$ has order $d$.

    \indent Let $\psi(d)$ be the number of elements in $\mathbb{F}^{*}$ of order $d$. Recall that the order of an element is the number of times an operation must be applied in order to return to itself; in this case, we are defining the arithmetic function $\psi(d)$ to be the order of the subgroup of $\mathbb{F}^{*}$ containing elements with order exactly $d$. Then we can make the assertion that for every $c$ that divides this order $d$, there exists some summatory function that takes in all such $c$ and outputs this exact order $d$. We can express this as 
    \begin{equation} \label{summatory function for psi(d) in multiplicative group of finite field is cyclic proof}
        \sum_{c|d}\psi(c)=d.
    \end{equation}
    \noindent For example, let there exist some set of divisors $c_{1},c_{2},\ldots,c_{n}$ such that $c_{1},c_{2},\ldots,c_{n}|d$. Then in \ref{summatory function for psi(d) in multiplicative group of finite field is cyclic proof} we have
    \begin{equation*}
        \sum_{c|d}\psi(c) = \psi(c_{1})+\psi(c_{2})+\cdots + \psi(c_{n}) = d. 
    \end{equation*}
    \noindent Therefore, by Möbius inversion, we can write
    \begin{equation*}
        \psi(d) = \sum_{c|d}\mu(c)F\bigg(\frac{d}{c}\bigg).
    \end{equation*}
    \noindent Notice that the summatory function $F$ no longer applies here. This is because for each divisor $c_{i}$ of $d$, the arithmetic function $\psi(d)$ simply counts the number of $c_{i}$, adding $1$ each time to its value. Therefore, we can write
    \begin{equation*}
        \sum_{c|d}\mu(c)F\bigg(\frac{d}{c}\bigg) = \sum_{c|d}\mu(c)\frac{d}{c}.
    \end{equation*}
    \noindent Notice too, however, that this is simply the number of divisors coprime to $d$, or $\phi(d)$. We can remove $\mu(c)$ because it takes in divisors of $d$, which will always evaluate to $1$ if there are $k\equiv 0 \pmod{2}$ distinct prime factors, and $0$ if there are $k\equiv 1\pmod{2}$ distinct prime factors. In either case, we have $\psi(d)=\phi(d)$, or the number of elements in $\mathbb{F}^{*}$ of order $d$ is equivalent to the number of divisors coprime to $d$. By Lemma \ref{result for cyclic groups to prove Theorem 0.3}, $\mathbb{F}^{*}$ is cyclic. 
\end{proof}

\subsection{$n$th Power Residues and a Connection to Finite Fields}
\noindent In this section, we focus on proving the following important result. Let $n\in\mathbb{Z}^{+}$ and let $|\mathbb{F}|=q$ so $|\mathbb{F}^{*}|=q-1$. 
\begin{thm} \label{last result about F*}
    Let $\alpha\in\mathbb{F}^{*}$. Then the equation $x^{n}=\alpha$ has solutions if and only if $\alpha^{(q-1)/d}=1$, where $d=\gcd(n,q-1)$. If there indeed are solutions, then there are exactly $d$ solutions. 
\end{thm}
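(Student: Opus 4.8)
The plan is to exploit Theorem \ref{multiplicative group of finite field is cyclic}: fix a generator $g$ of the cyclic group $\mathbb{F}^{*}$, so every element can be written as a power of $g$. Write $\alpha = g^{a}$ for some integer $a$ (well-defined modulo $q-1$), and look for solutions $x = g^{y}$. Then the equation $x^{n} = \alpha$ becomes $g^{ny} = g^{a}$, which holds if and only if $ny \equiv a \pmod{q-1}$. So the whole problem is reduced to a linear congruence in one variable, and the statement becomes a purely elementary number-theoretic fact.

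First I would recall the standard criterion for linear congruences: $ny \equiv a \pmod{q-1}$ has a solution if and only if $d \mid a$, where $d = \gcd(n, q-1)$, and in that case it has exactly $d$ solutions modulo $q-1$. Since $x = g^{y}$ ranges bijectively over $\mathbb{F}^{*}$ as $y$ ranges over residues mod $q-1$, these $d$ solutions in $y$ correspond to exactly $d$ distinct solutions $x \in \mathbb{F}^{*}$. So it remains only to translate the divisibility condition $d \mid a$ into the stated condition $\alpha^{(q-1)/d} = 1$.

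For that translation: $\alpha^{(q-1)/d} = g^{a(q-1)/d}$, and since $g$ has order $q-1$, this equals $1$ precisely when $q-1 \mid a(q-1)/d$, i.e. when $d \mid a$. This is exactly the solvability condition from the previous paragraph, so the two formulations agree, and combining the pieces gives both the ``if and only if'' and the exact count of $d$ solutions. Alternatively, one could phrase the forward direction (solutions exist $\implies \alpha^{(q-1)/d}=1$) directly: if $x^{n}=\alpha$ then $\alpha^{(q-1)/d} = x^{n(q-1)/d} = (x^{q-1})^{n/d} = 1$ using that $n/d$ is an integer and $x^{q-1}=1$ for all $x \in \mathbb{F}^{*}$; this avoids invoking the linear-congruence count for that half.

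The only genuine obstacle is the exact count of solutions rather than mere existence. Existence is the easy half; pinning down that there are \emph{exactly} $d$ solutions requires knowing that $\#\{y \bmod q-1 : ny \equiv a\} = d$ when solvable, which is the standard linear-congruence lemma, together with the observation that $y \mapsto g^{y}$ is a bijection from $\mathbb{Z}/(q-1)\mathbb{Z}$ to $\mathbb{F}^{*}$. If one prefers to avoid the congruence-counting lemma, an alternative is to count roots of $x^{n} - \alpha$ directly: once one solution $x_{0}$ is known, every solution has the form $x_{0}\zeta$ where $\zeta^{n}=1$, and $\zeta^{n}=1$ has exactly $\gcd(n,q-1)=d$ solutions in $\mathbb{F}^{*}$ because the $n$-torsion of a cyclic group of order $q-1$ has size $d$ (this can be seen from Proposition \ref{most number of roots for f in k[x]} giving ``at most $d$'' together with the cyclic structure giving ``at least $d$''). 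I would present the generator-and-linear-congruence argument as the main line, since it handles both claims uniformly.
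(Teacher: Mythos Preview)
Your proposal is correct and follows essentially the same approach as the paper: pick a generator of $\mathbb{F}^{*}$, reduce $x^{n}=\alpha$ to the linear congruence $ny\equiv a\pmod{q-1}$, invoke the standard solvability/counting criterion (the paper's Proposition \ref{solutions of linear congruence ax=b (modulo m)}), and translate $d\mid a$ into $\alpha^{(q-1)/d}=1$. Your extra remarks (the direct verification of the forward implication and the $n$-torsion alternative for the count) are nice supplements but not needed to match the paper.
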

\noindent In order to understand the proof of the prior result in the context of finite fields, we begin by proving a proposition for which the prior result is a generalization. Recall that for $m,n\in\mathbb{Z}^{+}$ and $a\in\mathbb{Z}$ with $\gcd(a,m)=1$, we define $a$ to be an \textit{$n$th power residue modulo $m$} if $x^{n}\equiv 1 \pmod{m}$ is solvable and $a$ is a solution.
\begin{prop} \label{analogue to last result about F*}
    If $m\in\mathbb{Z}^{+}$ has primitive roots and $\gcd(a,m)=1$, then $a$ is an $n$th power residue modulo $m$ if and only if $a^{\phi(m)/d}\equiv 1 \pmod{m}$, where $d=\gcd(n,\phi(m))$. 
\end{prop}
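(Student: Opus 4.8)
The plan is to exploit the fact that, since $m$ has primitive roots, the group $(\ZZ/m\ZZ)^{*}$ is cyclic of order $\phi(m)$; fix a primitive root $g$, so every unit $a$ can be written uniquely as $a\equiv g^{k}\pmod{m}$ for some $k$ with $0\le k<\phi(m)$. Under this identification the multiplicative congruence $x^{n}\equiv a\pmod{m}$ becomes the additive congruence $n y\equiv k\pmod{\phi(m)}$ in the exponent, where $x\equiv g^{y}$. First I would record this translation carefully, since it turns the whole question into elementary linear congruence theory.

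Next I would invoke the standard criterion for solvability of a linear congruence: $ny\equiv k\pmod{\phi(m)}$ has a solution if and only if $d\mid k$, where $d=\gcd(n,\phi(m))$. So the task reduces to showing that $d\mid k$ is equivalent to $a^{\phi(m)/d}\equiv 1\pmod{m}$. For this I would compute, using $a\equiv g^{k}$,
\begin{equation*}
    a^{\phi(m)/d}\equiv g^{k\phi(m)/d}\pmod{m},
\end{equation*}
and observe that since $g$ has order exactly $\phi(m)$, this is $\equiv 1$ precisely when $\phi(m)\mid k\phi(m)/d$, i.e.\ when $d\mid k$. That closes the equivalence. (If one wants the $n=1$ edge case or the $a=1$ case to be visibly consistent, both fall out immediately: $d=\gcd(1,\phi(m))=1$ divides every $k$.)

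Finally, although the proposition as stated asks only for the solvability criterion, it would be natural to also note the count of solutions, paralleling Theorem 3.7: when a solution exists, the linear congruence $ny\equiv k\pmod{\phi(m)}$ has exactly $d$ incongruent solutions mod $\phi(m)$, hence $x^{n}\equiv a\pmod m$ has exactly $d$ incongruent solutions mod $m$. The main obstacle is not conceptual depth but bookkeeping: one must be scrupulous that the bijection $a\leftrightarrow k$ between units mod $m$ and residues mod $\phi(m)$ is an isomorphism of the relevant structures (multiplicative to additive), and that ``solution mod $\phi(m)$ in $y$'' corresponds bijectively to ``solution mod $m$ in $x$'' — this uses precisely that $g$ is a primitive root, so nothing is lost or double-counted. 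Once the exponent dictionary is set up correctly, the rest is the classical linear-congruence argument.
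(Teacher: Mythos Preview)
Your proposal is correct and follows essentially the same route as the paper: write $a$ as a power of a fixed primitive root $g$, convert $x^{n}\equiv a\pmod m$ into the linear congruence $ny\equiv k\pmod{\phi(m)}$, invoke the solvability criterion $d\mid k$, and then check that $d\mid k$ is equivalent to $a^{\phi(m)/d}\equiv 1\pmod m$ by computing $g^{k\phi(m)/d}$. Your write-up is in fact a bit tidier than the paper's, and your added remark on the number of solutions matches what the paper uses immediately afterward.
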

\noindent In order to prove this result, we must recall some facts in elementary number theory. Recall that the set of residue classes modulo $m$ is denoted by $\mathbb{Z}/m\mathbb{Z}$. This is in fact a ring, but we will not prove it here. The set of all representatives for the residue classes of $\ZZ/m\ZZ$ is the complete set of residues modulo $m$. 

\indent Perhaps the most elementary study of congruences can be summarized in linear congruences of the form $ax\equiv b \pmod{m}$. In particular, we are interested in determining the solvability of linear congruences of this form. For some interesting perspective, the number of solutions to a linear congruence is the value of $n$ in an $n$-tuple $(a_{1},\ldots,a_{n})$ such that $f(a_{1},a_{2},\ldots,a_{n})\equiv 0\pmod{m}$ for a linear congruence $f(x_{1},x_{2},\ldots,x_{n})\equiv 0\pmod{m}$ in $n$ variables. Uniqueness of such an $n$-tuple is assumed, so that if there exists some $n$-tuple $(b_1,\ldots,b_n)$ that also satisfies the polynomial equation, it must be the exact same $n$-tuple. 

\indent As we move forward, we will begin by looking at linear congruences in one variable. 
\begin{prop} \label{solutions of linear congruence ax=b (modulo m)}
    Let $d=\gcd(a,m)$. The linear congruence $ax\equiv b \pmod{m}$ has solutions if and only if $d|b$. Moreover, if $d|b$, then there are exactly $d$ solutions. Furthermore, if $x_{0}$ is a solution, then all other solutions can be written in the form $x_{0}+m',\ldots,x_{0}+(d-1)m'$.
\end{prop}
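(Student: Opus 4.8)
The plan is to prove the classical criterion for solvability of the linear congruence $ax\equiv b\pmod m$ by translating it into a statement about integer solutions of a linear Diophantine equation and then counting. The congruence $ax\equiv b\pmod m$ is equivalent to the existence of $y\in\ZZ$ with $ax-my=b$, i.e. $ax+my'=b$ for $y'=-y$. By B\'ezout's Lemma, the set of integer values attained by $ax+my'$ as $x,y'$ range over $\ZZ$ is exactly $d\ZZ$, where $d=\gcd(a,m)$. Hence a solution exists if and only if $d\mid b$, which gives the first assertion.

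For the counting, I would first reduce to the case $d\mid b$ and then pass to the \emph{primitive} equation: write $a=da'$, $m=dm'$, $b=db'$, so that $\gcd(a',m')=1$ and the congruence becomes $a'x\equiv b'\pmod{m'}$. Since $\gcd(a',m')=1$, the element $a'$ is a unit modulo $m'$, so this reduced congruence has a unique solution $x\equiv x_0\pmod{m'}$ for some $x_0$. The point is then to lift this single residue class modulo $m'$ back to residue classes modulo $m=dm'$: the integers congruent to $x_0$ modulo $m'$, when read modulo $m$, split into exactly the $d$ classes represented by $x_0, x_0+m', x_0+2m',\dots,x_0+(d-1)m'$. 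I would check that these $d$ values are (i) all solutions of the original congruence — because adding a multiple of $m'$ changes $ax=da'x$ by a multiple of $da'm'=a m'\equiv 0$... more carefully, by a multiple of $dm'\cdot a' $, wait — the cleanest check is that $a(x_0+km')=ax_0+akm'$ and $akm' = a'k\,(dm')=a'km\equiv 0\pmod m$, so each is indeed a solution; and (ii) pairwise incongruent modulo $m$ — because $x_0+im'\equiv x_0+jm'\pmod m$ forces $m\mid (i-j)m'$, i.e. $d\mid i-j$, impossible for $0\le i,j\le d-1$ distinct; and (iii) exhaustive — any solution $x$ satisfies $a'x\equiv b'\pmod{m'}$ hence $x\equiv x_0\pmod{m'}$, so $x=x_0+km'$ for some $k$, and reducing $k$ modulo $d$ puts $x$ in one of the listed classes. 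This gives exactly $d$ solutions of the stated form.

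The only genuine subtlety, and the place to be careful, is the bookkeeping in step (ii)–(iii): one must keep straight the two moduli $m$ and $m'$ and verify that the reduction map from residues mod $m'$ to residues mod $m$ has fibers of size exactly $d$ over the relevant class. Everything else is a direct application of B\'ezout's Lemma and the fact that $\gcd(a',m')=1$ makes $a'$ invertible in $\ZZ/m'\ZZ$. I would also remark that this proposition is precisely the $m$-analogue of the finite-field counting statement in Theorem~\ref{last result about F*} (solvability governed by a gcd condition, with the number of solutions equal to that gcd), which motivates stating it here; the proof technique — reduce to a coprime/unit situation, solve uniquely there, then count lifts — is the same one that will reappear in the finite-field setting with the cyclic structure of $\mathbb{F}^*$ from Theorem~\ref{multiplicative group of finite field is cyclic} playing the role that primitive roots play here.
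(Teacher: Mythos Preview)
Your proposal is correct and follows essentially the same approach as the paper: both arguments use B\'ezout's Lemma for the solvability criterion, introduce the reduced data $a'=a/d$, $m'=m/d$ with $\gcd(a',m')=1$, and then show that all solutions lie in a single residue class modulo $m'$ which splits into exactly $d$ classes modulo $m$ via the representatives $x_0,x_0+m',\dots,x_0+(d-1)m'$. The only cosmetic difference is that you phrase the counting step as ``solve uniquely mod $m'$, then lift,'' whereas the paper argues directly that any two solutions of the original congruence differ by a multiple of $m'$; these are the same computation.
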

\begin{proof}
    \noindent To preface the proof, we need the following remark. We let $d=\gcd(a,m)$. We also set $a'=a/d$ and $m'=m/d$. Then $\gcd(a',m')=1$ since $a'$ and $m'$ are in their ultimately reduced forms. 
    
    \indent We prove the biconditional first. We begin with the forward direction. Let $x_{0}$ be a solution to the linear congruence. Then it satisfies the relation $ax_{0}-b=my_{0}$ for some $y_{0}\in\mathbb{Z}$. Then by the opening remark in this proof, we have $b=ax_{0}-my_{0}=da'x_{0}-dm'y_{0}=d(a'x_{0}-m'y_{0})$; thus $d|b$. We now prove the backward direction. Suppose that $d|b$. By Bézout's Lemma, there must exist integers $x_{0}'$ and $y_{0}'$ such that $ax_{0}'-my_{0}'=d$. If we let some $c=b/d$ and multiply both sides by $c$, we obtain $a(x_{0}'c)-m(y_{0}'c)=b$. Letting $x_0=cx_{0}'$ and $y_{0}=cy_{0}'$, we can write $ax_{0}-b=m(y_{0})$ or $ax_{0}-my_{0}=b$, which gives the linear congruence a solution. 

    \indent We now prove that there are exactly $d$ solutions to the congruence $ax\equiv b \pmod{m}$. Suppose that both $x_{0}$ and $x_{1}$ are solutions such that $ax_{0}\equiv b\pmod{m}$ and $ax_{1}\equiv b\pmod{m}$. This implies that $a(x_{1}-x_{0})\equiv 0\pmod{m}$. Therefore, for two distinct pairs $a,m$ and $a',m'$ we have $m|a(x_{1}-x_{0})$ and $m'|a'(x_{1}-x_{0})$ respectively. The second statement implies that $m'|x_{1}-x_{0}$ by our opening remark; in other words, for some $k\in\ZZ$ we have $x_{1}=x_{0}+km'$. As we vary the value of $k$ from $0$ to $d-1$, we see that there are incongruent solutions in the form $x_{0},x_{0}+m',\ldots,x_{0}+(d-1)m'$. Suppose that another solution to $ax\equiv b\pmod{m}$ is $x_{1}=x_{0}+km'$. By the division algorithm, there exist $r,s\in\ZZ$ such that $k=rd+s$ where $0\leq s<d$. Substituting, this gives $x_{1}=x_{0}+(rd+s)m'=x_{0}+sm'+rm$. Since $x_{1}=x_{0}+km'$, we must have $r=0$, so $k=s$. Since $s$ runs from $0$ to $d-1$, there are thus exactly $d$ solutions. 
\end{proof}
\noindent This establishes the solvability of linear congruences. The equation $ax\equiv b\pmod{m}$ is equivalent to writing the equivalence relation $[a]x=[b]$ in the ring $\mathbb{Z}/m\mathbb{Z}$. By Proposition \ref{solutions of linear congruence ax=b (modulo m)}, the congruence has solutions if and only if $d|b=1$, which is equivalent to when $\gcd(a,m)=1$\footnote{We say that the residue class $[a]$ is a \textit{unit} if and only if it satisfies $[a]x=1$, or if $ax\equiv 1 \pmod{m}$ has solutions.}. Thus, $[a]$ is a unit if and only if $\gcd(a,m)=1$. A special fact about $\mathbb{Z}/m\mathbb{Z}$ is that there are exactly $\phi(m)$ such units. If we let $m=p$ be prime, then all residue classes in $\mathbb{Z}/p\mathbb{Z}$ are units, and we can prove that, in both the multiplicative and additive cases, $\ZZ /p\ZZ$ is a field. 

\indent With this result, Euler's Theorem can be proven using the elementary fact that a residue class is a unit if, when multiplied with another residue class, yields the $[1]$ residue class modulo $m$. 
\begin{thm}[Euler's Theorem] \label{Euler's Theorem}
    If $\gcd(a,m)=1$, then
    \begin{equation*}
        a^{\phi(m)}\equiv 1 \pmod{m}.
    \end{equation*}
\end{thm}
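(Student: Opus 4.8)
The plan is to deduce Euler's Theorem from Lagrange's Theorem (Theorem~\ref{Lagrange's Theorem}) applied to the multiplicative group of units modulo $m$. First I would assemble the group-theoretic setup: the residue classes $[b]$ with $\gcd(b,m)=1$ form a group under multiplication in $\mathbb{Z}/m\mathbb{Z}$ --- closure holds because a product of units is a unit, associativity is inherited from the ring, $[1]$ is the identity, and inverses exist precisely because, as established just before the statement, $[b]x=1$ is solvable exactly when $\gcd(b,m)=1$. Call this group $(\mathbb{Z}/m\mathbb{Z})^{*}$; by the fact recorded above it has order $\phi(m)$.

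Next, since $\gcd(a,m)=1$, the class $[a]$ lies in $(\mathbb{Z}/m\mathbb{Z})^{*}$, so the cyclic subgroup $\langle [a]\rangle$ it generates is a subgroup of a finite group of order $\phi(m)$. By Lagrange's Theorem the order of this subgroup --- equivalently, the multiplicative order of $[a]$, say $k$ --- divides $\phi(m)$. Writing $\phi(m)=k\ell$, we get $[a]^{\phi(m)}=([a]^{k})^{\ell}=[1]^{\ell}=[1]$, which is precisely the congruence $a^{\phi(m)}\equiv 1\pmod{m}$.

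An alternative, entirely elementary route avoids invoking Lagrange: let $[r_{1}],\ldots,[r_{\phi(m)}]$ enumerate the units modulo $m$. Multiplication by $[a]$ is injective on this set (if $[ar_{i}]=[ar_{j}]$, cancel the unit $[a]$), hence a permutation of it, so $\prod_{i}[ar_{i}]=\prod_{i}[r_{i}]$, i.e.\ $[a]^{\phi(m)}\prod_{i}[r_{i}]=\prod_{i}[r_{i}]$; cancelling the unit $\prod_{i}[r_{i}]$ yields $[a]^{\phi(m)}=[1]$. I would present the Lagrange argument as the main proof since that machinery is already in hand. There is no real obstacle here: the only point requiring care is citing the already-established facts that the units are exactly the classes coprime to $m$ and that there are $\phi(m)$ of them, so that the order of the unit group is correctly identified as $\phi(m)$ before Lagrange is applied.
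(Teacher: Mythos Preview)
Your proposal is correct. The paper does not actually supply a formal proof of Euler's Theorem; it only remarks that the result ``can be proven using the elementary fact that a residue class is a unit if, when multiplied with another residue class, yields the $[1]$ residue class modulo $m$,'' after having recorded that the units of $\mathbb{Z}/m\mathbb{Z}$ are exactly the classes coprime to $m$ and that there are $\phi(m)$ of them. Both of your arguments fill this in correctly: the Lagrange route uses Theorem~\ref{Lagrange's Theorem}, which the paper has already stated, while your alternative permutation-of-units argument is the more ``elementary'' one the paper's phrasing seems to gesture toward. Either is a faithful completion of what the paper leaves implicit.
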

\noindent An immediate corollary for prime $m$ is Fermat's Little Theorem, which is used widely in elementary number theory. 
\begin{cor}[Fermat's Little Theorem] \label{Fermat's Little Theorem}
    If $p$ prime and $p\nmid a$, then 
    \begin{equation*}
        a^{p-1}\equiv 1 \pmod{p}.
    \end{equation*}
\end{cor}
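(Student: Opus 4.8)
The plan is to derive this statement as a direct specialization of Euler's Theorem (Theorem \ref{Euler's Theorem}) to the case of a prime modulus. Two small observations bridge the gap. First, I would note that if $p$ is prime, then every integer in the range $1, 2, \ldots, p-1$ is coprime to $p$, since the only positive divisors of $p$ are $1$ and $p$ itself; hence $\phi(p) = p - 1$. Second, the hypothesis $p \nmid a$ is equivalent to $\gcd(a,p) = 1$: if $\gcd(a,p) \neq 1$, then this common divisor, being a divisor of the prime $p$, must equal $p$, so $p \mid a$; conversely if $p \mid a$ then $\gcd(a,p) = p \neq 1$.

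With these in hand, I would simply invoke Theorem \ref{Euler's Theorem} with $m = p$. Since $p \nmid a$ gives $\gcd(a,p) = 1$, Euler's Theorem yields
\begin{equation*}
    a^{\phi(p)} \equiv 1 \pmod{p},
\end{equation*}
and substituting $\phi(p) = p - 1$ gives $a^{p-1} \equiv 1 \pmod{p}$, which is exactly the claim.

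There is no real obstacle here: the entire content of the corollary is the computation $\phi(p) = p-1$ together with the translation between the divisibility hypothesis and the coprimality hypothesis of Euler's Theorem. The only point worth stating carefully is that primality of $p$ is precisely what makes both of these observations work, so I would make sure to mention where primality is used. This mirrors the standard textbook treatment, and I expect the write-up to be two or three sentences long.
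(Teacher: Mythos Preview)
Your proposal is correct and matches the paper's approach exactly: the paper simply notes that Fermat's Little Theorem is ``an immediate corollary for prime $m$'' of Euler's Theorem, which is precisely the specialization you carry out via $\phi(p)=p-1$ and $p\nmid a \iff \gcd(a,p)=1$.
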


\noindent We are becoming carried away with ourselves with these results. In a final step before our proof of Proposition \ref{analogue to last result about F*}, we note that much like our proof of Theorem \ref{multiplicative group of finite field is cyclic}, we were studying the existence of an $x$ that acts as a generator for $\mathbb{F}^{*}$. Now we consider the analogue over $U(\mathbb{Z}/n\mathbb{Z})$, the group of units of the integers modulo $n$. It can be shown that if $p$ prime, $U(\mathbb{Z}/p\mathbb{Z})$ is cyclic; the proof is essentially identical to the proof of Theorem \ref{multiplicative group of finite field is cyclic}. We say that $a$ is a \textit{primitive root modulo $p$} if $p-1$ is the smallest integer such that $a^{p-1}\equiv 1 \pmod{p}$\footnote{We also define an integer $a$ to be a \textit{primitive root} modulo $p$ prime if its residue class $[a]$ generates $U(\mathbb{Z}/p\mathbb{Z})$, the group of units of $\ZZ/p\ZZ$.}. Now we proceed to prove Proposition \ref{analogue to last result about F*}.

\begin{proof}[Proof of Proposition \ref{analogue to last result about F*}]
To begin, let $g$ be a primitive root modulo $m$, and let $a=g^{b}$ and $x=a^{y}$. Then the $n$th-degree congruence $x^{n}\equiv 1 \pmod{m}$ is equivalent to $a^{yn}\equiv g^{yn}\equiv g^{b}\pmod{m}$. The second equivalence can be taken due to the fact that we assumed $g$ to be a primitive root modulo $m$. Simplifying, we have $g^{ny-b}\equiv 1\pmod{m}$. Since $g$ is a primitive root, this only occurs if $ny-b$ is some multiple of $\phi(m)$ by Euler's Theorem. Therefore $ny-b\equiv k\phi(m)$ for $k\in\mathbb{Z}$ so $ny\equiv b\pmod{\phi(m)}$. This is a linear congruence and is solvable if and only if $d|b$ by Proposition \ref{solutions of linear congruence ax=b (modulo m)}. To show the forward direction of this proposition, let $d|b$. Then $a^{\phi(m)/d}\equiv g^{(b\phi(m))/d}\equiv g^{l\phi(m)} \pmod{m}$ for some integer constant $l$, or $g^{l\phi(m)} \equiv 1\pmod{m}$. To show the backward direction, let $a^{\phi(m)/d}\equiv 1 \pmod{m}$. Then $g^{(b\phi(m))/d}\equiv 1 \pmod{m}$, which only occurs when $b/d$ is an integer. This only occurs if $d|b$. Therefore $d|b$, and we have proven both directions. 
\end{proof}

\noindent Notice also that since the linear congruence is solvable, $d=\gcd(n,\phi(m))$, and there are exactly $d$ solutions. This deduction is necessary in the proof of Theorem \ref{last result about F*}. We now return to our proof of Theorem \ref{last result about F*}. 

\begin{proof}[Proof of Theorem \ref{last result about F*}]
\noindent The statement of the theorem appears very similar to that of Proposition \ref{analogue to last result about F*}. To begin, let $\gamma$ be a generator of the cyclic group $\mathbb{F}^{*}$. As in Proposition \ref{analogue to last result about F*}, we let $\alpha=\gamma^{a}$, and $x=\gamma^{y}$. Then the equivalence relation $x^{n}=\alpha$ is equivalent to $\gamma^{yn}=\gamma^{a}$. We can reduce this equivalence to a congruence by removing the base of $\gamma$ as follows. Dividing both sides by $\gamma^{b}$, we have $\gamma^{ny-b}=1$. Similar to the proof of Proposition \ref{analogue to last result about F*}, and due to the fact that we defined $\gamma$ to be a generator of $\mathbb{F}^{*}$, we must have that $ny-b$ is an integer multiple of $q-1$, the order of the multiplicative group of the finite field. Therefore $ny-b=k(q-1)$ so $ny\equiv b\pmod{q-1}$. As in Proposition \ref{analogue to last result about F*}, this is a linear congruence, and we can apply Proposition \ref{solutions of linear congruence ax=b (modulo m)} as follows. 

\indent The congruence is solvable if and only if $d|a$. Suppose first that $d|a$. Then $\alpha^{(q-1)/d}\equiv \gamma^{(a(q-1))/d} \equiv \gamma^{r(q-1)}$ for some integer constant $r$. Since $r$ must be an integer, $\gamma^{r(q-1)}=1$. To prove the backward direction, let $\alpha^{(q-1)/d}=1$. Then $\gamma^{(a(q-1))/d}=1$, which is only possible if $d|a$, as $\gamma$ is a generator of $\mathbb{F}^{*}$. Therefore $d|a$ and we have proven both directions. 
\end{proof}

\noindent Notice that in this result, since the linear congruence is solvable, the number of solutions is given by $d=\gcd(n,q-1)$ by Proposition \ref{solutions of linear congruence ax=b (modulo m)}, and we are done. 

\begin{rem}\label{Remark: nth power residues and how many solutions to x^n=alpha}
In relation to $n$th power residues, it is also interesting consider what might happen to the number of solutions to the equation $x^{n}=\alpha$ for $\alpha\in\mathbb{F}^{*}$ with varying values for $d$. If $\gcd(n,q-1)=1$, then there is only 1 unique solution to the equation $x^{n}=\alpha$. Alternatively, if $n|q-1$ instead, then there are exactly $\gcd(n,q-1)=\frac{q-1}{n}$ solutions to $x^{n}=\alpha$, and there are $n$ solutions if $\alpha=\beta^{n}$ for some $\beta\in\mathbb{F}^{*}$. 
\end{rem}

\subsection{Structure of Finite Fields}
\noindent Now that we have surveyed the multiplicative group of a finite field, we might be interested in determining further characteristics of finite fields and their structural properties, especially in regard to their construction. Most notably, in this section we determine the order of a finite field and show how finite fields have a very intuitive relationship with their subfields. These results prepare us in proving the existence of finite fields later. 

\begin{prop} \label{an important isomoprhism by the First Isomorphism Theorem}
    Let $\mathbb{F}$ be a finite field. The integer multiples of the identity form a subfield of $\mathbb{F}$ that is isomorphic to $\mathbb{Z}/p\mathbb{Z}$ for $p$ a prime. 
\end{prop}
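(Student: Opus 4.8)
The plan is to realize the set of integer multiples of $1_{\mathbb{F}}$ as the image of a natural ring homomorphism out of $\mathbb{Z}$, and then to apply the ring-theoretic version of the First Isomorphism Theorem (as remarked after Theorem \ref{First Isomorphism Theorem}). First I would define $\phi \colon \mathbb{Z} \to \mathbb{F}$ by $\phi(n) = n \cdot 1_{\mathbb{F}} = 1_{\mathbb{F}} + \cdots + 1_{\mathbb{F}}$ ($n$ summands, with the evident sign convention for $n \le 0$), and verify that $\phi$ is a ring homomorphism. By construction its image $\phi(\mathbb{Z})$ is exactly the set of integer multiples of the identity, and being the image of a ring homomorphism it is already a subring of $\mathbb{F}$; the real content is to see that $\phi(\mathbb{Z})$ is in fact a subfield and to identify it with $\mathbb{Z}/p\mathbb{Z}$.

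Next I would analyze $\ker(\phi)$. It is an ideal of $\mathbb{Z}$, and since $\mathbb{Z}$ is a Euclidean domain (with the absolute value as norm), hence a PID by Theorem \ref{Every Euclidean domain is a PID}, we have $\ker(\phi) = n\mathbb{Z}$ for some $n \ge 0$. Because $\mathbb{Z}$ is infinite while $\mathbb{F}$ is finite, $\phi$ cannot be injective, so $n \ge 1$; and $n \neq 1$, since $1_{\mathbb{F}} \neq 0_{\mathbb{F}}$ in a field. The First Isomorphism Theorem then yields a ring isomorphism $\mathbb{Z}/n\mathbb{Z} \cong \phi(\mathbb{Z})$.

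Finally I would pin down $n$. Since $\phi(\mathbb{Z})$ sits inside the field $\mathbb{F}$, it has no zero divisors (in a field every nonzero element is invertible), so $\mathbb{Z}/n\mathbb{Z}$ has no zero divisors either; if $n = ab$ with $1 < a, b < n$, then $[a][b] = [0]$ with $[a],[b] \neq [0]$, a contradiction, so $n = p$ is prime. Hence $\phi(\mathbb{Z}) \cong \mathbb{Z}/p\mathbb{Z}$, and since $\mathbb{Z}/p\mathbb{Z}$ is a field (every nonzero residue class is a unit, by the discussion following Proposition \ref{solutions of linear congruence ax=b (modulo m)}), $\phi(\mathbb{Z})$ is a subfield of $\mathbb{F}$ isomorphic to $\mathbb{Z}/p\mathbb{Z}$, as claimed. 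The step I would be most careful about is precisely this last passage from ``$\phi(\mathbb{Z})$ lies inside a field'' to ``$n$ is prime'': one must genuinely invoke the absence of zero divisors in $\mathbb{F}$ rather than presuppose anything about $\mathbb{Z}/n\mathbb{Z}$. Everything else — that $\phi$ respects addition and multiplication, and the bookkeeping with sign conventions in the definition of $n \cdot 1_{\mathbb{F}}$ — is routine.
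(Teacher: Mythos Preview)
Your proof is correct and follows essentially the same route as the paper: define the ring homomorphism $\phi\colon \mathbb{Z}\to\mathbb{F}$ by $n\mapsto n\cdot 1_{\mathbb{F}}$, identify its kernel as a principal ideal $n\mathbb{Z}$, argue that $n$ must be prime because the image sits inside a field (hence has no zero divisors), and apply the First Isomorphism Theorem. If anything, your version is more careful than the paper's in spelling out why $\ker(\phi)$ is nonzero (finiteness of $\mathbb{F}$) and why its generator is prime.
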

\begin{proof}
    As a means of standardizing notation, we use $e$ as the identity of the multiplicative group of $\mathbb{F}$ given as $\mathbb{F}^{*}$ in this proof. Define $\phi$ as a mapping of the integers to the finite field $\mathbb{F}$ that takes every $n\in\mathbb{Z}$ to some $ne$, or the integer multiples of the multiplicative identity of $\mathbb{F}$. This is a ring homomorphism because the original operation is preserved, and we are operating under $\phi$ from the ring of integers to $\mathbb{F}$. It is not difficult to show that $\phi$ is bijective and satisfies $\phi(a+b)=\phi(a)+\phi(b)$ and $\phi(ab)=\phi(a)\phi(b)$. The resulting image, namely the $ne$s, form a finite subring of $\mathbb{F}$. More specifically, since $\mathbb{Z}$ commutes, it is also a nonzero commutative ring, or an integral domain. The kernel of the homomorphism is thus Ker$(\phi)=\{ n\in\mathbb{Z} | \phi(n)=e \}$. In this way, the kernel is a nonzero prime ideal, meaning that either $n$ or $e$ must belong to the finite subring and naturally the integral domain. Thus, by Theorem \ref{First Isomorphism Theorem}, the field $\mathbb{Z}/p\mathbb{Z}$ of the integers modulo $p$ a prime (Note: the set $p\mathbb{Z}$ is exactly the aforementioned Ker$(\phi)$ because it is a prime ideal of $\mathbb{Z}$) must be isomorphic to the image of $\phi$, or $\text{im}(\phi)$, in the mapping from $\mathbb{Z}$ to $\mathbb{F}$. 
\end{proof}

\noindent Now that we have proven a fact about the finite field and its relation to the field $\mathbb{Z}/p\mathbb{Z}$, we further explore properties of $\mathbb{F}$ in determining its order. This brings us to the following result. 

\begin{prop} \label{number of elements in a finite field is a power of a prime}
    The number of elements in a finite field is a power of a prime. Namely, a finite field over a vector space with dimension $n$ has order $p^{n}$. 
\end{prop}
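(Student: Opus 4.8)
The plan is to leverage Proposition \ref{an important isomoprhism by the First Isomorphism Theorem}, which tells us that $\mathbb{F}$ contains a subfield $K$ isomorphic to $\mathbb{Z}/p\mathbb{Z}$ for some prime $p$ (the prime subfield, generated by the integer multiples of the identity). The idea is then to view $\mathbb{F}$ as a vector space over $K$ and count elements via a basis. First I would observe that $\mathbb{F}$ is a $K$-vector space: the field axioms for $\mathbb{F}$ give an abelian group under addition, and restricting the multiplication of $\mathbb{F}$ to scalars from $K \subseteq \mathbb{F}$ gives a scalar action satisfying distributivity and associativity, as required of a vector space over the scalar field $K$ in the sense of our definition of a field extension.

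Next I would argue that this vector space is finite-dimensional. Since $\mathbb{F}$ has only finitely many elements, any spanning set is finite, so there is a finite basis; call its size $n = [\mathbb{F} : K]$, the degree of the extension. Fix a basis $e_1, \dots, e_n$ of $\mathbb{F}$ over $K$. By the defining property of a basis, every element $\beta \in \mathbb{F}$ can be written in exactly one way as
\begin{equation*}
    \beta = c_1 e_1 + c_2 e_2 + \cdots + c_n e_n, \qquad c_i \in K.
\end{equation*}
Existence of such an expression follows from spanning; uniqueness follows from linear independence (a difference of two representations would be a nontrivial dependence relation).

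Finally I would count: each coefficient $c_i$ ranges independently over the $p$ elements of $K \cong \mathbb{Z}/p\mathbb{Z}$, so the number of distinct $n$-tuples $(c_1, \dots, c_n)$, hence the number of elements of $\mathbb{F}$, is $p^n$. This establishes both that $|\mathbb{F}|$ is a prime power and the refined statement that a finite field which is $n$-dimensional over its prime subfield has order $p^n$.

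The steps here are essentially routine: the only point requiring a moment's care is confirming finite-dimensionality (which is immediate from finiteness of $\mathbb{F}$) and the bijection between $\mathbb{F}$ and $K^n$ via coordinates, which is the standard fact that coordinates with respect to a basis are unique. I do not anticipate a genuine obstacle; the work is in carefully invoking Proposition \ref{an important isomoprhism by the First Isomorphism Theorem} to supply the prime subfield and then applying elementary linear algebra.
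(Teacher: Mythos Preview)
Your proposal is correct and follows essentially the same approach as the paper: view $\mathbb{F}$ as a vector space over its prime subfield $\mathbb{Z}/p\mathbb{Z}$ (supplied by Proposition~\ref{an important isomoprhism by the First Isomorphism Theorem}), choose a basis of size $n$, and count the $p^{n}$ coordinate tuples. If anything, your version is slightly more careful, since you justify finite-dimensionality from the finiteness of $\mathbb{F}$ rather than citing it as a black-box fact from linear algebra.
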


\begin{proof}
    From linear algebra, we know that every field can be expressed as a finite-dimensional vector space over each of its subfields, or in this case, every field is a finite-dimensional vector space over $\mathbb{Z}/p\mathbb{Z}$. We will not prove this here as it lies beyond the scope of this paper, but the result is critical in our proof of this result. 
    
    \indent Let $n$ be the dimension of the vector space and let $\omega_{1},\omega_{2},\ldots,\omega_{n}$ be a basis of $\mathbb{F}$. By the construction of a finite-dimensional vector space, every element in $\mathbb{F}$ has a unique representation as a linear combination of all vectors in the basis and elements of $\ZZ/p\ZZ$, namely as $a_{1}\omega_{1}+a_{2}\omega_{2}+\cdots+a_{n}\omega_{n}$ where $a_{i}\in\mathbb{Z}/p\mathbb{Z}$ for all $1\leq i\leq n$. Since the field $\mathbb{Z}/p\mathbb{Z}$ has prime order $p$, we know that there are exactly $p$ possible inputs for each $a_{i}$. This gives us $p^{n}$ total linear combinations of the basis, so the order of $\mathbb{F}$ is $p^{n}$. 
\end{proof}

\noindent As we continue studying $\mathbb{F}$, we introduce a definition. Let $e$ represent the multiplicative identity of a finite field $\mathbb{F}$. We define the \textit{characteristic} of a finite field $\mathbb{F}$ to be the minimum element $p$ to satisfy $pe=0$, where $0$ is the additive identity. As we have seen before, $p$ must be a prime as it is the only possible integer that satisfies the isomorphism in Proposition \ref{an important isomoprhism by the First Isomorphism Theorem}. An important fact about the characteristic is that when applied to \textit{any} element of the finite field, it yields the additive identity. In other words, if there is some $\alpha\in\mathbb{F}$, then $p\alpha=p(\alpha e)=(pe)\alpha=0\cdot \alpha=0$ by the commutativity of $\mathbb{F}$. This leads us to the following result. 
\begin{prop} \label{binomial theorem analogue for finite field with characteristic p}
    If a finite field $\mathbb{F}$ has characteristic $p$, then for $\alpha,\beta\in\mathbb{F}$ and some $d\in\mathbb{Z}^{+}$, 
    \begin{equation*}
        (\alpha+\beta)^{p^{d}}=\alpha^{p^{d}}+\beta^{p^{d}}.
    \end{equation*}
\end{prop}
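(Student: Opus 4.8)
The plan is to prove the single-exponent case $(\alpha+\beta)^p = \alpha^p + \beta^p$ first, and then bootstrap to $p^d$ by an easy induction on $d$. For the base case, I would expand $(\alpha+\beta)^p$ using the binomial theorem, which is valid in any commutative ring and hence in $\mathbb{F}$:
\begin{equation*}
    (\alpha+\beta)^p = \sum_{k=0}^{p} \binom{p}{k} \alpha^k \beta^{p-k}.
\end{equation*}
The crux is then the claim that $\binom{p}{k} \equiv 0 \pmod p$ for $1 \le k \le p-1$. This follows from the identity $k\binom{p}{k} = p\binom{p-1}{k-1}$: since $p$ is prime and $1 \le k \le p-1$, we have $p \nmid k$, so $p$ must divide $\binom{p}{k}$. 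Because $\mathbb{F}$ has characteristic $p$, every integer multiple $pe$ of the identity is $0$, and hence each middle term $\binom{p}{k}\alpha^k\beta^{p-k}$ vanishes in $\mathbb{F}$, leaving only the $k=0$ and $k=p$ terms, i.e. $\alpha^p + \beta^p$.

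For the inductive step, suppose $(\alpha+\beta)^{p^d} = \alpha^{p^d} + \beta^{p^d}$ for all $\alpha,\beta \in \mathbb{F}$. Then
\begin{equation*}
    (\alpha+\beta)^{p^{d+1}} = \left((\alpha+\beta)^{p^d}\right)^p = \left(\alpha^{p^d} + \beta^{p^d}\right)^p = \left(\alpha^{p^d}\right)^p + \left(\beta^{p^d}\right)^p = \alpha^{p^{d+1}} + \beta^{p^{d+1}},
\end{equation*}
where the third equality is the base case applied to the elements $\alpha^{p^d}$ and $\beta^{p^d}$ of $\mathbb{F}$. This closes the induction.

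The main obstacle — really the only non-formal point — is the divisibility fact $p \mid \binom{p}{k}$ for $1 \le k \le p-1$; everything else is either the ordinary binomial theorem (legitimate since $\mathbb{F}$ is commutative) or the defining property of the characteristic established in the discussion preceding the proposition, namely that $p\gamma = 0$ for every $\gamma \in \mathbb{F}$. I would make sure to state explicitly that the binomial coefficients appearing are the images of ordinary integers under the map $n \mapsto ne$, so that "$\binom{p}{k} \equiv 0$" literally means that term is the zero of $\mathbb{F}$. One could alternatively phrase the base case as saying the Frobenius map $x \mapsto x^p$ is additive, but since the statement is phrased concretely I would keep the argument elementary.
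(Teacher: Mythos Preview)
Your proposal is correct and follows essentially the same approach as the paper: induction on $d$, with the base case handled by the binomial expansion and the fact that $p\mid\binom{p}{k}$ for $1\le k\le p-1$. Your inductive step is actually a bit cleaner than the paper's, which re-expands $(\alpha^{p^k}+\beta^{p^k})^p$ via the binomial theorem rather than simply citing the base case applied to $\alpha^{p^d}$ and $\beta^{p^d}$; and you supply an explicit argument for the divisibility of $\binom{p}{k}$ where the paper merely asserts it.
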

\begin{proof}
    We prove this by induction on $d$. In the base case where $d=1$, this is obvious as $(\alpha+\beta)^{p}=\sum_{r=0}^{p}\binom{p}{r}\alpha^{p-r}\beta^{r}$. Expanding, we have 
    \begin{equation*}
        \alpha^{p}+\binom{p}{1}\alpha^{p-1}\beta + \binom{p}{2}\alpha^{p-2}\beta^{2} + \cdots + \binom{p}{p-1}\alpha\beta^{p-1} + \beta^{p}.
    \end{equation*}
    \noindent Since $\mathbb{F}$ has characteristic $p$, each multiple of $p$ must be equivalent to the additive identity. In other words, since $p$ divides each binomial coefficient where $i=1,2,\ldots,p-1$ (this can be proven using simple facts about the binomial coefficient), their multiples equate to the additive identity. Therefore we have $(\alpha+\beta)^{p}=\alpha^{p}+\beta^{p}$. 

    \indent Assume that this relation holds for some $d=k$. Then $(\alpha+\beta)^{p^{k}}=\alpha^{p^{k}}+\beta^{p^{k}}$. We wish to prove that it holds for $d=k+1$. As a result of the inductive hypothesis, we have
    \begin{align*}
        ((\alpha+\beta)^{p^{k}})^{p} & =(\alpha^{p^{k}}+\beta^{p^{k}})^{p} \\ 
        (\alpha+\beta)^{p^{k+1}} & = \sum_{r=0}^{p}\binom{p}{r}(\alpha^{p^{k}})^{p-r}(\beta^{p^{k}})^{p} \\ &
        = \sum_{r=0}^{p}\binom{p}{r}(\alpha^{p^{k}})^{p}(\alpha^{p^{k}})^{-r}(\beta^{p^{k}})^{p} \\ &
        = \alpha^{p^{k+1}} + \binom{p}{1}\alpha^{p^{k+1}}\alpha^{-p^{k}}\beta^{p^{k+1}} + \cdots + \binom{p}{p-1}\alpha^{p^{k+1}}\alpha^{-p^{k}(p-1)}\beta^{p^{k+1}} + \beta^{p^{k+1}} \\ &
        = \alpha^{p^{k+1}} + \beta^{p^{k+1}},
    \end{align*}
    \noindent where intermediate terms in the binomial expansion vanish modulo $p$ since $\mathbb{F}$ has characteristic $p$. 
\end{proof}

\noindent Now that we have information about binomial powers and the order of finite fields, we may be interested in determining not only properties of $\mathbb{F}$ but of subfields of the finite field $\mathbb{F}$ for which $\mathbb{Z}/p\mathbb{Z}$ is the scalar field. For the sake of the following results, we will denote such fields as $\mathbb{E}$. Let $n$ be the order of $\mathbb{F}$ and $d$ be the dimension of $\mathbb{E}$. It is possible to show, using techniques of field extensions and algebraic extensions, that $d|n$. We will instead provide an alternate proof that $d|n$ using finite fields. The underlying concept suggests that there is one and only one intermediate subfield $\mathbb{E}$ corresponding to each divisor $d$ of $n$. In order to prove this we begin with the following elementary results. 
\begin{lemma} \label{divisibility of polynomials if and only if l|m}
    Let $\mathbb{F}$ be a field. Then for $x^{l}-1, x^{m}-1\in \mathbb{F}[x]$ we have that $x^{l}-1 | x^{m}-1$ if and only if $l|m$. 
\end{lemma}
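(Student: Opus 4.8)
The plan is to prove both implications using the division algorithm for integers together with the single algebraic identity behind the factorization of $y^q-1$; the polynomial side of the argument only needs that $\mathbb{F}[x]$ is an integral domain (indeed a Euclidean domain, as noted after the definition of Euclidean domain), so that degrees add under multiplication.

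First I would dispatch the easy direction. Suppose $l\mid m$, say $m=ql$ with $q\in\mathbb{Z}^{+}$. Substituting $y=x^{l}$ into $y^{q}-1=(y-1)\big(y^{q-1}+y^{q-2}+\cdots+1\big)$ gives
\[
x^{m}-1=(x^{l}-1)\big((x^{l})^{q-1}+(x^{l})^{q-2}+\cdots+1\big),
\]
so $x^{l}-1\mid x^{m}-1$. In particular this also shows $x^{l}-1\mid x^{ql}-1$ for every $q$, which I will reuse below.

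For the converse, assume $x^{l}-1\mid x^{m}-1$. Write $m=ql+r$ with $0\le r<l$ by the division algorithm in $\mathbb{Z}$. A direct expansion gives $x^{m}-1=x^{r}(x^{ql}-1)+(x^{r}-1)$, and since $x^{l}-1$ divides both $x^{m}-1$ and $x^{ql}-1$ it divides $x^{r}-1$. Now I compare degrees: $x^{l}-1$ is a nonzero polynomial of degree $l$ (nonzero because $\mathbb{F}$ is a field, so $1\neq 0$), and in the integral domain $\mathbb{F}[x]$ every nonzero multiple of $x^{l}-1$ has degree at least $l$. Since $\deg(x^{r}-1)=r<l$, the only way $x^{l}-1\mid x^{r}-1$ can hold is $x^{r}-1=0$, i.e. $r=0$, whence $l\mid m$.

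The argument is essentially routine; the only step that deserves a moment's care is the final degree comparison, where one must use both that $x^{l}-1\neq 0$ (so it is not a unit trivially dividing everything) and that degrees are additive in $\mathbb{F}[x]$, which is what forces $r=0$ outright rather than merely bounding it.
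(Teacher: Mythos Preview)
Your proof is correct and follows essentially the same approach as the paper: both use the division algorithm to write $m=ql+r$ and the identity $x^{m}-1=x^{r}(x^{ql}-1)+(x^{r}-1)$, then argue that the term $x^{r}-1$ forces $r=0$. Your version is in fact slightly cleaner, since you work directly with the polynomial identity and a degree comparison rather than the paper's fraction manipulation, and you make the degree argument explicit where the paper simply asserts that the last quotient vanishes iff $r=0$.
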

\begin{proof}
    Suppose that $l\nmid m$, or that $m=ql+r$ for some remainder $r\in[0,1)$ and divisor $q\in\mathbb{Z}$. Then, 
    \begin{equation*}
        \frac{x^{m}-1}{x^{l}-1}=\frac{x^{ql}x^{r}-1}{x^{l}-1}=\frac{x^{ql}-1}{x^{l}-1}x^{r}+\frac{x^{r}-1}{x^{l}-1}.
    \end{equation*}
    \noindent By polynomial division, the first term on the \textit{RHS} can be written as the polynomial $x^{r}((x^{l})^{q-1}+(x^{l})^{q-2}+\cdots + x^{l}+1)$. The remaining quotient can be seen to be $0$ if and only if $r=0$, as it evaluates to $0/(x^{l}-1)=0$. Therefore the \textit{RHS} is a polynomial if and only if $r=0$, which occurs if and only if $l|m$, so we are done. 
\end{proof}

\begin{rem} \label{Remark: important remark as a corollary to lemma about divisibility}
    This result is also true for positive integers in place of $x$ and can be summarized as follows: if $a\in\mathbb{Z}^{+}$, then $a^{l}-1 | a^{m}-1$ if and only if $l|m$. We will not prove it here as the proof is identical to that of Lemma \ref{divisibility of polynomials if and only if l|m}. 
\end{rem} 

\noindent Now that we have these results about divisibility, we can prove the relation between $\mathbb{F}$ and its subfields. 

\begin{thm}
    Let $\mathbb{F}$ be a finite field of dimension $n$ over $\mathbb{Z}/p\mathbb{Z}$. Then the subfields of $\mathbb{F}$ have an injection with the divisors $d$ of $n$. 
\end{thm}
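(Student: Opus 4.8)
The plan is to define a map $\Phi$ from the set of subfields of $\mathbb{F}$ to the set of divisors of $n$ by sending a subfield $\mathbb{E}$ to its dimension $d = [\mathbb{E} : \mathbb{Z}/p\mathbb{Z}]$, and then to verify two things: that $d$ always divides $n$ (so $\Phi$ is well defined as a map into the divisors of $n$), and that $\Phi$ is injective. Note that the statement only asks for an injection, not a bijection, so there is no need to produce a subfield for each divisor of $n$; it suffices to show that a subfield of a prescribed dimension, if it exists, is unique.

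First I would check that $\Phi$ makes sense. Any subfield $\mathbb{E} \subseteq \mathbb{F}$ contains the multiplicative identity, hence contains all its integer multiples, which by Proposition~\ref{an important isomoprhism by the First Isomorphism Theorem} form a copy of $\mathbb{Z}/p\mathbb{Z}$ (the characteristic of $\mathbb{E}$ is the same $p$ as that of $\mathbb{F}$). Thus $\mathbb{E}$ is itself a finite field with prime subfield $\mathbb{Z}/p\mathbb{Z}$, and by the argument of Proposition~\ref{number of elements in a finite field is a power of a prime} it is a finite-dimensional $\mathbb{Z}/p\mathbb{Z}$-vector space with $|\mathbb{E}| = p^{d}$, where $d = [\mathbb{E} : \mathbb{Z}/p\mathbb{Z}]$ is well defined. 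Set $\Phi(\mathbb{E}) = d$.

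Next, the divisibility $d \mid n$. The multiplicative group $\mathbb{E}^{*}$ is a subgroup of $\mathbb{F}^{*}$, of order $p^{d}-1$ inside a group of order $p^{n}-1$, so Lagrange's Theorem (\ref{Lagrange's Theorem}) gives $(p^{d}-1) \mid (p^{n}-1)$. Applying Remark~\ref{Remark: important remark as a corollary to lemma about divisibility} with $a = p$, this forces $d \mid n$; hence $\Phi$ indeed takes values in the set of divisors of $n$. For injectivity, observe that if $|\mathbb{E}| = p^{d}$ then every $\alpha \in \mathbb{E}^{*}$ satisfies $\alpha^{p^{d}-1}=1$, so every $\alpha \in \mathbb{E}$ satisfies $\alpha^{p^{d}}=\alpha$; equivalently, by Corollary~\ref{first corollary to first proposition in section about finite fields} applied to the inclusion $\mathbb{E} \subseteq \mathbb{F}$, the elements of $\mathbb{E}$ are exactly the roots in $\mathbb{F}$ of the polynomial $x^{p^{d}}-x \in \mathbb{F}[x]$. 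Since that polynomial has at most $p^{d}$ roots in $\mathbb{F}$ by Proposition~\ref{most number of roots for f in k[x]}, and $|\mathbb{E}| = p^{d}$, the subfield $\mathbb{E}$ coincides with the full set of roots of $x^{p^{d}}-x$ in $\mathbb{F}$, a set that depends only on $d$. Therefore two subfields of the same dimension are equal, $\Phi$ is injective, and the theorem follows.

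The step I expect to be the crux is the divisibility $d \mid n$: it is the one place the earlier divisibility lemma (via Remark~\ref{Remark: important remark as a corollary to lemma about divisibility}) is genuinely needed, and it is what makes $\Phi$ land in the divisors rather than merely in the positive integers. The uniqueness half also requires a little care — one must invoke both that $x^{p^{d}}-x$ has at most $p^{d}$ roots and that $\mathbb{E}$ realizes all of them — but after that the argument is bookkeeping.
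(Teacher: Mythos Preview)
Your proposal is correct and, for the injectivity claim actually stated, essentially parallels the paper's argument: map a subfield to its dimension $d$, show $d\mid n$, and identify any subfield of order $p^{d}$ with the root set of $x^{p^{d}}-x$ in $\mathbb{F}$.

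Two differences are worth noting. First, for $d\mid n$ you invoke Lagrange's Theorem directly on $\mathbb{E}^{*}\le\mathbb{F}^{*}$ to obtain $(p^{d}-1)\mid(p^{n}-1)$, then apply Remark~\ref{Remark: important remark as a corollary to lemma about divisibility}; the paper instead passes through the polynomial divisibility $x^{p^{d}-1}-1\mid x^{p^{n}-1}-1$ and Lemma~\ref{divisibility of polynomials if and only if l|m} before reaching the same integer divisibility. Your route is shorter and avoids the somewhat informal step in the paper where polynomial divisibility is asserted from root containment. Second, the paper goes beyond the stated injection and actually establishes a bijection: for each $d\mid n$ it constructs $\mathbb{E}=\{\alpha\in\mathbb{F}:\alpha^{p^{d}}=\alpha\}$, verifies it is a field, and uses Corollary~\ref{number of roots for f(x) that divides x^q-x} to show $|\mathbb{E}|=p^{d}$. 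You deliberately omit this, correctly observing that the theorem as phrased asks only for an injection; your uniqueness argument via Proposition~\ref{most number of roots for f in k[x]} suffices and is self-contained. So your proof is leaner and matches the statement exactly, while the paper's proof delivers the stronger (and more commonly cited) bijection.
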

\begin{proof}
    Let $\mathbb{E}$ be a field of dimension $d$ over the field $\mathbb{Z}/p\mathbb{Z}$. Furthermore, let the finite field $\mathbb{F}$ have dimension $n$ such that $\mathbb{E}$ is a subfield of $\mathbb{F}$. In this proof we wish to show that $d|n$. 

    \indent To begin, notice that by Proposition \ref{number of elements in a finite field is a power of a prime}, we know that $\mathbb{E}$ must have an order of a power of a prime, namely $p^{d}$, since the dimension of $\mathbb{E}$ is $d$. We can verify this by counting all possible linear combinations with respect to some basis, just as in the proof of Proposition \ref{number of elements in a finite field is a power of a prime}. Since the subfield $\mathbb{E}$ has order $p^{d}$, its multiplicative group $\mathbb{E}^{*}$ therefore must have order $p^{d}-1$. By the definition of the multiplicative group, exactly $p^{d}-1$ elements satisfy the polynomial equation $x^{p^{d}-1}-1=0$ where $x$ is some arbitrary variable. Now, considering the multiplicative group $\mathbb{F}^{*}$, we can see that it similarly has exactly $p^{n}-1$ elements satisfying $x^{p^{n}-1}-1=0$. Therefore $x^{p^{d}-1}-1|x^{p^{n}-1}-1$. By Lemma \ref{divisibility of polynomials if and only if l|m}, we know that this implies $p^{d}-1|p^{n}-1$, and furthermore, by Remark \ref{Remark: important remark as a corollary to lemma about divisibility}, this implies that $d|n$.

    \indent Now that we have shown that $d|n$, we must prove that there is an injection between the subfields of $\mathbb{F}$ and the divisors of $n$. In order for an injection to exist, there must be a correspondence such that if $f(x)=f(y)$ for $x,y\in\mathbb{E}$ an arbitrary subfield of $\mathbb{F}$, then $x=y$ for $f$ a function from $\mathbb{E}$ of $\mathbb{F}$ to the divisors of $n$. We show this relationship in the following. To begin, suppose that $d|n$ again, as we will use this fact. Now let $\mathbb{E}=\{ \alpha\in\mathbb{F}|\alpha^{p^{d}}=\alpha \}$. By constructing the subfield $\mathbb{E}$ in this way, we are constructing all subfields such that all elements $\alpha$ in the finite field $\mathbb{E}$ are also in $\mathbb{F}$, just as we observed in Corollary \ref{first corollary to first proposition in section about finite fields} with the requirement that $\alpha\in\mathbb{F}$ be in $\mathbb{E}$ if and only if $\alpha$ satisfies the equation $\alpha^{p^{d}}=\alpha$. In order to ensure that this construction is valid, we must prove that $\mathbb{E}$ is indeed a field. In other words, the following properties must be true for any $\alpha,\beta\in\mathbb{E}$:
    \begin{center}
    \begin{enumerate}
        \item $(\alpha+\beta)^{p^{d}} = \alpha^{p^{d}}+\beta^{p^{d}} = \alpha + \beta$,
        \item $(\alpha\beta)^{p^{d}} = \alpha^{p^{d}}\beta^{p^{d}} = \alpha\beta$,
        \item $(\alpha^{-1})^{p^{d}} = (\alpha^{p^{d}})^{-1} = \alpha^{-1}$ for $\alpha\neq 0$. 
    \end{enumerate}
    \end{center}
    \noindent Property (1) follows immediately from Proposition \ref{binomial theorem analogue for finite field with characteristic p} since $\mathbb{E}$ also has characteristic $p$, and then by our initial construction of elements in $\mathbb{E}$. Properties (2) and (3) are trivial as they are inherent properties of $\mathbb{F}$ with the exception of the construction of $\mathbb{E}$. 

    \indent We now determine the order of $\mathbb{E}$. By the very construction of $\mathbb{E}$, we know that $\mathbb{E}$ is the set of solutions to the polynomial equation $x^{p^{d}}-x=0$. By Remark \ref{Remark: important remark as a corollary to lemma about divisibility}, since $d|n$, we know that $p^{d}-1|p^{n}-1$, and similarly by Lemma \ref{divisibility of polynomials if and only if l|m}, we must also have $x^{p^{d}-1}-1|x^{p^{n}-1}-1$. Consequently $x^{p^{d}}-x|x^{p^{n}}-x$. Since $\mathbb{E}$ comprises the solutions to $x^{p^{d}}-x=0$, we thus have that $x^{p^{d}}-x$ is exactly the $f(x)$ described by Corollary \ref{number of roots for f(x) that divides x^q-x}. Furthermore, the order of $\mathbb{E}$ is exactly the roots of $x^{p^{d}}-x$, so the order of $\mathbb{E}$ is $p^{d}$. Therefore $\mathbb{E}$ must have dimension $d$ over $\mathbb{Z}/p\mathbb{Z}$ by applying Proposition \ref{number of elements in a finite field is a power of a prime}. 

    \indent To finish the proof, we must show that each divisor $d$ of $n$ corresponds to a unique subfield of $\mathbb{F}$. To do this, let $\mathbb{E}'$ be another subfield of $\mathbb{F}$ of dimension $d$ over $\mathbb{Z}/p\mathbb{Z}$. Then by our previous workings, every element of $\mathbb{E}'$ must satisfy $x^{p^{d}}-x=0$, and these are the elements that do. However, we constructed $\mathbb{E}$ in the same way, so the solutions to $x^{p^{d}}-x=0$ must coincide, i.e. $\mathbb{E}=\mathbb{E}'$. 
\end{proof}

\noindent These results establish some basic properties of finite fields. We are now concerned with whether such a finite field can exist. 

\subsection{The Existence of Finite Fields}
\noindent Now that we have shown the construction and some properties of finite fields, we may ask another question: given some number $p^{n}$ where $n\in\mathbb{Z}^{+}$, does there exist some finite field with exactly $p^{n}$ elements? This is largely what we will investigate here, summarized in the following theorem.  

\begin{thm} \label{existence of a finite field with p^n elements}
    Let $n\geq 1$ be an integer and let $p$ be prime. Then there must exist a finite field with exactly $p^{n}$ elements. 
\end{thm}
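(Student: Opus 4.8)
The plan is to realize the desired field as the set of roots of $g(x) = x^{p^{n}}-x$ inside a sufficiently large extension of the prime field $\mathbb{Z}/p\mathbb{Z}$, and then to count those roots. Write $q = p^{n}$ and $k = \mathbb{Z}/p\mathbb{Z}$, which is a field by the discussion following Proposition \ref{solutions of linear congruence ax=b (modulo m)}. The first step is to produce a field $K \supseteq k$ over which $g$ factors into linear factors, i.e.\ a splitting field of $g$ over $k$. I would build $K$ by the Kronecker construction: $k[x]$ is Euclidean, hence a PID by Theorem \ref{Every Euclidean domain is a PID}, so for any irreducible $\pi \in k[x]$ the ideal $(\pi)$ is maximal and $k[x]/(\pi)$ is a field extending $k$ in which $\pi$ acquires a root; iterating this finitely many times over the irreducible factors of $g$ produces $K$ with $g(x) = \prod_{i=1}^{q}(x - \alpha_{i})$ in $K[x]$.

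Next I would set $\mathbb{F} := \{\alpha \in K : \alpha^{q} = \alpha\}$, the set of roots of $g$ in $K$, and check that it is a subfield of $K$. It contains $0$ and $1$; it is closed under multiplication and under taking inverses of nonzero elements because $\alpha \mapsto \alpha^{q}$ respects products; and it is closed under addition and subtraction by Proposition \ref{binomial theorem analogue for finite field with characteristic p} applied with $d = n$, since $K$ has characteristic $p$: indeed $(\alpha+\beta)^{q} = \alpha^{q}+\beta^{q} = \alpha+\beta$. Thus $\mathbb{F}$ is a finite field.

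It remains to show $|\mathbb{F}| = q$. Since $\mathbb{F}$ is exactly the root set of $g$ in $K$ and $g$ splits completely over $K$, it suffices to show that $g$ has \emph{no repeated root}. The formal derivative is $g'(x) = q\,x^{q-1} - 1 = -1$ in $k[x]$, because $q = p^{n}$ is divisible by the characteristic $p$ and so $q\cdot 1 = 0$ in $k$; a repeated root of $g$ would be a common root of $g$ and $g'$, which is impossible since $g'$ is a nonzero constant. (Equivalently, $\gcd(g, g') = 1$.) Hence $g$ has exactly $q$ distinct roots in $K$ — consistent with the bound of Proposition \ref{most number of roots for f in k[x]} — so $|\mathbb{F}| = q = p^{n}$, as desired.

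The step I expect to be the main obstacle is the first one: exhibiting a field in which $g$ splits, since this is the only ingredient not already developed in the text. The cleanest route is the iterated quotient $k[x]/(\pi)$ described above, together with the small ring-theoretic fact that the quotient of a commutative ring by a maximal ideal is a field; introducing the formal derivative for the separability step is the only other new ingredient, and it is routine.
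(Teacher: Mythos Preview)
Your argument is correct, but it takes a genuinely different route from the paper. The paper proceeds by first proving the factorization $x^{p^{n}}-x=\prod_{d\mid n}F_{d}(x)$ in $\mathbb{F}_{p}[x]$, deducing the degree identity $p^{n}=\sum_{d\mid n}dN_{d}$, and then applying M\"obius inversion to obtain an explicit formula for $N_{n}$; since that formula is visibly positive, there exists an irreducible $f\in\mathbb{F}_{p}[x]$ of degree exactly $n$, and then $\mathbb{F}_{p}(\alpha)\cong\mathbb{F}_{p}[x]/(f)$ furnishes the field of order $p^{n}$ via Propositions \ref{Proposition: for a field K there exists alpha in K such that f(alpha)=0} and \ref{Proposition: vector space basis for k(alpha) over k}. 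You instead build a splitting field of $x^{q}-x$ directly, take the root set, and verify it is a subfield of the right cardinality using the Frobenius additivity of Proposition \ref{binomial theorem analogue for finite field with characteristic p} together with the separability check $g'=-1$. Your route is shorter and avoids the counting machinery entirely; the paper's route is longer but yields as a byproduct the exact number of monic irreducibles of each degree over $\mathbb{F}_{p}$, a result of independent interest attributed to Gauss. Note also that both of your ``new'' ingredients are in fact already present in the paper: the Kronecker adjunction is precisely Proposition \ref{Proposition: for a field K there exists alpha in K such that f(alpha)=0}, and the formal-derivative trick appears in the proof of Proposition \ref{Proposition: factorization of x^{p^{n}}-x}.
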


\noindent We first need to prove some ring-theoretic facts, and then we will return to proving results for the theorem. We stated that $k[x]$ was a Euclidean domain for some arbitrary field $k$. By Theorem \ref{Every Euclidean domain is a PID}, $k[x]$ is a PID. We say that some polynomial $p(x)\in k[x]$ is \textit{irreducible} if some polynomial $q(x)$ divides $p(x)$, then $q(x)$ must either be a constant multiple of $p(x)$ or is a constant. In this way, irreducible polynomials are analogous to prime numbers in $\mathbb{Z}$. We now need to prove the following results. 

\begin{lemma} \label{lemma for R/A is a field iff A is maximal ideal}
    If $A$ is an ideal of a ring $R$ and contains a unit (1), then $A=R$. 
\end{lemma}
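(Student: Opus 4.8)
The plan is to exploit the defining absorption property of an ideal together with the existence of a multiplicative inverse for a unit. First I would name the unit: let $u \in A$ be a unit of $R$, so there exists $u^{-1} \in R$ with $u^{-1} u = 1$. Since $A$ is an ideal of $R$ and $u \in A$, the product $u^{-1} u$ must lie in $A$ by absorption; that is, $1 \in A$.

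With $1 \in A$ in hand, the second step is to show $A$ absorbs all of $R$: for an arbitrary $r \in R$, the element $r \cdot 1 = r$ lies in $A$, again by the ideal absorption property applied to $r \in R$ and $1 \in A$. This gives $R \subseteq A$. Since an ideal is by definition a subring of $R$, we already have $A \subseteq R$, and combining the two inclusions yields $A = R$.

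There is essentially no obstacle here; the proof is a two-line manipulation. The only points worth flagging are that we are working inside a commutative ring $R$ (so the one-sided/two-sided distinction for absorption is moot, and the paper's definition of ideal already requires both $ar$ and $ra$ to lie in the ideal), and that the parenthetical case where the unit is literally $1$ is simply the specialization $u = 1$, for which the first step is immediate and the argument goes through unchanged.
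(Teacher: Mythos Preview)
Your proof is correct and follows essentially the same approach as the paper: both argue by double inclusion, using the absorption property of an ideal together with $1 \in A$ to obtain $R \subseteq A$. The only minor difference is that you first reduce from an arbitrary unit $u \in A$ to $1 \in A$ via $u^{-1}u = 1$, whereas the paper reads the hypothesis as asserting $1 \in A$ directly and begins there; your extra step is harmless and makes the argument slightly more general.
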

\begin{proof}
    We prove this by double inclusion. Since $A$ is an ideal of $R$, by definition it is true that $A\subseteq R$. We need to show that $R\subseteq A$. Let $a\in A$ and $r\in R$. Since $A$ is an ideal, we know that $ar=ra\in R$ for all $r\in R$. Since $A$ contains $1$, it must be true that $1r=r1\in R$ for all $r\in R$, namely that $r\in A$ for all $r\in R$, which means that $R\subseteq A$. Therefore $R=A$, and we are done. 
\end{proof}

\begin{prop} \label{R/A is a field iff A is maximal ideal}
    Let $R$ be a commutative ring with unity and let $A$ be an ideal of $R$. Then the quotient ring $R/A$ is a field if and only if $A$ is maximal. 
\end{prop}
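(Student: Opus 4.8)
The plan is to prove both implications directly, in each case exploiting the correspondence between cosets in $R/A$ being invertible and certain ideals of $R$ equalling all of $R$, with Lemma \ref{lemma for R/A is a field iff A is maximal ideal} as the workhorse. Throughout I will use that, since $R$ is a commutative ring with unity, the quotient $R/A$ is automatically a commutative ring with unity $1 + A$; the only thing at stake is whether every nonzero element is a unit, and whether $R/A$ is the zero ring (equivalently, whether $A$ is a proper ideal).

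For the forward direction, I would assume $R/A$ is a field. Since a field has $0 \neq 1$, we get $1 + A \neq 0 + A$, so $1 \notin A$, whence $A \neq R$ is proper. Now let $B$ be any ideal with $A \subseteq B \subseteq R$ and $B \neq A$, and pick $b \in B \setminus A$. Then $b + A$ is a nonzero element of the field $R/A$, so it has a multiplicative inverse $c + A$; this says $bc - 1 \in A \subseteq B$. Since $b \in B$ and $B$ is an ideal, $bc \in B$, and therefore $1 = bc - (bc - 1) \in B$, i.e.\ $B$ contains a unit. By Lemma \ref{lemma for R/A is a field iff A is maximal ideal}, $B = R$. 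Hence the only ideals sandwiched between $A$ and $R$ are $A$ and $R$ themselves, which is exactly the definition of $A$ being maximal.

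For the backward direction, I would assume $A$ is maximal (hence proper, so $R/A \neq 0$) and show every nonzero coset is invertible. Take $b \in R$ with $b + A \neq 0 + A$, i.e.\ $b \notin A$, and form the set $B = A + Rb = \{\, a + rb \mid a \in A,\ r \in R \,\}$. A short check shows $B$ is an ideal of $R$: it is closed under addition, and for $s \in R$ we have $s(a + rb) = sa + (sr)b \in B$ since $sa \in A$. Clearly $A \subseteq B$ (take $r = 0$), and $b = 0 + 1\cdot b \in B$, so $B \supsetneq A$ because $b \notin A$. Maximality of $A$ then forces $B = R$, so $1 \in B$; write $1 = a + rb$ with $a \in A$, $r \in R$. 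Reducing mod $A$ gives $(r + A)(b + A) = rb + A = (1 - a) + A = 1 + A$, so $r + A$ inverts $b + A$. Thus $R/A$ is a field.

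I do not anticipate a genuine obstacle here; the result is essentially bookkeeping. The one point that deserves care is the construction of the ideal $B = A + Rb$ in the backward direction and the verification that it strictly contains $A$ (this is where $b \notin A$ is used), together with the clean invocation of Lemma \ref{lemma for R/A is a field iff A is maximal ideal} in the forward direction to conclude $B = R$ from $1 \in B$. I would also make explicit at the outset that commutativity of $R$ is what makes $R/A$ commutative, so that "field" is the right target notion.
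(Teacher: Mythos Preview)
Your proposal is correct and follows essentially the same route as the paper: in both directions you use the ideal $B$ (an arbitrary ideal strictly above $A$ in the forward direction, and $A + Rb$ in the backward direction), produce $1 \in B$ via the inverse of $b + A$ or via maximality, and invoke Lemma~\ref{lemma for R/A is a field iff A is maximal ideal}. Your write-up is in fact slightly more careful than the paper's, since you make explicit that $A$ is proper in the forward direction and that $B \supsetneq A$ in the backward direction.
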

\begin{proof}
    We begin by proving the forward direction. Let the quotient ring $R/A$ be a field. Let $B$ be an ideal of $R$ such that $A\subseteq B$. Furthermore, consider some $b\in B$ but $b\not\in A$. Then $b+A$ is a nonzero element of the field $R/A$, and by the definition of a field, there must exist another element $c+A$ such that $(b+A)(c+A)=1+A$, or the multiplicative identity of $R/A$. Since $b\in B$, it must be true by the definition of an ideal that any multiple $bc\in B$. We then have $1+A=(b+A)(c+A)=bc+A$. Therefore $1-bc\in A\subseteq B$, so $1-bc+bc=1\in B$. By Lemma \ref{lemma for R/A is a field iff A is maximal ideal}, we have that $A$ is a maximal ideal. 

    \indent We now prove the backward direction. Suppose that $A$ is a maximal ideal and there exists some $b\in R$ but $b\not\in A$. To ensure that the quotient ring $R/A$ is a field, we need only show that $b+A$ has a multiplicative inverse because the other properties of a field are trivial. Consider the set $B=\{ br+a|r\in R, a\in A \}$. We can show that $B$ is an ideal of $R$ as follows. First, to prove the first condition for $B$ to be an ideal, consider distinct $r,a\in R$ such that we have two elements $br_{1}+a_{1}$ and $br_{2}+a_{2}$. Then we can see $br_{1}+a_{1}+br_{2}+a_{2}=b(r_{1}+r_{2})+(a_{1}+a_{2})\in B$ by the construction of $B$. Next, consider some element $r'\in R$. Then $(br+a)r'=brr'+ar'=b(rr')+r'a\in B$ again by the construction of $B$. Therefore $B$ is an ideal of $R$. Since $A$ was assumed to be maximal, in the construction of $B$ as an ideal of $R$ we then have that $B=R$. Then the unit $1$ is an element of $B$. Let $1=bc+a'$ for some $a'\in A$. Then we may write $1+A=bc+a'+A=bc+A=(b+A)(c+A)$, so we have shown that $b+A$ has a multiplicative inverse, namely $c+A$. 
\end{proof}
\noindent This leads us to the following result about the existence of polynomials with roots in a field. Recall that an \textit{irreducible} polynomial in $k[x]$ for some arbitrary field $k$ is equivalent to the notion of a prime in $\mathbb{Z}$. 
\begin{prop}\label{Proposition: for a field K there exists alpha in K such that f(alpha)=0}
    Let $k$ be an arbitrary field, and also let $f(x)\in k[x]$ be an irreducible polynomial. There exists some field $K$ containing $k$ and an element $\alpha\in K$ such that $\alpha$ is a root of $f(x)$, or $f(\alpha)=0$. 
\end{prop}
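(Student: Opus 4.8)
The plan is to carry out the classical Kronecker construction: take $K=k[x]/(f(x))$ and show it does the job. First I would recall that $k[x]$ is a Euclidean domain (via the degree function, as noted in the remark following the definition of Euclidean domain), hence a PID by Theorem~\ref{Every Euclidean domain is a PID}; in particular the ideal $(f(x))$ is principal and we may speak of the quotient ring $R/A$ with $R=k[x]$ and $A=(f(x))$.

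The key step is to prove that $(f(x))$ is a maximal ideal of $k[x]$, for then Proposition~\ref{R/A is a field iff A is maximal ideal} immediately gives that $K=k[x]/(f(x))$ is a field. To see maximality, suppose $B$ is an ideal with $(f(x))\subseteq B\subseteq k[x]$. Since $k[x]$ is a PID, $B=(g(x))$ for some $g(x)$, and $(f(x))\subseteq (g(x))$ forces $g(x)\mid f(x)$. By irreducibility of $f(x)$, either $g(x)$ is a constant (a unit in $k[x]$), whence $B=k[x]$ by Lemma~\ref{lemma for R/A is a field iff A is maximal ideal}, or $g(x)$ is a constant multiple of $f(x)$, whence $B=(f(x))$. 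Thus $(f(x))$ is maximal, and $K$ is a field.

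It remains to exhibit $k$ inside $K$ and produce the root. I would consider the map $k\to K$ sending $c\mapsto c+(f(x))$; this is a ring homomorphism, and it is injective because its kernel is an ideal of the field $k$ not containing $1$ (as $\deg f\geq 1$ means no nonzero constant lies in $(f(x))$), hence the zero ideal. Identifying $k$ with its image, we regard $k\subseteq K$. Finally, set $\alpha=x+(f(x))\in K$. Writing $f(x)=\sum a_i x^i$ with $a_i\in k$, we compute in $K$ that $f(\alpha)=\sum a_i\alpha^i=\big(\sum a_i x^i\big)+(f(x))=f(x)+(f(x))=0$, so $\alpha$ is the desired root.

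The one place that needs care, and which I expect to be the main obstacle, is the maximality argument: one must use both that $k[x]$ is a PID (so every ideal between $(f)$ and $k[x]$ is principal) and the precise meaning of irreducibility stated just before the proposition, namely that a divisor of $f$ is either a constant or a constant multiple of $f$. The embedding of $k$ and the verification $f(\alpha)=0$ are then routine.
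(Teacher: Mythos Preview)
Your proposal is correct and follows essentially the same Kronecker construction as the paper: form $K=k[x]/(f(x))$, invoke Proposition~\ref{R/A is a field iff A is maximal ideal} via maximality of $(f(x))$, embed $k$ by $c\mapsto c+(f(x))$, and take $\alpha=x+(f(x))$. If anything, your version is slightly more explicit than the paper's, which asserts maximality of $(f(x))$ without spelling out the PID divisor argument you give; your injectivity argument (kernel is a proper ideal of the field $k$, hence zero) is a clean variant of the paper's unit-based reasoning.
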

\begin{proof}
    We showed above that $k[x]$ is a PID. Since $f(x)$ is an irreducible polynomial, the ideal given by $(f(x))$ generated by $f(x)$ is a maximal and principal ideal (while also being a proper ideal), meaning that it is one of the largest ideals of $k[x]$. By Proposition \ref{R/A is a field iff A is maximal ideal}, the quotient ring $k[x]/(f(x))$ is a field since $(f(x))$ is a maximal ideal. Let $K'=k[x]/(f(x))$. This field runs through all $a(x)\in k[x]$ and combines each $a(x)$ with the entire ideal $(f(x))$, thereby generating the group of cosets $\{ a(x)+(f(x)) | a(x)\in k[x] \}$. Let $\phi$ be the homomorphism that maps $k[x]$ to $K'$ by mapping each element of $k[x]$ to its respective coset modulo $(f(x))$. For example, if $a_{1}(x)\in k[x]$ then $\phi$ would take $a_{1}(x)$ to its unique coset in the group of cosets. Now consider the following diagram. 
    \begin{figure}[H]
    \begin{tikzpicture}
        \node (A) at (0,3) {$k[x]$};
        \node (B) at (4,3) {$K'$};
        \node (C) at (0,0) {$k$};
        \node (D) at (4,0) {$\phi(k)$};

        \draw[->] (A) -- (B) node[midway, above] {$\phi$};
        \draw[->] (C) -- (D) node[midway, above] {$\phi$};
        \draw (0,2.5) -- (0,0.5);
        \draw (4,2.5) -- (4,0.5);
    \end{tikzpicture}
    \caption{Illustration of the homomorphism $\phi$ over two analogues}
    \end{figure}
    \noindent In this diagram, we naturally have the mapping from $k[x]$ to $K'$ described before. However, to simplify the proof, we are also considering the mapping via $\phi$ that similarly maps elements from the previously defined field $k$ to its cosets, defined as $\phi(k)=\{ a+(f(x))|a\in k \}$. It is obvious that $k$ is a subfield of $k[x]$, and it is also clear that since for all $a\in k$ it is also true that all $a(x)\in k[x]$ with coefficients $a$, $\phi(k)$ is a subfield of $K'$. 
    
    \indent We claim that $\phi(k)$ is isomorphic to $k$. To prove this, we need to show that the mapping via $\phi$ is injective and that it preserves the operation. It is clear that the operation is preserved due to the fact that we are transitioning between $k$ and the group of cosets with respect to the ideal $(f(x))$. We need only show that the map is injective. Consider some $a\in k$. If $\phi(a)=0$, then $a\in (f(x))$ because this means that for all $a$, it is true that $a+(f(x))=0$, or that $a=-(f(x))$, namely that $a\in (f(x))$. Suppose that $a\neq 0$. Therefore $a$ must be a unit and cannot be an element of $(f(x))$ as that would contradict the construction of the ideal. Therefore $a=0$. Thus, if $\phi(a)=0$, then $a=0$, satisfying the condition for $\phi$ to be an injection. 

    \indent Since $\phi$ is an isomorphism attached to $k$, we can identify $k$ with $\phi(k)$ instead, meaning that we are now considering $K'$ to be an extension of $k$ using the isomorphism via $\phi$. In other words, $K'$ contains a subfield, namely the aforementioned $\phi(k)$, that is isomorphic to $k$. Therefore we can relabel $K'$ as $K$ containing $k$, as this is the field that we desired to construct.

    \indent To complete the proof, we wish to bring this to context in polynomials. Let $\alpha$ be the coset of $x$ in $K$, namely the coset $\phi(x)=x+(f(x))=\alpha$. Then $0=\phi(f(x))=f(\phi(x))=f(\alpha)$. The second and third equivalence can be seen to be true by considering a simple example. For the sake of simplicity, we consider the monic polynomial $x^{2}\in k[x]$. After evaluating each composition, we see that $\phi(f(x))=f(\phi(x))=x^{2}+(x^{4})$. The equivalence can be generalized by considering the general form of a polynomial $a(x)\in k[x]$ with $\deg(a(x))=n$ and evaluating each composition accordingly.
    
    \indent Therefore $\alpha$ is a root of $f(x)$ in $K$. 
\end{proof}

\noindent In the following, we denote this field $K$ exactly as $k(\alpha)$, where $k$ is an arbitrary field. Let $k[\alpha]$ denote the ring of polynomials in $\alpha$ with coefficients belonging to the field $k$. We have the following. The $\alpha$ described below is exactly the roots of $f(x)$. 

\begin{prop}\label{Proposition: vector space basis for k(alpha) over k}
    The elements $1,\alpha,\alpha^{2},\ldots,\alpha^{n-1}$ form a vector space basis for the finite-dimensional vector space $k(\alpha)$ over $k$, where $k$ is the same arbitrary field and $n$ is the degree of $f(x)$ described in the previous proposition. 
\end{prop}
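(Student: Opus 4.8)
The plan is to establish the two defining properties of a vector space basis — spanning and linear independence — by working through the identification $k(\alpha) = k[x]/(f(x))$ set up in Proposition~\ref{Proposition: for a field K there exists alpha in K such that f(alpha)=0}, under which $k$ is identified with its isomorphic image $\phi(k)$ and $\alpha$ is the coset $x + (f(x))$, so that $\alpha^{i}$ is the coset $x^{i} + (f(x))$ for each $i$.

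First I would check spanning. An arbitrary element of $k(\alpha)$ is a coset $g(x) + (f(x))$ with $g(x)\in k[x]$. Since $k[x]$ is a Euclidean domain, the division algorithm gives $g(x) = q(x)f(x) + r(x)$ with $r(x)=0$ or $\deg r(x) \leq n-1$; then $g(x)+(f(x)) = r(x)+(f(x))$, and writing $r(x) = c_{0} + c_{1}x + \cdots + c_{n-1}x^{n-1}$ with $c_{i}\in k$ shows this coset equals $c_{0} + c_{1}\alpha + \cdots + c_{n-1}\alpha^{n-1}$. Hence $1,\alpha,\ldots,\alpha^{n-1}$ span $k(\alpha)$ over $k$.

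Next I would check linear independence. Suppose $c_{0} + c_{1}\alpha + \cdots + c_{n-1}\alpha^{n-1} = 0$ in $k(\alpha)$ with all $c_{i}\in k$. Lifting to $k[x]$, this says the polynomial $c(x) = c_{0} + c_{1}x + \cdots + c_{n-1}x^{n-1}$ lies in the ideal $(f(x))$, i.e.\ $f(x)\mid c(x)$. But $\deg c(x) \leq n-1 < n = \deg f(x)$, so the only multiple of $f(x)$ of degree below $n$ is the zero polynomial; thus $c(x)=0$ and every $c_{i}=0$. Together with the spanning step this makes $\{1,\alpha,\ldots,\alpha^{n-1}\}$ a basis, and in particular $[k(\alpha):k] = n$.

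The step requiring the most care is not any single computation but keeping the identifications honest: one must note that reducing a coset representative modulo $f(x)$ is well defined (any two representatives differ by a multiple of $f(x)$, so they yield the same remainder $r(x)$ by uniqueness in the division algorithm), and one should observe that irreducibility of $f$ is used here only through Proposition~\ref{Proposition: for a field K there exists alpha in K such that f(alpha)=0} to guarantee that $k(\alpha)$ is a field at all — the basis argument itself needs only $\deg f = n$ and the division algorithm. I expect no genuine obstacle; the content is the division algorithm plus a degree count.
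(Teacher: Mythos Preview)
Your argument is correct and complete: the division algorithm gives spanning, and the degree bound gives linear independence, with the identification $k(\alpha)\cong k[x]/(f(x))$ from the preceding proposition doing the bookkeeping. There is nothing to compare against, however, because the paper explicitly omits the proof of this proposition as beyond its scope; you have supplied exactly the standard argument the author chose not to include.
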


\noindent We omit the proof as it lies beyond the scope of this paper.

\indent We briefly introduced field extensions and the degree of a field extension in the preliminary section. Proposition \ref{Proposition: vector space basis for k(alpha) over k} shows that if we wish to find a field extension $[k(\alpha):k]=n$ of degree $n$, it is sufficient to construct an irreducible polynomial $f(x)\in k[x]$ with $\deg(f)=n$. In other words, we need only produce such a polynomial to show that a finite field with prime power order exists. 

\indent As we proceed, we will prove a powerful result, due to Gauss, that there exists an irreducible polynomial of every degree in the polynomial ring obtained by adjoining the finite field $\ZZ/p\ZZ=\mathbb{F}_{p}$ with $x$. This polynomial ring can be denoted with $\ZZ/p\ZZ[x]$, but we will use the notation $\mathbb{F}_{p}[x]$ instead. First, notice that in $\mathbb{F}_{p}[x]$ there are finitely many polynomials of any degree, ranging from $0$ to $p-1$. This is obvious because there are a finite number of combinations of elements in $\mathbb{F}_{p}$ that can form a polynomial. 

\indent In the following, we let $F_{d}(x)$ denote the product of all monic irreducible polynomials in $\mathbb{F}_{p}[x]$. The following result ultimately states that the polynomial $x^{p^{n}}-x$ can be factored into a product of monic irreducible polynomials with respect to degrees that are divisors of $n$. 

\begin{prop} \label{Proposition: factorization of x^{p^{n}}-x}
    \begin{equation*}
        x^{p^{n}}-x = \prod_{d|n}F_{d}(x). 
    \end{equation*}
\end{prop}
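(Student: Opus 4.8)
The plan is to establish the identity by comparing roots and showing both sides are squarefree monic polynomials in $\mathbb{F}_p[x]$ with the same irreducible factors. First I would recall from Proposition~\ref{Proposition: for a field K there exists alpha in K such that f(alpha)=0} (iterated, or via Theorem~\ref{existence of a finite field with p^n elements} together with the structure results) that there is a field $\mathbb{F}_{p^n}$ of order $p^n$, and that by Proposition~\ref{factorization of x^q-x} we have $x^{p^n}-x=\prod_{\alpha\in\mathbb{F}_{p^n}}(x-\alpha)$; in particular the left-hand side is squarefree (it has $p^n$ distinct roots). So it suffices to show that the monic irreducible factors of $x^{p^n}-x$ in $\mathbb{F}_p[x]$ are \emph{exactly} the monic irreducible polynomials whose degree divides $n$, each occurring once.

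The key step is the following claim: a monic irreducible polynomial $g(x)\in\mathbb{F}_p[x]$ of degree $d$ divides $x^{p^n}-x$ if and only if $d\mid n$. For the ``if'' direction, suppose $d\mid n$. Let $\alpha$ be a root of $g$ in an extension field; then $\mathbb{F}_p(\alpha)$ has dimension $d$ over $\mathbb{F}_p$ by Proposition~\ref{Proposition: vector space basis for k(alpha) over k}, hence order $p^d$ by Proposition~\ref{number of elements in a finite field is a power of a prime}, so $\alpha^{p^d}=\alpha$ by Corollary~\ref{first corollary to first proposition in section about finite fields}. Since $d\mid n$, Remark~\ref{Remark: important remark as a corollary to lemma about divisibility} gives $p^d-1\mid p^n-1$, and then $\alpha^{p^n}=\alpha$ as well, so $\alpha$ is a root of $x^{p^n}-x$; because $g$ is the minimal polynomial of $\alpha$ over $\mathbb{F}_p$ (it is irreducible and monic with $g(\alpha)=0$), $g\mid x^{p^n}-x$. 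For the ``only if'' direction, suppose $g\mid x^{p^n}-x$ with a root $\alpha$; then $\alpha$ lies in the field $\mathbb{F}_{p^n}$ of order $p^n$ (being a root of $x^{p^n}-x$, using Corollary~\ref{first corollary to first proposition in section about finite fields}), and $\mathbb{F}_p(\alpha)$ is a subfield of $\mathbb{F}_{p^n}$ of dimension $d$; by the subfield theorem (the dimension of a subfield divides the dimension of the field), $d\mid n$.

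Finally I would assemble these facts. Every monic irreducible $g$ with $\deg g=d\mid n$ divides $x^{p^n}-x$; since $x^{p^n}-x$ is squarefree, distinct such $g$ contribute distinct factors and each appears to the first power, so $\prod_{d\mid n}F_d(x)$ divides $x^{p^n}-x$. Conversely, every monic irreducible factor of $x^{p^n}-x$ has degree dividing $n$, so $x^{p^n}-x$ divides $\prod_{d\mid n}F_d(x)$. Both polynomials are monic, so they are equal. The main obstacle is the ``only if'' half of the claim, which relies on the subfield-dimension divisibility ($d\mid n$ whenever $\mathbb{F}_{p^d}\subseteq\mathbb{F}_{p^n}$) proved in the structure-of-finite-fields section; I would cite that theorem directly rather than reprove it. A secondary point of care is knowing that an irreducible polynomial with a root $\alpha$ is the minimal polynomial of $\alpha$, so that divisibility of $x^{p^n}-x$ by $(x-\alpha)$ in an extension forces divisibility by $g$ back in $\mathbb{F}_p[x]$ — this is standard and follows from the division algorithm in $\mathbb{F}_p[x]$.
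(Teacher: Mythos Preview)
Your argument is mathematically sound, but in the logical order of this paper it is \emph{circular}. Proposition~\ref{Proposition: factorization of x^{p^{n}}-x} is one of the steps used to prove Theorem~\ref{existence of a finite field with p^n elements} (via Corollaries~\ref{Corollary: formula for number of monic irreducibles of deg n} and the count $N_n>0$). You invoke Theorem~\ref{existence of a finite field with p^n elements} to obtain $\mathbb{F}_{p^n}$, both for squarefreeness and for the ``only if'' direction where you embed a root $\alpha$ into $\mathbb{F}_{p^n}$ and appeal to the subfield-dimension theorem. Your alternative of ``iterating Proposition~\ref{Proposition: for a field K there exists alpha in K such that f(alpha)=0}'' to build a splitting field does not escape this: to know that the roots of $x^{p^n}-x$ form a field of order $p^n$ inside that splitting field is precisely the content of the existence theorem.

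The paper's proof avoids this by never touching $\mathbb{F}_{p^n}$. Squarefreeness is obtained by the formal-derivative trick: if $f(x)^2\mid x^{p^n}-x$ then differentiating gives $f(x)\mid -1$, a contradiction. For the key biconditional, the paper works only in $K=\mathbb{F}_p(\alpha)$, which exists by Proposition~\ref{Proposition: for a field K there exists alpha in K such that f(alpha)=0} and has order $p^d$. For the ``only if'' half, instead of embedding $\alpha$ into $\mathbb{F}_{p^n}$, one shows (using Proposition~\ref{binomial theorem analogue for finite field with characteristic p} on the basis $1,\alpha,\dots,\alpha^{d-1}$) that \emph{every} element of $K$ satisfies $x^{p^n}=x$; since $x^{p^d}-x=\prod_{\beta\in K}(x-\beta)$, this forces $x^{p^d}-x\mid x^{p^n}-x$, and then Lemma~\ref{divisibility of polynomials if and only if l|m} and Remark~\ref{Remark: important remark as a corollary to lemma about divisibility} give $d\mid n$. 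Your ``if'' direction matches the paper's; it is the squarefreeness and the ``only if'' direction that you should rework to stay within what has been proved so far.
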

\begin{proof}
    First, we must prove that the product $F_{d}(x)$ contains only unique monic irreducible polynomials. We do this by supposing that if some arbitrary monic irreducible $f(x)$ divides $x^{p^{n}}-x$, then $f(x)^{2}$ cannot divide $x^{p^{n}}-x$. We show this as follows. Suppose $f(x)^{2}$ does indeed divide $x^{p^{n}}-x$. Then there must exist another monic irreducible $g(x)$ such that $x^{p^{n}}-x = f(x)^{2}g(x)$. Differentiating each side with respect to $x$, we obtain
    \begin{equation*}
        p^{n}x^{p^{n}-1}-1 = 2f(x)f'(x)g(x)+f(x)^{2}g'(x).
    \end{equation*}
\noindent Since $\mathbb{F}_{p}[x]$ has characteristic $p$, we set $p=0$. Therefore 
\begin{equation*}
    -1 = 2f(x)f'(x)g(x)+f(x)^{2}g'(x) = f(x)[2f'(x)+f(x)g'(x)].
\end{equation*}
\noindent This shows that $f(x)|1$, which, when $f(x)$ is a monic irreducible, is impossible. Therefore $f(x)^{2}\nmid x^{p^{n}}-x$. 

\indent We now only need to show that if $f(x)$ is a monic irreducible polynomial with $\deg(f)=d$, then it divides $x^{p^{n}}-x$ if and only $d|n$. Let $K=\ZZ/p\ZZ(\alpha)$ be the field mentioned earlier, where $\alpha$ is a root of $f(x)$ as described in Proposition \ref{Proposition: for a field K there exists alpha in K such that f(alpha)=0}. Since $\deg(f)=d$, we know that $K$ has order $p^{d}$ by Proposition \ref{number of elements in a finite field is a power of a prime}. Therefore all elements of $K$ are roots of $x^{p^{d}}-x$, i.e. satisfy the polynomial equation $x^{p^{d}}-x=0$. 

\indent We first prove the forward direction. Assume, WLOG, that $f(x)|x^{p^{n}}-x$, or that there exists another monic irreducible $g(x)\in \mathbb{F}_{p}[x]$ that divides $x^{p^{n}}-x$. Then we need to show that $\alpha^{p^{n}}=\alpha$ for some arbitrary root $\alpha\in K$. Suppose that $\alpha_{1}=a_{1}\alpha_{1}^{d-1}+a_{2}\alpha_{1}^{d-2}+\cdots+a_{d-1}\alpha_{1}+a_{d}$ is some arbitrary element of $K$. Then, plugging into the equivalence $\alpha^{p^{n}}=\alpha$ for $\alpha$, by Proposition \ref{binomial theorem analogue for finite field with characteristic p}, 
\begin{align*}
    (a_{1}\alpha_{1}^{d-1}+a_{2}\alpha_{1}^{d-2}+\cdots+a_{d-1}\alpha_{1}+a_{d})^{p^{n}} & = a_{1}(\alpha_{1}^{p^{n}})^{d-1} + a_{2}(\alpha_{1}^{p^{n}})^{d-2} + \cdots a_{d-1}(\alpha_{1}^{p^{n}}) + a_{d} \\ 
    & = a_{1}\alpha_{1}^{d-1}+a_{2}\alpha_{1}^{d-2}+\cdots+a_{d-1}\alpha_{1}+a_{d}.
\end{align*}
\noindent Therefore every element of $K$ satisfies the polynomial equation $x^{p^{n}}-x=0$. By the construction of $K$, its elements satisfy the polynomial equation $x^{p^{d}}-x=0$, so it must also be true that $x^{p^{d}}-x|x^{p^{n}}-x$. By Lemma \ref{divisibility of polynomials if and only if l|m}, this implies that $d|n$, thus proving the forward direction.

\indent We now proof the backward direction. Assume that $d|n$. We again have that an arbitrary root $\alpha\in K$ satisfies $\alpha^{p^{d}}=\alpha$. Since $f(x)$ is the monic irreducible with $\alpha$ as a root, we have $f(x)|x^{p^{d}}-x$. By Lemma \ref{divisibility of polynomials if and only if l|m} again, since $d|n$, we have $x^{p^{d}}-x|x^{p^{n}}-x$, and by transitivity $f(x)|x^{p^{n}}-x$, thus proving the proposition. 
\end{proof}

\noindent Now that we have proven that such a factorization of $x^{p^{n}}-x$ exists, we want to prove something about the number of monic irreducibles of a given degree in $\mathbb{F}_{p}[x]$. We let $N_{d}$ denote the number of monic irreducibles of degree $d$. We have the following.

\begin{cor}
    \begin{equation*}
        p^{n} = \sum_{d|n}dN_{d}.
    \end{equation*}
\end{cor}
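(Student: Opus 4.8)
The plan is to deduce this identity simply by comparing degrees on both sides of the factorization established in Proposition \ref{Proposition: factorization of x^{p^{n}}-x}. That proposition gives the polynomial identity
\begin{equation*}
    x^{p^{n}}-x = \prod_{d|n}F_{d}(x),
\end{equation*}
where $F_{d}(x)$ is the product of all monic irreducible polynomials of degree $d$ in $\mathbb{F}_{p}[x]$. Since the degree of a product of polynomials is the sum of the degrees of the factors, and since there are exactly $N_{d}$ monic irreducibles of degree $d$ (each contributing $d$ to the degree), we have $\deg F_{d}(x) = dN_{d}$.

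First I would observe that $\deg(x^{p^{n}}-x) = p^{n}$, since the leading term is $x^{p^{n}}$ and $p^{n} \geq 2 > 1$. Next I would compute the degree of the right-hand side: because the product runs over the divisors $d$ of $n$ and the factors $F_{d}(x)$ have pairwise disjoint sets of irreducible factors (so no cancellation or overlap affects the degree count), we get
\begin{equation*}
    \deg\left(\prod_{d|n}F_{d}(x)\right) = \sum_{d|n}\deg F_{d}(x) = \sum_{d|n}dN_{d}.
\end{equation*}
Equating the two degree computations yields $p^{n} = \sum_{d|n}dN_{d}$, as desired.

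There is essentially no obstacle here; the corollary is an immediate bookkeeping consequence of Proposition \ref{Proposition: factorization of x^{p^{n}}-x}. The only point requiring any care is to note that $\deg F_{d}(x) = dN_{d}$ follows from the definition of $N_{d}$ as the count of monic irreducibles of degree $d$, together with additivity of degree under multiplication of polynomials over the integral domain $\mathbb{F}_{p}[x]$; this also implicitly uses that distinct monic irreducibles of the same degree are genuinely distinct factors, which is built into the definition of $F_{d}(x)$ and the uniqueness part of the factorization in the earlier proposition.
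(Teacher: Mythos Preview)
Your proposal is correct and follows exactly the same approach as the paper: equate degrees on both sides of the factorization $x^{p^{n}}-x = \prod_{d\mid n}F_{d}(x)$ from Proposition~\ref{Proposition: factorization of x^{p^{n}}-x}, using that $\deg F_{d}(x)=dN_{d}$. Your write-up is simply a more detailed version of the paper's one-sentence proof.
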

\begin{proof}
    We equate the degrees of the \textit{LHS} and \textit{RHS} of Proposition \ref{Proposition: factorization of x^{p^{n}}-x}. Since the degree of the \textit{RHS} is the sum of the number of monic irreducibles of degree $d$ multiplied by each divisor of $n$, the result follows. 
\end{proof}

\noindent This gives us the following due to Möbius Inversion. 
\begin{cor}\label{Corollary: formula for number of monic irreducibles of deg n}
    \begin{equation*}
        N_{n} = \frac{1}{n}\sum_{d|n}\mu\bigg(\frac{n}{d}\bigg)p^{d}. 
    \end{equation*}
\end{cor}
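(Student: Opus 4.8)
The plan is to apply Möbius inversion (Theorem \ref{Theorem: Mobius Inversion}) directly to the identity $p^{n} = \sum_{d|n} d N_{d}$ established in the previous corollary. First I would set $f(d) = d N_{d}$, which is an arithmetic function in $d$, and observe that the previous corollary is precisely the statement that the summatory function of $f$ is $F(n) = \sum_{d|n} f(d) = p^{n}$. In other words, the closed form $p^{n}$ plays the role of $F$ in the hypotheses of Theorem \ref{Theorem: Mobius Inversion}.

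Then Möbius inversion yields $f(n) = \sum_{d|n} \mu(d) F(n/d)$, that is, $n N_{n} = \sum_{d|n} \mu(d)\, p^{n/d}$. Since $d \mapsto n/d$ is a bijection of the set of positive divisors of $n$ onto itself, I would reindex the sum by replacing $d$ with $n/d$, obtaining $n N_{n} = \sum_{d|n} \mu(n/d)\, p^{d}$. Dividing both sides by $n$ gives the claimed formula $N_{n} = \frac{1}{n}\sum_{d|n}\mu\!\left(\frac{n}{d}\right)p^{d}$.

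There is no substantive obstacle here; the only thing to be careful about is the bookkeeping, namely correctly matching $f$ and its summatory function to the hypotheses of Theorem \ref{Theorem: Mobius Inversion} and performing the divisor substitution accurately. (One could optionally remark that, since $N_{n}$ is manifestly a nonnegative integer, the right-hand side is automatically a nonnegative integer, and for large enough $n$ it is positive, which re-proves the existence of an irreducible polynomial of degree $n$ over $\mathbb{F}_{p}$ needed for Theorem \ref{existence of a finite field with p^n elements}.)
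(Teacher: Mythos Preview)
Your proof is correct and follows essentially the same route as the paper: set $f(d)=dN_d$ with summatory function $F(n)=p^n$, apply Theorem \ref{Theorem: Mobius Inversion}, and divide by $n$. You are simply more explicit than the paper about the divisor reindexing $d\leftrightarrow n/d$ needed to pass from $\sum_{d|n}\mu(d)p^{n/d}$ to $\sum_{d|n}\mu(n/d)p^{d}$, which the paper's proof glosses over.
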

\begin{proof}
    We use Theorem \ref{Theorem: Mobius Inversion}. Let the arithmetic function $f(n)=nN_{n}$ and let its summatory function be $F(n)=p^{n}$. By the inversion formula, we can write this as 
    \begin{align*}
        nN_{n} & = \sum_{d|n}\mu\bigg(\frac{n}{d}\bigg)p^{d} \\
        N_{n} & = \frac{1}{n}\sum_{d|n}\mu\bigg(\frac{n}{d}\bigg)p^{d}.
    \end{align*}
\end{proof}

\noindent This gives us the following due to Gauss.

\begin{prop}
    For each integer $n\geq 1$, there exists an irreducible polynomial of degree $n$ in $\mathbb{F}_{p}[x]$. 
\end{prop}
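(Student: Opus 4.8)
The plan is to read off the desired existence from the counting formula already in hand. Let $N_n$ denote the number of monic irreducible polynomials of degree $n$ in $\mathbb{F}_p[x]$. By Corollary \ref{Corollary: formula for number of monic irreducibles of deg n},
\[
N_n = \frac{1}{n}\sum_{d\mid n}\mu\!\left(\frac{n}{d}\right)p^d,
\]
and since $N_n$ is a nonnegative integer, it suffices to show $nN_n > 0$ for every $n \geq 1$; then $N_n \geq 1$, which is exactly the assertion that a monic (hence a fortiori a general) irreducible polynomial of degree $n$ exists in $\mathbb{F}_p[x]$.

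First I would separate out the dominant term. The divisor $d = n$ contributes $\mu(1)p^n = p^n$, so
\[
nN_n = p^n + \sum_{\substack{d\mid n \\ d < n}}\mu\!\left(\frac{n}{d}\right)p^d.
\]
Using $\bigl|\mu(n/d)\bigr| \leq 1$ and the fact that every proper divisor of $n$ lies in $\{1,2,\dots,n-1\}$, the tail sum is bounded in absolute value by $p + p^2 + \cdots + p^{n-1} = \frac{p^n - p}{p-1}$. Since $p \geq 2$, we have $\frac{p^n - p}{p-1} \leq p^n - p < p^n$, and therefore
\[
nN_n \;\geq\; p^n - \frac{p^n - p}{p-1} \;>\; 0.
\]
Hence $N_n \geq 1$.

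There is no genuine obstacle here — the only thing to be careful about is that the Möbius coefficients are sign-indefinite, so no exact evaluation is available and one must be content with the crude geometric bound above; it is also worth noting the case $n=1$ is immediate (the proper-divisor sum is empty and $N_1 = p \geq 2$), so the inequality is never vacuous. Finally, for the surrounding exposition I would observe that this proposition closes the loop on Theorem \ref{existence of a finite field with p^n elements}: given an irreducible $f(x) \in \mathbb{F}_p[x]$ of degree $n$, Proposition \ref{R/A is a field iff A is maximal ideal} makes $\mathbb{F}_p[x]/(f(x))$ a field, and Proposition \ref{Proposition: vector space basis for k(alpha) over k} shows it has dimension $n$ over $\mathbb{F}_p$, hence exactly $p^n$ elements.
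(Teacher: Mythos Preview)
Your proof is correct and follows essentially the same approach as the paper: both isolate the dominant $p^n$ term in the M\"obius-inverted formula for $N_n$ and argue that the remaining terms cannot cancel it. Your geometric-series bound on the tail is in fact more explicit and rigorous than the paper's rather informal remark that ``the first term is $p^n$ and all remaining terms are a prime factor of either $1$ or $-1$.''
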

\begin{proof}
    Expanding the sum on the \textit{RHS} of Corollary \ref{Corollary: formula for number of monic irreducibles of deg n} (and excluding the intermediate terms due to the fact that the divisors are arbitrary for an arbitrary non-prime $n$), we obtain 
    \begin{equation*}
        N_{n} = \frac{1}{n}(p^{n}-\cdots+p\mu(n)).
    \end{equation*}
    \noindent Notice that the expression $(p^{n}-\cdots+p\mu(n))$ is never $0$ since the first term is $p^{n}$ and all remaining terms are a prime factor of either $1$ or $-1$ as given in the definition of the Möbius function. This implies that with respect to the degree $n$, there exists at least $1$ irreducible with that degree. 
\end{proof}

\noindent Since we have proven that there is an irreducible of every degree, we have shown that there exists a finite field with $p^{n}$ elements, thereby proving Theorem \ref{existence of a finite field with p^n elements}. 

\indent We have only provided a brief overview of the algebra and number theory required for the study of finite fields, but existing research and current research on finite fields is highly relevant to many areas of ongoing mathematics research. The study of finite fields is also interesting in its own right. In the next section we give a proof of quadratic reciprocity using finite fields. 

\subsection{Another Proof of the Law of Quadratic Reciprocity}

\noindent The following proof of quadratic reciprocity is an expanded version of a proof of quadratic reciprocity using quadratic Gauss sums due to Hausner published in 1961; see \cite{Hausner1961reciprocity}. While this proof relies heavily on quadratic Gauss sums, the fundamental argument of the proof we present relies largely on the existence of finite fields of prime power order to define a modified quadratic Gauss sum, and we take full advantage of properties of finite fields. The proof is as follows.

\begin{proof}[Proof of Theorem \ref{the law of quadratic reciprocity} using finite fields]
    Consider distinct odd primes $p$ and $q$. Obviously, we have $\gcd(p,q)=1$. Therefore, there exists some integer $n$ such that $q^{n}\equiv 1 \pmod{p}$. For example, $n-1$ might satisfy this congruence, which is a special case of Corollary \ref{Fermat's Little Theorem}. Now let $\mathbb{F}$ be some finite field of dimension $n$ over $\mathbb{Z}/q\mathbb{Z}$ (recall from earlier that we are able to do this as we are treating $\mathbb{F}$ as a vector space for which $\mathbb{Z}/q\mathbb{Z}$ is a scalar field). It is well-known that since $q$ is an odd prime, $\mathbb{Z}/q\mathbb{Z}$ is a field. Therefore the multiplicative group $\mathbb{F}^{*}$ is cyclic and has order $|\mathbb{F}|-1=q^{n}-1$. Since $\mathbb{F}^{*}$ is cyclic, it must have a generator. Let $\gamma$ be one such generator, and let $\lambda = \gamma^{(q^{n}-1)/p}$. In other words, $\lambda$ has order $p$, since $p$ is the least integer such that $\lambda^{p}=1$ by Corollary \ref{Fermat's Little Theorem}.

    \indent We now define an analogue to the quadratic Gauss sum. Let 
    \begin{equation*}
        \tau_{a} = \sum_{t=0,a\in\mathbb{Z}}^{p-1}\bigg( \frac{t}{p} \bigg)\lambda^{at}.
    \end{equation*}
    \noindent Similar to quadratic Gauss sums, we will denote the case when $a=1$ as simply $\tau$. Like the proof of quadratic reciprocity using quadratic Gauss sums in section 4.4 of \cite{relmat2022quadrec}, we will need two identities. Namely,
    \begin{enumerate}
        \item \begin{equation*}
            \tau_{a} = \bigg( \frac{a}{p} \bigg)\tau,
        \end{equation*}
        \item \begin{equation*}
            \tau^{2} = (-1)^{\frac{p-1}{2}}\overline{p}.
        \end{equation*}
    \end{enumerate}
    \noindent In (2), we let $\overline{p}$ be the coset of $p$ in $\mathbb{Z}/q\mathbb{Z}$. We first prove (1). Consider the case where $a\equiv 0 \pmod{p}$. This can clearly be shown to be true since $\sum_{t=0}^{p-1}\bigg(\frac{t}{p}\bigg)\cdot 1=0$. The second case is when $a\not\equiv 0\pmod{p}$. We follow the proof procedure used for Proposition \ref{property 1 of quadratic Gauss sums}, and ultimately we can prove the result by expanding the \textit{RHS} of (1) and proceeding with the same approach as in \cite{relmat2022quadrec}. To prove (2) we follow the proof procedure for Proposition \ref{property 2 of quadratic Gauss sums}. We leave the details of the proof to the reader, which can also be found in \cite{relmat2022quadrec}. The proof involves evaluating the particular sum
    \begin{equation*}
    \sum_{a=0}^{p-1}\tau_{a}\tau_{-a}
\end{equation*}
    \noindent in two different ways. 

    \indent Notice that in (2), $\overline{p}$ denotes the coset of $p$ in $\mathbb{Z}/q\mathbb{Z}$. In other words, it is comprised of all elements $p+q\mathbb{Z}$. Note that if some number is a square modulo $q$, then that is equivalent to stating that the coset consists of square elements. As with the proof of quadratic reciprocity using quadratic Gauss sums, we denote $p^{*} = (-1)^{(p-1)/2}p$. Then we can rewrite (2) as $\tau^{2} = \overline{(-1)^{(p-1)/2}p} = \overline{p^{*}}$. Therefore, the coset $p^{*}$ is a square modulo $q$, i.e. $p^{*}$ is a quadratic residue modulo $q$, so $(\frac{p^{*}}{q}) = 1$. Note that this is satisfied if and only if $\tau\in\mathbb{Z}/q\mathbb{Z}$. By Corollary \ref{first corollary to first proposition in section about finite fields}, this biconditional statement is true if and only if 
    \begin{equation*} \label{Equation: eq 1 for quadrec proof using finite fields}\tag{1}
        \tau^{q}=\tau.
    \end{equation*} Notice further that we are able to use Corollary \ref{first corollary to first proposition in section about finite fields} because obviously $\mathbb{Z}/q\mathbb{Z}\subset \mathbb{F}$. If we evaluate $\tau^{q}$, since all intermediate terms reduce to $0$ modulo $q$,
    \begin{equation*}
        \tau^{q} = \bigg( \sum_{t\in\mathbb{F}_{p}}\bigg( \frac{t}{p} \bigg)\lambda^{t} \bigg)^{q} = \sum_{t\in\mathbb{F}_{p}}\bigg( \frac{t}{p} \bigg)\lambda^{qt} = \tau_{q}.
    \end{equation*}
    \noindent Applying \eqref{Equation: eq 1 for quadrec proof using finite fields}, we can see that this is the same as $\tau_{q} = (\frac{q}{p})\tau$. Clearly, the only case where $\tau_{q}=\tau$ is when $(\frac{q}{p})=1$. If we work our way back to the statement of this biconditional, we have $(\frac{p^{*}}{q})=1$ if and only if $(\frac{q}{p})=1$, i.e. 
    \begin{equation*}
        \bigg( \frac{p^{*}}{q} \bigg) = \bigg( \frac{q}{p} \bigg).
    \end{equation*}
    \noindent As with the standard proof of quadratic reciprocity using quadratic Gauss sums, by Theorem \ref{supplement to quadratic reciprocity}, we can write this equivalence as 
    \begin{align*}
        \bigg(\frac{-1}{q}\bigg)^{\frac{p-1}{2}}\bigg(\frac{p}{q}\bigg) & = \bigg(\frac{q}{p}\bigg) \\
        \bigg(\frac{p}{q}\bigg)(-1)^{\frac{p-1}{2}\cdot \frac{q-1}{2}} & = \bigg(\frac{q}{p}\bigg),
    \end{align*}
    \noindent which is equivalent to the statement of Theorem \ref{the law of quadratic reciprocity}.
\end{proof}

\noindent Many proofs of quadratic reciprocity using Gauss sums exist, and this is one such proof that utilizes few facts from algebraic number theory. In the proof given by Hausner, finite fields are referred to instead as Galois fields and are denoted as $GF(q)$ where $q$ denotes order. For our purposes, as proven in Proposition \ref{number of elements in a finite field is a power of a prime}, we are dealing primarily with $GF(p^{k})$ for $k\in\mathbb{Z}^{+}$. Many similar proofs of quadratic reciprocity published during this period utilize similar notation. 

\section{Multiplicative Characters}
\noindent In this section we study multiplicative characters. The main motivation for studying multiplicative characters is to generalize quadratic residue symbols - which we referred to as Legendre symbols - to higher degree residue symbols. While this section provides a general overview of $n$th degree residue symbols, the case when $n=3$ will be the primary consideration of this paper. 

\subsection{Definitions and Some Basic Results}
\noindent One of the most elementary examples of a multiplicative character is the Legendre symbol $(a/p)$, as it can be thought of as a function of the coset of $a$ modulo $p$ a prime. We define a multiplicative character as follows, where we denote the integers modulo $p$ given as $\mathbb{Z}/p\mathbb{Z}$ by $\mathbb{F}_{p}$ for the sake of simplicity.
\begin{defn}[Multiplicative Character]
    We define a multiplicative character on a field $\mathbb{F}_{p}$ with $p$ elements as some mapping $\chi$ from the multiplicative group $\mathbb{F}_{p}^{*}$ to the nonzero complex numbers that satisfies the property that for all $a,b\in \mathbb{F}_{p}^{*}$,
    \begin{equation*}
        \chi(ab)=\chi(a)\chi(b).
    \end{equation*}
\end{defn}
\noindent Once we are more familiar with the basic definitions of multiplicative characters, we will refer to them as ``characters" instead. One concrete notion of the multiplicative character is the trivial multiplicative character, which naturally has properties that map every element of $\mathbb{F}_{p}^{*}$ to the multiplicative identity; namely, for a mapping $\eps$, we have $\eps(a)=1$ for all $a\in \mathbb{F}_{p}^{*}$. To study multiplicative characters in more depth, it is possible to consider the additive identity in the definition of the trivial and nontrivial multiplicative characters. Namely, if we let some character $\chi\neq\eps$, then we can define $\chi(0)=0$, and $\eps(0)=1$. We now prove the following results about multiplicative characters. 
\begin{prop} \label{basic facts about multiplicative characters 1}
    Let $\chi$ be some multiplicative character and $a\in \mathbb{F}_{p}^{*}$. Then
    \begin{enumerate}
        \item $\chi(1)=1$,
        \item $\chi(a)$ is a $(p-1)$st root of unity,
        \item $\chi(a^{-1})=(\chi(a))^{-1}=\overline{\chi(a)}$ (where a bar denotes conjugation).
    \end{enumerate}
\end{prop}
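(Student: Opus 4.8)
The plan is to derive all three statements directly from the multiplicativity of $\chi$ together with the fact that $\mathbb{F}_{p}^{*}$ is a finite group of order $p-1$. The three parts feed into one another, so I would prove them in the stated order.

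First I would dispatch (1). Applying the defining relation $\chi(ab)=\chi(a)\chi(b)$ with $a=b=1$ gives $\chi(1)=\chi(1)^{2}$. Since $\chi$ takes values in the \emph{nonzero} complex numbers, $\chi(1)\neq 0$, and dividing both sides by $\chi(1)$ yields $\chi(1)=1$. Next, for (2), I would use that $\mathbb{F}_{p}^{*}$ has order $p-1$ — equivalently, Fermat's Little Theorem (Corollary \ref{Fermat's Little Theorem}) — so that $a^{p-1}=1$ for every $a\in\mathbb{F}_{p}^{*}$. Iterating multiplicativity and invoking part (1) gives $\chi(a)^{p-1}=\chi(a^{p-1})=\chi(1)=1$, so $\chi(a)$ is a root of $z^{p-1}-1$, i.e.\ a $(p-1)$st root of unity.

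Finally, for (3), applying multiplicativity to $a$ and $a^{-1}$ gives $\chi(a)\chi(a^{-1})=\chi(aa^{-1})=\chi(1)=1$, hence $\chi(a^{-1})=\chi(a)^{-1}$. To identify this inverse with the complex conjugate I would appeal to part (2): any $(p-1)$st root of unity $z$ lies on the unit circle, so $z\overline{z}=|z|^{2}=1$ and therefore $z^{-1}=\overline{z}$; taking $z=\chi(a)$ closes the chain $\chi(a^{-1})=\chi(a)^{-1}=\overline{\chi(a)}$.

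There is no substantive obstacle here — every step is a one-line manipulation. The only point deserving care is that the last equality in (3) is genuinely analytic rather than formal: it relies on the unit-modulus conclusion of (2), which is why the parts must be established in the given order and not treated as independent.
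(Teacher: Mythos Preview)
Your proposal is correct and mirrors the paper's proof essentially line for line: both deduce $\chi(1)=1$ from $\chi(1)=\chi(1)^2$ and nonvanishing, both get (2) from $a^{p-1}=1$ and multiplicativity, and both obtain $\chi(a^{-1})=\chi(a)^{-1}$ from $\chi(a^{-1})\chi(a)=\chi(1)=1$ before invoking the unit-modulus conclusion of (2) to identify the inverse with the conjugate. Your remark that (3) depends on (2) is exactly the logical structure the paper uses.
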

\begin{proof}
    To prove (1), we have from our definition of $\chi$ that $\chi(1)=\chi(1\cdot 1)=\chi(1)\chi(1)$. Since $\chi$ is a map to nonzero complex numbers, the only value for $\chi(1)$ that satisfies this relation is the complex number $1$, so $\chi(1)=1$. 
    
    \indent For (2), some complex number $a$ is a $(p-1)$st root of unity if $a^{p-1}=1$. Thus, following from (1), we have $1=\chi(1)=\chi(a^{p-1})=(\chi(a))^{p-1}$. Therefore $\chi(a)$ is a $(p-1)$st root of unity.

    \indent Finally, for (3), following again from the definition of $\chi$, we have $1=\chi(1)=\chi(a^{-1}a)=\chi(a^{-1})\chi(a)$. Therefore $\chi(a^{-1})=\chi(a)^{-1}$, and also, since $\chi(a)$ is a $(p-1)$st root of unity, meaning that $\chi(a)$ evaluates to $1$, its modulus must be $1$. From the fact that $x\overline{x}=1$ for some $x\in \mathbb{C}$, we have that $\chi(a)^{-1}=\overline{\chi(a)}$. 
\end{proof}
\noindent Many properties of the Legendre symbol can offer good intuition for many of the following results that we develop regarding multiplicative characters. Said properties were developed more precisely in the preliminary section of \cite{relmat2022quadrec}. 
\begin{prop} \label{proof about sum of elements evaluated by a character prop}
    Let $\chi$ be a multiplicative character. If $\chi\neq\eps$, the trivial multiplicative character, then $\sum_{t\in \mathbb{F}_{p}}\chi(t)=0$. Otherwise, the sum is $p$. 
\end{prop}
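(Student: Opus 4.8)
The plan is to split along the dichotomy in the statement. If $\chi = \eps$, then by the stated conventions $\eps(0) = 1$ and $\eps(a) = 1$ for every $a \in \mathbb{F}_p^*$, so all $p$ summands in $\sum_{t \in \mathbb{F}_p}\eps(t)$ equal $1$ and the sum is $p$; this case is immediate and requires nothing further.

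So suppose $\chi \neq \eps$. First I would use the convention $\chi(0) = 0$ to reduce to the multiplicative group, writing $\sum_{t \in \mathbb{F}_p}\chi(t) = \sum_{t \in \mathbb{F}_p^*}\chi(t) =: S$. Because $\chi$ is not the trivial character, there exists some $a \in \mathbb{F}_p^*$ with $\chi(a) \neq 1$ — this is exactly the content of $\chi \neq \eps$. Multiplying $S$ by $\chi(a)$ and invoking multiplicativity of $\chi$ gives $\chi(a)S = \sum_{t \in \mathbb{F}_p^*}\chi(a)\chi(t) = \sum_{t \in \mathbb{F}_p^*}\chi(at)$.

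The crux is the observation that $t \mapsto at$ is a bijection of $\mathbb{F}_p^*$ onto itself, since $\mathbb{F}_p^*$ is a group and multiplication by $a^{-1}$ furnishes the inverse map. Hence reindexing the last sum by $at$ recovers $S$ itself, so $\chi(a)S = S$, i.e. $(\chi(a) - 1)S = 0$. Since $\chi(a) \neq 1$ by the choice of $a$, we conclude $S = 0$, which is the claim.

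I do not anticipate a genuine obstacle here: the only points requiring care are the bookkeeping of the $t = 0$ term through the $\chi(0) = 0$ convention, and the recognition that ``$\chi \neq \eps$'' is precisely ``there exists $a \in \mathbb{F}_p^*$ with $\chi(a) \neq 1$''. The argument is the standard averaging trick — the multiplicative-group analogue of the fact that a nontrivial character of a finite abelian group has vanishing sum — and the same one-line manipulation does the job.
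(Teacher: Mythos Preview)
Your proof is correct and follows essentially the same approach as the paper: both pick $a\in\mathbb{F}_p^*$ with $\chi(a)\neq 1$, multiply the sum by $\chi(a)$, and use the bijection $t\mapsto at$ to conclude $(\chi(a)-1)S=0$. The only cosmetic difference is that you explicitly strip off the $t=0$ term first via $\chi(0)=0$, whereas the paper leaves the sum over all of $\mathbb{F}_p$ throughout (which works just as well since $a\cdot 0=0$).
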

\begin{proof}
    The last assertion is as follows. Since $t$ runs through all elements of $\mathbb{F}_{p}$, we must have \begin{equation*}
        \sum_{t\in \mathbb{F}_{p}}\chi(t)=\sum_{t\in \mathbb{F}_{p}}\eps(t)=p.
    \end{equation*} 
    \noindent To prove the first assertion, we assume otherwise. Let there exist some $a\in \mathbb{F}_{p}^{*}$ such that $\chi(a)\neq 1$, or $\chi$ does not map $a$ to $1$, hence $\chi$ nontrivial. Let the desired sum be $T=\sum_{t\in \mathbb{F}_{p}}\chi(t)$. Then we may write
    \begin{equation*}
        \chi(a)T=\sum_{t\in \mathbb{F}_{p}}\chi(a)\chi(t)=\sum_{t\in \mathbb{F}_{p}}\chi(at).
    \end{equation*}
    \noindent This equates to $T$ itself as $at$ runs through the exact same number of elements from $\mathbb{F}_p$ as $t$ does, so $\chi(a)T=T$. Then $T(\chi(a)-1)=0$. We stated that necessarily $\chi(a)\neq 1$, so $T=0$, and we are finished.  
\end{proof}
\begin{rem}\label{Remark: characters form a group and special properties}
    An important fact about characters is that they form a group. Necessarily, for two nontrivial characters $\chi$ and $\lambda$, the function $\chi\lambda$ is the map that takes some $a\in \mathbb{F}_{p}^{*}$ to $\chi(a)\lambda(a)$. In order for $\chi\lambda$ to be a character it must be true that the map of the composition is a homomorphism, namely that for all $x,y\in\mathbb{F}_{p}^{*}$, we have $\chi\lambda(xy)=\chi\lambda(x)\chi\lambda(y)$. Another property of the group of characters is that if $\chi$ is some character, then $\chi^{-1}$ is the map that takes some $a\in\mathbb{F}^{*}$ to $\chi(a)^{-1}$, serving as a sort of reverse image mapping. This can be shown to be a character by similarly proving that it is a homomorphism. Finally, it is clear that the identity element of the group is the trivial character $\eps$. 
\end{rem}
\noindent The fact that characters form a group is fundamental to later results. We need the following result. 
\begin{thm} \label{multiplicative group of the integers modulo p is cyclic}
    The multiplicative group of $\mathbb{Z}/p\mathbb{Z}$, namely in our notation, $\mathbb{F}_{p}^{*}$, is cyclic. 
\end{thm}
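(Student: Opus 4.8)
The plan is to reuse, essentially verbatim, the argument of Theorem \ref{multiplicative group of finite field is cyclic}, since $\mathbb{F}_{p}^{*}$ is precisely the multiplicative group of the finite field $\mathbb{Z}/p\mathbb{Z}$ (which is a field because $p$ is prime), a field of order $p$, so that $\mathbb{F}_{p}^{*}$ has order $p-1$. First I would note that by Proposition \ref{factorization of x^q-x} we have $x^{p}-x = \prod_{\alpha\in\mathbb{F}_{p}}(x-\alpha)$, hence $x^{p-1}-1 \mid x^{p}-x$; so for every divisor $d$ of $p-1$ we get $x^{d}-1 \mid x^{p-1}-1 \mid x^{p}-x$, and Corollary \ref{number of roots for f(x) that divides x^q-x} gives that $x^{d}-1$ has exactly $d$ distinct roots in $\mathbb{F}_{p}$. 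Equivalently, the subgroup $\{\,a\in\mathbb{F}_{p}^{*} : a^{d}=1\,\}$ has order exactly $d$ for each $d\mid p-1$.

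Next, for each divisor $d$ of $p-1$ let $\psi(d)$ be the number of elements of $\mathbb{F}_{p}^{*}$ of order exactly $d$. Since every element of $\mathbb{F}_{p}^{*}$ has order dividing $p-1$ (Corollary \ref{Fermat's Little Theorem}), and since all elements of a given order $d$ lie inside the order-$d$ subgroup described above whenever any such element exists, the same reasoning as in Theorem \ref{multiplicative group of finite field is cyclic} yields $\sum_{c\mid d}\psi(c)=d$ for every $d\mid p-1$. Applying Möbius inversion (Theorem \ref{Theorem: Mobius Inversion}) gives $\psi(d)=\sum_{c\mid d}\mu(c)\frac{d}{c}=\phi(d)$; in particular $\psi(p-1)=\phi(p-1)\ge 1$, so $\mathbb{F}_{p}^{*}$ possesses an element of order $p-1$, i.e.\ a generator, and Lemma \ref{result for cyclic groups to prove Theorem 0.3} packages this conclusion.

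The step I expect to be the main obstacle is the bookkeeping that upgrades "$\psi(d)=\phi(d)$" from plausible to rigorous: one must know that whenever $\psi(d)\ne 0$, \emph{all} $d$ solutions of $x^{d}=1$ form a single cyclic group of order $d$, so that exactly $\phi(d)$ of them have order $d$. This is precisely where the field structure of $\mathbb{Z}/p\mathbb{Z}$ is used, via Proposition \ref{most number of roots for f in k[x]} (a degree-$d$ polynomial has at most $d$ roots) combined with the explicit count above, and it is what fails for a general abelian group of order $p-1$. An alternative presentation, avoiding Möbius inversion, is to establish the dichotomy $\psi(d)\in\{0,\phi(d)\}$ directly and then compare $\sum_{d\mid p-1}\psi(d)=p-1=\sum_{d\mid p-1}\phi(d)$ term by term to force equality; I would use whichever is cleaner in context, but both hinge on the same root-counting fact.
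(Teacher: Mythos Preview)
Your proposal is correct and matches the paper's approach exactly: the paper does not give a separate proof but simply states that the argument is nearly identical to that of Theorem \ref{multiplicative group of finite field is cyclic}, which is precisely what you have written out. Your remark about the alternative $\psi(d)\in\{0,\phi(d)\}$ argument is a nice aside, but the Möbius-inversion route you chose is the one the paper uses.
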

    \noindent The proof is nearly identical to the proof of Theorem \ref{multiplicative group of finite field is cyclic}, but simpler due to the fact that we are dealing with the integers modulo $p$. 
\noindent We now show that the group of characters also forms a cyclic group of order $p-1$. 
\begin{thm} \label{the group of characters form a cyclic group of order p-1}
    The group of characters form a cyclic group of order $p-1$. Furthermore, for some $a\in\mathbb{F}_{p}^{*}$ with $a\neq 1$, then there exists a character $\chi$ such that $\chi(a)\neq 1$. 
\end{thm}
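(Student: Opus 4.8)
The plan is to pin down a character completely by its value on a single generator. By Theorem \ref{multiplicative group of the integers modulo p is cyclic} the group $\mathbb{F}_p^*$ is cyclic of order $p-1$; fix a generator $g$. First I would observe that any character $\chi$ is determined by $\chi(g)$, since $\chi(g^k) = \chi(g)^k$ for every $k$, and that by Proposition \ref{basic facts about multiplicative characters 1}(2) the value $\chi(g)$ must be a $(p-1)$st root of unity. Conversely, for any $(p-1)$st root of unity $\zeta$ one can define $\chi_\zeta$ by $\chi_\zeta(g^k) = \zeta^k$; this is well-defined precisely because $g^k = g^{k'}$ forces $k \equiv k' \pmod{p-1}$ and $\zeta^{p-1} = 1$, and it is visibly multiplicative. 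Hence the assignment $\chi \mapsto \chi(g)$ is a bijection between the group of characters and the group $\mu_{p-1}$ of $(p-1)$st roots of unity; since $\chi\lambda \mapsto \chi(g)\lambda(g)$, it is a group isomorphism. As $\mu_{p-1}$ is cyclic of order $p-1$, so is the group of characters.

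Next I would make the cyclic structure explicit, which also delivers the second assertion. Let $\zeta = e^{2\pi i/(p-1)}$, a primitive $(p-1)$st root of unity, and set $\lambda = \chi_\zeta$, i.e. $\lambda(g^k) = \zeta^k$. Under the isomorphism above $\lambda$ corresponds to $\zeta$, which generates $\mu_{p-1}$, so $\lambda$ generates the character group — confirming cyclicity with an explicit generator in hand. For the final claim, take $a \in \mathbb{F}_p^*$ with $a \neq 1$ and write $a = g^k$ with $1 \le k \le p-2$, so that $p-1 \nmid k$. Then $\lambda(a) = \zeta^k \neq 1$ because $\zeta$ has exact order $p-1$. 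This produces the desired character $\chi = \lambda$.

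The only slightly delicate point — the ``main obstacle'' such as it is — is the well-definedness of $\chi_\zeta$: one must check that the formula $\chi_\zeta(g^k) = \zeta^k$ does not depend on the choice of exponent $k$ representing a given element of $\mathbb{F}_p^*$, which comes down to the two compatible facts that $g$ has order $p-1$ in $\mathbb{F}_p^*$ and $\zeta$ has order dividing $p-1$ in $\mathbb{C}^*$. Once this is noted, the remaining verifications (multiplicativity of $\chi_\zeta$, and that $\chi \mapsto \chi(g)$ respects the group operations of Remark \ref{Remark: characters form a group and special properties}) are immediate, and the counting, cyclicity, and separation statements all follow from the isomorphism with $\mu_{p-1}$.
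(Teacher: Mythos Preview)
Your proposal is correct and follows essentially the same approach as the paper: fix a generator $g$ of $\mathbb{F}_p^*$, observe that a character is determined by its value $\chi(g)$ which must be a $(p-1)$st root of unity, construct the explicit character $\lambda(g^k)=e^{2\pi i k/(p-1)}$, and use it both to generate the character group and to separate any $a\neq 1$. The only organizational difference is that you package the argument as an isomorphism with $\mu_{p-1}$, whereas the paper argues via an upper bound on the order together with the explicit construction of $p-1$ distinct powers of $\lambda$; the content is the same.
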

\begin{proof}
    This theorem asserts that the characters form a cyclic group, so just as before, it is necessary to show that there exists a generator that, when raised to the $(p-1)$st power, yields the identity, namely $\eps$. We first show that the order of the group is $p-1$. By Theorem \ref{multiplicative group of the integers modulo p is cyclic}, $\mathbb{F}_{p}^{*}$ is cyclic. Let some $g\in\mathbb{F}_{p}^{*}$ be a generator. Then every $a\in\mathbb{F}_{p}^{*}$ can be represented as a power of $g$. Then $a=g^{l}$ for some exponent $l$. If $\chi$ is a character on $\mathbb{F}_{p}^{*}$, then it is true that $\chi(a)=\chi(g)^{l}$. What this shows is that the the character $\chi$ is restricted explicitly by the value of $\chi(g)$, the character acting on the generator. By Proposition \ref{basic facts about multiplicative characters 1}, we know that $\chi(g)$ is a $(p-1)$st root of unity. Furthermore, by the definition of prime roots of unity, there are exactly $p-1$ roots. Therefore, the character group has order at most $p-1$.

    \indent Now we proceed to show the existence of a generator on the group of characters to show that it is cyclic. Define some function $\lambda$ as $\lambda(g^{k})=e^{2\pi i(\frac{k}{p-1})}$. The function $\lambda$ is well-defined; this can be verified by considering its properties as a function. Furthermore, $\lambda$ is also a character because it possesses the property that
    \begin{equation*}
        \lambda(g^{k})  = \underbrace{\lambda(g)\cdots \lambda(g)}_{k} = \underbrace{e^{\frac{2\pi i}{p-1}}\cdots e^{\frac{2\pi i}{p-1}}}_{k} = \bigg(e^{\frac{2\pi i}{p-1}}\bigg)^{k} = e^{\frac{2k\pi i}{p-1}},
    \end{equation*}
    \noindent thus satisfying the multiplicative property of the character. If we want to show that the group is cyclic, we need to show that $p-1$ is the smallest integer $n$ such that $\lambda^{n}=\eps$. If $\lambda^{n}=\eps$, then we must have $\lambda^{n}(g)=\eps(g)=1$ for some $g$. Alternatively, we also have $\lambda^{n}(g)=(\lambda(g))^{n}=(e^{\frac{2\pi i}{p-1}})^{n}=e^{\frac{2n\pi i}{p-1}}$. For this to be equivalent to $1$, we must have that $p-1|n$. For some $a$, we have $\lambda^{p-1}(a)=\lambda(a)^{p-1}=\lambda(a^{p-1})=1$, which is only possible if $\lambda^{p-1}=\eps$. Repeating this process, we can show that $\eps,\lambda,\ldots,\lambda^{p-2}$ are distinct, and thus combined with the the fact that the group has a maximum order of $p-1$, we have that the group of characters is cyclic with generator $\lambda$. 

    \indent Finally, to prove the final part, let there be some $a\in\mathbb{F}_{p}^{*}$ with $a\neq 1$. Then $a$ can be represented as a power of the generator $g$, so $a=g^{l}$, with $p-1|l$. Then applying $\lambda$ to $a$ we have $\lambda(a)=\lambda(g^{l})=\lambda(g)^{l}=(e^{\frac{2\pi i}{p-1}})^{l}=\lambda(g)^{l}=e^{\frac{2l\pi i}{p-1}}$. Since $p-1\nmid l$, it must be true that $\lambda(a)\neq 1$, so we are done. 
\end{proof}
\noindent As an analogue to Proposition \ref{proof about sum of elements evaluated by a character prop} and as a result of Theorem \ref{the group of characters form a cyclic group of order p-1}, we can also consider summing over all characters and evaluating each one at a fixed variable from $\mathbb{F}_{p}^{*}$. This gives us the following proposition. 
\begin{prop}
    Let $a\in \mathbb{F}_{p}^{*}$ with $a\neq 1$. Then $\sum_{\chi}\chi(a)=0$ over all characters $\chi$.
\end{prop}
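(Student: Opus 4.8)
The plan is to mimic exactly the argument used for Proposition~\ref{proof about sum of elements evaluated by a character prop}, but with the roles of group elements and characters interchanged. Set $S = \sum_{\chi}\chi(a)$, the sum ranging over all multiplicative characters on $\mathbb{F}_p$. Since $a \neq 1$, the last clause of Theorem~\ref{the group of characters form a cyclic group of order p-1} guarantees the existence of a character $\rho$ with $\rho(a) \neq 1$. This is the one substantive input; everything else is formal.

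Next I would multiply $S$ by $\rho(a)$ and use that characters form a group (Remark~\ref{Remark: characters form a group and special properties}): since the map $\chi \mapsto \rho\chi$ is a bijection of the character group onto itself, as $\chi$ runs over all characters so does $\rho\chi$. Hence
\begin{equation*}
    \rho(a)S = \sum_{\chi}\rho(a)\chi(a) = \sum_{\chi}(\rho\chi)(a) = \sum_{\psi}\psi(a) = S,
\end{equation*}
where in the last step $\psi = \rho\chi$ reindexes the sum. Therefore $\big(\rho(a)-1\big)S = 0$, and since $\rho(a) \neq 1$ we conclude $S = 0$.

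I expect no real obstacle here: the only place care is needed is justifying that $\chi \mapsto \rho\chi$ permutes the finite character group (so that the reindexing of the sum is legitimate), which follows immediately from the group structure established in Remark~\ref{Remark: characters form a group and special properties}, together with the availability of the separating character $\rho$ from Theorem~\ref{the group of characters form a cyclic group of order p-1}. One could alternatively give a more computational proof by writing $a = g^l$ for a generator $g$ of $\mathbb{F}_p^*$ with $(p-1)\nmid l$, parametrizing characters via $\lambda$ as in that theorem, and summing a nontrivial finite geometric series of $(p-1)$st roots of unity; but the group-theoretic argument above is cleaner and parallels the earlier proposition directly.
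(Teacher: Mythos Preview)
Your proposal is correct and matches the paper's own proof essentially line for line: the paper also sets $S=\sum_{\chi}\chi(a)$, invokes Theorem~\ref{the group of characters form a cyclic group of order p-1} to obtain a character (called $\lambda$ there) with $\lambda(a)\neq 1$, computes $\lambda(a)S=\sum_{\chi}\lambda\chi(a)=S$ via the reindexing $\chi\mapsto\lambda\chi$, and concludes $S=0$. Your justification of the reindexing step is, if anything, slightly more explicit than the paper's.
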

\begin{proof}
    Let us denote the sum above with $S=\sum_{\chi}\chi(a)$. Since we assumed that $a\neq 1$, Theorem \ref{the group of characters form a cyclic group of order p-1} asserts that there must exist some other character $\lambda$ such that $\lambda(a)\neq 1$. Then we have 
    \begin{equation*}
        \lambda(a)S = \lambda(a)\sum_{\chi}\chi(a) = \sum_{\chi}\lambda(a)\chi(a) = \sum_{\chi}\lambda\chi(a)
    \end{equation*}
    \noindent by our discussion in Remark \ref{Remark: characters form a group and special properties}. Note that $\lambda\chi$ runs over an equivalent number of characters as $\chi$ from the group of characters, so we can assert that $\sum_{\chi}\lambda\chi(a)=S$ as we defined earlier. Therefore $\lambda(a)S=S$, so $(\lambda(a)-1)S=0$, which since $\lambda(a)\neq 1$, is only possible if $S=0$. 
\end{proof}

\subsection{Gauss Sums}
\noindent As we have mentioned before, the multiplicative character generalizes the notion of the Legendre symbol. Similarly, it also generalizes the notion of a quadratic Gauss sum to the notion of a general Gauss sum. We define the Gauss sum as follows.
\begin{defn}[Gauss Sum]
    Let $\chi$ be some character on $\mathbb{F}_{p}$ and let $a\in\mathbb{F}_{p}$. Let 
    \begin{equation*}
        g_{a}(\chi)=\sum_{t\in\mathbb{F}_{p}}\chi(t)\zeta_{p}^{at},
    \end{equation*}
    \noindent where $\zeta_{p}=e^{2i\pi/p}$ is a $p$th root of unity. We say that $g_{a}(\chi)$ is a Gauss sum on $\mathbb{F}_{p}$ belonging to the character $\chi$. 
\end{defn}
\noindent With this definition, we can see that a each Gauss sum is over a unique character, so we are dealing with sums of a singular character evaluated at all values of $\mathbb{F}_p$ equipped with an additional parametrization of $t$. We examine some basic properties of the Gauss sum.
\begin{lemma} \label{lemma with some important properties of the Gauss sum and its evaluation}
The following are true. 
    \begin{enumerate}
        \item If $a\neq 0$ and $\chi\neq \eps$, then $g_{a}(\chi)=\overline{\chi(a)}g_{1}(\chi)$. 
        \item If $a\neq 0$ and $\chi = \eps$ then $g_{a}(\eps)=0$.
        \item If $a=0$ and $\chi\neq \eps$, then $g_{a}(\chi)=0$.
        \item If $a=0$ and $\chi=\eps$, then $g_{a}(\chi)=p$. 
    \end{enumerate}
\end{lemma}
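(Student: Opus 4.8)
The plan is to handle the four cases essentially independently, since each reduces either to a reindexing of the defining sum $g_a(\chi)=\sum_{t\in\mathbb{F}_{p}}\chi(t)\zeta_{p}^{at}$ or to a sum that has already been evaluated earlier in the paper.

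For (1), assume $a\neq 0$ and $\chi\neq\eps$. Since $a\in\mathbb{F}_{p}^{*}$, the map $t\mapsto at$ is a bijection of $\mathbb{F}_{p}$ onto itself, so I would substitute $u=at$, i.e.\ $t=a^{-1}u$, in the defining sum. Using $\chi(a^{-1}u)=\chi(a^{-1})\chi(u)$ together with Proposition \ref{basic facts about multiplicative characters 1}(3), which gives $\chi(a^{-1})=\overline{\chi(a)}$, and noting that the $u=0$ term contributes nothing on either side because $\chi(0)=0$ for $\chi\neq\eps$, the factor $\overline{\chi(a)}$ pulls out of the sum and what remains is exactly $g_{1}(\chi)$.

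For (2), assume $a\neq 0$ and $\chi=\eps$. Here $\eps(t)=1$ for every $t\in\mathbb{F}_{p}$ (including $t=0$), so $g_{a}(\eps)=\sum_{t\in\mathbb{F}_{p}}\zeta_{p}^{at}$. Since $p\nmid a$, the element $\zeta_{p}^{a}$ is a $p$th root of unity different from $1$, and the geometric-series identity $\sum_{t=0}^{p-1}(\zeta_{p}^{a})^{t}=\frac{(\zeta_{p}^{a})^{p}-1}{\zeta_{p}^{a}-1}=0$ finishes this case. For (3), assume $a=0$ and $\chi\neq\eps$; then $\zeta_{p}^{at}=1$ for all $t$, so $g_{0}(\chi)=\sum_{t\in\mathbb{F}_{p}}\chi(t)$, which is $0$ by Proposition \ref{proof about sum of elements evaluated by a character prop}. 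For (4), assume $a=0$ and $\chi=\eps$; the same reduction gives $g_{0}(\eps)=\sum_{t\in\mathbb{F}_{p}}\eps(t)=p$, again by Proposition \ref{proof about sum of elements evaluated by a character prop} (or directly, since every term is $1$).

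No step here is a serious obstacle; the only point requiring care is the bookkeeping of the $t=0$ term in case (1), where one must invoke the convention $\chi(0)=0$ for nontrivial $\chi$ so that the reindexing is legitimate and the identity $g_{a}(\chi)=\overline{\chi(a)}g_{1}(\chi)$ holds exactly rather than up to a correction term.
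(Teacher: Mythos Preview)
Your proposal is correct and follows essentially the same approach as the paper: the reindexing in (1) via $u=at$ is the mirror image of the paper's trick of multiplying $g_{a}(\chi)$ by $\chi(a)$ and recognizing $\sum_{t}\chi(at)\zeta_{p}^{at}=g_{1}(\chi)$, and parts (2)--(4) are handled identically via the geometric-series identity and Proposition~\ref{proof about sum of elements evaluated by a character prop}. Your remark on the $t=0$ term is a nice point of rigor that the paper leaves implicit.
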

\begin{proof}
    \noindent Let us begin by proving (1). Let $a\neq 0$ and $\chi\neq \eps$. Then 
    \begin{equation*}
        \chi(a)g_{a}(\chi) = \chi(a)\sum_{t\in\mathbb{F}_{p}}\chi(t)\zeta_{p}^{at} = \sum_{t\in\mathbb{F}_{p}}\chi(a)\chi(t)\zeta_{p}^{at} = \sum_{t\in\mathbb{F}_{p}}\chi(at)\zeta_{p}^{at} = g_{1}(\chi).
    \end{equation*}
    \noindent Then $\chi(a)g_{a}(\chi) = g_{1}(\chi)$, so $g_{a}=g_{1}(\chi)\chi(a)^{-1} = \chi(a^{-1})g_{1}(\chi) = \overline{\chi(a)}g_{1}(\chi)$.

    \indent We now prove (2). Let $a\neq 0$ but $\chi=\eps$. Since $\eps$ maps all $a\in\mathbb{F}_{p}$ to $1$, we have 
    \begin{equation*}
        g_{a}(\eps) = \sum_{t\in\mathbb{F}_{p}}\eps(t)\zeta_{p}^{at} = \sum_{t\in\mathbb{F}_{p}}\zeta_{p}^{at}.
    \end{equation*}
    \noindent Recall that $\mathbb{F}_{p}$ is the integers modulo $p$, so $t$ runs through all residue class representatives. Namely, it goes from $t=0$ to $p-1$. Therefore $\sum_{t\in\mathbb{F}_{p}}\zeta_{p}^{at} = \sum_{t=0}^{p-1}\zeta_{p}^{at}$. Since $a\neq 0$, we consider two cases: (a) when $a\equiv 0 \pmod{p}$ and (b) when $a\not\equiv 0\pmod{p}$. Considering (a), if $a\equiv 0 \pmod{p}$, then for some $k\in\mathbb{Z}$, we have $\zeta_{p}^{a} = (e^{2i\pi/p})^{kp} = e^{2ki\pi}=1$ for all values of $k$. Then $\sum_{t=0}^{p-1}(\zeta_{p}^{a})^{t}=1+\cdots+1=p$. We now consider (b). If $a\not\equiv 0\pmod{p}$, then we can evaluate the sum as a finite geometric series. Then
    \begin{equation*}
        \sum_{t=0}^{p-1}\zeta_{p}^{at} = \sum_{t=1}^{p}\zeta_{p}^{at} = \frac{1(1-\zeta_{p}^{ap})}{1-\zeta_{p}^{a}} = \frac{\zeta_{p}^{ap}-1}{\zeta_{p}^{a}-1}.
    \end{equation*}
    \noindent We know that $\zeta_{p}^{ap}=1$ for all $p$ prime, so $\frac{\zeta_{p}^{ap}-1}{\zeta_{p}^{a}-1}=0/(\zeta_{p}^{a}-1)=0$.

    \indent To prove (3), let $a=0$ and $\chi\neq \eps$. Then $g_{0}(\chi)=\sum_{t\in\mathbb{F}_{p}}\chi(t)=0$ by Proposition \ref{proof about sum of elements evaluated by a character prop}.

    \indent To prove (4), let both $a=0$ and $\chi=\eps$. Then $g_{0}(\eps)=\sum_{t\in\mathbb{F}_{p}}\eps(t) = \underbrace{1+\cdots+1}_{p}=p$. 
\end{proof}
\noindent In our proof of (2), we split the evaluation of the sum into two cases with dependence on the value of $a$. This result can be summarized as follows.
\begin{lemma} \label{result about zeta from special property prop about Gauss sums}
    \begin{equation*}
        \sum_{t=0}^{p-1}\zeta_{p}^{at} = \left\{ \begin{array}{ll}
            p, & a\equiv 0\pmod{p}, \\
            0, & a\not\equiv 0\pmod{p}.
        \end{array}
        \right.
    \end{equation*}
\end{lemma}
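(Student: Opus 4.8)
The plan is to split into the two cases distinguished in the statement; in each case the claim reduces to an elementary computation, and indeed this is precisely the content already extracted inside the proof of part (2) of Lemma \ref{lemma with some important properties of the Gauss sum and its evaluation}, so the argument is short. I would present it as a standalone lemma purely for convenience of later citation.

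First I would treat the case $a\equiv 0\pmod p$. Writing $a=kp$ for some $k\in\ZZ$, every summand becomes $\zeta_p^{at}=(e^{2i\pi/p})^{kpt}=e^{2ki\pi t}=1$, so the sum is a sum of $p$ ones and equals $p$. Next I would treat the case $a\not\equiv 0\pmod p$. Here $\zeta_p^{a}\neq 1$, since $\zeta_p=e^{2i\pi/p}$ is a primitive $p$th root of unity and $p\nmid a$. Therefore $\sum_{t=0}^{p-1}\zeta_p^{at}=\sum_{t=0}^{p-1}(\zeta_p^{a})^{t}$ is a finite geometric series with common ratio $\zeta_p^{a}\neq 1$, and the standard formula gives
\[
\sum_{t=0}^{p-1}(\zeta_p^{a})^{t}=\frac{(\zeta_p^{a})^{p}-1}{\zeta_p^{a}-1}=\frac{\zeta_p^{ap}-1}{\zeta_p^{a}-1}.
\]
Since $\zeta_p^{p}=1$, the numerator $\zeta_p^{ap}=(\zeta_p^{p})^{a}=1$ vanishes while the denominator is nonzero, so the sum is $0$, as claimed.

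The only point requiring a moment's care is the assertion $\zeta_p^{a}\neq 1$ when $p\nmid a$, which is exactly where primitivity of the chosen root of unity $\zeta_p=e^{2i\pi/p}$ enters; everything else is routine manipulation. So there is no genuine obstacle here: the statement is a bookkeeping lemma recording the geometric-series dichotomy for orthogonality of additive characters, and it will be invoked repeatedly when evaluating Gauss and Jacobi sums in the sequel.
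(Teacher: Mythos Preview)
Your proof is correct and follows exactly the same route as the paper: the lemma is simply a restatement of the two-case computation (constant summand when $p\mid a$, geometric series when $p\nmid a$) already carried out inside the proof of part~(2) of Lemma~\ref{lemma with some important properties of the Gauss sum and its evaluation}. There is nothing to add.
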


\noindent Suppose we wish to determine the absolute value of the Gauss sum if the character is nontrivial. In prior literature it is possible to compute the value of the quadratic Gauss sum as well as its sign. Though we do not humor the intricacies of that proof as they lie beyond the scope of the paper, we compute the value of the general Gauss sum in Lemma \ref{Lemma: value of the general Gauss sum}. The following simple result is useful in proving the lemma and following results. 
\begin{cor}[Corollary to Lemma \ref{result about zeta from special property prop about Gauss sums}]\label{corollary to Lemma 3.7}
    \begin{equation*}
     p^{-1}\sum_{t=0}^{p-1}\zeta_{p}^{t(x-y)}  = \delta(x,y).
\end{equation*}
\end{cor}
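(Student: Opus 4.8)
The plan is to obtain this as an immediate specialization of Lemma~\ref{result about zeta from special property prop about Gauss sums}. First I would set $a = x - y$, which is an integer, so the lemma applies verbatim: the sum $\sum_{t=0}^{p-1}\zeta_p^{t(x-y)}$ equals $p$ when $x - y \equiv 0 \pmod{p}$ and equals $0$ when $x - y \not\equiv 0 \pmod{p}$. Dividing through by $p$ then gives that $p^{-1}\sum_{t=0}^{p-1}\zeta_p^{t(x-y)}$ equals $1$ precisely when $x \equiv y \pmod{p}$ and $0$ otherwise.

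The last step is simply to recognize that this piecewise value is exactly the definition of the Kronecker delta $\delta(x,y)$ recorded in the preliminary section, so the two sides of the claimed identity agree for all $x,y$. There is essentially no obstacle here, since the statement is a one-line corollary; the only minor point to keep in mind is that $x - y$ may be negative, but $\zeta_p^{t(x-y)}$ is nonetheless well-defined because $\zeta_p$ is a root of unity, and the congruence condition $x - y \equiv 0 \pmod p$ — equivalently $x \equiv y \pmod p$ — is symmetric and unaffected by the sign. This completes the argument.
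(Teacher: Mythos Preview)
Your proposal is correct and matches the paper's approach exactly: the paper's proof simply says ``The proof follows by considering each case and evaluating accordingly,'' which is precisely your substitution $a = x-y$ into Lemma~\ref{result about zeta from special property prop about Gauss sums} followed by division by $p$ and identification with the definition of $\delta(x,y)$.
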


\begin{proof}
    The proof follows by considering each case and evaluating accordingly.
\end{proof}

\subsection{Jacobi Sums}
\noindent The theory of Jacobi sums extends far beyond what we will discuss here, specifically in regard to solving Diophantine equations, but basic properties of the Jacobi sum will be useful later. We define a Jacobi sum as follows.
\begin{defn}[Jacobi Sum]
    Let $\chi$ and $\lambda$ be two characters on $\mathbb{F}_{p}$. Then we define a Jacobi sum over $\chi$ and $\lambda$ to be
    \begin{equation*}
        J(\chi,\lambda) = \sum_{\scriptsize\begin{aligned}a+b&=1 \\[-4pt] a,b &\in \mathbb{F}_{p}\end{aligned}}\chi(a)\lambda(b),
    \end{equation*}
    where $a,b\in \mathbb{F}_{p}$.
\end{defn}
\noindent The following theorem relates Jacobi sums to Gauss sums. 
\begin{thm} \label{basic properties of the jacobi sum and relation to Gauss sum}
    Let $\chi$ and $\lambda$ be characters such that neither is the trivial character $\eps$. Then
    \begin{enumerate}
        \item $J(\eps,\eps)=p$,
        \item $J(\eps,\chi)=0$,
        \item $J(\chi,\chi^{-1})=-\chi(-1)$,
        \item If the composition $\chi\lambda\neq \eps$, then 
        \begin{equation*}
            J(\chi,\lambda) = \frac{g(\chi)g(\lambda)}{g(\chi\lambda)}.
        \end{equation*}
    \end{enumerate}
\end{thm}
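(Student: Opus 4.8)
The plan is to take the four parts in order, dispatching (1)--(3) quickly by reducing them to the orthogonality relation of Proposition~\ref{proof about sum of elements evaluated by a character prop}, and spending the real effort on (4).

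For (1), use the convention fixed above that $\eps(x)=1$ for every $x\in\mathbb{F}_p$ (including $\eps(0)=1$); then $J(\eps,\eps)=\sum_{a+b=1}\eps(a)\eps(b)$ simply counts the $p$ pairs $(a,1-a)$ with $a\in\mathbb{F}_p$, so $J(\eps,\eps)=p$. For (2), again $\eps(a)=1$ for all $a$, so as $a$ runs over $\mathbb{F}_p$ the element $b=1-a$ runs over $\mathbb{F}_p$ and $J(\eps,\chi)=\sum_{b\in\mathbb{F}_p}\chi(b)=0$ by Proposition~\ref{proof about sum of elements evaluated by a character prop}, since $\chi\neq\eps$ and the $b=0$ term contributes nothing because $\chi(0)=0$. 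For (3), in $J(\chi,\chi^{-1})=\sum_{a+b=1}\chi(a)\chi^{-1}(b)$ the terms with $a=0$ or $b=0$ vanish because $\chi(0)=\chi^{-1}(0)=0$; for the remaining pairs with $a,b\in\mathbb{F}_p^{*}$ write $\chi(a)\chi^{-1}(b)=\chi(ab^{-1})=\chi(b^{-1}-1)$ using $a=1-b$. As $b$ ranges over $\mathbb{F}_p^{*}\setminus\{1\}$, the quantity $t:=b^{-1}-1$ ranges bijectively over $\mathbb{F}_p\setminus\{0,-1\}$, so $J(\chi,\chi^{-1})=\sum_{t\in\mathbb{F}_p\setminus\{0,-1\}}\chi(t)=\bigl(\sum_{t\in\mathbb{F}_p^{*}}\chi(t)\bigr)-\chi(-1)=-\chi(-1)$, invoking Proposition~\ref{proof about sum of elements evaluated by a character prop} a last time.

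Part (4) carries the content. I would expand the product of Gauss sums as a double sum, $g(\chi)g(\lambda)=\sum_{x,y\in\mathbb{F}_p}\chi(x)\lambda(y)\zeta_p^{x+y}$, and reorganize the sum by the value $t=x+y$. For $t=0$ the inner sum is $\sum_{x}\chi(x)\lambda(-x)=\lambda(-1)\sum_x(\chi\lambda)(x)=0$, since $\chi\lambda\neq\eps$ and Proposition~\ref{proof about sum of elements evaluated by a character prop} applies. For $t\neq 0$ the key step is the substitution $x=tu$, $y=tv$: because $t\in\mathbb{F}_p^{*}$ this is a bijection on the pairs summing to $t$, the constraint becomes $u+v=1$, and $\chi(x)\lambda(y)=\chi\lambda(t)\,\chi(u)\lambda(v)$ (this even holds when $u$ or $v$ is $0$, as both sides then vanish). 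Hence the inner sum equals $\chi\lambda(t)\,J(\chi,\lambda)$, and therefore $g(\chi)g(\lambda)=J(\chi,\lambda)\sum_{t\neq 0}\chi\lambda(t)\zeta_p^{t}=J(\chi,\lambda)\,g(\chi\lambda)$, the last equality because $\chi\lambda(0)=0$ lets us reinsert the $t=0$ term at no cost. Dividing by $g(\chi\lambda)$ yields the stated formula.

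The one genuinely non-elementary ingredient is that the division in (4) is legitimate, i.e. that $g(\chi\lambda)\neq 0$; this is where I would invoke the value of the general Gauss sum, namely $|g(\psi)|^{2}=p$ for any nontrivial character $\psi$ (the forthcoming Lemma~\ref{Lemma: value of the general Gauss sum}), or equivalently the identity $g(\psi)g(\psi^{-1})=\psi(-1)p$. I expect this to be the only real obstacle; everything else is bookkeeping, and the scaling $x=tu$, $y=tv$ in part (4) --- the one clever move --- is essentially forced once one chooses to index the double sum by $t=x+y$. The remaining thing to be careful about throughout is keeping the conventions $\chi(0)=0$ and $\eps(0)=1$ straight, since in parts (2), (3) and the $t=0$ case of (4) the relevant terms vanish precisely because of them.
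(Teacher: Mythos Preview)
Your proof is correct and follows essentially the same route as the paper: parts (1)--(3) reduce to Proposition~\ref{proof about sum of elements evaluated by a character prop} (your substitution $t=b^{-1}-1$ in (3) is the same change of variable as the paper's $c=a/(1-a)$, since $a=1-b$), and in (4) both you and the paper expand $g(\chi)g(\lambda)$, group by $t=x+y$, kill the $t=0$ slice via $\chi\lambda\neq\eps$, and rescale by $t$ on the $t\neq 0$ slices. Your explicit remark that dividing by $g(\chi\lambda)$ requires $g(\chi\lambda)\neq 0$ (via $|g(\chi\lambda)|^{2}=p$) is a point the paper leaves implicit, so if anything you are slightly more careful.
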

\begin{proof}
    \noindent Note that $x,y\in\mathbb{F}_{p}$ in the following.
    
    \indent (1) follows from the fact that for all $\alpha$ we have $\eps(\alpha)=1$, so that $J(\eps,\eps) = \sum_{a+b=1}\chi(a)\lambda(b) = \sum_{a+b=1}1 = p$ since $\mathbb{F}_{p}$ has order $p$. 

    \indent For (2), we have $J(\eps,\chi) = \sum_{a+b=1}\eps(a)\chi(b) = \sum_{a+b=1}\chi(b)$, which equates to $0$ as an extension of the result in Proposition \ref{proof about sum of elements evaluated by a character prop}.

    \indent For (3), we have 
    \begin{equation*}
        J(\chi,\chi^{-1}) = \sum_{a+b=1}\chi(a)\chi^{-1}(b) = \sum_{\scriptsize\begin{aligned}a+b&=1, \\[-4pt] b &\neq 0 \end{aligned}}\chi(ab^{-1}) = \sum_{a\neq 1}\chi\bigg( \frac{a}{1-a} \bigg).
    \end{equation*}
    \noindent Let $q=\sum_{a\neq 1}\chi(\frac{a}{1-a})$. With the constraint that $c\neq 1$, we can express $a=\frac{c}{1+c}$. As the value of $a$ runs over the entire field $\mathbb{F}_{p}$, with the exception that $a\neq 1$, simultaneously $c$ also varies over $\mathbb{F}_{p}$, with the exception that $c\neq 1$. Therefore, by Proposition \ref{basic facts about multiplicative characters 1} we can take the sum
    \begin{equation*}
        J(\chi,\chi^{-1}) = \sum_{c\neq -1}\chi(c) = \sum_{c}\chi(c)-\sum_{c=-1}\chi(c) = -\chi(-1). 
    \end{equation*}
    \noindent For (4), we have
    \begin{align*}
        \label{eq:1} g(\chi\lambda) = g(\chi)g(\lambda) & =\bigg( \sum_{x}\chi(x)\zeta^{x} \bigg)\bigg( \sum_{y}\lambda(x)\zeta^{y} \bigg) \\ & = \sum_{x,y}\chi(x)\lambda(y)\zeta^{x+y} \\ & =  \sum_{t\in\mathbb{F}_{p}}\bigg(\sum_{x+y=t}\chi(x)\lambda(y)\bigg)\zeta^{t}.  
    \end{align*}
    \noindent We consider two cases for the value of $t$. If $t=0$, then, choosing to sum over $x$ and by the fact that the composition $\chi\lambda\neq \eps$, clearly 
    \begin{equation*}
        \sum_{x+y=0}\chi(x)\lambda(y) = \sum_{x}\chi(x)\lambda(-x) = \sum_{x}\lambda(-1)\chi(x)\lambda(x) = \lambda(-1)\sum_{x}\chi\lambda(x) = 0
    \end{equation*}
    \noindent by Proposition \ref{proof about sum of elements evaluated by a character prop}. In the case that $t\neq 0$, define two new elements $x'$ and $y'$ as $x=tx'$ and $y=ty'$. Then, if we have $x+y=t$, then substituting we have $tx'+ty'=t$, so that $x'+y'=1$. Therefore 
    \begin{align*}
        \sum_{x+y=t}\chi(x)\lambda(y) & = \sum_{x'+y'=1}\chi(tx')\lambda(ty') = \sum_{x'+y'=1}\chi(t)\lambda(t)\chi(x')\lambda(y')  = \sum_{x'+y'=1}\chi\lambda(t)\chi(x')\lambda(y') \\ & = \chi\lambda(t)J(\chi,\lambda).
    \end{align*}
    \noindent If we substitute this into our evaluation of $g(\chi)g(\lambda)$, then we have 
    \begin{equation*}
        g(\chi)g(\lambda) = \sum_{t\in\mathbb{F}_{p}}\chi\lambda(t)J(\chi,\lambda)\zeta^{t} = J(\chi,\lambda)\sum_{t\in\mathbb{F}_{p}}\chi\lambda(t)\zeta^{t} = J(\chi,\lambda)g(\chi\lambda). 
    \end{equation*}
    \noindent Setting this equal to $g(\chi)g(\lambda)$ and dividing by $g(\chi\lambda)$, we have 
    \begin{equation*}
        J(\chi,\lambda) = \frac{g(\chi)g(\lambda)}{g(\chi\lambda)}.
    \end{equation*}
\end{proof}

\noindent Before we can proceed, we require several technical lemmas. We have the following.
\begin{lemma}\label{Lemma: value of the general Gauss sum}
    If $\chi\neq \varepsilon$ is a nontrivial character, then $|g(\chi)|^{2}=p$. 
\end{lemma}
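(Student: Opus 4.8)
The plan is to evaluate the auxiliary sum $\sum_{a=0}^{p-1} g_a(\chi)\overline{g_a(\chi)}$ in two different ways and compare, which is the standard orthogonality argument for Gauss sums. Throughout I would use that $g(\chi)=g_1(\chi)$.

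First I would compute the sum using Lemma \ref{lemma with some important properties of the Gauss sum and its evaluation}. For $a\neq 0$, part (1) gives $g_a(\chi)=\overline{\chi(a)}\,g(\chi)$, and since $\chi(a)$ is a $(p-1)$st root of unity by Proposition \ref{basic facts about multiplicative characters 1} it has modulus $1$, so $|g_a(\chi)|^2=|g(\chi)|^2$. For $a=0$, part (3) gives $g_0(\chi)=0$. Summing over $a\in\mathbb{F}_p$ therefore yields
\[
\sum_{a=0}^{p-1}g_a(\chi)\overline{g_a(\chi)} = (p-1)\,|g(\chi)|^2.
\]

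Next I would expand the same sum directly. Writing $\overline{g_a(\chi)}=\sum_{t\in\mathbb{F}_p}\overline{\chi(t)}\,\zeta_p^{-at}$ and interchanging the order of summation,
\[
\sum_{a=0}^{p-1}g_a(\chi)\overline{g_a(\chi)} = \sum_{s,t\in\mathbb{F}_p}\chi(s)\overline{\chi(t)}\sum_{a=0}^{p-1}\zeta_p^{a(s-t)}.
\]
By Lemma \ref{result about zeta from special property prop about Gauss sums}, the innermost sum equals $p$ when $s\equiv t\pmod p$ and $0$ otherwise, so only the diagonal terms $s=t$ survive and the right-hand side collapses to $p\sum_{s\in\mathbb{F}_p}|\chi(s)|^2$. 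Since $\chi\neq\eps$ we have $\chi(0)=0$, while $|\chi(s)|=1$ for every $s\in\mathbb{F}_p^*$, so this equals $p(p-1)$.

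Equating the two evaluations gives $(p-1)|g(\chi)|^2 = p(p-1)$, hence $|g(\chi)|^2 = p$. I do not expect a genuine obstacle here; the only point requiring care is the bookkeeping — keeping the $a=0$ term separate on one side, keeping the diagonal $s=t$ terms separate on the other, and remembering that $\chi$ vanishes at $0$ but has unit modulus on $\mathbb{F}_p^*$.
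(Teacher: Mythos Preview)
Your proposal is correct and follows essentially the same approach as the paper: both evaluate the auxiliary sum $\sum_{a\in\mathbb{F}_p} g_a(\chi)\overline{g_a(\chi)}$ in two ways, using Lemma~\ref{lemma with some important properties of the Gauss sum and its evaluation} for the first evaluation and the orthogonality relation of Lemma~\ref{result about zeta from special property prop about Gauss sums} (equivalently Corollary~\ref{corollary to Lemma 3.7}) for the second. Your bookkeeping of the $a=0$ term and the diagonal $s=t$ terms is in fact slightly cleaner than the paper's exposition.
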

\begin{proof}
    The proof is very similar to proofs regarding the quadratic Gauss sum discussed in \cite{relmat2022quadrec}. We want to evaluate the sum 
    \begin{equation*}
        \sum_{a\in\mathbb{F}_{p}}g_{a}(\chi)\overline{g_{a}(\chi)}
    \end{equation*}
    \noindent in two different ways. We first want to evaluate the argument of the sum. Assume that $a\neq 0$. By (1) of Lemma \ref{lemma with some important properties of the Gauss sum and its evaluation}, we can write 
    \begin{equation*}
        \overline{g_{a}(\chi)} = \overline{\chi(a^{-1})g(\chi)} = \chi(a)\overline{g(\chi)}.
    \end{equation*}
    \noindent Taking the conjugate, we also have $g_{a}(\chi) = \chi(a^{-1})g(\chi)$. Multiplying,
    \begin{equation*}
    \chi(a)\overline{g(\chi)}\chi(a^{-1})g(\chi) = \overline{g(\chi)}g(\chi) = |g(\chi)|^{2}.
    \end{equation*}
    \noindent Since $\sum_{a\in\mathbb{F}_{p}}$ sums over all elements of the finite field $\mathbb{F}_{p}$ except $a=0$, we consider this quantity $p-1$ times, so 
    \begin{equation*}
        \sum_{a\in\mathbb{F}_{p}}g_{a}(\chi)\overline{g_{a}(\chi)} = (p-1)|g(\chi)|^{2}.
    \end{equation*}
    \noindent Similarly, considering two parameters $x$ and $y$ and writing the argument of the sum as a double sum, we have
    \begin{equation*}
        g_{a}(\chi)\overline{g_{a}(\chi)} = \sum_{x\in\mathbb{F}_{p}}\sum_{y\in\mathbb{F}_{p}}\chi(x)\overline{\chi(y)}\zeta^{ax-ay}.
    \end{equation*}
    \noindent Summing over all elements of $\mathbb{F}_{p}$ and applying Corollary \ref{corollary to Lemma 3.7} we have
    \begin{align*}
        \sum_{a\in\mathbb{F}_{p}}\sum_{x\in\mathbb{F}_{p}}\sum_{y\in\mathbb{F}_{p}}\chi(x)\overline{\chi(y)}\zeta^{ax-ay} & = pp^{-1}\sum_{a\in\mathbb{F}_{p}}\sum_{x\in\mathbb{F}_{p}}\sum_{y\in\mathbb{F}_{p}}\chi(x)\overline{\chi(y)}\zeta^{ax-ay}p \\
        & = p\sum_{x\in\mathbb{F}_{p}}\sum_{y\in\mathbb{F}_{p}}\chi(x)\overline{\chi(y)}\zeta^{ax-ay}\delta(x,y)
    \end{align*}
    \noindent where $\delta(x,y)$ denotes the Kronecker delta. If $x\not\equiv y\pmod{p}$ then the double sum will equate to $0$, so we consider when $x\equiv y\pmod{p}$. If this is true, then the argument of the sum will run over exactly $p-1$ elements, so that 
    \begin{equation*}
        p\sum_{x\in\mathbb{F}_{p}}\sum_{y\in\mathbb{F}_{p}}\chi(x)\overline{\chi(y)}\zeta^{ax-ay}\delta(x,y) = p(p-1).
    \end{equation*}
    \noindent Equating our two evaluations, we have 
    \begin{align*}
        (p-1)|g(\chi)|^{2} & = p(p-1) \\
        |g(\chi)|^{2} & = p,
    \end{align*}
    \noindent so we are done. 
\end{proof}
\noindent It is important to notice that the same result holds when $g(\overline{\chi})$ is considered instead, i.e. $|g(\overline{\chi})|^{2}=p$. This is evident in the following. 
\begin{cor}\label{Corollary to Lemma about value of general Gauss sum}
    \begin{equation*}
        g(\chi)g(\overline{\chi}) = \chi(-1)p.
    \end{equation*}
\end{cor}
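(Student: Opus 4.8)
The plan is to prove the identity for a nontrivial character $\chi$ (consistent with its status as a corollary to Lemma \ref{Lemma: value of the general Gauss sum}, which requires $\chi \neq \eps$) by relating the complex conjugate $\overline{g(\chi)}$ to the Gauss sum $g(\overline{\chi})$, and then invoking $|g(\chi)|^{2} = p$.

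First I would compute $\overline{g(\chi)}$ directly. Since $g(\chi) = \sum_{t \in \mathbb{F}_{p}} \chi(t)\zeta_{p}^{t}$ and $\overline{\zeta_{p}^{t}} = \zeta_{p}^{-t}$, conjugation gives $\overline{g(\chi)} = \sum_{t} \overline{\chi(t)}\,\zeta_{p}^{-t}$. By Proposition \ref{basic facts about multiplicative characters 1}, the numbers $\overline{\chi(t)}$ are exactly the values of the inverse character $\overline{\chi} = \chi^{-1}$ (which is again a character by Remark \ref{Remark: characters form a group and special properties}), so $\overline{g(\chi)} = \sum_{t} \overline{\chi}(t)\,\zeta_{p}^{-t}$. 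Replacing the summation index $t$ by $-t$, which is a bijection of $\mathbb{F}_{p}$, turns this into $\sum_{t} \overline{\chi}(-t)\,\zeta_{p}^{t} = \overline{\chi}(-1)\sum_{t}\overline{\chi}(t)\,\zeta_{p}^{t} = \overline{\chi}(-1)\,g(\overline{\chi})$, where the factoring of $\overline{\chi}(-1)$ is harmless at $t = 0$ since $\overline{\chi}(0) = 0$. Now $\chi(-1)^{2} = \chi(1) = 1$ forces $\chi(-1) = \pm 1$, so it is real and $\overline{\chi}(-1) = \overline{\chi(-1)} = \chi(-1)$; hence $\overline{g(\chi)} = \chi(-1)\,g(\overline{\chi})$.

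Finally I would combine this with Lemma \ref{Lemma: value of the general Gauss sum}: since $|g(\chi)|^{2} = g(\chi)\overline{g(\chi)} = p$, substituting the identity just derived gives $\chi(-1)\,g(\chi)\,g(\overline{\chi}) = p$. Multiplying both sides by $\chi(-1)$ and using $\chi(-1)^{2} = 1$ yields $g(\chi)\,g(\overline{\chi}) = \chi(-1)\,p$, as required.

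There is no serious obstacle here; the only points requiring care are the index substitution $t \mapsto -t$ inside the sum and the observation that $\chi(-1) \in \{1,-1\}$ is real (so conjugating it does nothing), both of which are immediate. The argument also tacitly relies on $\overline{\chi}$ being a multiplicative character and on $\overline{\chi(t)} = \chi(t)^{-1}$, which are recorded in Remark \ref{Remark: characters form a group and special properties} and Proposition \ref{basic facts about multiplicative characters 1} respectively.
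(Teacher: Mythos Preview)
Your proof is correct and follows essentially the same route as the paper: both establish the key identity $\overline{g(\chi)} = \chi(-1)g(\overline{\chi})$ and then combine it with $|g(\chi)|^{2} = p$ (the paper applies the lemma to $\overline{\chi}$ rather than $\chi$, but this is a cosmetic difference). If anything, your derivation of $\overline{g(\chi)} = \chi(-1)g(\overline{\chi})$ via the explicit substitution $t \mapsto -t$ is cleaner than the paper's, which invokes that identity somewhat implicitly.
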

\begin{proof}
    Note first that $\chi(-1)=\overline{\chi(-1)}$ since both values equate to $\pm 1$. Taking the conjugate of the equivalence given in Lemma \ref{Lemma: value of the general Gauss sum}, we have
    \begin{align*}
        \overline{\overline{g(\chi)}}& = \overline{\chi(-1)g(\overline{\chi})} \\ 
        g(\chi) & = \chi(-1)\overline{g(\overline{\chi})} \\ 
        g(\chi)g(\overline{\chi}) & = \chi(-1)\overline{g(\overline{\chi})}g(\overline{\chi}) = \chi(-1)|g(\overline{\chi})|^{2} = \chi(-1)p,
    \end{align*}
    \noindent thereby proving the result.
\end{proof}

\noindent The following result is general for characters, but will be useful when considering the relation between Gauss sums and Jacobi sums.  
\begin{thm} \label{characters of degree n>2 expression in terms of Gauss and Jacobi sum}
    It is well known that there exist an infinite number of primes of the form $p\equiv 1 \pmod{n}$ since all primes are odd. As such, let $\chi$ be a character of order $n>2$. Then
    \begin{equation*}
        g(\chi)^{n} = \chi(-1)p(\chi,\chi)J(\chi,\chi^{2})\cdots J(\chi,\chi^{n-2}).
    \end{equation*}
\end{thm}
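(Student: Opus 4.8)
The plan is to iterate the Gauss--Jacobi relation from Theorem \ref{basic properties of the jacobi sum and relation to Gauss sum}(4), telescoping a product of Jacobi sums, and then close up with the corollary computing $g(\chi)g(\overline{\chi})$. The key observation to set up is that since $\chi$ has exact order $n$, the power $\chi^k$ is the trivial character $\eps$ only when $n \mid k$; in particular $\chi^k \neq \eps$ for every $k$ with $1 \leq k \leq n-1$. This is exactly what is needed to invoke part (4) of Theorem \ref{basic properties of the jacobi sum and relation to Gauss sum} at each stage, since in the relevant products $\chi \cdot \chi^k = \chi^{k+1}$ will always have exponent $k+1 \leq n-1$ and hence be nontrivial.

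First I would rewrite part (4) of Theorem \ref{basic properties of the jacobi sum and relation to Gauss sum} in the cleared form $g(\chi)g(\chi^k) = J(\chi,\chi^k)\, g(\chi^{k+1})$, valid for $1 \leq k \leq n-2$. Then I would run the induction: the case $k=1$ gives $g(\chi)^2 = J(\chi,\chi)\, g(\chi^2)$; multiplying through by $g(\chi)$ and applying the relation with $k=2$ gives $g(\chi)^3 = J(\chi,\chi)J(\chi,\chi^2)\, g(\chi^3)$; and continuing in this way one obtains, for each $m$ with $2 \leq m \leq n-1$,
\begin{equation*}
    g(\chi)^m = J(\chi,\chi)\,J(\chi,\chi^2)\cdots J(\chi,\chi^{m-1})\, g(\chi^m).
\end{equation*}
Taking $m = n-1$ yields $g(\chi)^{n-1} = J(\chi,\chi)\cdots J(\chi,\chi^{n-2})\, g(\chi^{n-1})$, and since $\chi$ has order $n$ we have $\chi^{n-1} = \chi^{-1} = \overline{\chi}$, so $g(\chi^{n-1}) = g(\overline{\chi})$.

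Finally I would multiply both sides by $g(\chi)$ and apply Corollary \ref{Corollary to Lemma about value of general Gauss sum}, which says $g(\chi)g(\overline{\chi}) = \chi(-1)p$, to conclude
\begin{equation*}
    g(\chi)^n = \chi(-1)\,p\,J(\chi,\chi)\,J(\chi,\chi^2)\cdots J(\chi,\chi^{n-2}),
\end{equation*}
as desired. (When $n = 3$ the telescoping is trivial: one only needs the single step $g(\chi)^2 = J(\chi,\chi)g(\chi^2)$ together with $g(\chi)g(\overline\chi) = \chi(-1)p$.) I do not expect a genuine obstacle here; the only point that requires care is the bookkeeping on the exponents---namely verifying at each application of Theorem \ref{basic properties of the jacobi sum and relation to Gauss sum}(4) that the relevant composed character $\chi^{k+1}$ is still nontrivial, which is where the hypothesis that $\chi$ has order exactly $n$ (rather than merely dividing $n$) is used. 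It is also worth remarking that $g(\chi) \neq 0$ by Lemma \ref{Lemma: value of the general Gauss sum}, so no issues arise from the division implicit in the original statement of the Gauss--Jacobi identity.
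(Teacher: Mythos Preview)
Your proposal is correct and follows essentially the same route as the paper: iterate the Gauss--Jacobi relation from Theorem \ref{basic properties of the jacobi sum and relation to Gauss sum}(4) to telescope up to $g(\chi)^{n-1} = J(\chi,\chi)\cdots J(\chi,\chi^{n-2})\,g(\chi^{n-1})$, identify $\chi^{n-1} = \overline{\chi}$, and close with Corollary \ref{Corollary to Lemma about value of general Gauss sum}. Your version is in fact a bit more careful than the paper's in tracking the nontriviality hypothesis on $\chi^{k+1}$ at each step.
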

\begin{proof}
    We can express (4) of Theorem \ref{basic properties of the jacobi sum and relation to Gauss sum} as $J(\chi,\chi)g(\chi\cdot\chi) = g(\chi)g(\chi)$. This gives $g(\chi)^{2} = J(\chi,\chi)g(\chi^{2})$. In the $n=3$ case, multiply both sides by $g(\chi)$ and we have 
    \begin{equation*}
        g(\chi)^{3} = J(\chi,\chi)g(\chi)g(\chi^{2}) = J(\chi,\chi)J(\chi,\chi\cdot\chi)g(\chi^{3}) = J(\chi,\chi)J(\chi,\chi^{2})g(\chi^{3}).
    \end{equation*}
    \noindent We can continue multiplying the \textit{LHS} and \textit{RHS} by $g(\chi)$, so eventually, we have the $(n-1)$th case, so that 
    \begin{equation*}
        g(\chi)^{n-1} = J(\chi,\chi)J(\chi,\chi^{2})\cdots J(\chi,\chi^{n-2})J(\chi,\chi^{n-1}).
    \end{equation*}
    \noindent Notice, however, by the fact that characters form a cyclic group, that $g(\chi)^{n-1}=g(\chi)^{n}g(\chi)^{-1} = g(\chi)^{-1}=g(\overline{\chi})$. Therefore we have $g(\chi)g(\chi^{n-1}) = g(\chi)g(\overline{\chi}) = \chi(-1)p$ by Corollary \ref{Corollary to Lemma about value of general Gauss sum}. Multiplying both sides of the equation above by $g(\chi)$, we have
    \begin{equation*}
        g(\chi)^{n} = \chi(-1)p(\chi,\chi)J(\chi,\chi^{2})\cdots J(\chi,\chi^{n-2}),
    \end{equation*}
    \noindent so we are done. 
\end{proof}
\noindent What follows from this is a corollary concerning the relationship between the cubic Gauss sum and the Jacobi sum. 
\begin{cor} \label{corollary for relation between cubic Gauss sum and jacobi sum}
    Let $\chi$ be the cubic character. Then
    \begin{equation*}
        g(\chi)^{3} = pJ(\chi,\chi). 
    \end{equation*}
\end{cor}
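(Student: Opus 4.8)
This is a direct specialization of Theorem \ref{characters of degree n>2 expression in terms of Gauss and Jacobi sum}. The plan is: first invoke that theorem with $n=3$, then handle the single loose end, namely the value of $\chi(-1)$.

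First I would apply Theorem \ref{characters of degree n>2 expression in terms of Gauss and Jacobi sum} to the cubic character $\chi$ (which has order $n=3>2$). The right-hand side there is $\chi(-1)p\,J(\chi,\chi)J(\chi,\chi^{2})\cdots J(\chi,\chi^{n-2})$; when $n=3$ the factors $J(\chi,\chi^{2}),\dots,J(\chi,\chi^{n-2})$ have indices ranging from $2$ to $n-2=1$, which is an empty range, so the trailing product is vacuous and equals $1$. This already gives
\begin{equation*}
    g(\chi)^{3} = \chi(-1)\,p\,J(\chi,\chi).
\end{equation*}

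It then remains only to show $\chi(-1)=1$, after which substitution finishes the proof. Since $\chi$ has order $3$, the value $\chi(-1)$ is a cube root of unity, so $\chi(-1)^{3}=1$. On the other hand $(-1)^{2}=1$ in $\mathbb{F}_{p}$, so by the multiplicativity of $\chi$ and Proposition \ref{basic facts about multiplicative characters 1}(1) we get $\chi(-1)^{2}=\chi(1)=1$. Combining these, $\chi(-1)=\chi(-1)\cdot\chi(-1)^{2}=\chi(-1)^{3}=1$. (Equivalently: $\chi(-1)$ is simultaneously a square root and a cube root of unity, and $\gcd(2,3)=1$, forcing $\chi(-1)=1$.) Plugging $\chi(-1)=1$ into the displayed identity yields $g(\chi)^{3}=pJ(\chi,\chi)$.

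The main obstacle here is essentially nil — the result is a corollary in the strict sense — so the only things to get right are the empty-product convention in the $n=3$ case and the short argument that $\chi(-1)=1$ for a cubic character. No further computation is needed.
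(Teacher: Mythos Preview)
Your proposal is correct and matches the paper's approach essentially verbatim: both apply Theorem \ref{characters of degree n>2 expression in terms of Gauss and Jacobi sum} at $n=3$ and then dispose of the factor $\chi(-1)$. The only cosmetic difference is in that last step: the paper observes directly that $-1=(-1)^3$ is a cube, so $\chi(-1)=\chi((-1)^3)=\chi(-1)^3=1$, whereas you combine $\chi(-1)^2=1$ with $\chi(-1)^3=1$ via $\gcd(2,3)=1$; both arguments are equally valid.
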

\begin{proof}
    This is a special case of Theorem \ref{characters of degree n>2 expression in terms of Gauss and Jacobi sum}. Take $n=3$. Then, since $-1$ is clearly a cube, namely $\chi(-1)=\chi((-1)^{3})=1$, we have
    \begin{equation*}
        g(\chi)^{3} = \chi(-1)pJ(\chi,\chi) = pJ(\chi,\chi).
    \end{equation*}
\end{proof}
\noindent These results will be of utmost importance later when we seek to prove cubic reciprocity. 

\section{Cubic Reciprocity}
\noindent Whereas quadratic reciprocity resides in $\ZZ$, cubic reciprocity resides in $\ZZ[\omega]$, the Eisenstein integers. In order to state and then prove cubic reciprocity, it is necessary to take a step back and examine characteristics of the ring $\mathbb{Z}[\omega]$. We will begin our examination by looking at the prime elements and units of $\ZZ[\omega]$. 

\subsection{Units and Primes in $\mathbb{Z}[\omega]$}
\noindent We say that some $\omega$ is a cube root of unity, and that the set $\mathbb{Z}[\omega]$ contains complex number elements of the form $\alpha=a+b\omega$ for $a,b\in\mathbb{Z}$. It is in fact true that $\mathbb{Z}[\omega]$ is a ring. In our case, we consider when $\omega=-1/2+i\sqrt{3}/2$. 

\indent We say that the \textit{norm} of $\alpha$, written as $N\alpha$, is the product of $\alpha$ and its conjugate, namely $N\alpha=\alpha\overline{\alpha}=a^{2}-ab+b^{2}$. We may now consider the prime elements and units of $\mathbb{Z}[\omega]$. Note that this is possible explicitly because there inherently exists a notion of unique factorization in $\mathbb{Z}[\omega]$, and therefore we are able to consider the supposed ``building blocks" of the ring. By means of notational convention, we denote $\mathbb{Z}[\omega]=D$. 

\begin{prop} \label{classification of units in Z[omega]}
    Some $\alpha\in D$ is a unit of $D$ if and only if $N\alpha=1$. Furthermore, the units of $D$ are $1,-1,\omega,-\omega,\omega^{2},-\omega^{2}$.
\end{prop}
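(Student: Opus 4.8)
The plan is to prove both directions of the biconditional using multiplicativity of the norm, and then to pin down the unit group by solving the Diophantine equation $N\alpha = 1$ explicitly.

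First I would record the two facts about the norm on which everything rests: that $N$ is multiplicative, $N(\alpha\beta) = N\alpha\, N\beta$, and that $N\alpha$ is a nonnegative integer for every $\alpha\in D$. Multiplicativity is immediate once one writes $N\alpha = \alpha\overline{\alpha} = |\alpha|^{2}$ and invokes $|\alpha\beta| = |\alpha||\beta|$; nonnegativity and integrality come from the explicit formula $N(a+b\omega) = a^{2}-ab+b^{2} = (a-\tfrac{b}{2})^{2}+\tfrac{3}{4}b^{2}$. I would also note that $D$ is closed under complex conjugation: since $\overline{\omega} = \omega^{2} = -1-\omega$, one has $\overline{a+b\omega} = (a-b)-b\omega \in D$. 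This closure remark is what makes the backward direction work.

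For the forward direction, suppose $\alpha$ is a unit with inverse $\beta\in D$; applying multiplicativity to $\alpha\beta = 1$ gives $N\alpha\, N\beta = N(1) = 1$ as an equation in $\mathbb{Z}_{\geq 0}$, which forces $N\alpha = 1$. For the backward direction, suppose $N\alpha = 1$; then $\alpha\overline{\alpha} = 1$ with $\overline{\alpha}\in D$ by the conjugation-closure observation, so $\alpha$ is a unit with inverse $\overline{\alpha}$.

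To produce the explicit list, I would solve $a^{2}-ab+b^{2}=1$ over $\mathbb{Z}$: multiplying by $4$ rewrites this as $(2a-b)^{2}+3b^{2}=4$, which forces $b\in\{-1,0,1\}$, and checking each value of $b$ yields exactly the six pairs $(a,b)$ corresponding to $1,-1,\omega,-\omega,1+\omega,-1-\omega$; using $1+\omega+\omega^{2}=0$ these are precisely $1,-1,\omega,-\omega,\omega^{2},-\omega^{2}$. The only steps needing any care are the conjugation-closure claim and keeping the short Diophantine casework straight; I do not anticipate a genuine obstacle beyond this bookkeeping.
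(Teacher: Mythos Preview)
Your proposal is correct and follows essentially the same approach as the paper: both directions of the biconditional via multiplicativity of the norm, followed by the same Diophantine trick of multiplying $a^{2}-ab+b^{2}=1$ by $4$ to get $(2a-b)^{2}+3b^{2}=4$ and enumerating the solutions. Your extra care in justifying that $\overline{\alpha}\in D$ (conjugation closure of $D$) is a nice touch the paper leaves implicit, but otherwise the arguments coincide.
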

\begin{proof}
    We prove the backward direction first. If $N\alpha=1$, then by the definition of the norm, $\alpha\overline{\alpha}=1$, which implies that $\alpha$ must be a unit. 

    \indent To prove the reverse direction, suppose that $\alpha$ is a unit. By the definition of the unit, this must mean there exists some $\beta\in D$ such that $\alpha\beta=1$. Therefore $N\alpha N\beta=1$. However, since $N\alpha$ and $N\beta$ must be integers, in order for their product to be $1$, they must both be $1$. Therefore $N\alpha=1$. 

    \indent We now determine the units of $D$. Suppose that $\alpha=a+b\omega\in D$ is a unit. Then by definition of the norm we have $1=a^{2}-ab+b^{2}$. We may rewrite this as $4=4a^{2}-4ab+4b^{2}$, or $4=4a^{2}-4ab+b^{2}+3b^{2}$ so $4=(2a-b)^{2}+3b^{2}$. If we observe this Diophantine equation, we can see that there are only two sets of possible solutions. We write them as follows.
    \begin{enumerate}
        \item $2a-b=\pm 1, b=\pm 1$,
        \item $2a-b=\pm 2, b=0$.
    \end{enumerate}
    \noindent We need to solve all 6 possible pairs of equations for $a$ and $b$. The first is $2a-b=1$ and $b=1$. Then $a=b=1$. The second is $2a-b=-1$ and $b=-1$. Then $a=b=-1$. The third is $2a-b=1$ and $b=-1$. Then $a=0$ and $b=-1$. The fourth is $2a-b=-1$ and $b=1$. Then $a=-1$ and $b=1$. The fifth is $2a-b=2$ and $b=0$. Then $a=1$ and $b=0$. The final is $2a-b=-2$ and $b=0$. Then $a=-1$ and $b=0$. Plugging these into the expression for $\alpha$, we have the units $1+\omega,-1-\omega,-\omega,\omega,1,-1$. An identity asserts that $\omega^{2}+\omega+1=0$, so rewriting, we can express the first two units as $-\omega^{2}$ and $\omega^{2}$ respectively. 
\end{proof}
\noindent Now that we have determined the units, we begin to observe characteristics of prime elements in $\mathbb{Z}[\omega]$.
\begin{rem}
     Note that primes in $\mathbb{Z}$ are not necessarily prime in $D$. For example, consider the prime $7$. We can express it as $7=(3+\omega)(2-\omega)=6-3\omega+2\omega-\omega^{2}=6-\omega-\omega^{2}=7$. Therefore, in order to distinguish between integer primes and primes in $D$, we refer to primes in $D$ as primes and primes in $\mathbb{Z}$ as \textit{rational primes}.
\end{rem}

\begin{prop}
    Let $\pi$ be a prime in $D$. Then there is some rational prime $p$ such that $N\pi=p$ or $p^{2}$. If $N\pi=p$ then $\pi$ is not associate to a rational prime, and if $N\pi=p^{2}$ then $\pi$ is associate to a rational prime. 
\end{prop}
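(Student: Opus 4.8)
The plan is to exploit the multiplicativity of the norm together with the fact that a prime $\pi$ divides its own norm $N\pi=\pi\overline{\pi}$.

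First I would record two preliminary facts. Since $\pi$ is a prime of $D$, it is neither zero nor a unit, so by Proposition \ref{classification of units in Z[omega]} we have $N\pi\geq 2$. Moreover the norm is multiplicative: $N(\alpha\beta)=(N\alpha)(N\beta)$, because $N\alpha=\alpha\overline{\alpha}$ and complex conjugation is a ring automorphism of $D$, so $N(\alpha\beta)=\alpha\beta\overline{\alpha\beta}=(\alpha\overline{\alpha})(\beta\overline{\beta})$.

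Next, factor the positive integer $N\pi$ into rational primes, say $N\pi=p_{1}p_{2}\cdots p_{k}$. Since $\pi\mid\pi\overline{\pi}=N\pi=p_{1}\cdots p_{k}$ in $D$, and $\pi$ is prime (so the ideal $(\pi)$ is prime and Euclid's lemma applies), $\pi$ divides some factor $p_{i}=:p$, a rational prime. Write $p=\pi\gamma$ with $\gamma\in D$. Taking norms gives $p^{2}=N(p)=(N\pi)(N\gamma)$. The only factorizations of $p^{2}$ into positive integers are $1\cdot p^{2}$, $p\cdot p$, and $p^{2}\cdot 1$; since $N\pi>1$, we conclude that either $N\pi=p$ (with $N\gamma=p$) or $N\pi=p^{2}$ (with $N\gamma=1$). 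This establishes the first assertion.

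Finally I would dispose of the two cases. If $N\pi=p^{2}$, then $N\gamma=1$, so $\gamma$ is a unit by Proposition \ref{classification of units in Z[omega]}, and therefore $\pi=\gamma^{-1}p$ is associate to the rational prime $p$. If instead $N\pi=p$, suppose toward a contradiction that $\pi$ is associate to some rational prime $q$, say $\pi=uq$ with $u$ a unit; then, using $Nu=1$, we get $p=N\pi=(Nu)(Nq)=q^{2}$, which is impossible since $p$ is prime. Hence $\pi$ is not associate to any rational prime, completing the proof. The only genuinely load-bearing step is the passage ``$\pi$ prime $\Rightarrow$ $\pi\mid p$ for some rational prime $p$,'' which is exactly where the prime (as opposed to merely irreducible) hypothesis on $\pi$ is used.
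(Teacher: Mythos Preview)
Your proof is correct and follows essentially the same route as the paper: use $\pi\mid N\pi$ together with Euclid's lemma to find a rational prime $p$ with $\pi\mid p$, then take norms of $p=\pi\gamma$ to force $N\pi\in\{p,p^{2}\}$, and dispose of the two cases exactly as you do. If anything, you are slightly more explicit than the paper about why ``$\pi$ prime'' is needed at the divisibility step.
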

\begin{proof}
    By the definition of the norm, we must have that $N\pi=\pi\overline{\pi}=n>1$. Clearly, by the fundamental theorem of arithmetic, $n$ is a product of rational primes. Therefore $\pi|p$ for some rational prime $p$. Since $\pi|p$, there exists some $\gamma\in D$ such that $p=\pi\gamma$. Then we may write $N\pi N\gamma=N\pi\gamma=Np=p(p)=p^{2}$. For this equality to be true we must have either $N\pi=p^{2}$ with $N\gamma=1$ or that $N\pi=N\gamma=p$. We consider the first case. If $N\pi=p^{2}$ and $N\gamma=1$, then $\gamma$ must be a unit of $D$, so $\pi$ is associate to $p$. In the second case, suppose that $\pi$ is associate to some other rational prime $q\in\mathbb{Z}$, with $u$ a unit. Then we must have $\pi=uq$. Then $N\pi=Nuq=NuNq=1(q)(q)=q^{2}$. Clearly, a rational prime cannot be a square of another rational prime, so $\pi$ must not be associate to a rational prime. Thus we are done. 
\end{proof}
\begin{prop} \label{some prime pi in D such that its norm is p, then pi is prime}
    If there is some $\pi\in D$ such that $N\pi=p$ where $p$ is a rational prime, then $\pi$ is a prime in $D$. 
\end{prop}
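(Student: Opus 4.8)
The plan is to prove that $\pi$ is irreducible in $D$ and then invoke the fact, recorded in Remark \ref{Remark: class inclusions}, that $D=\ZZ[\omega]$ is a UFD, so that irreducible elements of $D$ are prime. The one tool needed is the multiplicativity of the norm: since $N\alpha=\alpha\overline{\alpha}$ and complex conjugation maps $D$ to itself (because $\overline{a+b\omega}=(a-b)-b\omega\in D$, using $\omega^{2}+\omega+1=0$), one has $N(\alpha\beta)=N\alpha\,N\beta$ for all $\alpha,\beta\in D$. It is also worth noting up front that $N\alpha=a^{2}-ab+b^{2}=\tfrac{1}{4}\big((2a-b)^{2}+3b^{2}\big)$ is a non-negative integer, vanishing only when $\alpha=0$.

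So suppose $\pi=\alpha\beta$ with $\alpha,\beta\in D$. Applying the norm gives $p=N\pi=N\alpha\,N\beta$. Since $N\alpha$ and $N\beta$ are non-negative integers and $p$ is a rational prime, one of the two factors, say $N\alpha$, must equal $1$ (the value $0$ is excluded, as it would force $\alpha=0$ and hence $\pi=0$). By Proposition \ref{classification of units in Z[omega]}, $N\alpha=1$ implies $\alpha$ is a unit of $D$. Thus every factorization $\pi=\alpha\beta$ has a unit factor, which is precisely the statement that $\pi$ is irreducible in $D$.

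Finally, because $D=\ZZ[\omega]$ is a UFD, every irreducible element is prime, so $\pi$ is prime in $D$. (Alternatively, one could sidestep quoting the UFD property directly: $D$ is a Euclidean domain with respect to $N$, hence a PID by Theorem \ref{Every Euclidean domain is a PID}, and irreducibles in a PID generate maximal — in particular prime — ideals.) I do not expect a genuine obstacle here; the proposition is a short bookkeeping argument, the only mild care-point being the justification that $N$ is multiplicative and is strictly greater than $1$ on non-zero non-units, both of which are immediate from the explicit formula for $N$ above.
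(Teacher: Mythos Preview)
Your proof is correct and follows essentially the same approach as the paper: both arguments take norms of a putative factorization $\pi=\alpha\beta$ to obtain $p=N\alpha\,N\beta$, conclude that one factor is a unit via Proposition \ref{classification of units in Z[omega]}, and (implicitly in the paper's case) rely on the UFD structure of $D$ to pass from irreducible to prime. Your write-up is in fact a bit more careful than the paper's, which phrases the contradiction as ``$\pi$ not prime implies $\pi=\rho\gamma$ with $N\rho,N\gamma>1$'' without separating the irreducibility argument from the irreducible-implies-prime step.
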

\begin{proof}
    Assume that $\pi$ is not prime in $D$. Then we can express $\pi$ as a product of primes $\rho,\gamma\in D$, such that for $N\rho,N\gamma>1$, we have $p=N\pi=N\rho\gamma=N\rho N\gamma$. However, since $p$ is itself prime in $\mathbb{Z}$, this argument is not possible. Therefore $\pi$ is prime in $D$. 
\end{proof}
\noindent Now that we have shown several properties of primes and units in $D$, we might be interested in classifying its primes. 
\begin{thm} \label{classification of primes in Z[omega]}
    Let $p$ and $q$ be rational primes. 
    \begin{enumerate}
        \item If $q\equiv 2 \pmod{3}$ then $q$ is prime in $D$. 
        \item If $p\equiv 1 \pmod{3}$, then $p=N\pi=\pi\overline{\pi}$, where $\pi$ is a prime in $D$. 
        \item $3=-\omega^{2}(1-\omega)^{2}$, and $1-\omega$ is prime in $D$. 
    \end{enumerate}
\end{thm}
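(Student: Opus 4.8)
The plan is to handle the three assertions separately: (3) is a one-line computation, (1) a short congruence obstruction, and (2) the genuinely substantive case, which I would base on the cyclicity of $(\mathbb{Z}/p\mathbb{Z})^{*}$.

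For (3) I would expand $(1-\omega)^{2}=1-2\omega+\omega^{2}$ and substitute the identity $\omega^{2}+\omega+1=0$ to get $(1-\omega)^{2}=-3\omega$, so that $-\omega^{2}(1-\omega)^{2}=3\omega^{3}=3$ since $\omega^{3}=1$. To see that $1-\omega$ is prime, note that writing $1-\omega=1+(-1)\omega$ gives $N(1-\omega)=1^{2}-(1)(-1)+(-1)^{2}=3$, a rational prime, so Proposition \ref{some prime pi in D such that its norm is p, then pi is prime} applies directly.

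For (1), suppose for contradiction that a rational prime $q\equiv 2\pmod 3$ is not prime in $D$. Then $q=\pi\gamma$ with $N\pi,N\gamma>1$, and from $q^{2}=Nq=N\pi\,N\gamma$ we are forced to have $N\pi=q$. Writing $\pi=a+b\omega$ this gives $a^{2}-ab+b^{2}=q$; multiplying by $4$ yields $(2a-b)^{2}+3b^{2}=4q$, hence $(2a-b)^{2}\equiv 4q\equiv q\equiv 2\pmod 3$, which is impossible since a square is $\equiv 0$ or $1\pmod 3$. (Equivalently one checks over the nine residue pairs that $a^{2}-ab+b^{2}\not\equiv 2\pmod 3$.) Therefore $q$ is prime in $D$.

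For (2), the key step is to produce a cube root of $1$ modulo $p$. Since $p\equiv 1\pmod 3$, the group $(\mathbb{Z}/p\mathbb{Z})^{*}$, which is cyclic of order $p-1$ by Theorem \ref{multiplicative group of the integers modulo p is cyclic}, contains an element $n$ of order $3$; from $n^{3}\equiv 1$ and $n\not\equiv 1\pmod p$ we get $p\mid n^{2}+n+1$. In $D$ we factor $n^{2}+n+1=(n-\omega)(n-\omega^{2})$. If $p$ were prime in $D$, then — since $D$ is a UFD, so irreducible elements satisfy Euclid's lemma — $p$ would divide one of the two factors; but in $n-\omega$ the $\omega$-coefficient is $-1$, and in $n-\omega^{2}=(n+1)+\omega$ (using $\omega^{2}=-1-\omega$) it is $+1$, so divisibility by $p$ would force $p\mid 1$, a contradiction. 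Hence $p$ is not prime in $D$, so $p=\pi\gamma$ with $\pi,\gamma$ non-units; then $p^{2}=Np=N\pi\,N\gamma$ forces $N\pi=N\gamma=p$. By Proposition \ref{some prime pi in D such that its norm is p, then pi is prime}, $\pi$ is prime in $D$, and $\pi\overline{\pi}=N\pi=p$ (so in fact $\gamma=\overline{\pi}$), completing the proof. The main obstacle is precisely this construction of $n$ in case (2); the rest is bookkeeping with the norm and the already-established facts about $D$.
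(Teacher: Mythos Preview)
Your proof is correct. Parts (1) and (3) follow essentially the same route as the paper (the same $4q=(2a-b)^{2}+3b^{2}$ congruence obstruction for (1), and the same norm computation $N(1-\omega)=3$ for (3), with only cosmetic differences in how the identity $3=-\omega^{2}(1-\omega)^{2}$ is verified).

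Part (2), however, is handled by a genuinely different mechanism. The paper invokes quadratic reciprocity together with its supplement to compute $\bigl(\tfrac{-3}{p}\bigr)=1$, obtains $a$ with $a^{2}\equiv -3\pmod p$, and then factors $a^{2}+3=(a+1+2\omega)(a-1-2\omega)$ in $D$ to contradict primality of $p$. You instead exploit the cyclicity of $(\mathbb{Z}/p\mathbb{Z})^{*}$ directly: since $3\mid p-1$ there is an element $n$ of order $3$, whence $p\mid n^{2}+n+1=(n-\omega)(n-\omega^{2})$, and the $\omega$-coefficients $\pm 1$ block divisibility by $p$. Your argument is strictly more self-contained --- it needs only Theorem~\ref{multiplicative group of the integers modulo p is cyclic} and avoids the forward reference to quadratic reciprocity --- and it is thematically closer to the cubic setting, since you are literally producing a cube root of unity modulo $p$. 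The paper's approach, by contrast, ties the result back into the quadratic-reciprocity machinery developed earlier, which fits its expository arc but is logically heavier. Both arguments terminate identically once $p$ is shown not to be prime in $D$: the norm equation $p^{2}=N\pi\,N\gamma$ forces $N\pi=p$, and Proposition~\ref{some prime pi in D such that its norm is p, then pi is prime} finishes.
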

\begin{proof}
    \noindent We begin by proving (1). Suppose that $p$ is not a rational prime. Then we can write $p=\pi\gamma$ for $\pi,\gamma\in D$ and $N\pi,N\gamma>1$. Then taking the norm of both sides, we have $Np=N\pi N\gamma$ so $p^{2}=N\pi N\gamma$. Therefore we can write $N\pi=p$. Let $\pi\in D$ be of the from $\pi=a+b\omega$. Then we may write $N\pi=a^{2}-ab+b^{2}=p$. Using the same factorization in Proposition \ref{classification of units in Z[omega]}, we have $4p=(2a-b)^{2}+3b^{2}$. Reducing modulo 3, we have $p\equiv (2a-b)^{2} \pmod{3}$. If $3\nmid p$, then it must be true that $p\equiv 1\pmod{3}$ because $1$ is the only integer such that it is a nonzero square modulo 3. In other words, $1$ is a quadratic residue modulo 3. If we again look at $a^{2}-ab+b^{2}$ and substitute all pairs $(a,b)$ where $a,b\in\ZZ/3\ZZ$, we can see that it is impossible for $a^{2}-ab+b^{2}\equiv 2\pmod{3}$. Note that no prime is congruent to $0$ modulo $3$. Therefore there must exist some rational prime $q\equiv 2\pmod{3}$ that is prime in $D$. 

    \indent We now prove (2). Suppose that $p\equiv 1 \pmod{3}$. Using Theorem \ref{the law of quadratic reciprocity}, we have
    \begin{align*}
        \bigg( \frac{-3}{p} \bigg) & = \bigg( \frac{-1}{p} \bigg) \bigg( \frac{3}{p} \bigg) = (-1)^{\frac{p-1}{2}}\bigg( \frac{p}{3} \bigg)(-1)^{\frac{p-1}{2}\cdot \frac{3-1}{2}} \\ & = (-1)^{\frac{p-1}{2}+\frac{p-1}{2}}\bigg( \frac{p}{3} \bigg) = \bigg( \frac{p}{3} \bigg) = \bigg( \frac{1}{3} \bigg) = 1. 
    \end{align*}
    \noindent Therefore, $-3$ is a quadratic residue modulo 3, meaning that there exists some $a\in\mathbb{Z}$ such that $a^{2}\equiv -3 \pmod{3}$. We may rewrite this as $pb=a^{2}+3$ for some $b\in\mathbb{Z}$. Factorizing, we have that $p$ divides $pb=(a-\sqrt{-3})(a+\sqrt{-3})=(a+1+2\omega)(a-1-2\omega)$. Since $D$ is a UFD, by the class inclusions mentioned in Remark \ref{Remark: class inclusions}, $D$ is also an integral domain. Therefore an analogue of Euclid's lemma applies, so $p$ must divide one of the factors. Assume that $p$ is a prime. Since $p\neq 2$, it cannot divide the first factor. Furthermore, $2/p$ is rational, and so it cannot divide the second factor. Therefore $p$ is not prime and can be expressed as $p=\pi\gamma$ for $\pi,\gamma\in D$ nonunits (as this guarantees that one is not associate to the other). Therefore, taking the norm of both sides, we have $p^{2}=N\pi\gamma=N\pi N\gamma$, so that $p=N\pi = \pi\overline{\pi}$. 

    \indent We now prove (3). Note that we can factorize $x^{3}-1=(x-1)(x-\omega)(x-\omega^{2})$. Therefore since $x^{3}-1=(x-1)(x^{2}+x+1)$, we have that $x^{2}+x+1=(x-\omega)(x-\omega^{2})$. Letting $x=1$, we have $3=(1-\omega)(1-\omega^{2})=(1-\omega)(1-\omega)(1+\omega)=(1+\omega)(1-\omega)^{2}$. Recalling that $1+\omega=-\omega^{2}$, we have $3=-\omega^{2}(1-\omega)^{2}$. Taking the norm of both sides, we obtain
    \begin{align*}
        3(3) & = N(-\omega^{2})N(1-\omega)^{2} \\
        9 & = N(\omega^{3})N(1-\omega)^{2} \\
        3 & = N(1-\omega),
    \end{align*}
    \noindent so we are done.
\end{proof}

\noindent Now that we are equipped with information about prime and unitary elements of $\ZZ[\omega]$, we are prepared to introduce the second most important theorem in this paper. 

\subsection{The Residue Class Ring $\mathbb{Z}[\omega]/\pi \mathbb{Z}[\omega]$ Is a Finite Field for $\pi$ Prime}
\noindent In this subsection, we prove the most important connections between finite fields and reciprocity and outline why we needed to survey finite fields in so much depth. Much like in section 4.1 of \cite{relmat2022quadrec}, it is useful to think about notions of congruence in $D$ as well. Let $\alpha,\beta,\gamma\in D$ with $\gamma$ nonzero and a nonunit. Then we say that $\alpha\equiv\beta\pmod{p}$ if $\gamma|\alpha-\beta$. Note that this definition is very similar to the definition of congruence in $\mathbb{Z}$. Each residue class modulo $\gamma$ may be combined in a quotient ring of the form $D/\gamma D$. To see how this is analogous to the conventional construction of a quotient ring, notice that $D/\gamma D$ runs through moduli $0,1,2,\ldots,\gamma-1$, with each residue class forming a set of congruent numbers in $D$, much like how $\mathbb{Z}/n\mathbb{Z}$ runs through moduli $0,1,2,\ldots,n-1$. We refer to such a quotient ring as a \textit{residue class ring modulo $\gamma$}. We now prove an important property of residue class rings. 
\begin{thm} \label{Theorem: Residue Class Ring D/piD has Npi elements (D=Eisenstein Integers)}
    Let some $\pi\in D$ be a prime. Then the residue class ring $D/\pi D$ is a finite field with $N\pi$ elements. 
\end{thm}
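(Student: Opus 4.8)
The plan is to establish two things separately: that $D/\pi D$ is a field, and that it has exactly $N\pi$ elements. For the first, I would use that $D=\ZZ[\omega]$ is a PID (it is a Euclidean domain for the norm $N$, so Theorem~\ref{Every Euclidean domain is a PID} applies) together with the fact that a prime element of a PID generates a maximal ideal: if $\pi D\subseteq \beta D\subseteq D$ then $\beta\mid\pi$, so $\beta$ is a unit or an associate of $\pi$, forcing $\beta D=D$ or $\beta D=\pi D$. Hence $\pi D$ is maximal and Proposition~\ref{R/A is a field iff A is maximal ideal} gives that $D/\pi D$ is a field. (If one prefers not to cite that $D$ is a PID, one can instead prove the element count below first — it only uses that $\pi$ is prime — and then invoke that a finite integral domain is a field, using that $\pi D$ is a prime ideal.)

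For the count I would invoke the classification in Theorem~\ref{classification of primes in Z[omega]}, which, up to associates and noting that associate primes give the same residue ring, leaves two cases distinguished by whether $N\pi$ is a rational prime or its square. \textbf{Case 1: $\pi$ is associate to a rational prime $q$ with $q\equiv 2\pmod 3$}, so $N\pi=q^{2}$ and $\pi D=qD$. Then $D/\pi D=D/qD$, and since $\{1,\omega\}$ is a $\ZZ$-basis of $D$, each $\alpha=a+b\omega$ is congruent modulo $q$ to a unique $a'+b'\omega$ with $0\le a',b'<q$ (because $q\mid (a-a')+(b-b')\omega$ forces $q\mid a-a'$ and $q\mid b-b'$ in $\ZZ$). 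Hence $|D/\pi D|=q^{2}=N\pi$.

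\textbf{Case 2: $N\pi=p$ is a rational prime} (this covers the factors of $p\equiv 1\pmod 3$, where $p=\pi\overline{\pi}$, and $\pi=1-\omega$, where $p=3$). Here I would show the ring map $\phi\colon\ZZ\to D/\pi D$, $n\mapsto n+\pi D$, has kernel $p\ZZ$ and is surjective, so $D/\pi D\cong\ZZ/p\ZZ$ has $p=N\pi$ elements. For the kernel: $\pi\mid p$ since $\pi\overline{\pi}=p$, so $p\ZZ\subseteq\ker\phi$; conversely if $\pi\mid n$ in $D$ but $p\nmid n$ then $\gcd(n,p)=1$, so $an+bp=1$ for integers $a,b$, and $\pi\mid n$, $\pi\mid p$ force $\pi\mid 1$, a contradiction. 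For surjectivity, write $\pi=a+b\omega$; then $p\nmid b$, for otherwise $p=N\pi=a^{2}-ab+b^{2}$ would give $p\mid a^{2}$, hence $p\mid a$, hence $p^{2}\mid a^{2}-ab+b^{2}=p$, which is absurd. Thus the class of $b$ in the field $D/\pi D$ lies in $\phi(\ZZ)\cong\ZZ/p\ZZ$ and is invertible there, and $a+b\omega\equiv 0\pmod\pi$ yields $\omega\equiv -ab^{-1}\pmod\pi$ with $b^{-1}\in\phi(\ZZ)$; since $D=\ZZ+\ZZ\omega$, every class modulo $\pi$ is then in $\phi(\ZZ)$.

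I expect the surjectivity step in Case 2 — showing that $\omega$, and hence every element of $D$, is congruent modulo $\pi$ to a rational integer — to be the crux; the observation $p\nmid b$ is exactly what unlocks it. A secondary point to handle cleanly is the field argument at the start, as discussed above. Finally, I would mention (but not use) a slicker unified route: multiplication by $\pi$ is a $\ZZ$-linear endomorphism of the free abelian group $D\cong\ZZ^{2}$ with determinant $\pm N\pi$, so the Smith normal form gives $[D:\pi D]=N\pi$ in one stroke, independent of the case split.
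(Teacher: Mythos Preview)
Your proof is correct and follows essentially the same route as the paper: establish the field property, then count by the case split of Theorem~\ref{classification of primes in Z[omega]}, using the same representative set $\{a+b\omega:0\le a,b<q\}$ in the rational-prime case and the same key observation $p\nmid b$ to reduce to rational integers when $N\pi=p$. Your packaging is slightly tidier---you invoke Proposition~\ref{R/A is a field iff A is maximal ideal} for the field step (the paper instead writes down a B\'ezout relation directly) and fold $\pi=1-\omega$ into the general $N\pi=p$ argument via the ring map $\ZZ\to D/\pi D$ rather than treating it separately---but the substance is the same.
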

\begin{proof}
    \noindent In order to prove that $D/\pi D$ is a finite field, we need to first prove that it is a field, and then show that it has a finite number of elements; in this case, exactly $N\pi$. The second part of the proof requires that we consider all possible cases in Theorem \ref{classification of primes in Z[omega]}. 

    \indent We begin by showing that $D/\pi D$ is a field. Clearly, properties such as commutativity, transitivity, etc. natural to fields are present in $D/\pi D$. We need only show the existence of a unit in $D/\pi D$. Let there exist some $\alpha\in D$ with the property that $\alpha\not\equiv 0 \pmod{\pi}$. Since $D/\pi D$ is a commutative ring, it is an integral domain. Therefore for some elements $\beta,\gamma\in D$ it is possible to write $1=\beta\alpha+\gamma\pi$. Reducing both sides modulo $\pi$, we have $\alpha\beta \equiv 1 \pmod{\pi}$. If we rewrite this congruence as an equality, we have that $[\alpha\beta]=[\alpha][\beta]=1$, which is the requirement for $[\alpha]$ to be a unit in $D/\pi D$. Therefore $D/\pi D$ is a field. 

    \indent We now prove that $D/\pi D$ has $N\pi$ elements by considering all possible cases of Theorem \ref{classification of primes in Z[omega]}. We begin by supposing that $q\equiv 2 \pmod{3}$, where $q\in\mathbb{Z}$ is a rational prime. In order to show that there are $N\pi$ elements, we need to show that there is a complete set of coset, or residue class, representatives that has cardinality $N\pi$. Therefore, we need to show that the set $\{ a+b\omega | 0\leq a<q \wedge 0\leq b<q \}$ is a complete set of coset representatives for $D/qD$ with cardinality $Nq=q^{2}$. Suppose that there is some $\mu=m+n\omega\in D$ for $m,n\in\mathbb{Z}$. Then by the division algorithm, we can express $m=qs+a$ and $n=qt+b$, where $s,t,a,b\in\mathbb{Z}$ with the constraint that $a,b\in[0,q)$. By our definition of $\mu$, it is clear that $\mu$ belongs to its own residue class modulo $q$, namely $\mu\equiv a+b\omega \pmod{q}$. We want to show that every coset representative $\mu$ can be constructed in this form, and is unique. Suppose that for $0\leq a,b,a',b'<q$ that $a+b\omega\equiv a'+b'\omega \pmod{q}$. Rearranging, we have that 
    \begin{align*}
        a-a'+b\omega-b'\omega & \equiv 0\pmod{q} \\
        (a-a') + (b-b')\omega & \equiv 0\pmod{q} \\
        \frac{a-a'}{q}+\frac{b-b'}{q}\omega & \in D.
    \end{align*}
    \noindent By the definition of elements of $D$, this means $(a-a')/q$ and $(b-b')/q$ are both integers, but since they are rational it is only possible if $a=a'$ and $b=b'$, thus proving uniqueness. 

    \indent Now we show that this is true when $p\equiv 1 \pmod{3}$ is a rational prime, and $p=\pi\overline{\pi}=N\pi$. Much like the first part of the proof, we want to show that a set is a complete set of coset representatives, but alternatively in the form $\{ 0,1,2,\ldots,p-1 \}$ with cardinality $p=N\pi$. Begin by letting $\pi=a+b\omega$ be a prime. By the definition of the norm, $p=a^{2}-ab+b^{2}$, and it is obvious that $p\nmid b$. Let there exist $\mu=m+n\omega\in D$. Then there must exist some $c\in\mathbb{Z}$ such that $cb\equiv n \pmod{p}$. Then we have $\mu-c\pi\equiv m+n\omega-c(a+b\omega) \equiv m+n\omega-ac-bc\omega \equiv m-ac+(n-bc)\omega \equiv m-ac \pmod{p}$. Therefore it is also true that $\mu\equiv m-ca \pmod{p}$ by taking both sides modulo $\pi$. This shows that every element of $D$ is congruent to a rational integer modulo $\pi$. Now we need to show modulo $\pi$, these elements correspond to one of $\{ 0,1,2,\ldots,p-1 \}$. Let some $l\in \mathbb{Z}$. Then by the division algorithm we can write $l=sp+r$ for $s,r\in \mathbb{Z}$ and $0\leq r<p$. Therefore $l\equiv r\pmod{p}$, and in fact $l\equiv r\pmod{\pi}$. Since we showed earlier that every element of $D$ is congruent to a rational integer modulo $\pi$, this argument demonstrates that all elements of $D$ are congruent to exactly one of $\{ 0,1,2,\ldots,p-1 \}$ modulo $p$ instead. Now we need only prove that these coset representatives are unique. Suppose that for $r,r'\in\mathbb{Z}$ and $0\leq r,r'<p$, there is a congruence $r\equiv r'\pmod{\pi}$. Then $r-r'=\pi\gamma$ for some arbitrary $\gamma\in D$. Then taking the norm, we have $(r-r')^{2}=N\pi\gamma=pN\gamma$. Then $p|(r-r')^{2}$, and so $p|(r-r')$. This implies that $r\equiv r'\pmod{p}$. Since we initially stated that $r$ and $r'$ are least residues modulo $p$, they are not only equivalent modulo $p$, but further satisfy $r=r'$, thus proving uniqueness. 

    \indent The final case is when an element of $D$ has a norm of 3. Proposition \ref{some prime pi in D such that its norm is p, then pi is prime} guarantees that $1-\omega$ is prime in $D$ because $3$ is prime in $\ZZ$. In other words, since part (3) of Theorem \ref{classification of primes in Z[omega]} asserts that $1-\omega$ is prime in $D$, the residue class ring $D/(1-\omega)D$ contains exactly $N(1-\omega)=3$ elements. Since $\pi=1-\omega$, we will be proving this modulo $(1-\omega)$. To see what these cosets look like, we must determine what the elements of $D/(1-\omega)D$ are. Notice that since this is a residue class ring, we are taking the elements from the ideal $(1-\omega)D$ and combining them with the elements of $D$ to form the set of coset representatives $\{ r+(1-\omega)D | r\in D \}$. The three coset representatives are $0,1$ and $2$, so the respective cosets are $0+D/(1-\omega)D,1+D/(1-\omega)D,$ and $2+D/(1-\omega)D$.
\end{proof}
\noindent The significance of the above result is that it allows us to consider elements of the ring $D/\pi D$ for $\pi$ prime in such a way to be a finite field, which is a fundamental fact when studying cubic reciprocity considering many of the results that we derived in section 1. 

\subsection{Statement of Cubic Reciprocity}
\noindent Now that we have formed the connection between finite fields and the residue class ring $D/\pi D$ for $\pi$ prime, we can begin familiarizing ourselves with characteristics of the finite field. Since $D/\pi D$ is a finite field with order $N\pi$, its multiplicative group $(D/\pi D)^{*}$ has order $N\pi-1$. Theorem \ref{multiplicative group of finite field is cyclic} asserts that $(D/\pi D)^{*}$ must be cyclic, so with it we have a useful analogue in $D/\pi D$ to Fermat's Little Theorem, namely
\begin{thm}[Analogue to Fermat's Little Theorem in $D/\pi D$] \label{Analogue to Fermat's Little Theorem in D/pi D}
    If $\pi$ is a prime and $\pi \nmid \alpha$, then
    \begin{equation*}
        \alpha^{N\pi-1}\equiv 1 \pmod{\pi}.
    \end{equation*}
\end{thm}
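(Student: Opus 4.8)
The plan is to deduce this directly from the fact, established in Theorem \ref{Theorem: Residue Class Ring D/piD has Npi elements (D=Eisenstein Integers)}, that $D/\pi D$ is a finite field of order $N\pi$. First I would observe that the hypothesis $\pi \nmid \alpha$ says precisely that $\alpha \not\equiv 0 \pmod{\pi}$, so the residue class $[\alpha]$ is a nonzero element of the field $D/\pi D$; equivalently, $[\alpha]$ lies in the multiplicative group $(D/\pi D)^{*}$, which by Theorem \ref{Theorem: Residue Class Ring D/piD has Npi elements (D=Eisenstein Integers)} has order exactly $N\pi - 1$.

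Next I would invoke Lagrange's Theorem (Theorem \ref{Lagrange's Theorem}): the order $d$ of the cyclic subgroup of $(D/\pi D)^{*}$ generated by $[\alpha]$ divides $|(D/\pi D)^{*}| = N\pi - 1$. Writing $N\pi - 1 = dk$ for some positive integer $k$, we obtain $[\alpha]^{N\pi - 1} = \big([\alpha]^{d}\big)^{k} = [1]^{k} = [1]$. Translating this equation of residue classes back into the language of congruences in $D$ gives exactly $\alpha^{N\pi - 1} \equiv 1 \pmod{\pi}$. One could alternatively appeal to Theorem \ref{multiplicative group of finite field is cyclic} to say $(D/\pi D)^{*}$ is cyclic of order $N\pi - 1$ and argue via the exponent of the group, but Lagrange's Theorem alone is enough, and the argument is a verbatim analogue of the deduction of Euler's Theorem and Fermat's Little Theorem from the structure of $U(\mathbb{Z}/m\mathbb{Z})$ carried out earlier.

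I do not anticipate a genuine obstacle: all of the substance was already absorbed into proving that $D/\pi D$ is a finite field with $N\pi$ elements. The only point requiring care in the write-up is the bookkeeping between the three equivalent formulations in play --- divisibility by $\pi$ in $D$, congruence modulo $\pi$, and multiplication of residue classes in $(D/\pi D)^{*}$ --- so I would state each identification explicitly, in particular that "$\pi \nmid \alpha$" is equivalent to "$[\alpha] \neq [0]$ in $D/\pi D$," and that passing from $[\alpha]^{N\pi-1}=[1]$ back to a congruence is immediate from the definition of the quotient ring.
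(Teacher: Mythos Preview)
Your proposal is correct and matches the paper's approach: the paper does not write out a full proof but simply remarks that it follows as in Corollary \ref{Fermat's Little Theorem} by considering residue classes modulo $\pi$ and using that $(D/\pi D)^{*}$ is cyclic. Your argument via Lagrange's Theorem is exactly this deduction (and you even note the cyclicity alternative), so there is nothing to add.
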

\noindent The proof of this follows similarly to Corollary \ref{Fermat's Little Theorem}, where instead we consider the residue classes modulo $\pi$ and use the fact that $(D/\pi D)^{*}$ is cyclic. 

\indent In order to consider higher reciprocity, we must consider cases where $N\pi\neq 3$, namely, when $N\pi>3$. Specifically, if $N\pi\neq 3$, then the residue classes formed by $1,\omega$ and $\omega^{2}$ would necessarily be distinct, unlike if $N\pi=3$. We show this as follows. 

\indent Suppose that $N\pi\neq 3$ in all following cases. Then suppose that $\omega$ and $1$ belong to the same residue class, i.e. $\omega\equiv 1\pmod{\pi}$. Then $\pi | 1-\omega$. However, we showed before that $1-\omega$ is a prime element of $D/\pi D$, so it must be true that $\pi$ is associate to $1-\omega$, i.e. there exists some unit $u$ such that $\pi=u(1-\omega)$. Taking the norm of both sides, we have $N\pi=NuN(1-\omega)=N(1-\omega)=3$, but we assumed that $N\pi\neq 3$, so this is a contradiction. Therefore $\omega$ and $1$ belong to distinct residue classes modulo $\pi$. We can repeat this for $1$ and $\omega^{2}$, and finally for $\omega$ and $\omega^{2}$. For the first one, we assume that $\omega^{2}\equiv 1\pmod{\pi}$. Then $\pi | 1-\omega^{2}$, so for some unit $u$, we have $\pi = u(1-\omega^{2})$. Notice that $-\omega^{2}=-\omega$, so $\pi = u(1-\omega)$. Taking the norm of both sides, we have $N\pi=NuN(1-\omega)=N(1-\omega)=3$, but this is a contradiction, so $\omega^{2}\not\equiv 1 \pmod{\pi}$. The same procedure can be done to show the distinctness of the remaining two residue classes. 

\indent Taking these three distinct residue class representatives, we have a cyclic group of order $3$, namely $\{ 1,\omega,\omega^{2} \}$. By Theorem \ref{Lagrange's Theorem}, this cyclic group divides the order of the group for which it is a subgroup, namely $|(D/\pi D)^{*}|=N\pi -1$. Therefore $3|N\pi -1$. Alternatively, if $\pi=q$ is some rational prime, then taking the norm of both sides, we have $N\pi =q^{2}$. Then $q^{2}\equiv 1 \equiv N\pi \pmod{3}$. Proposition \ref{some prime pi in D such that its norm is p, then pi is prime} asserts the existence of some other prime $p$ such that $N\pi = p$, so $q^{2}\equiv p \equiv 1\pmod{3}$, which is the same as $3|p-1$. This leads us to the following result.

\begin{prop} \label{Proposition: There exists a unique integer m=0,1,2 analogue to FLT}
    Let $\pi$ be a prime with $N\pi\neq 3$ and $\pi \nmid \alpha$. Then there must exist a unique integer $m=0,1,2$ such that 
    \begin{equation*}
        \alpha^{\frac{N\pi-1}{3}}\equiv \omega^{m} \pmod{\pi}.
    \end{equation*}
\end{prop}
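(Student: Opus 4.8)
The plan is to reduce the statement to counting roots of $x^{3}-1$ in the finite field $D/\pi D$. Set $\beta = \alpha^{(N\pi-1)/3}$, which makes sense because we have already shown that $3 \mid N\pi - 1$ whenever $N\pi \neq 3$. First I would observe that, by Theorem \ref{Analogue to Fermat's Little Theorem in D/pi D}, $\alpha^{N\pi-1}\equiv 1 \pmod{\pi}$, and since $\pi \nmid \alpha$ this power is genuinely a unit in $D/\pi D$. Raising $\beta$ to the third power gives $\beta^{3}=\alpha^{N\pi-1}\equiv 1 \pmod{\pi}$, so $\beta$ is a root of the polynomial $x^{3}-1$ in the field $D/\pi D$.

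Next I would factor $x^{3}-1=(x-1)(x-\omega)(x-\omega^{2})$, an identity already used in the proof of Theorem \ref{classification of primes in Z[omega]}, so that $1,\omega,\omega^{2}$ are three roots of $x^{3}-1$ in $D/\pi D$. Because $N\pi \neq 3$, the argument carried out immediately before this proposition shows that $1,\omega,\omega^{2}$ lie in three \emph{distinct} residue classes modulo $\pi$; hence $x^{3}-1$ has at least three distinct roots in $D/\pi D$. Since $D/\pi D$ is a field by Theorem \ref{Theorem: Residue Class Ring D/piD has Npi elements (D=Eisenstein Integers)}, Proposition \ref{most number of roots for f in k[x]} tells us a degree-$3$ polynomial has at most three distinct roots there, so the complete set of roots of $x^{3}-1$ in $D/\pi D$ is exactly $\{1,\omega,\omega^{2}\}=\{\omega^{0},\omega^{1},\omega^{2}\}$.

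Therefore $\beta \equiv \omega^{m}\pmod{\pi}$ for some $m\in\{0,1,2\}$, which is the existence half. For uniqueness, I would note that if $\omega^{m}\equiv \omega^{m'}\pmod{\pi}$ with $m,m'\in\{0,1,2\}$, then multiplying by $\omega^{-m}$ reduces to the case $1\equiv \omega^{m'-m}\pmod{\pi}$, and the distinctness of $1,\omega,\omega^{2}$ modulo $\pi$ (again valid since $N\pi\neq 3$) forces $m=m'$.

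I do not expect a serious obstacle here: the proof is essentially an assembly of the analogue of Fermat's Little Theorem, the fact that $D/\pi D$ is a field, the bound on the number of roots of a polynomial over a field, and the already-established distinctness of $1,\omega,\omega^{2}$ modulo $\pi$. The only point that requires a moment's care is making sure the hypothesis $N\pi \neq 3$ is invoked in both places it is needed: once to guarantee $3\mid N\pi-1$ (so that $\beta$ is well defined and $\beta^{3}\equiv 1$), and once to guarantee that $1,\omega,\omega^{2}$ are pairwise incongruent modulo $\pi$ (which gives both that $x^{3}-1$ actually attains three roots and that $m$ is unique).
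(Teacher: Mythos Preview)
Your proof is correct and follows essentially the same route as the paper: both arguments use the analogue of Fermat's Little Theorem to see that $\alpha^{(N\pi-1)/3}$ is a root of $x^{3}-1=(x-1)(x-\omega)(x-\omega^{2})$ modulo $\pi$, and then use that $\pi$ is prime (equivalently, that $D/\pi D$ is a field) to conclude it must agree with exactly one of $1,\omega,\omega^{2}$. Your write-up is in fact a bit more careful than the paper's, since you explicitly invoke Proposition~\ref{most number of roots for f in k[x]} and the pre-established distinctness of $1,\omega,\omega^{2}$ modulo $\pi$ for both existence and uniqueness, whereas the paper's justification for ``exactly one factor'' is somewhat terse.
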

\begin{proof}
    Rearranging the statement of Theorem \ref{Analogue to Fermat's Little Theorem in D/pi D}, we have $\alpha^{N\pi-1}-1\equiv 0\pmod{\pi}$, so $\pi|\alpha^{N\pi-1}-1$. Factoring the \textit{LHS}, we have 
    \begin{equation*}
        \alpha^{N\pi-1}-1 = (\alpha^{\frac{N\pi-1}{3}}-1)(\alpha^{\frac{N\pi-1}{3}}-\omega)(\alpha^{\frac{N\pi-1}{3}}-\omega^{2})
    \end{equation*}
    \noindent so $\pi|(\alpha^{\frac{N\pi-1}{3}}-1)(\alpha^{\frac{N\pi-1}{3}}-\omega)(\alpha^{\frac{N\pi-1}{3}}-\omega^{2})$. As we discussed prior to this proof, it must be true that $3|N\pi-1$ for each element of the cyclic group $\{ 1,\omega,\omega^{2} \}$. If $\pi$ divided more than one factor, then the first and intermediate terms would no longer satisfy the property that $3|N\pi -1$. Therefore, $\pi$ must divide exactly one of the factors. Therefore, considering each factor, we have $\pi|(\alpha^{\frac{N\pi-1}{3}}-1)$, $\pi|(\alpha^{\frac{N\pi-1}{3}}-\omega)$, and $\pi|(\alpha^{\frac{N\pi-1}{3}}-\omega^{2})$. Naturally, it follows that for distinct integer values $m$ running from $0$ to $2$, we have $\alpha^{\frac{N\pi-1}{3}}\equiv \omega^{m} \pmod{\pi}$. 
\end{proof}
\noindent We now proceed to define the cubic residue character. Note the following.
\begin{rem}
While the vertical notation $(a/p)$ is preferable for the Legendre symbol and there does exist a vertical cubic residue symbol $(\alpha/\pi)_{3}$, we instead use a shorthand as we will be writing it many times. Therefore, we denote the cubic residue character with $\chi_{\pi}(\alpha)$ to represent the cubic character of $\alpha$ modulo $\pi$. 
\end{rem}
\begin{defn}[Cubic residue character]
    Let $N\pi\neq 3$. We say that the cubic residue character of $\alpha$ modulo $\pi$ is defined as
    \begin{enumerate}
        \item $\chi_{\pi}(\alpha)=0$ if $\pi|\alpha$, 
        \item $\alpha^{\frac{N\pi-1}{3}}\equiv \chi_{\pi}(\alpha) \pmod{\pi}$ where 
        \begin{equation*}
            \chi_{\pi}(\alpha) = 
            \left\{
        \begin{array}{ll}
            1 & \text{if $\alpha$ is a cubic residue},\\
            \omega \ \text{or} \ \omega^{2} & \text{otherwise}.
        \end{array}
    \right.
        \end{equation*}
    \end{enumerate}
\end{defn}
\noindent Recall that the Legendre symbol outputted solutions to the equation $x^{2}-1=0$, so naturally, the cubic residue character outputs solutions to the equation $x^{3}-1=0$, roots of which are cube roots of unity. Note that both symbols can also output $0$. We first prove an important property that the congruence of two cubic characters modulo $\pi$ implies their equality, and follow by proving some other properties of the cubic residue character. 
\begin{lemma} \label{when two numbers are equivalent when they are congruent mod pi lemma}
    Let $\pi\in D$ be a prime. Then suppose that there exist some $a,b\in\{ 0,1,\omega,\omega^{2} \}$ with the property that $a\equiv b \pmod{\pi}$. Then $a=b$. 
\end{lemma}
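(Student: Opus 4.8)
The plan is to argue by contradiction. Assume $a \neq b$; I will show this forces $N\pi = 3$, which contradicts the standing assumption $N\pi \neq 3$ under which the cubic residue character (and this whole subsection) operates. This hypothesis is genuinely needed here: for $\pi = 1-\omega$ one has $1 \equiv \omega \pmod{\pi}$, so the conclusion would fail without it.

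First I would dispose of the cases involving $0$. If one of $a,b$ is $0$ and the other lies in $\{1,\omega,\omega^2\}$, say $\pi \mid b$ with $b \in \{1,\omega,\omega^2\}$, then since $1,\omega,\omega^2$ are units of $D$ by Proposition \ref{classification of units in Z[omega]}, $\pi$ would be a unit, contradicting that $\pi$ is prime; and $a = b = 0$ is not a case with $a \neq b$. So we may assume $a, b \in \{1, \omega, \omega^2\}$ are distinct, and it remains only to rule out the three possible differences $1-\omega$, $1-\omega^2$, $\omega - \omega^2$.

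The key observation is that each of these is an associate of the single prime $1-\omega$: indeed $1 - \omega^2 = (1-\omega)(1+\omega) = -\omega^2(1-\omega)$ using $1+\omega = -\omega^2$, while $\omega - \omega^2 = \omega(1-\omega)$. Since $-\omega^2$ and $\omega$ are units (hence of norm $1$) and $N(1-\omega) = 3$ by part (3) of Theorem \ref{classification of primes in Z[omega]}, multiplicativity of the norm gives $N(a-b) = 3$ in all three cases. But $a \equiv b \pmod{\pi}$ means $\pi \mid a - b$, so $N\pi \mid N(a-b) = 3$; as $\pi$ is a nonzero nonunit, $N\pi > 1$, which forces $N\pi = 3$ — the promised contradiction. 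Hence $a = b$. There is no real obstacle beyond the bookkeeping in this last paragraph — recognizing that every difference among $\{1,\omega,\omega^2\}$ collapses to an associate of $1-\omega$, and being explicit that the lemma is read under the ambient hypothesis $N\pi \neq 3$; everything else is an immediate consequence of the norm's multiplicativity together with the classification results already proved.
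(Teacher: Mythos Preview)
Your proof is correct and follows essentially the same strategy as the paper: examine the possible nonzero differences $a-b$ and show that $\pi$ cannot divide any of them. The paper does this by listing all twelve differences and checking each is a unit or has norm $3$, whereas you organize the argument more efficiently by first disposing of the cases involving $0$ via the unit observation and then noting that the remaining differences are all associates of $1-\omega$, hence of norm $3$. You are also right to flag that the hypothesis $N\pi\neq 3$ is genuinely required (and is implicit from the surrounding context), since without it the lemma fails for $\pi=1-\omega$; the paper's statement and proof leave this tacit.
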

\begin{proof}
    Notice that if $a\equiv b\pmod{\pi}$, then by the definition of congruence we can write $a-b=\pi\gamma$, where $\gamma\in D$. For $a=b$, the \textit{RHS} must reduce to $0$ modulo $\pi$. 

    \indent We begin by taking the differences of each possible pair in the set $\{ 0,1,\omega,\omega^{2} \}$. We obtain $-1,1,-\omega,\omega,-\omega^{2},\omega^{2},1-\omega,\omega-1,1-\omega^{2},\omega^{2}-1,\omega-\omega^{2},\omega^{2}-\omega$. If we apply the identity $1+\omega+\omega^{2}=0$ we can reduce these differences to $-1,1,-\omega,\omega,-1-\omega,1+\omega,1-\omega,\omega-1,2+\omega,-2-\omega,2\omega+1,-2\omega-1$. We want to show that these differences are always a multiple of some prime $\pi$. 

    \indent To do this, it is sufficient to show that each of these differences is either prime or irreducible. If an element of $D$ is a unit, then it is irreducible. Proposition \ref{classification of units in Z[omega]} asserts that the units of $D$ are $1,-1,\omega,-\omega,\omega^{2},-\omega^{2}$, so when congruent modulo $\pi$ these elements will also be equivalent. Part (3) of Theorem \ref{classification of primes in Z[omega]} asserts that $1-\omega$ is prime in $D$, so by the same argument, $-1-\omega$ is also prime in $D$. Furthermore, by applying the identity $1+\omega+\omega^{2}=0$, notice that $-1-\omega=\omega^{2}$ and $1+\omega=-\omega^{2}$, which are both units. Finally, applying Proposition \ref{some prime pi in D such that its norm is p, then pi is prime}, we need only check that the norm of the 4 differences is prime in $\ZZ$. We have $N(2+\omega)=N(-2-\omega)=2^{2}-2(1)+1=3$, which is prime in $\ZZ$. Similarly, $N(1+2\omega)=N(-1-2\omega)=1^{2}-2(1)+2^{2}=3$, which is also prime in $\ZZ$. 

    \indent Therefore if each element is congruent to every other element modulo $\pi$, they must be equal. 
\end{proof}

\begin{prop} \label{properties of the cubic residue character}
    The following are true.
    \begin{enumerate}
        \item $\chi_{\pi}(\alpha)=1$ if and only if the congruence $x^{3}\equiv \alpha \pmod{\pi}$ is solvable, namely, if $\alpha$ is a cubic residue modulo $\pi$, or $\pi\nmid\alpha$.
        \item \begin{equation*}\alpha^{\frac{N\pi-1}{3}}\equiv \chi_{\pi}(\alpha)\pmod{\pi}. \end{equation*}
        \item \begin{equation*}
            \chi_{\pi}(\alpha\beta) = \chi_{\pi}(\alpha)\chi_{\pi}(\beta).
        \end{equation*}
        \item If $\alpha\equiv\beta\pmod{\pi}$, then \begin{equation*}\chi_{\pi}(\alpha)=\chi_{\pi}(\beta). \end{equation*}
    \end{enumerate}
\end{prop}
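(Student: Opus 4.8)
The proof rests on two ingredients already in hand: Proposition \ref{Proposition: There exists a unique integer m=0,1,2 analogue to FLT}, which guarantees that the definition of $\chi_\pi$ is well-posed (and in particular that $\chi_\pi(\alpha) \in \{1,\omega,\omega^2\}$ whenever $\pi \nmid \alpha$), and Lemma \ref{when two numbers are equivalent when they are congruent mod pi lemma}, which says that two elements of $\{0,1,\omega,\omega^2\}$ that are congruent modulo $\pi$ must be equal. The second ingredient is the workhorse: throughout, I would derive a congruence between two values of $\chi_\pi$ and then invoke the lemma to upgrade it to an honest equality. Part (2) requires no work: if $\pi \mid \alpha$ both sides vanish by definition, and if $\pi \nmid \alpha$ the congruence $\alpha^{(N\pi-1)/3} \equiv \chi_\pi(\alpha) \pmod{\pi}$ is precisely clause (2) of the definition, whose internal consistency is exactly the content of Proposition \ref{Proposition: There exists a unique integer m=0,1,2 analogue to FLT}.

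Parts (4) and (3) then follow the same template. For (4), assume $\alpha \equiv \beta \pmod{\pi}$; if $\pi \mid \alpha$ then $\pi \mid \beta$ and both characters are $0$, while if $\pi \nmid \alpha,\beta$ then raising the congruence to the $(N\pi-1)/3$ power and applying part (2) gives $\chi_\pi(\alpha) \equiv \chi_\pi(\beta) \pmod{\pi}$, and since both lie in $\{1,\omega,\omega^2\}$ Lemma \ref{when two numbers are equivalent when they are congruent mod pi lemma} forces equality. For (3), if $\pi$ divides $\alpha$ or $\beta$ then it divides $\alpha\beta$ and both sides are $0$; otherwise $(\alpha\beta)^{(N\pi-1)/3} = \alpha^{(N\pi-1)/3}\beta^{(N\pi-1)/3} \equiv \chi_\pi(\alpha)\chi_\pi(\beta) \pmod{\pi}$, which compared with $(\alpha\beta)^{(N\pi-1)/3} \equiv \chi_\pi(\alpha\beta) \pmod{\pi}$ yields $\chi_\pi(\alpha\beta) \equiv \chi_\pi(\alpha)\chi_\pi(\beta) \pmod{\pi}$; a product of cube roots of unity is a cube root of unity, so both sides lie in $\{1,\omega,\omega^2\}$ and the lemma again upgrades the congruence to equality.

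The only part with genuine content is (1), and I would handle it last. By Theorem \ref{Theorem: Residue Class Ring D/piD has Npi elements (D=Eisenstein Integers)}, $D/\pi D$ is a finite field of order $N\pi$, so by Theorem \ref{multiplicative group of finite field is cyclic} its multiplicative group is cyclic of order $N\pi-1$; moreover $3 \mid N\pi - 1$ was established in the discussion preceding Proposition \ref{Proposition: There exists a unique integer m=0,1,2 analogue to FLT}. Applying Theorem \ref{last result about F*} in this field with $n = 3$, one has $d = \gcd(3, N\pi-1) = 3$, so the congruence $x^3 \equiv \alpha \pmod{\pi}$ is solvable (with $\pi \nmid \alpha$, so that $\alpha$ is a unit) if and only if $\alpha^{(N\pi-1)/3} \equiv 1 \pmod{\pi}$. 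By part (2) and Lemma \ref{when two numbers are equivalent when they are congruent mod pi lemma}, the latter congruence holds exactly when $\chi_\pi(\alpha) = 1$, giving the desired equivalence. The main obstacle is thus not any calculation but correctly transporting the finite-field structure onto $D/\pi D$ so that Theorem \ref{last result about F*} applies — once that is in place, everything reduces to the congruence-plus-lemma pattern used above.
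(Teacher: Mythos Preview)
Your proposal is correct and follows essentially the same approach as the paper: part (2) is read off from the definition, parts (3) and (4) are handled by deriving the relevant congruence and invoking Lemma \ref{when two numbers are equivalent when they are congruent mod pi lemma} to upgrade to equality, and part (1) is proved by applying Theorem \ref{last result about F*} to the finite field $D/\pi D$ with $n=3$. Your write-up is in fact slightly more careful than the paper's in explicitly treating the degenerate cases where $\pi$ divides one of the arguments.
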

\begin{proof}
    \noindent To prove (1), consider Theorem \ref{last result about F*}. Let $\mathbb{F}=D/\pi D$, so that $\mathbb{F}^{*}=(D/\pi D)^{*}$. Furthermore, let $q=N\pi$ and $n=3$ as we are dealing with a cubic. Therefore the congruence $x^{3}\equiv \alpha \pmod{\pi}$ is solvable if and only if $\alpha^{(N\pi-1)/\gcd(3,N\pi -1)}\equiv 1\pmod{\pi}$. This is solvable because $N\pi -1$ is always divisible by the possible values of $\gcd(3,N\pi -1)=1$ or $3$, and $1$ is always a cubic residue modulo $\pi$. Furthermore, there is either one solution or three solutions depending on the value of $\gcd(3,N\pi-1)$. This is the same idea as what is described in Remark \ref{Remark: nth power residues and how many solutions to x^n=alpha} but applied to $D/\pi D$ instead. 

    \indent (2) follows from the definition of the cubic residue character. 

    \indent For (3) and (4) we use Lemma \ref{when two numbers are equivalent when they are congruent mod pi lemma}. To show (3), we have 
    \begin{equation*}
        \chi_{\pi}(\alpha\beta) \equiv (\alpha\beta)^{\frac{N\pi-1}{3}} \equiv \alpha^{\frac{N\pi-1}{3}}\beta^{\frac{N\pi-1}{3}}\equiv \chi_{\pi}(\alpha)\chi_{\pi}(\beta) \pmod{\pi}.
    \end{equation*}
    \noindent By Lemma \ref{when two numbers are equivalent when they are congruent mod pi lemma}, we have $ \chi_{\pi}(\alpha\beta)=\chi_{\pi}(\alpha)\chi_{\pi}(\beta)$.

    \indent To show (4), notice that if $\alpha\equiv\beta\pmod{\pi}$, then
    \begin{equation*}
        \chi_{\pi}(\alpha) \equiv \alpha^{\frac{N\pi-1}{3}} \equiv \beta^{\frac{N\pi-1}{3}} \equiv \chi_{\pi}(\beta)\pmod{\pi}.
    \end{equation*}
    \noindent By Lemma \ref{when two numbers are equivalent when they are congruent mod pi lemma}, we then have $\chi_{\pi}(\alpha)=\chi_{\pi}(\beta)$.
\end{proof}
\noindent We will now study cubic characters and their function, as the proof of cubic reciprocity requires the use of cubic Gauss sums. 
\begin{prop} \label{properties of the cubic character modulo pi}
    The following are true. 
    \begin{enumerate}
        \item $\overline{\chi_{\pi}(\alpha)}=\chi_{\pi}(\alpha)^{2}=\chi_{\pi}(\alpha^{2})$.
        \item $\overline{\chi_{\pi}(\alpha)}=\chi_{\overline{\pi}}(\overline{\alpha})$.
    \end{enumerate}
\end{prop}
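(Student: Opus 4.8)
The plan is to treat the two identities separately, reducing each to results already in hand. For (1), I would start from the observation that, by the definition of the cubic residue character together with Proposition \ref{properties of the cubic residue character}, the value $\chi_{\pi}(\alpha)$ is always one of $0,1,\omega,\omega^{2}$. If $\pi \mid \alpha$ all three quantities in (1) vanish, so assume $\pi \nmid \alpha$; then $\zeta := \chi_{\pi}(\alpha)$ is a cube root of unity, so $\zeta^{3}=1$ and $|\zeta|=1$, whence $\overline{\zeta} = \zeta^{-1} = \zeta^{2}$. This gives the first equality $\overline{\chi_{\pi}(\alpha)} = \chi_{\pi}(\alpha)^{2}$. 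For the second equality I would invoke multiplicativity of $\chi_{\pi}$ (part (3) of Proposition \ref{properties of the cubic residue character}): $\chi_{\pi}(\alpha)^{2} = \chi_{\pi}(\alpha)\chi_{\pi}(\alpha) = \chi_{\pi}(\alpha^{2})$.

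For (2), the key is the defining congruence $\chi_{\pi}(\alpha) \equiv \alpha^{(N\pi-1)/3} \pmod{\pi}$ (part (2) of Proposition \ref{properties of the cubic residue character}). First I would record two bookkeeping facts: complex conjugation is a ring automorphism of $D$ (since $\overline{\omega} = \omega^{2} \in D$), so it carries primes to primes, and it sends a congruence $x \equiv y \pmod{\pi}$ to $\overline{x} \equiv \overline{y} \pmod{\overline{\pi}}$; moreover $N\overline{\pi} = \overline{\pi}\,\overline{\overline{\pi}} = \pi\overline{\pi} = N\pi$, so in particular $N\overline{\pi}\neq 3$ and $\chi_{\overline{\pi}}$ is defined. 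Conjugating the defining congruence then yields $\overline{\chi_{\pi}(\alpha)} \equiv \overline{\alpha}^{(N\pi-1)/3} = \overline{\alpha}^{(N\overline{\pi}-1)/3} \equiv \chi_{\overline{\pi}}(\overline{\alpha}) \pmod{\overline{\pi}}$, the last step again by part (2) of Proposition \ref{properties of the cubic residue character} applied to the prime $\overline{\pi}$.

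It remains to upgrade this congruence modulo $\overline{\pi}$ to an equality. Since $\chi_{\pi}(\alpha) \in \{0,1,\omega,\omega^{2}\}$ and conjugation permutes $\{1,\omega,\omega^{2}\}$ (interchanging $\omega$ and $\omega^{2}$) and fixes $0$, both $\overline{\chi_{\pi}(\alpha)}$ and $\chi_{\overline{\pi}}(\overline{\alpha})$ lie in $\{0,1,\omega,\omega^{2}\}$; Lemma \ref{when two numbers are equivalent when they are congruent mod pi lemma}, applied with the prime $\overline{\pi}$, then forces them to be equal. The case $\pi \mid \alpha$ is handled directly, since then $\overline{\pi} \mid \overline{\alpha}$ and both sides are $0$. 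The only mildly delicate points are these last bookkeeping observations — that $\overline{\pi}$ is prime with the same norm and that conjugation respects congruences — so I do not expect a genuine obstacle beyond stating them carefully.
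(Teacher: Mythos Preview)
Your proposal is correct and follows essentially the same approach as the paper: for (1) you both observe that $\chi_{\pi}(\alpha)\in\{0,1,\omega,\omega^{2}\}$ and check that conjugation coincides with squaring on this set, and for (2) you both conjugate the defining congruence, use $N\overline{\pi}=N\pi$, and invoke Lemma \ref{when two numbers are equivalent when they are congruent mod pi lemma} to pass from congruence to equality. Your write-up is in fact a bit more careful than the paper's, explicitly separating the $\pi\mid\alpha$ case and invoking multiplicativity for the equality $\chi_{\pi}(\alpha)^{2}=\chi_{\pi}(\alpha^{2})$.
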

\begin{proof}
    \noindent By definition, $\chi_{\pi}(\alpha)\in\{ 0, 1,\omega,\omega^{2} \}$. Squaring each one, we have $0,1,\omega^{2}=\omega$, and $\omega^{4}=\omega^{2}$, all of which are equivalent to their conjugate.

    \indent To prove the second property, recall by Proposition \ref{properties of the cubic residue character} that $\alpha^{(N\pi-1)/3}\equiv \chi_{\pi}(\alpha)\pmod{\pi}$. Conjugating both sides, we have
    \begin{equation*}
        \overline{\alpha}^{(N\overline{\pi}-1)/3}\equiv \overline{\chi_{\pi}(\alpha)}\pmod{\overline{\pi}}.
    \end{equation*}
    \noindent Notice that $\overline{\alpha}^{(N\overline{\pi}-1)/3} \equiv \chi_{\overline{\pi}}(\overline{\alpha}) \pmod{\overline{\pi}}$, but $N\overline{\pi}=N\overline{\overline{\pi}}=N\pi$, so this is just $\chi_{\overline{\pi}}(\overline{\alpha}) \equiv \overline{\chi_{\pi}(\alpha)} \pmod{\overline{\pi}}$. By Lemma \ref{when two numbers are equivalent when they are congruent mod pi lemma}, we thus have $\chi_{\overline{\pi}}(\overline{\alpha}) = \overline{\chi_{\pi}(\alpha)}$. 
\end{proof}
\noindent From this we have the following corollary. 
\begin{cor} \label{Corollary: cor to props of cub char mod pi}
\noindent The following are true for a rational integer $q$.
    \begin{enumerate}
        \item $\chi_{q}(\overline{\alpha})=\chi_{q}(\alpha^{2})$.
        \item If $n$ is a rational integer coprime to $q$, then $\chi_{q}(n)=1$. 
    \end{enumerate}
\end{cor}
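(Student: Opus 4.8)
The plan is to deduce both statements directly from Proposition \ref{properties of the cubic character modulo pi}, the only new ingredient being the trivial observation that a rational prime $q$ (necessarily $q \equiv 2 \pmod 3$, so that $q$ is prime in $D$ by Theorem \ref{classification of primes in Z[omega]}(1) and $Nq = q^{2} \neq 3$, making $\chi_{q}$ well-defined) satisfies $\overline{q} = q$.

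For part (1), I would begin from part (2) of Proposition \ref{properties of the cubic character modulo pi}, namely $\overline{\chi_{\pi}(\alpha)} = \chi_{\overline{\pi}}(\overline{\alpha})$, and specialize $\pi = q$; since $\overline{q} = q$ this reads $\overline{\chi_{q}(\alpha)} = \chi_{q}(\overline{\alpha})$. On the other hand, part (1) of the same proposition gives $\overline{\chi_{q}(\alpha)} = \chi_{q}(\alpha^{2})$. Chaining the two equalities yields $\chi_{q}(\overline{\alpha}) = \chi_{q}(\alpha^{2})$, which is precisely the first claim.

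For part (2), I would apply part (1) of this corollary to a rational integer $\alpha = n$ with $\gcd(n,q) = 1$: again $\overline{n} = n$, so $\chi_{q}(n) = \chi_{q}(n^{2})$, and multiplicativity (Proposition \ref{properties of the cubic residue character}(3)) rewrites the right-hand side as $\chi_{q}(n)^{2}$. Hence $\chi_{q}(n) = \chi_{q}(n)^{2}$. Because $q \nmid n$, the value $\chi_{q}(n)$ is nonzero, so it lies in $\{1,\omega,\omega^{2}\}$; the relation $z = z^{2}$ excludes $z = \omega$ (as $\omega^{2} \neq \omega$) and $z = \omega^{2}$ (as $(\omega^{2})^{2} = \omega \neq \omega^{2}$), leaving $\chi_{q}(n) = 1$.

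I do not anticipate any real obstacle: the whole argument is that conjugation fixes rational integers together with one line of cube-root-of-unity arithmetic. The only points requiring care are stating the hypothesis on $q$ correctly (it should be read as a rational prime, equivalently $q \equiv 2 \pmod 3$, so that $\chi_{q}$ makes sense) and noting that $\chi_{q}(n) \neq 0$ is what allows the final step $z = z^{2} \Rightarrow z = 1$.
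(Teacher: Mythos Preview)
Your argument is correct and matches the paper's approach essentially line for line: both parts hinge on $\overline{q}=q$ (and $\overline{n}=n$) together with the two identities of Proposition \ref{properties of the cubic character modulo pi}, and the conclusion $\chi_{q}(n)=\chi_{q}(n)^{2}\Rightarrow\chi_{q}(n)=1$ is exactly how the paper finishes. Your explicit remark that one needs $q\equiv 2\pmod{3}$ so that $\chi_{q}$ is defined is a welcome clarification the paper leaves implicit.
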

\begin{proof}
    Since $q$ is also a rational integer, it is obvious that $\overline{q}=q$. Therefore, by Proposition \ref{properties of the cubic character modulo pi}, we have $\chi_{q}(\overline{\alpha}) = \chi_{\overline{q}}(\overline{\alpha}) = \overline{\chi_{q}(\alpha)} = \chi_{q}(\alpha)^{2} = \chi_{q}(\alpha^{2})$. 

    \indent To prove (2), notice similarly that $\overline{n}=n$. Therefore following the same procedure as (1) we have $\chi_{q}(n) = \chi_{\overline{q}}(\overline{n}) = \overline{\chi_{q}(n)} = \chi_{q}(n)^{2}$. It is impossible for $\chi_{q}(n)$ to be $0$ as $n\nmid q$, so it must be that $\chi_{q}(n)=1$. In this way, this corollary asserts that $n$ is a cubic residue modulo $q$ if both are rational integers. 
\end{proof}
\begin{rem} \label{Remark: trivial case of cubic reciprocity}
As a special case of the law of cubic reciprocity, consider two primes $q_{1}\neq q_{2}$ such that $q_{1}\equiv q_{2}\equiv 2 \pmod{3}$. Then $\chi_{q_{1}}(q_{2}) = \chi_{q_{2}}(q_{1})$. This is a special case of cubic reciprocity where both the modulus and argument are rational integers. 
\end{rem}
\noindent In order to state the general case, we need to extend this result to all prime elements in $D$. 
\begin{defn}[Primary]
    Let $\pi\in D$ be prime. Then $\pi$ is \textit{primary} if $\pi\equiv 2\pmod{3}$. 
\end{defn}
\noindent We know that $\pi$ is either rational or not rational. In the rational case, the previous discussion in Remark \ref{Remark: trivial case of cubic reciprocity} applies. If $\pi$ is not rational, then we consider when $\pi=a+b\omega$, namely when $a\equiv 2\pmod{3}$ and $b\equiv 0\pmod{3}$. Naturally, there exist 6 possible associates for every element $\pi$ of $D$, as the 6 units of $D$ act as multiplicative identities. However, it is necessary to derive a result that removes the ambiguity concerning which associate can be used for each element of $D$. 
\begin{lemma} \label{lemma for if Npi=1(mod 3) then one associate of pi is primary}
    Let $N\pi=p\equiv 1\pmod{3}$. Exactly one associate of $\pi$ is primary.
\end{lemma}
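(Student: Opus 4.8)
The plan is to pass to the residue ring $D/3D$ and show that, reduced modulo $3$, the six associates of $\pi$ are precisely the six elements of $(D/3D)^{*}$; since $2$ is itself a unit mod $3$, exactly one associate will reduce to $2$, and that is the unique primary associate. Recall from Proposition \ref{classification of units in Z[omega]} that the units of $D$ are exactly $1,-1,\omega,-\omega,\omega^{2},-\omega^{2}$, so the associates of $\pi$ are the six elements $u\pi$ with $u$ a unit. For concreteness one should also note that, for a non-rational $\pi=a+b\omega$, the condition $\pi\equiv 2\pmod 3$ unwinds to $a\equiv 2$ and $b\equiv 0\pmod 3$, matching the definition of primary.

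First I would check that all the relevant classes are units in $D/3D$. Since $N\pi=p\equiv 1\pmod 3$, we have $3\nmid N\pi$, so $\pi$ is coprime to $3$ in $D$ and $\overline{\pi}$ is invertible in $D/3D$; hence so is each $\overline{u\pi}$. Also $2$ is prime in $D$ by part (1) of Theorem \ref{classification of primes in Z[omega]}, and it is not associate to $3$, so $\overline{2}\in (D/3D)^{*}$ as well.

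Next I would compute $|(D/3D)^{*}|=6$. By part (3) of Theorem \ref{classification of primes in Z[omega]}, $3=-\omega^{2}(1-\omega)^{2}$, so $3D=(1-\omega)^{2}D$, and writing $\lambda=1-\omega$ we get a filtration $3D=\lambda^{2}D\subset \lambda D\subset D$ in which $D/\lambda D$ is a field of $N\lambda=3$ elements by Theorem \ref{Theorem: Residue Class Ring D/piD has Npi elements (D=Eisenstein Integers)} and each successive quotient is isomorphic to it; thus $|D/3D|=N(3)=9$, the non-units are exactly the $|\lambda D/\lambda^{2}D|=3$ classes of multiples of $\lambda$, and $|(D/3D)^{*}|=9-3=6$. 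Then I would observe that the six units of $D$ are pairwise incongruent modulo $3$: if $u\neq u'$ are units, then $u-u'\neq 0$ and $N(u-u')=|u-u'|^{2}\le(|u|+|u'|)^{2}=4<9=N(3)$, so $u-u'$ is not divisible by $3$ (a nonzero multiple of $3$ has norm $\ge 9$). Consequently the six associates $u\pi$ are pairwise incongruent mod $3$ too, since $u\pi\equiv u'\pi\pmod 3$ would give $3\mid(u-u')\pi$, and as $\pi$ is invertible mod $3$ this forces $3\mid u-u'$, hence $u=u'$.

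Putting these together: the classes $\overline{u\pi}$, $u$ a unit, are six pairwise distinct elements of the $6$-element group $(D/3D)^{*}$, so they exhaust it, each occurring exactly once. Since $\overline{2}\in (D/3D)^{*}$, there is exactly one unit $u$ with $u\pi\equiv 2\pmod 3$, i.e. exactly one primary associate of $\pi$, which is what we wanted. The only genuinely computational ingredient is the count $|(D/3D)^{*}|=6$, which hinges on the ramification of $3$ recorded in Theorem \ref{classification of primes in Z[omega]}(3); I do not anticipate any real obstacle beyond carrying that out carefully, as the remaining steps are either quoted from the excerpt or one-line norm estimates.
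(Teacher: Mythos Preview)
Your argument is correct and takes a genuinely different route from the paper. The paper writes out all six associates of $\pi=a+b\omega$ explicitly in coordinates, reduces the norm equation $p=a^{2}-ab+b^{2}$ modulo $3$ to pin down the possible residues of $a$ and $b$, and then checks by cases which associate satisfies $a\equiv 2$, $b\equiv 0\pmod 3$, verifying uniqueness by inspecting the remaining five expressions one at a time. Your approach instead counts: you identify $(D/3D)^{*}$ as a group of order six, show the six associates of $\pi$ land in six distinct classes (via the norm estimate $N(u-u')\le 4<9$), and conclude that exactly one hits the class of $2$. The paper's method is more hands-on and has the minor advantage of telling you \emph{which} associate is primary in terms of $a,b\pmod 3$; yours is cleaner, avoids any case analysis, and transports verbatim to the analogous statement for $\ZZ[i]$ and primary elements modulo $(1+i)^{3}$ in the biquadratic setting.
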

\begin{proof}
    \noindent Express $\pi=a+b\omega$ for $a,b\in\mathbb{Z}$. Then the associates of $\pi$ are given as (1) $\pi$, since there is some unit $u$ such that $\pi=u\pi$, (2) $\omega\pi$, since $\omega$ is a unit and $\pi=u(\omega\pi)$ for some unit $u$, (3) $\omega^{2}\pi$, since $\omega^{2}$ is also a unit and $\pi=u(\omega^{2}\pi)$ for some unit $u$, (4) $-\pi$, for the same reason as (1), (5) $-\omega\pi$ for the same reason as (2), and (6) $-\omega^{2}\pi$ for the same reason as (3). 

    \indent With all of these associates for $\pi$, we can now express each in terms of $a$ and $b$. Namely, in order, we have
    \begin{enumerate}
        \item $a+b\omega$,
        \item $\omega(a+b\omega)=a\omega+b\omega^{2} = a\omega + b(-1-\omega) = -b + (a-b)\omega$,
        \item $\omega^{2}(a+b\omega) = a\omega^{2}+b\omega^{3} = a(-1-\omega)+b = (b-a)-a\omega$, 
        \item $-a-b\omega$,
        \item $-\omega(a+b\omega) = -(-b+(a-b)\omega) = b+(b-a)\omega$,
        \item $-\omega^{2}(a+b\omega) = -((b-a)-a\omega) = (a-b)+a\omega$.
    \end{enumerate}
    \noindent Among these we must determine the primary associate. Recall that $N\pi=p=a^{2}-ab+b^{2}$. In this expression, only one of $a$ and $b$ is divisible by $3$ since $\pi$ is primary. Therefore, we proceed by assuming, WLOG, that $a\not\equiv 0\pmod{3}$. Then we further assume that $a\equiv 2\pmod{3}$. With these assumptions, we have that $p=a^{2}-ab+b^{2}\implies p\equiv 1\equiv 2^{2}-2b+b^{2} \pmod{3} \implies -3\equiv b(b-2)\equiv 0 \pmod{3}$. We now consider two cases for $b$. If $3|b$, then we have $a\equiv 2\pmod{3}$ and $b\equiv 0\pmod{3}$, such that $\pi\equiv 2\pmod{3}$ and so $a+b\omega$ is primary. If $b\equiv 2 \pmod{3}$, then we must also have $a\equiv 2\pmod{3}$, so $\pi\equiv 2+2\omega \equiv b+(b-a)\omega \pmod{3}$. Therefore $b+(b-a)\omega$ is primary. 

    \indent Now all that remains is to prove uniqueness. Let $a+b\omega$ be primary. This only occurs when $b\equiv 0\pmod{3}$, so looking at (2), it is clear that $-b+(a-b)\omega$ cannot be primary. The same is true for (3). For (4), notice that $b\equiv 0\pmod{3}$ implies that $\pi\equiv -a\equiv -2\equiv 1 \pmod{3}$, so $-a-b\omega$ is not primary. For (5), the argument is identical to (2). For (6), the argument is identical to (3). 
\end{proof}
\noindent Now we are equipped to state cubic reciprocity.
\begin{thm}[The Law of Cubic Reciprocity] \label{The Law of Cubic Reciprocity}
    Let $\pi_{1}$ and $\pi_{2}$ be primary. Furthermore, let $N\pi_{1},N\pi_{2}\neq 3$ with $N\pi_{1}\neq N\pi_{2}$. Then 
    \begin{equation*}
        \chi_{\pi_{1}}(\pi_{2}) = \chi_{\pi_{2}}(\pi_{1}).
    \end{equation*}
    \noindent In words, if $\pi_{1}$ and $\pi_{2}$ are primary with different norms not equal to $3$, then $\pi_{1}$ is a cubic residue modulo $\pi_{2}$ if $\pi_{2}$ is a cubic residue modulo $\pi_{1}$, and $\pi_{1}$ is a cubic nonresidue modulo $\pi_{2}$ if $\pi_{2}$ is a cubic nonresidue modulo $\pi_{1}$. 
\end{thm}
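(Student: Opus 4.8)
The plan is to reduce to the hypothesis's situation of primary primes and split according to the shapes of the norms: each primary prime of $D$ is either a rational prime $q\equiv 2\pmod{3}$ (with $Nq=q^{2}$) or has prime norm $p\equiv 1\pmod{3}$, and norms equal to $3$ are excluded. If $\pi_{1}$ and $\pi_{2}$ are both rational primes they are coprime rational integers, and Corollary \ref{Corollary: cor to props of cub char mod pi}(2) gives $\chi_{\pi_{1}}(\pi_{2})=1=\chi_{\pi_{2}}(\pi_{1})$ at once. So it remains to handle the cases in which at least one of them — say $\pi_{1}$ — has prime norm $p\equiv 1\pmod{3}$, and the engine will be the cubic Gauss sum $g:=g(\chi_{\pi_{1}})=\sum_{t\in\mathbb{F}_{p}}\chi_{\pi_{1}}(t)\zeta_{p}^{t}$ attached to the order-$3$ character $\chi_{\pi_{1}}$ on $D/\pi_{1}D\cong\mathbb{F}_{p}$.

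The crux is the normalized evaluation $g^{3}=p\,\pi_{1}$, which is where the primary hypothesis on $\pi_{1}$ is used. By Corollary \ref{corollary for relation between cubic Gauss sum and jacobi sum}, $g^{3}=p\,J(\chi_{\pi_{1}},\chi_{\pi_{1}})$, so it suffices to prove $J(\chi_{\pi_{1}},\chi_{\pi_{1}})=\pi_{1}$. First, $|g|^{2}=p$ by Lemma \ref{Lemma: value of the general Gauss sum}, hence $N(J(\chi_{\pi_{1}},\chi_{\pi_{1}}))=|g^{3}|^{2}/p^{2}=p$, and Proposition \ref{some prime pi in D such that its norm is p, then pi is prime} shows this Jacobi sum is a prime of $D$ of norm $p$. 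Second, Euler's criterion $\alpha^{(p-1)/3}\equiv\chi_{\pi_{1}}(\alpha)\pmod{\pi_{1}}$ (Proposition \ref{properties of the cubic residue character}(2)) turns it into $\sum_{a\in\mathbb{F}_{p}}a^{(p-1)/3}(1-a)^{(p-1)/3}\pmod{\pi_{1}}$; expanding $(1-a)^{(p-1)/3}$ binomially and using $\sum_{a\in\mathbb{F}_{p}}a^{j}=0$ whenever $0<j<p-1$ — which covers every exponent that appears, as those lie between $(p-1)/3$ and $2(p-1)/3$ — makes the whole sum vanish, so $\pi_{1}\mid J(\chi_{\pi_{1}},\chi_{\pi_{1}})$. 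Third, exactly two of the $p$ summands $\chi_{\pi_{1}}(a)\chi_{\pi_{1}}(1-a)$ are zero and each of the remaining $p-2$ is a cube root of unity $\omega^{e}$; expanding $\omega^{e}\equiv 1+e(\omega-1)\pmod{(\omega-1)^{2}}$, using the equidistribution of $\chi_{\pi_{1}}$ on $\mathbb{F}_{p}^{*}$ that comes from Proposition \ref{proof about sum of elements evaluated by a character prop}, and using $3\mid p-1$ (so that $(\omega-1)^{2}$, an associate of $3$, divides $p-1$), one finds $J(\chi_{\pi_{1}},\chi_{\pi_{1}})\equiv-1\pmod{3}$, i.e. it is \emph{primary}. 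A primary prime of norm $p$ dividing $\pi_{1}$ must equal the unique primary associate of $\pi_{1}$ (Lemma \ref{lemma for if Npi=1(mod 3) then one associate of pi is primary}), which is $\pi_{1}$ itself; hence $g^{3}=p\,\pi_{1}$.

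For the comparison I would work in $\Omega=\ZZ[\omega,\zeta_{p}]$ and reduce modulo $\pi_{2}\Omega$; since $D$ is integrally closed, a congruence mod $\pi_{2}\Omega$ between elements of $D$ is a congruence mod $\pi_{2}$ in $D$, so Lemma \ref{when two numbers are equivalent when they are congruent mod pi lemma} stays available. In all remaining cases $N\pi_{2}\equiv 1\pmod{3}$. Evaluating $g^{N\pi_{2}}$ modulo $\pi_{2}$ in two ways: on one side $g^{N\pi_{2}}=g\,(g^{3})^{(N\pi_{2}-1)/3}=g\,(p\pi_{1})^{(N\pi_{2}-1)/3}\equiv g\,\chi_{\pi_{2}}(p)\,\chi_{\pi_{2}}(\pi_{1})\pmod{\pi_{2}}$ by Euler's criterion for the modulus $\pi_{2}$; on the other side, reducing modulo the rational prime below $\pi_{2}$ and applying the identity of Proposition \ref{binomial theorem analogue for finite field with characteristic p} together with $\chi_{\pi_{1}}(t)^{N\pi_{2}}=\chi_{\pi_{1}}(t)$, one gets $g^{N\pi_{2}}\equiv\sum_{t}\chi_{\pi_{1}}(t)\zeta_{p}^{N\pi_{2}\,t}=g_{N\pi_{2}}(\chi_{\pi_{1}})=\overline{\chi_{\pi_{1}}(N\pi_{2})}\,g\pmod{\pi_{2}}$ by Lemma \ref{lemma with some important properties of the Gauss sum and its evaluation}(1). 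Since $g$ is a unit modulo $\pi_{2}$ (because $g^{3}=p\pi_{1}$ with $\pi_{2}\nmid p\pi_{1}$), cancelling it and applying Lemma \ref{when two numbers are equivalent when they are congruent mod pi lemma} gives
\begin{equation*}
\chi_{\pi_{2}}(p)\,\chi_{\pi_{2}}(\pi_{1})=\overline{\chi_{\pi_{1}}(N\pi_{2})}.
\end{equation*}
When $\pi_{2}=q$ is rational, $\chi_{q}(p)=1$ by Corollary \ref{Corollary: cor to props of cub char mod pi}(2) and $\overline{\chi_{\pi_{1}}(q^{2})}=\overline{\chi_{\pi_{1}}(q)^{2}}=\chi_{\pi_{1}}(q)$, so this collapses to $\chi_{q}(\pi_{1})=\chi_{\pi_{1}}(q)$. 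When $\pi_{2}$ has prime norm $\ell\equiv 1\pmod{3}$, the analogous evaluation $g(\chi_{\pi_{2}})^{3}=\ell\,\pi_{2}$ also holds; writing $p=\pi_{1}\overline{\pi_{1}}$, $\ell=\pi_{2}\overline{\pi_{2}}$ and abbreviating $u=\chi_{\pi_{2}}(\pi_{1})$, $v=\chi_{\pi_{2}}(\overline{\pi_{1}})$, $x=\chi_{\pi_{1}}(\pi_{2})$, $y=\chi_{\pi_{1}}(\overline{\pi_{2}})$, the displayed identity and its mirror read $u^{2}v=x^{2}y^{2}$ and $x^{2}y=u^{2}v^{2}$; eliminating $v$ (every symbol is a cube root of unity, so its inverse is its square) forces $x=u$, that is $\chi_{\pi_{1}}(\pi_{2})=\chi_{\pi_{2}}(\pi_{1})$, and all cases are covered.

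The main obstacle is the engine evaluation $g^{3}=p\,\pi_{1}$ and, inside it, the precise normalization $J(\chi_{\pi_{1}},\chi_{\pi_{1}})=\pi_{1}$ rather than merely an associate of $\pi_{1}$: this is exactly what the primary hypothesis buys, and it rests on the congruence modulo $(\omega-1)^{2}$ (equivalently modulo $3$), which must be carried out carefully, since a congruence only modulo $\omega-1$ does not single out the primary associate. A secondary subtlety is the case of two primary primes of prime norm, where one pass of the Gauss-sum computation yields only a relation among the four symbols $\chi_{\pi_{1}}(\pi_{2}),\chi_{\pi_{1}}(\overline{\pi_{2}}),\chi_{\pi_{2}}(\pi_{1}),\chi_{\pi_{2}}(\overline{\pi_{1}})$, and one must combine it with its mirror image to isolate the reciprocity relation itself.
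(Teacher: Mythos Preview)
Your proposal is correct and follows essentially the same Gauss-sum strategy as the paper: establish $g(\chi_{\pi_1})^3=p\,\pi_1$ for primary $\pi_1$ of norm $p$, raise to the $(N\pi_2-1)/3$ power, apply the Frobenius/Freshman's-dream identity modulo the rational prime under $\pi_2$, and compare. The paper packages the engine evaluation as Proposition~\ref{proposition stating that the cubic jacobi sum of cubic character equals pi} and Corollary~\ref{corollary to proposition relating jacobi sums to Gauss sums}, so you could simply cite those rather than re-derive them.

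Two organizational differences are worth noting. First, your proof that $J(\chi_{\pi_1},\chi_{\pi_1})$ is primary expands each summand $\omega^{e(a)}\equiv 1+e(a)(\omega-1)\pmod{(\omega-1)^2}$ and uses equidistribution of $\chi_{\pi_1}$ to show $\sum e(a)\equiv 0\pmod 3$; the paper instead reduces the Gauss-sum cube directly, obtaining $g(\chi)^3\equiv\sum_{t\neq 0}\zeta^{3t}=-1\pmod 3$ (Proposition~\ref{Proposition about jacobi sum being a primary prime}). Both are valid; the paper's is shorter. Second, in the complex--complex case you run the computation symmetrically with $g(\chi_{\pi_1})$ and $g(\chi_{\pi_2})$ to obtain the pair $u^{2}v=x^{2}y^{2}$, $x^{2}y=u^{2}v^{2}$ and solve for $x=u$; the paper instead uses $g(\chi_{\overline{\pi_1}})$ together with $g(\chi_{\pi_2})$, derives three labelled identities, and cancels a common factor $\chi_{\pi_2}(p_1\overline{\pi_1})$. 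These are the same relations rearranged, and your symmetric elimination is arguably cleaner.
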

    \noindent This theorem requires that we consider 3 scenarios. The first scenario requires that we consider whether each $\pi$ is rational or not. Namely, when both $\pi_{1}$ and $\pi_{2}$ are rational, when one of $\pi_{1}$ and $\pi_{2}$ is rational and the other is complex (i.e. in $D$), and when both $\pi_{1}$ and $\pi_{2}$ are complex. In Remark \ref{Remark: trivial case of cubic reciprocity}, we showed that the first case is trivial. We will also consider special cases of cubic reciprocity, namely when the input of the cubic character is either a unit, which we consider in the first supplement, or a special prime, which we consider in the second supplement. The first supplement is easily provable, but the second is a little more difficult. 

\subsection{Supplements to Cubic Reciprocity}

    \indent We first consider cubic reciprocity when the inputs are the units, namely $1,\omega,$ and $\omega^{2}$ and their negatives. Clearly, $(-1)^{3}=-1$, so $-1$ is always a cubic residue modulo any $\pi$ prime, i.e. $\chi_{\pi}(-1)=1$. By (2) of Proposition \ref{properties of the cubic residue character}, we have that $\chi_{\pi}(\omega) = \omega^{\frac{N\pi-1}{3}}$. Therefore, the cubic character of units can be stated as follows. 

\begin{thm}[First supplement to the Law of Cubic Reciprocity] \label{First supplement to the Law of Cubic Reciprocity}
    Let $\omega$ be a cube root of unity. Then
    \begin{equation*}
        \chi_{\pi}(\omega) = \omega^{\frac{N\pi-1}{3}} = 
        \left\{
        \begin{array}{lll}
            1 & \text{if} \ N\pi\equiv 1 \pmod{9}, \\
            \omega & \text{if} \ N\pi\equiv 4 \pmod{9}, \\
            \omega^{2} & \text{if} \ N\pi\equiv 7 \pmod{9}.
        \end{array}
        \right.
    \end{equation*}
\end{thm}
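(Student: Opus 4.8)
The plan is to combine the already-established identity $\chi_{\pi}(\omega) = \omega^{(N\pi-1)/3}$ from Proposition \ref{properties of the cubic residue character}(2) with a purely arithmetic analysis of the exponent $(N\pi-1)/3$ modulo $3$. Since $\omega$ has order $3$, the value $\omega^{(N\pi-1)/3}$ depends only on the residue of $(N\pi-1)/3$ modulo $3$, which in turn is governed by the residue of $N\pi$ modulo $9$. So the entire content reduces to a bookkeeping computation.

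First I would recall that the hypothesis $N\pi \neq 3$ forces $3 \mid N\pi - 1$ (this was shown in the discussion preceding Proposition \ref{Proposition: There exists a unique integer m=0,1,2 analogue to FLT}, using that $\{1,\omega,\omega^2\}$ is a subgroup of order $3$ in $(D/\pi D)^*$, together with Theorem \ref{Lagrange's Theorem}), so the exponent $(N\pi-1)/3$ is a well-defined integer. Write $N\pi = 3k+1$, so the exponent is $k$ and $\chi_{\pi}(\omega) = \omega^{k}$. Then $\omega^{k}$ equals $1$, $\omega$, or $\omega^2$ according as $k \equiv 0, 1, 2 \pmod 3$. Translating back: $k \equiv 0 \pmod 3$ iff $N\pi = 3k+1 \equiv 1 \pmod 9$; $k \equiv 1 \pmod 3$ iff $N\pi \equiv 4 \pmod 9$; and $k \equiv 2 \pmod 3$ iff $N\pi \equiv 7 \pmod 9$. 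These three cases are exhaustive because $N\pi \equiv 1 \pmod 3$ already, so $N\pi \pmod 9 \in \{1,4,7\}$. I would also remark that the invocation of Proposition \ref{properties of the cubic residue character}(2) gives a congruence modulo $\pi$, not an equality, but since both sides lie in $\{1,\omega,\omega^2\}$, Lemma \ref{when two numbers are equivalent when they are congruent mod pi lemma} upgrades it to an honest equality in $D$.

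There is essentially no obstacle here: the statement is a direct corollary of results already proved, and the only "work" is the elementary observation that $N\pi \equiv 1 \pmod 3$ together with the case split on $N\pi \bmod 9$. If anything, the mild subtlety worth spelling out is \emph{why} $N\pi \equiv 1 \pmod 3$ in every case — i.e. that whether $\pi$ is a rational prime $q \equiv 2 \pmod 3$ (so $N\pi = q^2 \equiv 1 \pmod 3$) or a complex prime with $N\pi = p \equiv 1 \pmod 3$, one always lands in the residue class $1$ modulo $3$; the excluded case $N\pi = 3$ is exactly the one where this would fail. After that, the three-way table is immediate.
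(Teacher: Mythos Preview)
Your proposal is correct and follows exactly the approach the paper intends: the paper's own proof is the single sentence ``This is not difficult to show by considering each case,'' and the text immediately preceding the theorem already records the identity $\chi_{\pi}(\omega)=\omega^{(N\pi-1)/3}$ via Proposition \ref{properties of the cubic residue character}(2). You have simply written out the case analysis the paper leaves to the reader, with the added care of invoking Lemma \ref{when two numbers are equivalent when they are congruent mod pi lemma} to pass from congruence to equality and of noting why $N\pi\equiv 1\pmod 3$ in the first place.
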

\begin{proof}
    This is not difficult to show by considering each case. 
\end{proof}
\noindent Notice that in the identity $1+\omega+\omega^{2}=0$, we can write $\omega^{2} = -1-\omega$. Therefore the final case requires us to consider the cubic residue character $\chi_{\pi}(1-\omega)$, where $1-\omega$ is also a prime in $D$ as shown in Theorem \ref{classification of primes in Z[omega]}. The proof is divided into two separate cases, specifically when the modulus is a rational prime $\pi=q$ and when the modulus is a non-rational prime $\pi$. The first case is easily considered, but the second case requires us to consider more about primary elements in $D$. 
\begin{thm}[Second supplement to the Law of Cubic Reciprocity] \label{Second supplement to the Law of Cubic Reciprocity}
    Let $N\pi\neq 3$. If $\pi=q$ is rational, then write $q=3m-1$. If $\pi$ is primary with $\pi\in D$ then write $\pi=a+b\omega$ and take $a=3m-1$. Then
    \begin{equation*}
        \chi_{\pi}(1-\omega) = \omega^{2m}.
    \end{equation*}
\end{thm}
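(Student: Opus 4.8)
\noindent The plan is to unwind the definition of the cubic residue character and then reduce, in each of the two cases named in the statement, to a power computation. By definition $\chi_{\pi}(1-\omega)\equiv(1-\omega)^{(N\pi-1)/3}\pmod{\pi}$, and since $\omega^{2m}\in\{1,\omega,\omega^{2}\}$, Lemma \ref{when two numbers are equivalent when they are congruent mod pi lemma} shows it is enough to verify the congruence $(1-\omega)^{(N\pi-1)/3}\equiv\omega^{2m}\pmod{\pi}$. Throughout I would use the two identities $(1-\omega)^{2}=-3\omega$ (immediate from $1+\omega+\omega^{2}=0$) and $3=-\omega^{2}(1-\omega)^{2}$ from Theorem \ref{classification of primes in Z[omega]}.

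\noindent \emph{The rational case $\pi=q$.} Here $N\pi=q^{2}$, and $q=3m-1$ gives $(q^{2}-1)/3=(q-1)(q+1)/3=m(q-1)$. By Theorem \ref{classification of primes in Z[omega]} the ring $D/qD$ is a field of characteristic $q$, so Proposition \ref{binomial theorem analogue for finite field with characteristic p} (the Frobenius) together with $q\equiv 2\pmod 3$ gives $(1-\omega)^{q}\equiv 1-\omega^{q}=1-\omega^{2}=(1-\omega)(1+\omega)=-\omega^{2}(1-\omega)\pmod q$. Since $N(1-\omega)=3$ is not a multiple of $Nq=q^{2}$, $1-\omega$ is a unit mod $q$, and cancelling it yields $(1-\omega)^{q-1}\equiv-\omega^{2}\pmod q$; hence $(1-\omega)^{(q^{2}-1)/3}=\big((1-\omega)^{q-1}\big)^{m}\equiv(-1)^{m}\omega^{2m}\pmod q$. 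The sign is harmless: an odd prime $q=3m-1$ forces $m$ even, and the single even case $q=2$ (so $m=1$) is fine since $-1\equiv 1\pmod 2$. This case is routine.

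\noindent \emph{The non-rational primary case $\pi=a+b\omega$ with $N\pi=p\equiv 1\pmod 3$.} Write $a=3m-1$ and, since $\pi$ is primary, $b=3n$. By Theorem \ref{Theorem: Residue Class Ring D/piD has Npi elements (D=Eisenstein Integers)}, reduction mod $\pi$ is an isomorphism $D/\pi D\cong\mathbb{F}_{p}$ sending $\omega$ to a primitive cube root of unity $c\in\mathbb{F}_{p}$ with $bc\equiv-a\pmod p$, and it carries $\chi_{\pi}(1-\omega)\equiv(1-\omega)^{(p-1)/3}\pmod{\pi}$ to $c^{\,j}\equiv(1-c)^{(p-1)/3}\pmod p$, where $\chi_{\pi}(1-\omega)=\omega^{j}$; so I must show $j\equiv 2m\pmod 3$. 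Since $p$ is an odd prime $\equiv 1\pmod 3$ we have $p\equiv 1\pmod 6$, so $(p-1)/3$ is even, and using $(1-c)^{2}=-3c$ this gives $(1-c)^{(p-1)/3}=(-3)^{(p-1)/6}c^{(p-1)/6}$. Now $\big((-3)^{(p-1)/6}\big)^{3}=(-3)^{(p-1)/2}=(-3/p)=1$, the last equality being exactly the Legendre-symbol computation already carried out in the proof of Theorem \ref{classification of primes in Z[omega]}, so $(-3)^{(p-1)/6}$ is itself a power of $c$; and a short congruence computation from $p=a^{2}-ab+b^{2}$ gives $(p-1)/6\equiv-(m+n)\pmod 3$. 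Assembling these, the identity to prove collapses to $(-3)^{(p-1)/6}\equiv c^{\,n}\pmod p$, equivalently to the assertion that the cubic character of $-3$ --- equivalently of $3$ --- modulo $\pi$ equals $\omega^{2n}$.

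\noindent \textbf{The main obstacle} is precisely this last reduction: no further formal manipulation of $\chi_{\pi}$ will close it, because in $D$ the integer $3$ factors only through the prime $1-\omega$, so every symbolic identity for $\chi_{\pi}(3)$ is circular. I would finish it by a direct computation in $\mathbb{F}_{p}$ that genuinely uses the relation between $c$ and the coefficients of $\pi$: from $-3=(2\omega+1)^{2}$ with $2\omega+1=\omega(1-\omega)$, together with $\pi-\overline{\pi}=b(2\omega+1)$ (so $2\omega+1\equiv-b^{-1}\overline{\pi}\pmod{\pi}$), one combines $bc\equiv-a\pmod p$, $p=a^{2}-ab+b^{2}$, and $a=3m-1$, $b=3n$ to evaluate $(2c+1)^{(p-1)/3}$ and pin it down as $c^{\,n}$. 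An alternative that fits the apparatus of Section 2 is to invoke the cubic Gauss sum: Corollary \ref{corollary for relation between cubic Gauss sum and jacobi sum} gives $g(\chi_{\pi})^{3}=p\,J(\chi_{\pi},\chi_{\pi})$, and once one knows $J(\chi_{\pi},\chi_{\pi})$ is a primary prime of norm $p$ this forces $g(\chi_{\pi})^{3}=p\pi$, from which a companion congruence for $g(\chi_{\pi})$ modulo a power of $1-\omega$ reads off $\chi_{\pi}(1-\omega)=\omega^{2m}$. Everything before this step is bookkeeping; this is where the effort concentrates.
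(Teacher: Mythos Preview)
Your rational case is correct and cleaner than the paper's: you use the Frobenius $(1-\omega)^{q}\equiv 1-\omega^{q}\pmod q$ directly, whereas the paper squares and invokes $\chi_{q}(-3)=1$ from Corollary \ref{Corollary: cor to props of cub char mod pi}. Both reach $\omega^{2m}$; yours avoids the detour through $(1-\omega)^{2}=-3\omega$.

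The non-rational case, however, is genuinely incomplete. Your reduction to $(-3)^{(p-1)/6}\equiv c^{n}\pmod p$ (equivalently $\chi_{\pi}(3)=\omega^{2n}$) is correct, and you rightly flag it as the crux, but neither of your two proposed resolutions is carried out. The first --- pushing $2\omega+1\equiv -b^{-1}\overline{\pi}\pmod{\pi}$ --- runs back into the same circularity, since $b=3n$ reintroduces $\chi_{\pi}(3)$, and evaluating $\chi_{\pi}(\overline{\pi})$ via $\overline{\pi}\mapsto 2a-b$ in $\mathbb{F}_{p}$ does not obviously close to $c^{n}$. The second (Gauss-sum) route does work in principle, but it needs $J(\chi_{\pi},\chi_{\pi})=\pi$ and a congruence for $g(\chi_{\pi})$ modulo a power of $1-\omega$, which you only name.

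The paper takes a route you do not anticipate: it \emph{uses cubic reciprocity itself}. Assuming (WLOG via primary factorization, Corollary \ref{Second supplement to cubic reciprocity: Corollary 1}) that $a$ and $a+b$ are primary rational primes, it applies reciprocity between $\pi$ and $a$, and between $\pi$ and $a+b$, together with the congruences $a+b\omega\equiv b\omega\pmod a$, $a(1-\omega)\equiv -(a+b)\omega\pmod{\pi}$, and $a+b\omega\equiv -b(1-\omega)\pmod{a+b}$, to rewrite $\chi_{\pi}(1-\omega)$ as $\omega^{n}\chi_{a+b}(1-\omega)$. The latter is a \emph{rational} modulus, so the already-proved first case finishes it. This bootstrap is logically sound because the proof of Theorem \ref{The Law of Cubic Reciprocity} in Section 3.5 nowhere invokes the supplement. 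What this buys is that the ``hard'' computation of $\chi_{\pi}(3)$ is never confronted directly --- it is absorbed into reciprocity --- whereas your approach isolates it but does not resolve it.
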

\begin{proof}[Proof of Theorem \ref{Second supplement to the Law of Cubic Reciprocity} when $\pi=q$ is rational]
    This supplement requires us to consider two different cases. The first case is when $\pi=q$ is a rational prime, and the second is when $\pi=a+b\omega$ is non-rational. We consider the case where $\pi=q$ is a rational prime in this proof.

    \indent First notice that $(1-\omega)^{2} = 1-2\omega+\omega^{2} = -1-\omega+1-2\omega = -3\omega$. Therefore by definition of a character, $\chi_{q}((1-\omega)^{2}) = \chi_{q}(-3\omega) = \chi_{q}(-3)\chi_{q}(\omega)$. By Corollary \ref{Corollary: cor to props of cub char mod pi}, since we know that $\pi=q$ is a rational prime, and since $\gcd(-3,q)=1$, we can write $\chi_{q}(\overline{-3}) = \chi_{q}(-3) = 1$. Taking $\pi=q$ to be a rational prime in Theorem \ref{First supplement to the Law of Cubic Reciprocity}, and since $N\pi=Nq = q^{2}$, we can write $\chi_{q}(\omega) = \omega^{(N\pi-1)/3} = \omega^{(q^{2}-1)/3}$. If we square both sides and notice that $\chi_{q}$ is inherently a cube root of unity, then by Proposition \ref{properties of the cubic character modulo pi} we have
    \begin{align*}
        (\chi_{q}(1-\omega)^{2})^{2} = \chi_{q}(1-\omega)^{4} = (\chi_{q}(1-\omega))^{3}\chi_{q}(1-\omega) & = \chi_{q}(1-\omega) \\ & = \omega^{2\cdot \frac{q^{2}-1}{3}}.
    \end{align*}
    \noindent We now evaluate the \textit{RHS}. Let $q=3m-1$ for some $m\in\ZZ$. Then $q^{2}-1 = (3m-1)^{2}-1 = 9m^{2}-6m$. Therefore the exponent of the \textit{RHS} is $2/3(9m^{2}-6m) = 6m^{2}-4m$. Reducing this modulo $3$ because we are dealing with all possible powers of $\omega$ - namely, the cube roots of unity - we have $6m^{2}-4m \equiv -4m \equiv 2m \pmod{3}$. Substituting, we have $\chi_{q}(1-\omega)=\omega^{2m}$, which is our desired result. 
\end{proof}
\noindent The second case, where $\pi$ is a complex prime, requires that we investigate some facts about primary elements in $D$. 
We begin with the following lemma. 
\begin{lemma} \label{Second supplement to cubic reciprocity: Lemma 1}
    Let $\alpha$ and $\beta$ be two primary elements in $D$. Then $-\alpha\beta$ is primary.
\end{lemma}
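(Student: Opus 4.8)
The plan is to work directly in the quotient ring $D/3D$, where being primary is simply the statement that an element lies in the residue class of $2$. Since $\alpha$ and $\beta$ are primary we have $\alpha\equiv 2\pmod 3$ and $\beta\equiv 2\pmod 3$ in $D$, and because $3D$ is an ideal of $D$ congruences modulo $3$ are compatible with multiplication. Multiplying the two congruences gives $\alpha\beta\equiv 4\pmod 3$, and since $4\equiv 1\pmod 3$ this reads $\alpha\beta\equiv 1\pmod 3$. Negating both sides yields $-\alpha\beta\equiv -1\pmod 3$, and as $2-(-1)=3\in 3D$ we have $-1\equiv 2\pmod 3$, whence $-\alpha\beta\equiv 2\pmod 3$. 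By the definition of primary this says precisely that $-\alpha\beta$ is primary.

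If one prefers a coordinate computation, write $\alpha=a+b\omega$ and $\beta=c+d\omega$ with $a,b,c,d\in\mathbb{Z}$; in the non-rational case primariness forces $a\equiv c\equiv 2$ and $b\equiv d\equiv 0\pmod 3$, while in the rational case the element is an ordinary integer $\equiv 2\pmod 3$, which is covered by the same computation. Using $\omega^{2}=-1-\omega$ one expands $\alpha\beta=(ac-bd)+(ad+bc-bd)\omega$ and reduces each coordinate modulo $3$: the constant term is $\equiv 4\equiv 1$ and the $\omega$-coordinate is $\equiv 0$, so $-\alpha\beta\equiv(-1)+0\cdot\omega\equiv 2\pmod 3$. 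Either route gives the result; I would present the first, since it avoids any case split between rational and non-rational primes.

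There is essentially no hard step here: the only points requiring care are that $D/3D$ is genuinely a ring (so that congruences may be multiplied), which holds because $3D$ is an ideal, and the trivial but easy-to-overlook fact that $-1$ and $2$ are in the same residue class modulo $3$ in $D$. I would state both explicitly so that the short argument is fully rigorous, and I would also note that the lemma does not actually require $\alpha$ or $\beta$ to be prime — only that they satisfy the primary congruence — which is exactly the form in which it will be used to combine associates in the proof of the second supplement to cubic reciprocity.
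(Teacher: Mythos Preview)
Your proof is correct and follows essentially the same route as the paper: both arguments use that primary means $\equiv 2\pmod{3}$ in $D$, multiply the two congruences to get $\alpha\beta\equiv 4\equiv 1\pmod{3}$, and then negate to obtain $-\alpha\beta\equiv 2\pmod{3}$. The paper writes $\alpha=3s+2$, $\beta=3t+2$ and expands the product explicitly, whereas you work directly with congruences in $D/3D$; these are the same computation, and your observation that the lemma needs only the congruence condition (not primality) is a worthwhile clarification.
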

\begin{proof}
    Let $\alpha = a+b\omega$ and $\beta= c+d\omega$. By definition of primary, we have $\alpha\equiv 2\pmod{3}$ and $\beta\equiv 2\pmod{3}$. For some $s,t\in\mathbb{Z}$, let $\alpha=3s+2$ and $\beta=3t+2$. Taking the product, we have
    \begin{align*}
        -\alpha\beta = -(3s+2)(3t+2) = -(9st+6s+6t+4) & \equiv -4 \pmod{3} \\ & \equiv 2\pmod{3}.
    \end{align*}
    \noindent Therefore $-\alpha\beta$ is primary. 
\end{proof}
\noindent This can in fact be extended to a product of any number primary elements. This gives us a sort of primary factorization for primary elements in $D$, as shown below. 
\begin{cor} \label{Second supplement to cubic reciprocity: Corollary 1}
    Let $\gamma_{1},\gamma_{2},\ldots,\gamma_{n}\in D$ be primary. Then $(-1)^{n-1}\gamma_{1}\gamma_{2}\cdots \gamma_{n}$ is also primary. 
\end{cor}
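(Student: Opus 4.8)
The plan is to prove this by induction on $n$, using Lemma \ref{Second supplement to cubic reciprocity: Lemma 1} as the engine of the inductive step. The base case $n=1$ is immediate: $(-1)^{1-1}\gamma_{1}=\gamma_{1}$, which is primary by hypothesis. (The case $n=2$ is exactly Lemma \ref{Second supplement to cubic reciprocity: Lemma 1}, so one could equally start the induction there.)

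For the inductive step, I would assume that $(-1)^{k-1}\gamma_{1}\gamma_{2}\cdots\gamma_{k}$ is primary for some $k\geq 1$, and consider the product of $k+1$ primary elements. The key observation is the bookkeeping identity
\begin{equation*}
    (-1)^{k}\gamma_{1}\gamma_{2}\cdots\gamma_{k+1} = -\big[(-1)^{k-1}\gamma_{1}\gamma_{2}\cdots\gamma_{k}\big]\gamma_{k+1}.
\end{equation*}
By the inductive hypothesis, the bracketed quantity $(-1)^{k-1}\gamma_{1}\cdots\gamma_{k}$ is primary, and $\gamma_{k+1}$ is primary by assumption. Applying Lemma \ref{Second supplement to cubic reciprocity: Lemma 1} to these two primary elements shows that $-\big[(-1)^{k-1}\gamma_{1}\cdots\gamma_{k}\big]\gamma_{k+1}$ is primary, i.e.\ $(-1)^{k}\gamma_{1}\cdots\gamma_{k+1}$ is primary. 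This closes the induction and yields the corollary for all $n\geq 1$.

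There is essentially no genuine obstacle here; the only thing to be careful about is tracking the sign so that the exponent on $-1$ comes out as $n-1$ rather than, say, $n$ or $\lfloor n/2\rfloor$. The clean way to see it is that each application of Lemma \ref{Second supplement to cubic reciprocity: Lemma 1} absorbs one new factor $\gamma$ at the cost of a single extra $-1$, and since we start (at $n=1$) with no sign at all, after incorporating $n$ factors we have accumulated exactly $n-1$ sign flips. Alternatively, one can note that $\pi\equiv 2\pmod 3$ is equivalent to $\pi\equiv -1\pmod 3$, so the product $\gamma_{1}\cdots\gamma_{n}\equiv(-1)^{n}\pmod 3$ and hence $(-1)^{n-1}\gamma_{1}\cdots\gamma_{n}\equiv(-1)^{2n-1}\equiv -1\equiv 2\pmod 3$, giving the result directly without induction; but since the induction is so short I would present that as the primary argument and perhaps mention the direct computation as a remark.
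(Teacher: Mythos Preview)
Your proof is correct and essentially identical to the paper's: both use induction on $n$, rewrite $(-1)^{k}\gamma_{1}\cdots\gamma_{k+1}$ as $-\big[(-1)^{k-1}\gamma_{1}\cdots\gamma_{k}\big]\gamma_{k+1}$, and invoke Lemma \ref{Second supplement to cubic reciprocity: Lemma 1} on the two primary factors. The only cosmetic difference is that the paper starts the induction at $n=2$ while you start at $n=1$; your additional remark giving the direct $\bmod\ 3$ computation is not in the paper but is a nice alternative.
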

\begin{proof}
    We prove this with induction. When $n=2$, we have $(-1)^{1}\gamma_{1}\gamma_{2}$, which is primary by Lemma \ref{Second supplement to cubic reciprocity: Lemma 1}. Assume that this holds for some $n=k$. Then $(-1)^{k-1}\gamma_{1}\gamma_{2}\cdots \gamma_{k}$ is primary. We want to show that $(-1)^{k}\gamma_{1}\gamma_{2}\cdots\gamma_{k}\gamma_{k+1}$ is primary. Notice that 
    \begin{equation*}
        (-1)^{k}\gamma_{1}\gamma_{2}\cdots\gamma_{k}\gamma_{k+1} = (-1)(-1)^{k-1}\gamma_{1}\gamma_{2}\cdots\gamma_{k}(\gamma_{k+1}) = -(\gamma_{k+1})((-1)^{k-1}\gamma_{1}\gamma_{2}\cdots\gamma_{k}).
    \end{equation*}
    \noindent We know that $\gamma_{k+1}$ is primary, so by Lemma \ref{Second supplement to cubic reciprocity: Lemma 1}, the product is also primary.
\end{proof}
\noindent This primary factorization allows us to consider the set of primaries as a UFD. In other words, if $\gamma\in D$ is some primary element, then we can write $\gamma=(-1)^{k-1}\gamma_{2}\gamma_{2}\cdots\gamma_{k}$, where, as in $\mathbb{Z}$, the $\gamma_{i}$s need not be distinct primary prime elements.

\begin{proof}[Proof of Theorem \ref{Second supplement to the Law of Cubic Reciprocity} when $\pi$ is a non-rational prime]
    We can now prove the second case of Theorem \ref{Second supplement to the Law of Cubic Reciprocity}. Let $\pi = a+b\omega$ be a primary non-rational prime. This is only possible when $a\equiv 2 \equiv -1 \pmod{3}$ and $b\equiv 0\pmod{3}$. For $m,n\in\mathbb{Z}$, let $a=3m-1$ and $b=3n$. By definition, $a$ is primary. If $a$ is non-prime, then Corollary \ref{Second supplement to cubic reciprocity: Corollary 1} asserts that for some sequence of primary primes $a_{i}$, we can write $a=(-1)^{n-1}a_{1}a_{2}\cdots a_{n}$. WLOG, we assume that $a$ is a primary rational prime because we can choose any such $a_{i}$ to be our $a$. By extension, we can say that $a+b$ is also a primary rational prime. Furthermore, since $a$ and $b$ are nonzero and we assumed $\pi$ to be complex, it is also true that $\gcd(a,a+b)=\gcd(b,a+b)=\gcd(a,a+bw)$. 
    
    \indent Before we proceed, we note the following important identities. 
    \begin{equation*}\label{Equation: Second supp to cub rec: eq 1}\tag{1}
        \frac{Na-1}{3}=\frac{(3m-1)^{2}-1}{3} \equiv 2m\pmod{3}. 
    \end{equation*}
    \noindent Furthermore, notice that 
    \begin{equation*}\label{Equation: Second supp to cub rec: eq 2}\tag{2}
        a+b\omega\equiv b\omega \pmod{a}.
    \end{equation*}
    \noindent We also note that
    \begin{align*}
        a+b\omega & \equiv 0\pmod{\pi} \\
        a-a\omega+a\omega+b\omega & \equiv 0\pmod{\pi} \\
        a-a\omega & \equiv -(a+b)\omega \pmod{\pi} \label{Equation: Second supp to cub rec: eq 3}\tag{3}. 
    \end{align*}
    \noindent Now, recall that we defined $N\pi=p=a^{2}-ab+b^{2}$, so
    \begin{align*}
        p & = a^{2}-ab+b^{2} \\ &
        = (3m-1)^{2}-(3m-1)(3n)+(3n)^{2} \\ 
        \frac{p-1}{3} & = 3m^{2}-2m-3mn+3n^{2}+n \\ &
        \equiv -2m+n\pmod{3} \label{Equation: Second supp to cub rec: eq 4}\tag{4}.
    \end{align*}
    \noindent Finally, we have the following. 
    \begin{align*}
        a+b & \equiv 0\pmod{a+b} \\
        a+b\omega & \equiv b\omega -b\pmod{a+b} \\
        a+b\omega&\equiv -b(1-\omega) \pmod{a+b}\label{Equation: Second supp to cub rec: eq 5}\tag{5}. 
    \end{align*}
    \noindent Using these results, we can compute the following. We use $\pi=a+b\omega$. Since $a$ is a rational primary, by Remark \ref{Remark: trivial case of cubic reciprocity}, we can write

\begin{align*}
    \chi_{a+b\omega}(1-\omega) & = \chi_{a}(b)\chi_{a+b\omega}(1-\omega) \\ & 
    = \chi_{a}(b\omega^{3})\chi_{a+b\omega}(1-\omega) \\ &
    = \chi_{a}(\omega^{2})\chi_{a}(b\omega)\chi_{a+b\omega}(1-\omega).
\end{align*}

\noindent By \eqref{Equation: Second supp to cub rec: eq 2}, we can expand $\chi_{a}(\omega)^{2}\chi_{a}(a+b\omega)\chi_{a+b\omega}(1-\omega) = \omega^{\frac{2(Na-1)}{3}}\chi_{a+b\omega}(a)\chi_{a+b\omega}(1-\omega)$.

\indent Combining \eqref{Equation: Second supp to cub rec: eq 1} and \eqref{Equation: Second supp to cub rec: eq 3}  and then simplifying, we have

\begin{align*}
    \omega^{2m}\chi_{a+b\omega}(a(1-\omega)) & = \omega^{2m}\chi_{a+b\omega}(-(a+b)\omega) \\ & 
    = \omega^{2m}\chi_{a+b\omega}(-1)\chi_{a+b\omega}(\omega)\chi_{a+b\omega}(a+b) \\ &
    = \omega^{2m}(1)\omega^{\frac{N\pi-1}{3}}\chi_{a+b\omega}(a+b).
\end{align*}
\noindent Applying properties of the cubic character, Theorem \ref{The Law of Cubic Reciprocity}, \eqref{Equation: Second supp to cub rec: eq 4}, and \eqref{Equation: Second supp to cub rec: eq 5}, we have

    \begin{equation*}
        \omega^{2m-2m+n}\chi_{a+b}(a+b\omega)  = \omega^{n}\chi_{a+b}(-b(1-\omega)) = \omega^{n}\chi_{a+b}(1-\omega).
    \end{equation*}
    \noindent It is not difficult to verify the following by evaluating each part individually and simplifying.
    \begin{equation*}\label{Equation: Second supp to cub rec: eq 6}\tag{6}
        \frac{2(N(a+b)-1)}{3} \equiv 2(m+n)\pmod{3}. 
    \end{equation*}
    \noindent Leading toward the final result and recalling that $(1-\omega)^{2}=-3\omega$, we have
    \begin{align*}
        \chi_{a+b\omega}(1-\omega) = \omega^{n}\chi_{a+b}(1-\omega) = \omega^{n}\chi_{a+b}(1-\omega)^{4} & = \omega^{n}(\chi_{a+b}(1-\omega)^{2})^{2} \\ &
        = \omega^{n}\chi_{a+b}(-3\omega)^{2} \\ &
        = \omega^{n}(1)^{2}(1)^{2}\chi_{a+b}(\omega)^{2} \\ & 
        = \omega^{n}(\omega^{\frac{N(a+b)-1}{3}})^{2}.
    \end{align*}
    \noindent By \eqref{Equation: Second supp to cub rec: eq 6}, this is equivalent to writing $\omega^{n}\omega^{2(m+n)} = \omega^{2m+3n} = \omega^{2m}\omega^{3n}=\omega^{2m}$, which is what we wanted to prove.
\end{proof}

\noindent In section 3.6, we will also introduce a special supplement of cubic reciprocity, namely the cubic character of 2 modulo $\pi$. We now proceed to prove cubic reciprocity. 

\subsection{Proof of Cubic Reciprocity}
\noindent Before continuing with the proof, we need to make some preliminary statements regarding $D/\pi D$. We let $\pi\in D$ be a prime such that $N\pi=p\equiv 1 \pmod{3}$. We showed earlier that $D/\pi D$ is a finite field with characteristic $p$, so naturally, it contains the field $\mathbb{Z}/p\mathbb{Z}$ as well. Both fields have $p$ elements. Therefore, it is useful to define an isomorphism between $D/\pi D$ and $\mathbb{Z}/p\mathbb{Z}$. Namely, we have a bijection, where we map residue classes from $\mathbb{Z}/p\mathbb{Z}$ to their complex counterparts in $D/\pi D$. In this way, we are mapping the coset of some residue class in $\mathbb{Z}/p\mathbb{Z}$ to some other coset in $D/\pi D$. This isomorphism allows us to extend the cubic character $\chi_{\pi}$ to not only $D/\pi D$, but $\mathbb{Z}/p\mathbb{Z}$ as well. This means that the properties of the cubic character in $D\pi D$ are also valid in $\mathbb{Z}/p\mathbb{Z}$, allowing us to consider cubic Gauss sums $g_{a}(\chi_{\pi})$ as well as cubic Jacobi sums $J(\chi_{\pi},\chi_{\pi})$ on $\mathbb{Z}/p\mathbb{Z}$. This realization is ultimately what allows us to prove cubic reciprocity, and explains why we investigated $D/\pi D$ so thoroughly. 

\indent Moving on, we need to prove some important properties about the Jacobi sum that relate directly to cubic reciprocity. We first have the following. 
\begin{lemma} \label{exercise result for congruence of sum of powers up to k from 1 to p-1}
    \begin{equation*}
    1^{k}+2^{k}+3^{k}+\cdots +(p-1)^{k} = \sum_{l=1}^{p-1}l^{k} \equiv \left\{
        \begin{array}{ll}
            0 & \text{if $p-1\not\equiv 0 \pmod{k}$} \\
            -1 & \text{if $p-1\equiv 0\pmod{k}$}.
        \end{array}
    \right.
    \end{equation*}
\end{lemma}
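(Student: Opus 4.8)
The plan is to reduce the sum to a finite geometric series by exploiting the cyclicity of $(\mathbb{Z}/p\mathbb{Z})^{*}$. By Theorem~\ref{multiplicative group of the integers modulo p is cyclic} the group $\mathbb{F}_{p}^{*}$ is cyclic; fix a generator $g$, i.e.\ a primitive root modulo $p$. As $l$ runs over $1,2,\ldots,p-1$, the powers $g^{0},g^{1},\ldots,g^{p-2}$ run over exactly the same residue classes modulo $p$, so reindexing the sum gives
\begin{equation*}
    \sum_{l=1}^{p-1}l^{k} \equiv \sum_{j=0}^{p-2}g^{jk} = \sum_{j=0}^{p-2}(g^{k})^{j} \pmod{p}.
\end{equation*}
Everything now hinges on the ratio $g^{k}$, and in particular on whether $g^{k}\equiv 1\pmod{p}$, which happens precisely when $(p-1)\mid k$.

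First I would treat the case in which $(p-1)\mid k$ (the branch producing $-1$). Writing $k=(p-1)m$ and applying Corollary~\ref{Fermat's Little Theorem} gives $g^{k}=(g^{p-1})^{m}\equiv 1\pmod{p}$, so each of the $p-1$ summands equals $1$ and the total is $p-1\equiv -1\pmod{p}$. In the complementary case, the order of $g$ being exactly $p-1$ forces $g^{k}\not\equiv 1\pmod{p}$, so $g^{k}-1$ is a unit in $\mathbb{Z}/p\mathbb{Z}$ and the geometric series evaluates to
\begin{equation*}
    \sum_{j=0}^{p-2}(g^{k})^{j} = \frac{(g^{k})^{p-1}-1}{g^{k}-1} = \frac{(g^{p-1})^{k}-1}{g^{k}-1} \equiv \frac{1-1}{g^{k}-1} \equiv 0 \pmod{p},
\end{equation*}
using $g^{p-1}\equiv 1\pmod{p}$ once more to kill the numerator. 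Together these two cases give exactly the dichotomy in the statement.

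I do not expect any genuine obstacle here: the only point that needs care is the observation that $g^{k}-1$ is invertible modulo $p$ exactly when $(p-1)\nmid k$, which is immediate once $g$ is known to generate a cyclic group of order $p-1$. It is worth recording the convention that degenerate exponents --- those for which every term satisfies $l^{k}\equiv 1\pmod{p}$ --- fall into the second branch, consistent with $\sum_{l=1}^{p-1}1=p-1\equiv -1$. Finally, I would emphasize that this is precisely the computation that makes the later Jacobi-sum identities work: after expanding and reindexing the argument of a Jacobi sum, the surviving terms are governed by this congruence, so the natural order of presentation is to reindex by a primitive root first and then split on whether $(p-1)\mid k$.
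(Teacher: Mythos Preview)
Your proof is correct and matches the paper's approach exactly: reindex by a primitive root $g$ of $(\mathbb{Z}/p\mathbb{Z})^{*}$ and split on whether $(p-1)\mid k$, with the geometric-series collapse you spell out being precisely what the paper's sketch gestures at. One caveat worth flagging: the displayed statement writes the condition as $p-1\equiv 0\pmod{k}$ (i.e.\ $k\mid(p-1)$), but both your argument and the paper's own proof sketch and subsequent application in Proposition~\ref{proposition stating that the cubic jacobi sum of cubic character equals pi} use the condition $(p-1)\mid k$ --- the statement as printed has the divisibility reversed, and you have proved the intended result.
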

\begin{proof}
    The proof can follow by considering $[g]$ to be a primitive root of $(\mathbb{Z}/p\mathbb{Z})^{*}$, and noticing that it is identical to considering the complete set of representatives $\{ [0], [1],[2],\ldots,[p-1] \}$, we can evaluate the sum over the entire finite field and evaluate each congruence depending on whether $p-1|k$ or $p- 1\nmid k$. 
\end{proof}
\begin{prop} \label{proposition stating that the cubic jacobi sum of cubic character equals pi}
    Let $\pi$ be primary. Then 
    \begin{equation*}
        J(\chi_{\pi},\chi_{\pi}) = \pi.
    \end{equation*}
\end{prop}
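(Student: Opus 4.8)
The plan is to recognise $J:=J(\chi_\pi,\chi_\pi)$ as a prime of $D=\mathbb{Z}[\omega]$ of norm $p=N\pi$, show directly that $\pi\mid J$ so that $J=u\pi$ for a unit $u$, and finally pin down $u=1$ by checking that $J$ is primary. Throughout we are in the setting $N\pi=p\equiv 1\pmod 3$, so $\chi_\pi$ is a character of order $3$. First, $J$ lies in $D$: by Proposition \ref{properties of the cubic residue character} the values $\chi_\pi(a)$ lie in $\{0,1,\omega,\omega^{2}\}$, so $J=\sum_{a+b=1}\chi_\pi(a)\chi_\pi(b)$ is a $\mathbb{Z}$-linear combination of powers of $\omega$. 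To compute its norm, apply Corollary \ref{corollary for relation between cubic Gauss sum and jacobi sum}, which gives $g(\chi_\pi)^{3}=p\,J$, together with Lemma \ref{Lemma: value of the general Gauss sum}, which gives $|g(\chi_\pi)|^{2}=p$; taking complex absolute values of $g(\chi_\pi)^{3}=pJ$ yields $p^{3}=p^{2}\,N(J)$, hence $N(J)=p$. By Proposition \ref{some prime pi in D such that its norm is p, then pi is prime}, $J$ is a prime of $D$.

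Next I would show $\pi\mid J$. Working modulo $\pi$ and using $\chi_\pi(a)\equiv a^{(p-1)/3}\pmod{\pi}$ (Proposition \ref{properties of the cubic residue character}(2)), write $k=(p-1)/3$ and note that the terms $a=0$ and $a=1$ contribute nothing, so
\[
J \equiv \sum_{a=0}^{p-1} a^{k}(1-a)^{k} \;=\; \sum_{j=0}^{k}\binom{k}{j}(-1)^{j}\sum_{a=0}^{p-1}a^{k+j}\pmod{\pi}.
\]
Since $k\le k+j\le 2k$ and $0<k<p-1$, we never have $(p-1)\mid(k+j)$, so Lemma \ref{exercise result for congruence of sum of powers up to k from 1 to p-1} makes every inner sum $\equiv 0\pmod{p}$, and as $\pi\mid p$ we conclude $J\equiv 0\pmod{\pi}$. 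Combined with $N(J)=N(\pi)=p$, the element $J/\pi$ has norm $1$ and so is a unit; by Proposition \ref{classification of units in Z[omega]}, $J=u\pi$ with $u\in\{\pm1,\pm\omega,\pm\omega^{2}\}$.

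It remains to show $J$ is primary, which forces $u=1$ since $\pi$ is primary and, by Lemma \ref{lemma for if Npi=1(mod 3) then one associate of pi is primary}, a prime of $D$ has a unique primary associate. Because $3$ and $(1-\omega)^{2}$ are associates (Theorem \ref{classification of primes in Z[omega]}), it suffices to verify $J\equiv -1\pmod{(1-\omega)^{2}}$. Writing $\chi_\pi(a)\chi_\pi(1-a)=\omega^{n_a}$ for $a\neq 0,1$ and using $\omega^{n}\equiv 1-n(1-\omega)\pmod{(1-\omega)^{2}}$, one gets $J\equiv (p-2)-(1-\omega)\sum_{a}n_a\pmod{(1-\omega)^{2}}$, where $p-2\equiv -1$ modulo $(1-\omega)^{2}$ since $3\mid p-1$. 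Finally $\omega^{\sum n_a}=\prod_{a\neq 0,1}\chi_\pi(a)\chi_\pi(1-a)=\big(\prod_{a\in\mathbb{F}_p^{*}}\chi_\pi(a)\big)^{2}$, and writing $\mathbb{F}_p^{*}=\langle g\rangle$ (cyclic by Theorem \ref{multiplicative group of the integers modulo p is cyclic}) this equals $\chi_\pi(g)^{(p-1)(p-2)/2}$, an exponent divisible by $3$ because $3\mid p-1$; hence $\omega^{\sum n_a}=1$, so $(1-\omega)\mid\sum_a n_a$ and $J\equiv -1\pmod{(1-\omega)^{2}}$. Thus $J$ is primary, $u=1$, and $J(\chi_\pi,\chi_\pi)=\pi$.

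The routine part is everything up to $J=u\pi$: it is just norm bookkeeping plus the power-sum congruence. I expect the genuine obstacle to be the last step, extracting $u=1$ by proving $J\equiv -1\pmod 3$, since this needs the sharper congruence modulo $(1-\omega)^{2}$ and the vanishing of the full product $\prod_{a\in\mathbb{F}_p^{*}}\chi_\pi(a)$, rather than a mere divisibility argument.
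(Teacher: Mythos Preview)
Your proof is correct and its skeleton matches the paper's: establish $N(J)=p$, show $\pi\mid J$ via the power-sum congruence $\sum_x x^{(p-1)/3}(1-x)^{(p-1)/3}\equiv 0\pmod p$ (exactly Lemma~\ref{exercise result for congruence of sum of powers up to k from 1 to p-1}), and then use primariness of $J$ together with uniqueness of the primary associate to pin down $J=\pi$.

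The one genuine point of divergence is how primariness of $J$ is obtained. You compute $J$ directly modulo $(1-\omega)^{2}$, reducing to the vanishing of $\prod_{a\in\mathbb F_p^{*}}\chi_\pi(a)$, which you handle with the cyclic structure of $\mathbb F_p^{*}$. The paper instead (in Proposition~\ref{Proposition about jacobi sum being a primary prime}, stated separately) uses the Gauss-sum relation $g(\chi)^{3}=pJ$ together with the Freshman's-dream congruence
\[
g(\chi)^{3}\equiv\sum_{t}\chi(t)^{3}\zeta^{3t}\equiv\sum_{t\neq 0}\zeta^{3t}=-1\pmod 3,
\]
whence $pJ\equiv -1\pmod 3$ and $J\equiv -1\pmod 3$. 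The paper's route is shorter and more conceptual once the Gauss-sum machinery is in place; your route has the virtue of staying entirely inside $\mathbb Z[\omega]$ and never invoking congruences in the larger ring $\mathbb Z[\omega,\zeta_p]$. Either way, what you flagged as the ``genuine obstacle'' is indeed the crux, and your resolution of it is valid.
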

\begin{proof}
    Let there exist some other primary number $\pi'$ such that $J(\chi_{\pi},\chi_{\pi}) = \pi'$. By the definition of $\pi$ we must have $N\pi = \pi\overline{\pi} = p = \pi'\overline{\pi}'$, since the norm of every primary element $\pi$ is $p$. Therefore either $\pi|\pi'$ or $\pi|\overline{\pi}'$. However, since all prime elements are primary, each prime is coprime to every other prime, implying that we must have either $\pi = \pi'$ or $\pi = \overline{\pi}'$. We need to show that the second equation is not possible in order to show that $\pi$ is unique and that there is only one such primary element. 

    \indent We begin by writing out the cubic Jacobi sum. Then for some $x$ that runs over $\mathbb{Z}/p\mathbb{Z}$, we have
    \begin{equation*}
        J(\chi_{\pi},\chi_{\pi}) = \sum_{x}\chi_{\pi}(x)\chi_{\pi}(1-x).
    \end{equation*}
    \noindent By Proposition \ref{properties of the cubic character modulo pi}, we can rewrite each character so that 
    \begin{equation*}
        \sum_{x}\chi_{\pi}(x)\chi_{\pi}(1-x) \equiv \sum_{x}x^{\frac{p-1}{3}}(1-x)^{\frac{p-1}{3}} \pmod{\pi}. 
    \end{equation*}
    \noindent Notice that the degree of this polynomial is $\deg(f(x))=2((p-1)/3)<p-1$. Obviously, this means that $p-1\nmid \deg(f(x))$. Analogously, by Lemma \ref{exercise result for congruence of sum of powers up to k from 1 to p-1}, and since $x$ runs over all elements of $\mathbb{Z}/p\mathbb{Z}$, we can assert that 
    \begin{equation*}
        \sum_{x}x^{\frac{p-1}{3}}(1-x)^{\frac{p-1}{3}} \equiv 0 \pmod{p}. 
    \end{equation*}
    \noindent Equivalence modulo $p$ can be extended to equivalence modulo $\pi$. Therefore we can make the assertion that 
    \begin{equation*}
        \sum_{x}\chi_{\pi}(x)\chi_{\pi}(1-x) \equiv \sum_{x}x^{\frac{p-1}{3}}(1-x)^{\frac{p-1}{3}} \equiv J(\chi_{\pi},\chi_{\pi}) =\pi' \equiv 0 \pmod{\pi}. 
    \end{equation*}
    \noindent By definition of congruence, $\pi|\pi'$, which is impossible unless $\pi=\pi'$ since $\pi$ and $\pi'$ are both primary. 
\end{proof}
\noindent A simple corollary follows by substituting this result into Corollary \ref{corollary for relation between cubic Gauss sum and jacobi sum} as follows. 
\begin{cor} \label{corollary to proposition relating jacobi sums to Gauss sums}
    \begin{equation*}
        g(\chi_{\pi})^{3} = p\pi.
    \end{equation*}
\end{cor}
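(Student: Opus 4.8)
The plan is to combine the two immediately preceding results, which already do all the work. Corollary \ref{corollary for relation between cubic Gauss sum and jacobi sum} established that for the cubic character $\chi$ one has $g(\chi)^{3} = pJ(\chi,\chi)$; this was itself obtained as the $n=3$ specialization of Theorem \ref{characters of degree n>2 expression in terms of Gauss and Jacobi sum}, using that $\chi(-1)=1$ since $-1$ is a cube. Proposition \ref{proposition stating that the cubic jacobi sum of cubic character equals pi} established that when $\pi$ is primary, $J(\chi_{\pi},\chi_{\pi}) = \pi$ (not merely an associate of $\pi$, which is exactly the point of the primality normalization).

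First I would instantiate Corollary \ref{corollary for relation between cubic Gauss sum and jacobi sum} at $\chi = \chi_{\pi}$, which is legitimate because, via the isomorphism $D/\pi D \cong \ZZ/p\ZZ$ discussed at the start of this subsection, $\chi_{\pi}$ may be regarded as a genuine cubic multiplicative character on $\ZZ/p\ZZ$, and it is nontrivial of order $3$ precisely when $N\pi = p \equiv 1 \pmod 3$ (so $3 \mid p-1$), which is the standing hypothesis. This yields $g(\chi_{\pi})^{3} = p\,J(\chi_{\pi},\chi_{\pi})$. Then I would substitute the value $J(\chi_{\pi},\chi_{\pi}) = \pi$ from Proposition \ref{proposition stating that the cubic jacobi sum of cubic character equals pi}, giving $g(\chi_{\pi})^{3} = p\pi$ directly.

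There is essentially no obstacle here: the corollary is a one-line substitution. The only thing worth flagging explicitly in the write-up is that the hypothesis "$\pi$ primary" is what pins down $J(\chi_\pi,\chi_\pi)$ to be $\pi$ on the nose rather than up to a unit, so that the right-hand side $p\pi$ is unambiguous; everything else is immediate from the cited results.

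\begin{proof}
    Since $N\pi = p \equiv 1 \pmod 3$, the cubic residue character $\chi_{\pi}$, viewed on $\ZZ/p\ZZ$ via the isomorphism $D/\pi D \cong \ZZ/p\ZZ$, is a nontrivial character of order $3$. Applying Corollary \ref{corollary for relation between cubic Gauss sum and jacobi sum} with $\chi = \chi_{\pi}$ gives
    \begin{equation*}
        g(\chi_{\pi})^{3} = p\,J(\chi_{\pi},\chi_{\pi}).
    \end{equation*}
    By Proposition \ref{proposition stating that the cubic jacobi sum of cubic character equals pi}, since $\pi$ is primary we have $J(\chi_{\pi},\chi_{\pi}) = \pi$. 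Substituting, we obtain $g(\chi_{\pi})^{3} = p\pi$.
\end{proof}
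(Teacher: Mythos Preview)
Your proof is correct and follows essentially the same approach as the paper: apply Corollary \ref{corollary for relation between cubic Gauss sum and jacobi sum} with $\chi=\chi_\pi$ to obtain $g(\chi_\pi)^3 = pJ(\chi_\pi,\chi_\pi)$, then substitute $J(\chi_\pi,\chi_\pi)=\pi$ from Proposition \ref{proposition stating that the cubic jacobi sum of cubic character equals pi}. Your write-up is, if anything, slightly more careful than the paper's in making explicit why $\chi_\pi$ qualifies as a cubic character on $\ZZ/p\ZZ$ and why the primary hypothesis is what fixes the Jacobi sum as $\pi$ rather than an associate.
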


\begin{proof}
    By Corollary \ref{corollary for relation between cubic Gauss sum and jacobi sum}, we know that $g(\chi)^{3} = pJ(\chi,\chi)$. Take the character to be the cubic character. Then by Proposition \ref{proposition stating that the cubic jacobi sum of cubic character equals pi}, we have $g(\chi_{\pi})^{3} = p\pi$. 
\end{proof}
\noindent We need a final fact about the Jacobi sum. The significance of the following result is that it allows us to make the assertion that the Jacobi sum $J(\chi,\chi)$ is a primary prime in $D$. In fact, since $J(\chi,\chi)$ is indeed primary, we have $J(\chi,\chi)\overline{J(\chi,\chi)}=p$, i.e. $J(\chi,\chi)$ has norm $p$. First, we must utilize a fact about algebraic integers. Let $\Omega$ denote the set of algebraic integers. 

\begin{lemma} \label{Lemma about prime power with algebraic integers}
    Let $\omega_{1},\omega_{2}\in\Omega$ and $p\in\mathbb{Z}$ be prime. Then
    \begin{equation*}
        (\omega_{1}+\omega_{2})^{p}\equiv \omega_{1}^{p}+\omega_{2}^{p} \pmod{p}.
    \end{equation*}
\end{lemma}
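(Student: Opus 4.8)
The plan is to expand $(\omega_1+\omega_2)^p$ by the binomial theorem and then kill all the intermediate terms using the elementary fact that $p\mid\binom{p}{k}$ for $1\le k\le p-1$. The binomial theorem is legitimate here because $\Omega$, the set of algebraic integers, is a commutative ring containing $\ZZ$, so all the manipulations take place inside $\Omega$.

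First I would record the divisibility statement: for a rational prime $p$ and $1\le k\le p-1$, the integer $\binom{p}{k}=\frac{p!}{k!\,(p-k)!}$ is divisible by $p$, since the single factor of $p$ in the numerator $p!$ cannot be cancelled by $k!$ or $(p-k)!$ (both $k$ and $p-k$ are strictly less than $p$). Hence write $\binom{p}{k}=p\,c_k$ with $c_k\in\ZZ$ for each such $k$; note $\binom{p}{0}=\binom{p}{p}=1$ account for the terms $\omega_1^p$ and $\omega_2^p$.

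Next I would simply compute
\begin{equation*}
(\omega_1+\omega_2)^p=\sum_{k=0}^{p}\binom{p}{k}\omega_1^{p-k}\omega_2^{k}
=\omega_1^{p}+\omega_2^{p}+p\sum_{k=1}^{p-1}c_k\,\omega_1^{p-k}\omega_2^{k}.
\end{equation*}
Because $\Omega$ is closed under addition and multiplication and contains the rational integers $c_k$, the element $\gamma:=\sum_{k=1}^{p-1}c_k\,\omega_1^{p-k}\omega_2^{k}$ again lies in $\Omega$. Therefore $(\omega_1+\omega_2)^p-(\omega_1^{p}+\omega_2^{p})=p\gamma$, i.e. the difference is $p$ times an algebraic integer, which is exactly the meaning of the congruence modulo $p$ in the statement.

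I do not anticipate a genuine obstacle: the only point that needs care is the interpretation of ``$\equiv\pmod p$'' over $\Omega$ (namely that the difference belongs to $p\Omega$), and the fact that $\Omega$ forms a ring, which the paper has already granted. Everything else is the standard ``freshman's dream'' computation carried out over a commutative ring rather than over $\FF_p$.
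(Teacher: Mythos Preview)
Your proposal is correct and follows essentially the same approach as the paper: expand by the binomial theorem and use $p\mid\binom{p}{k}$ for $1\le k\le p-1$ to kill the intermediate terms. The paper's proof is a one-line sketch of precisely this argument, and you have simply supplied the details (including the careful remark that $\gamma\in\Omega$ so that the congruence modulo $p$ makes sense in $\Omega$).
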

\begin{proof}
    The proof follows by expanding the \textit{LHS} with the Binomial Theorem and noticing that all intermediate terms reduce to $0$ modulo $p$.
\end{proof}

\noindent In the following, assume that $\pi\in D$ is primary. 
\begin{prop} \label{Proposition about jacobi sum being a primary prime}
    If $J(\chi,\chi)=a+b\omega$ where $\omega\in\mathbb{Z}[\omega]$, then $a\equiv -1 \pmod{3}$ and $b\equiv 0 \pmod{3}$. 
\end{prop}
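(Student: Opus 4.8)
The plan is to compute $g(\chi_\pi)^{3}$ modulo $3$ in the ring of algebraic integers and read off the answer. Write $\chi=\chi_\pi$ and set $p=N\pi\equiv 1\pmod 3$, so that by Corollary~\ref{corollary for relation between cubic Gauss sum and jacobi sum} we have $g(\chi)^{3}=p\,J(\chi,\chi)$. The idea is that $J(\chi,\chi)$ enters through this identity, and the left-hand side is easy to control modulo $3$.

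First I would reduce the Gauss sum itself. Since $g(\chi)=\sum_{t\in\mathbb{F}_p}\chi(t)\zeta_p^{t}$ is a sum of algebraic integers, Lemma~\ref{Lemma about prime power with algebraic integers} applied with the prime $3$ (extended from two summands to the whole sum by an immediate induction) gives
\[
 g(\chi)^{3}\equiv\sum_{t\in\mathbb{F}_p}\chi(t)^{3}\,\zeta_p^{3t}\pmod 3 .
\]
Because $\chi$ is the cubic character, $\chi(t)$ is a cube root of unity for $t\neq 0$ and $\chi(0)=0$, so $\chi(t)^{3}=1$ for $t\neq 0$ and the right-hand side equals $\sum_{t=1}^{p-1}\zeta_p^{3t}$. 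Here $p\neq 3$ (indeed $p\equiv 1\pmod 3$), so $3\not\equiv 0\pmod p$, and Lemma~\ref{result about zeta from special property prop about Gauss sums} yields $\sum_{t=0}^{p-1}\zeta_p^{3t}=0$; hence $\sum_{t=1}^{p-1}\zeta_p^{3t}=-1$ and therefore $g(\chi)^{3}\equiv -1\pmod 3$.

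Next I would combine this with $g(\chi)^{3}=p\,J(\chi,\chi)$ to obtain $p\,J(\chi,\chi)\equiv -1\pmod 3$. Since $3\mid p-1$, we have $p\,J(\chi,\chi)\equiv J(\chi,\chi)\pmod 3$, so $J(\chi,\chi)\equiv -1\pmod 3$ as an element of $\mathbb{Z}[\omega]$. Finally, writing $J(\chi,\chi)=a+b\omega$, the relation $a+1+b\omega=3(c+d\omega)$ for some $c,d\in\mathbb{Z}$ forces $a\equiv -1\pmod 3$ and $b\equiv 0\pmod 3$, which is exactly the assertion, so $J(\chi,\chi)$ is primary.

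I do not expect a genuine obstacle: the argument is short. The only points needing care are applying the ``freshman's dream'' congruence with the prime $3$ rather than with $p$; checking $p\neq 3$ so that $\sum_{t=0}^{p-1}\zeta_p^{3t}=0$ is legitimate; using $3\mid p-1$ to cancel the factor $p$ modulo $3$; and the routine passage from a single congruence modulo $3$ in $\mathbb{Z}[\omega]$ to the two congruences for the rational coordinates $a$ and $b$.
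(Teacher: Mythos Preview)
Your argument is correct and shares the paper's core computation: both reduce $g(\chi)^3$ modulo $3$ via the freshman's dream to obtain $g(\chi)^3\equiv -1\pmod 3$, then invoke $g(\chi)^3=p\,J(\chi,\chi)$ together with $p\equiv 1\pmod 3$ to get $a+b\omega\equiv -1\pmod 3$.

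The only genuine divergence is in the last step. You pass directly from $a+b\omega\equiv -1\pmod 3$ in $\mathbb{Z}[\omega]$ to $3\mid a+1$ and $3\mid b$ by reading off coordinates against the $\mathbb{Z}$-basis $\{1,\omega\}$; this implicitly uses that the congruence, a priori established in the ring $\Omega$ of all algebraic integers, descends to $\mathbb{Z}[\omega]$ (equivalently, that $\mathbb{Z}[\omega]$ is the full ring of integers of $\mathbb{Q}(\omega)$). The paper instead repeats the computation for $\overline{\chi}$ to obtain $a+b\overline{\omega}\equiv -1\pmod 3$, subtracts to get $b(\omega-\overline{\omega})=bi\sqrt{3}\equiv 0\pmod 3$, squares to force $3\mid b$, and then reads off $a\equiv -1\pmod 3$. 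Your route is shorter and more transparent; the paper's route is slightly more self-contained in that it avoids appealing to the integral-closure fact about $\mathbb{Z}[\omega]$, which is not proved elsewhere in the paper.
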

\begin{proof}
    It is well known that the algebraic integers form a ring. Working with congruences in $\Omega$ and using Lemma \ref{Lemma about prime power with algebraic integers} as well as the fact that $\chi(t)$ is a cubic character, we have 
    \begin{equation*}
        g(\chi)^{3} = \bigg( \sum_{t\in\mathbb{F}_{p}}\chi(t)\zeta_{3}^{t} \bigg)^{3} \equiv \sum_{t\in\mathbb{F}_{p}}\chi(t)^{3}\zeta_{3}^{3t} \pmod{3}.
    \end{equation*}
    \noindent Clearly $\chi(0)=0$, and since $\chi(t)$ is a cubic character, $\chi(t)^{3}=1$ for nonzero $t$. Therefore 
    \begin{equation*}
        \sum_{t\in\mathbb{F}_{p}}\chi(t)^{3}\zeta_{3}^{3t} = \sum_{t\neq 0}\zeta_{3}^{3t} = \sum_{t\neq 0}e^{2\pi it} = -1.
    \end{equation*}
    \noindent By Corollary \ref{corollary to proposition relating jacobi sums to Gauss sums}, we have
    \begin{equation*}
        g(\chi)^{3} = pJ(\chi,\chi) \equiv a+b\omega \equiv -1 \pmod{3}. 
    \end{equation*}
    \noindent Alternatively, consider the conjugate $\overline{\chi}$. By Proposition \ref{properties of the cubic character modulo pi}, we have that $\overline{g(\chi)} = g(\overline{\chi})$. Again applying Corollary \ref{corollary to proposition relating jacobi sums to Gauss sums}, we have
    \begin{equation*}
        g(\overline{\chi})^{3} = pJ(\overline{\chi},\overline{\chi}) \equiv a+b\overline{\omega} \equiv -1\pmod{3}. 
    \end{equation*}
    \noindent Equating these equations modulo $3$, we have
    \begin{align*}
        (a+b\omega)-(a+b\overline{\omega}) & \equiv -1-(-1) \pmod{3} \\ 
        b(\omega-\overline{\omega}) & \equiv 0 \pmod{3} \\
        b\bigg( \frac{-1+i\sqrt{3}}{2} - \frac{-1-i\sqrt{3}}{2} \bigg) & \equiv 0 \pmod{3} \\
        b\bigg( \frac{2i\sqrt{3}}{2} \bigg) = bi\sqrt{3} & \equiv 0 \pmod{3} \\
        -3b^{2} & \equiv 0\pmod{9}.
    \end{align*}
    \noindent This implies that $3|b$, i.e. $b\equiv 0 \pmod{3}$. Therefore $a+b\omega \equiv a \equiv -1 \equiv 2\pmod{3}$. 
\end{proof}

\noindent With all of these results, we are now sufficiently equipped to prove cubic reciprocity.
\begin{proof}[Proof of The Law of Cubic Reciprocity (Theorem \ref{The Law of Cubic Reciprocity})]
    As indicated before, this is a proof by cases. In order to prove the full law, we need to consider when one of $\pi_{1}$ and $\pi_{2}$ is complex and the other is rational, and when both are complex. 
    
    \indent We first prove the case when one is complex and the other is rational. In other words, we need to show that if $\pi_{1}=q\equiv 2$ and $\pi_{2}=\pi$ a primary with $N\pi=p$, then $\chi_{\pi}(q) = \chi_{q}(\pi)$. Begin by taking the expression in Corollary \ref{corollary to proposition relating jacobi sums to Gauss sums} and raising the \textit{LHS} and \textit{RHS} to a power of $(q^{2}-1)/3$. This results in the equivalence $g(\chi_{\pi})^{((q^{2}-1)/3)\cdot 3} = g(\chi_{\pi})^{q^{2}-1} = (p\pi)^{(q^{2}-1)/3}$. Take this equality modulo $q$, so by properties of the cubic character, we have $g(\chi_{\pi})^{q^{2}-1}\equiv \chi_{q}(p\pi)\pmod{q}$. Notice that the \textit{RHS} of the congruence can be expressed as $\chi_{q}(p\pi) = \chi_{q}(p)\chi_{q}(\pi)$. Clearly, since $p$ and $q$ are coprime, by Corollary \ref{Corollary: cor to props of cub char mod pi}, $\chi_{q}(p)=1$. Therefore we can rewrite the congruence as 
    \begin{align*}
        g(\chi_{\pi})^{q^{2}-1} & \equiv \chi_{q}(p)\chi_{q}(\pi)\pmod{q} \\
        g(\chi_{\pi})^{q^{2}} & \equiv \chi_{q}(\pi)g(\chi_{\pi}) \pmod{q}.
    \end{align*}
    \noindent Now we can examine the \textit{LHS}. If we expand the Gauss sum of the \textit{LHS}, we have
    \begin{equation*}
        g(\chi_{\pi})^{q^{2}} = \bigg( \sum_{t\in\mathbb{F}_{p}}\chi_{\pi}(t)\zeta^{t} \bigg)^{q^{2}}.
    \end{equation*}
    \noindent The intermediate terms of this sum will have some form of $q$ as a factor, so if we take this modulo $q$, we are left with
    \begin{equation*}
        g(\chi_{\pi})^{q^{2}} \equiv \sum_{t\in\mathbb{F}_{p}}\chi_{\pi}(t)^{q^{2}}\zeta^{q^{2}t} \pmod{q}. 
    \end{equation*}
    \noindent Notice that $q\equiv 2\pmod{3}\implies q^{2}\equiv 1\pmod{3}$. Also, since $\chi_{\pi}(t)$ is a cube root of 1, meaning that is is a cube root of unity, we can express the \textit{RHS} as a Gauss sum so that we can simplify to $g(\chi_{\pi})^{q^{2}} \equiv g_{q^{2}}(\chi_{\pi})\pmod{q}$. By Lemma \ref{lemma with some important properties of the Gauss sum and its evaluation} and since $\overline{q^{2}}=q^{-2}$ and $\chi_{\pi}(q^{3})=1$, the \textit{RHS} in fact becomes $g_{q^{2}}(\chi_{\pi}) = \chi_{\pi}(q^{-2})g(\chi_{\pi}) = \chi_{\pi}(q^{-2})\chi_{\pi}(q^{3})g(\chi_{\pi}) = \chi_{\pi}(q)g(\chi_{\pi})$. Thus, setting the two equations equal to each other, we have the expression
    \begin{equation*}
        \chi_{\pi}(q)g(\chi_{\pi}) \equiv \chi_{q}(\pi)g(\chi_{\pi}) \pmod{\pi}.
    \end{equation*}
    \noindent Notice that $g(\chi_{\pi})g(\overline{\chi_{\pi}})=\chi_{\pi}(-1)p=p$ by Corollary \ref{Corollary to Lemma about value of general Gauss sum}, so multiplying both sides by $g(\overline{\chi_{\pi}})$ we have 
    \begin{align*}
        \chi_{\pi}(q)g(\chi_{\pi}) & \equiv \chi_{q}(\pi)g(\chi_{\pi}) \pmod{q} \\
        \chi_{\pi}(q)p & \equiv \chi_{q}(\pi)p \pmod{q} \\
        \chi_{\pi}(q) & \equiv \chi_{q}(\pi) \pmod{q}.
    \end{align*}
    \noindent By Lemma \ref{when two numbers are equivalent when they are congruent mod pi lemma}, this means that $\chi_{\pi}(q) = \chi_{q}(\pi)$. 

    \indent Now we need to show that this is true when both $\pi_{1}$ and $\pi_{2}$ are non-rational. In this case, we have that the norm of both must be congruent to 1 modulo 3, i.e. $N\pi_{1}=p_{1}\equiv 1\pmod{3}$ and $N\pi_{2} = p_{2}\equiv 1 \pmod{3}$. (Note in some way that this is a result of Theorem \ref{classification of primes in Z[omega]}) Let some $\gamma_{1}=\overline{\pi_{1}}$ and some $\gamma_{2}=\overline{\pi_{2}}$. By Lemma \ref{lemma for if Npi=1(mod 3) then one associate of pi is primary}, we know that exactly one associate of each $p_{1}$ or $p_{2}$ is primary, so we call them $\gamma_{1}$ and $\gamma_{2}$. Then $p_{1}=\pi_{1}\gamma_{1}$ and $p_{2}=\pi_{2}\gamma_{2}$. We approach this problem in a similar way to when one prime is rational and the other is complex. Take the expression in Corollary \ref{corollary to proposition relating jacobi sums to Gauss sums}, and write it as $g(\chi_{\gamma_{1}})^{3} = p_{1}\gamma_{1}$. Raising the \textit{LHS} and \textit{RHS} to a power of $(N\pi_{2}-1)/3$ or $(p_{2}-1)/3$ and taking the expression modulo $\pi_{2}$, we obtain 
    \begin{align*}
        (g(\chi_{\gamma_{1}})^{3})^{\frac{p_{2}-1}{3}} = g(\chi_{\gamma_{1}})^{\pi_{2}-1} &
        = (p_{1}\gamma_{1})^{\frac{p_{2}-1}{3}} \equiv \chi_{\pi_{2}}(p_{1}\gamma_{1}) \pmod{\pi_{2}} \\
        g(\chi_{\gamma_{1}})^{p_{2}}g(\chi_{\gamma_{1}})^{-1} & \equiv \chi_{\pi_{2}}(p_{1}\gamma_{1}) \pmod{\pi_{2}} \\ g(\chi_{\gamma_{1}})^{p_{2}} & \equiv g(\chi_{\gamma_{1}})\chi_{\pi_{2}}(p_{1}\gamma_{1}) \pmod{\pi_{2}}.
    \end{align*}
    \noindent We can simplify the \textit{LHS}. Notice that 
    \begin{equation*}
        g(\chi_{\gamma_{1}})^{p_{2}} = \bigg( \sum_{t\in\mathbb{F}_{p}}\chi_{\gamma_{1}}(t)\zeta^{t} \bigg)^{p_{2}} \equiv \sum_{t\in\mathbb{F}_{p}}\chi_{\gamma_{1}}(t)^{p_{2}}\zeta^{p_{2}t} \pmod{\pi_{2}} 
    \end{equation*}
    since the intermediate terms are all congruent to 0 modulo $\pi_{2}$, and disappear after reduction. Notice that this is also a Gauss sum, namely $g_{p_{2}}(\chi_{\gamma_{1}})$, so $g(\chi_{\gamma_{1}})^{p_{2}}\equiv g_{p_{2}}(\chi_{\gamma_{1}})\pmod{\pi_{2}}$. Notice that by Lemma \ref{lemma with some important properties of the Gauss sum and its evaluation} and Corollary \ref{Corollary: cor to props of cub char mod pi}, and since $\overline{p_{2}}=p_{2}$, the \textit{RHS} of this congruence can be equivalently written as $g_{p_{2}}(\chi_{\gamma_{1}}) = \chi_{\gamma_{1}}(p_{2}^{2})g(\chi_{\gamma_{1}})$. If we equate this to the expression derived above, then we have 
    \begin{equation*}
        \chi_{\gamma_{1}}(p_{2}^{2})g(\chi_{\gamma_{1}})  \equiv g(\chi_{\gamma_{1}})\chi_{\pi_{2}}(p_{1}\gamma_{1}) \pmod{\pi_{2}}
    \end{equation*}
    Recall again that $g(\chi_{1})\overline{g(\chi_{\gamma_{1}})}=p$, so multiplying both sides by $\overline{g(\chi_{\gamma_{1}})}$, and by Lemma \ref{when two numbers are equivalent when they are congruent mod pi lemma} , we have
    \begin{align*}
        \chi_{\gamma_{1}}(p_{2}^{2})p & \equiv p\chi_{\pi_{2}}(p_{1}\gamma_{1}) \pmod{\pi_{2}}  \nonumber\\
        \chi_{\gamma_{1}}(p_{2}^{2}) & \equiv \chi_{\pi_{2}}(p_{1}\gamma_{1}) \pmod{\pi_{2}} \nonumber\\ 
        \chi_{\gamma_{1}}(p_{2}^{2}) & = \chi_{\pi_{2}}(p_{1}\gamma_{1}).\label{Equation: eq1 for cr proof}\tag{1}
    \end{align*}
    \noindent We now seek to evaluate the the same thing, but instead using the relation $g(\chi_{\pi_{2}})^{3}=p_{2}\gamma_{2}$. Raising the \textit{LHS} and \textit{RHS} to a power of $(N\pi_{1}-1)/3$ or $(p_{1}-1)/3$ and taking the expression modulo $\pi_{1}$, we obtain
    \begin{align*}
        (g(\chi_{\pi_{2}})^{3})^{\frac{p_{1}-1}{3}} & = (p_{2}\pi_{2})^{\frac{p_{1}-1}{3}} \\ 
        g(\chi_{\pi_{2}})^{p_{1}-1} & \equiv \chi_{\pi_{1}}(p_{2}\pi_{2}) \pmod{\pi_{1}} \\
        g(\chi_{\pi_{2}})^{p_{1}} & \equiv \chi_{\pi_{1}}(p_{2}\pi_{2})g(\chi_{\pi_{2}}) \pmod{\pi_{1}}. 
    \end{align*}
    \noindent Evaluating the \textit{LHS} of this congruence, notice that by using the same facts about Gauss sums as before we have 
    \begin{equation*}
        g(\chi_{\pi_{2}})^{p_{1}} = \bigg( \sum_{t\in\mathbb{F}_{p}}\chi_{\pi_{2}}(t)\zeta^{t} \bigg)^{p_{1}} \equiv \sum_{t\in\mathbb{F}_{p}}\chi_{\pi_{2}}(t)^{p_{1}}\zeta^{p_{1}t} \pmod{\pi_{1}}.
    \end{equation*}
    \noindent This is also a Gauss sum, so we can write $g(\chi_{\pi_{2}})^{p_{1}} = g_{p_{1}}(\chi_{\pi_{2}}) = \chi_{\pi_{2}}(p_{1}^{2})g(\chi_{\pi_{2}})$ since $\overline{p_{1}}=p_{1}$. Equating this to the equation derived above, we now have 
    \begin{align*}
        \chi_{\pi_{2}}(p_{1}^{2})g(\chi_{\pi_{2}}) & \equiv \chi_{\pi_{1}}(p_{2}\pi_{2})g(\chi_{\pi_{2}}) \pmod{\pi_{1}} \\
        \chi_{\pi_{2}}(p_{1}^{2})p & \equiv \chi_{\pi_{1}}(p_{2}\pi_{2})p \pmod{\pi_{1}} \\
        \chi_{\pi_{2}}(p_{1}^{2}) & \equiv \chi_{\pi_{1}}(p_{2}\pi_{2}) \pmod{\pi_{1}}. \label{Equation: eq2 for cr proof}\tag{2}
    \end{align*}
    \noindent We have evaluated the cases for both $\pi_{1}$ and $\pi_{2}$, but now we are interested in relating them. We want to evaluate $\chi_{\gamma_{1}}(p_{2}^{2})$. Notice that by (1) of Corollary \ref{Corollary: cor to props of cub char mod pi} again, we can rewrite this as $\chi_{\gamma_{1}}(p_{2}^{2}) = (\chi_{\gamma_{1}}(p_{2}))^{2} = \overline{\chi_{\gamma_{1}}(p_{2})}$. Since $\overline{\gamma_{1}}=\overline{\overline{\pi_{1}}}=\pi_{1}$, and since $\overline{p_{2}}=p_{2}$, we have $\overline{\chi_{\gamma_{1}}(p_{2})} = \chi_{\pi_{1}}(p_{2})$, so 
    \begin{equation*}
    \chi_{\gamma_{1}}(p_{2}^{2}) = \chi_{\pi_{1}}(p_{2}). \label{Equation: eq3 for cr proof}\tag{3}
    \end{equation*}
    \noindent Now we are able to finish the proof. 

    \indent We compute the following. We have
    \begin{equation*}
        \chi_{\pi_{1}}(\pi_{2})\chi_{\pi_{2}}(p_{1}\gamma_{1}) = \chi_{\pi_{1}}(\pi_{2})\chi_{\gamma_{1}}(p_{2}^{2}), \label{Equation: eq4 for cr proof}\tag{4}
    \end{equation*}
    \noindent which follows by substituting \eqref{Equation: eq1 for cr proof}. Using \eqref{Equation: eq3 for cr proof}, we have that 
    \begin{equation*}
        \chi_{\pi_{1}}(\pi_{2})\chi_{\gamma_{1}}(p_{2}^{2}) = \chi_{\pi_{1}}(\pi_{2}\chi_{\pi_{1}}(p_{2}) = \chi_{\pi_{1}}(p_{2}\pi_{2}).
    \end{equation*}
   \noindent By \eqref{Equation: eq2 for cr proof}, we can write this as
   \begin{align*}
       \chi_{\pi_{2}}(p_{1}^{2}) & = \chi_{\pi_{2}}(p_{1}\pi_{1}\gamma_{1}) \\ 
       & = \chi_{\pi_{2}}(\pi_{1})\chi_{\pi_{2}}(p_{1}\gamma_{1}). \label{Equation: eq5 for cr proof}\tag{5}
   \end{align*}
   \noindent Equating the \textit{LHS} of \eqref{Equation: eq4 for cr proof} to \eqref{Equation: eq5 for cr proof} and dividing both sides by $\chi_{\pi_{2}}(p_{1}\gamma_{1})$, we have
   \begin{align*}
       \chi_{\pi_{1}}(\pi_{2})\chi_{\pi_{2}}(p_{1}\gamma_{1}) & = \chi_{\pi_{2}}(\pi_{1})\chi_{\pi_{2}}(p_{1}\gamma_{1}) \\
       \chi_{\pi_{1}}(\pi_{2}) & = \chi_{\pi_{2}}(\pi_{1}),
   \end{align*}
   \noindent which is the statement of cubic reciprocity.
\end{proof}

\subsection{The Cubic Character of $2$}
\noindent Now that we have proven cubic reciprocity, we might be interested in investigating what special values might be cubic residues. The special case that we will consider in particular is the even prime $2$. We will not prove the final result as it uses facts about Jacobi sums that lie beyond the scope of this paper, but a detailed proof may be found in Chapter 10 of \cite{Rousseau2012reciprocity}. To begin, we have the following result about special rational primes. 

\begin{prop}\label{Proposition about how every int is a cub res if q=2 mod 3}
    If $q\equiv 2\pmod{3}$ is a rational prime, then every integer is a cubic residue modulo $q$. 
\end{prop}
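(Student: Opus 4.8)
The plan is to deduce this immediately from the structure of cubes in the finite field $\mathbb{Z}/q\mathbb{Z}$. Since $q$ is a rational prime, $\mathbb{Z}/q\mathbb{Z}$ is a finite field with $q$ elements (as noted in the discussion following Proposition \ref{solutions of linear congruence ax=b (modulo m)}), so the results of Section 1 about $\mathbb{F}^{*}$ apply with $\mathbb{F} = \mathbb{Z}/q\mathbb{Z}$ and $|\mathbb{F}^{*}| = q-1$. The only arithmetic input needed is that $q \equiv 2 \pmod{3}$ forces $q - 1 \equiv 1 \pmod{3}$, hence $3 \nmid q-1$ and $\gcd(3, q-1) = 1$.

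First I would dispose of the trivial case: if $q \mid a$ then $a \equiv 0 \equiv 0^{3} \pmod{q}$, so $a$ is a cubic residue modulo $q$. Next, for $a$ with $\gcd(a,q)=1$, regard $\alpha = [a] \in (\mathbb{Z}/q\mathbb{Z})^{*}$ and apply Theorem \ref{last result about F*} with $n = 3$: the congruence $x^{3} \equiv \alpha \pmod{q}$ is solvable if and only if $\alpha^{(q-1)/d} \equiv 1 \pmod{q}$, where $d = \gcd(3, q-1)$. But $d = 1$, so the condition reads $\alpha^{q-1} \equiv 1 \pmod{q}$, which holds for every unit by Fermat's Little Theorem (Corollary \ref{Fermat's Little Theorem}). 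Hence $x^{3} \equiv a \pmod{q}$ is solvable, and by the same theorem it has exactly $d = 1$ solution, so cubing is in fact a bijection of $(\mathbb{Z}/q\mathbb{Z})^{*}$ onto itself.

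An equivalent and slightly more self-contained route is to exhibit the cube root explicitly: since $\gcd(3, q-1) = 1$, Bézout's Lemma yields $u, v \in \mathbb{Z}$ with $3u + (q-1)v = 1$, and then for any $a$ coprime to $q$,
\begin{equation*}
    a \equiv a^{3u + (q-1)v} \equiv (a^{u})^{3}\,(a^{q-1})^{v} \equiv (a^{u})^{3} \pmod{q}
\end{equation*}
by Fermat's Little Theorem, so $a^{u}$ is a cube root of $a$ modulo $q$; combined with the trivial case $q \mid a$, this shows every integer is a cubic residue modulo $q$.

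There is essentially no obstacle here: the statement is a direct corollary of Theorem \ref{last result about F*} (equivalently of Remark \ref{Remark: nth power residues and how many solutions to x^n=alpha}, the case $\gcd(n, q-1) = 1$). The only point meriting a word of care is the observation that $\mathbb{Z}/q\mathbb{Z}$ genuinely is a field when $q$ is prime, so that the finite-field machinery of Section 1 is available, and the reduction $q \equiv 2 \pmod 3 \Rightarrow \gcd(3,q-1)=1$, which is what makes $3$rd powers exhaust the group.
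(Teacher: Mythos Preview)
Your proof is correct, and it takes a genuinely different route from the paper's own argument. The paper defines the cubing map $\phi:(\mathbb{Z}/q\mathbb{Z})^{*}\to(\mathbb{Z}/q\mathbb{Z})^{*}$, $\phi(k)=k^{3}$, invokes the First Isomorphism Theorem, and argues that $\text{Ker}(\phi)$ is trivial because $3\nmid q-1$, whence $|\text{Im}(\phi)|=|(\mathbb{Z}/q\mathbb{Z})^{*}|$ and $\phi$ is surjective. You instead appeal directly to Theorem \ref{last result about F*} (and Remark \ref{Remark: nth power residues and how many solutions to x^n=alpha}), which already packages exactly this conclusion for the case $\gcd(n,q-1)=1$; your explicit B\'ezout cube root $a^{u}$ is a nice bonus that the paper's argument does not supply. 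Your approach is more economical given the paper's prior development, while the paper's approach is a clean standalone illustration of the isomorphism-theorem technique; both hinge on the same arithmetic fact that $q\equiv 2\pmod{3}$ forces $\gcd(3,q-1)=1$.
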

\begin{proof}
    We begin by assuming that $q\equiv 2\pmod{3}$ is a rational prime. Since $q$ is rational, we can work in the integers modulo $q$, namely $\mathbb{Z}/q\mathbb{Z}$. We define a group homomorphism $\phi:(\mathbb{Z}/q\mathbb{Z})^{*} \rightarrow (\mathbb{Z}/q\mathbb{Z})^{*}$ with the mapping $\phi(k)=k^{3}$ for some $k\in (\mathbb{Z}/q\mathbb{Z})^{*}$. By Theorem \ref{First Isomorphism Theorem}, 
    \begin{equation*}
        (\mathbb{Z}/q\mathbb{Z})^{*}/\text{Ker}(\phi)\approx \text{Im}(\phi).
    \end{equation*}
    \noindent We determine the kernel of $\phi$. Clearly, it is only possible for some $k\in \text{Ker}(\phi)$ if $k^{3}=1$, i.e. $\phi$ maps $k$ to the identity of $(\mathbb{Z}/q\mathbb{Z})^{*}$, which is $1$. However, Theorem \ref{multiplicative group of the integers modulo p is cyclic} asserts that the multiplicative group $(\mathbb{Z}/q\mathbb{Z})^{*}$ is cyclic with order $q-1$. But, $3\nmid q-1$, so naturally the relation $k^{3}=1$ is possible if and only if $k=1$, as it maps to the identity. Thus $\text{Ker}(\phi)$ is trivial, in that it only contains one element, so that 
    \begin{equation*}
        |\text{Im}(\phi)|=|(\mathbb{Z}/q\mathbb{Z})^{*}/\text{Ker}(\phi)|=|(\mathbb{Z}/q\mathbb{Z})^{*}|/1 = |(\mathbb{Z}/q\mathbb{Z})^{*}|.
    \end{equation*}
    \noindent This satisfies the condition for $\phi$ to be surjective, so due to the mapping $\phi$ defined earlier, every element of $(\mathbb{Z}/q\mathbb{Z})^{*}$ is a perfect cube, i.e. every integer is a cubic residue modulo $q$.
\end{proof}

\noindent This takes care of one case. This result directly implies that $2$ is always a cubic residue modulo a rational prime $q\equiv 2\pmod{3}$. The following result gives special conditions for the cubic character of $2$ given a prime modulus. 

\begin{prop} \label{Proposition about pi=1 mod 2 for x^3=2 mod pi to be solvable}
    The cubic congruence $x^{3}\equiv 2\pmod{\pi}$ where $\pi\in D$ is prime is solvable if and only if $\pi\equiv 1\pmod{2}$, i.e. if and only if $a\equiv 1\pmod{2}$ and $b\equiv 0\pmod{2}$ in $\pi=a+b\omega$. 
\end{prop}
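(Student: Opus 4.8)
\noindent The plan is to reduce solvability of $x^{3}\equiv 2\pmod{\pi}$ to the equality $\chi_{\pi}(2)=1$ and then evaluate $\chi_{\pi}(2)$ by applying cubic reciprocity to trade the awkward modulus $\pi$ for the modulus $2$. First I would set up the bookkeeping: by part (1) of Proposition \ref{properties of the cubic residue character}, $x^{3}\equiv 2\pmod{\pi}$ is solvable if and only if $\chi_{\pi}(2)=1$; by part (1) of Theorem \ref{classification of primes in Z[omega]} the rational prime $2$ is prime in $D$, and since $2\equiv 2\pmod{3}$ it is primary, with $N(2)=4$. I would assume $\pi\nmid 2$ (otherwise $\pi$ is an associate of $2$; then $2$ is not a cubic residue modulo $\pi$, while $\pi=2=2+0\cdot\omega$ has even $1$-coefficient, so the asserted equivalence holds trivially) and also $N\pi\neq 3$, which is forced if $\chi_{\pi}$ is to be defined at all. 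Since passing to an associate of $\pi$ changes neither the ideal $\pi D$ nor the congruence $x^{3}\equiv 2\pmod{\pi}$, I may and do normalize $\pi$ to be primary; this is also exactly what makes the statement ``$\pi\equiv 1\pmod{2}$'' well posed. It then remains to treat two cases, according to whether $\pi$ is a rational prime $q\equiv 2\pmod{3}$ or a non-rational prime with $N\pi=p\equiv 1\pmod{3}$.

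In the rational case, Proposition \ref{Proposition about how every int is a cub res if q=2 mod 3} tells us that every integer, in particular $2$, is a cubic residue modulo $q$, so $x^{3}\equiv 2\pmod{q}$ is solvable; and writing $q=q+0\cdot\omega$ with $q$ odd (the case $q=2$ having been excluded) and the $\omega$-coefficient equal to $0$, we have $q\equiv 1\pmod{2}$. Thus both sides of the equivalence hold, and this case is finished.

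In the main case $\pi$ is primary with $N\pi=p$ a rational prime $\equiv 1\pmod{3}$, so in particular $N\pi\notin\{3,4\}$ and $N\pi\neq N(2)$. Applying the Law of Cubic Reciprocity (Theorem \ref{The Law of Cubic Reciprocity}) to the two primary primes $2$ and $\pi$ gives $\chi_{\pi}(2)=\chi_{2}(\pi)$. Now the exponent governing $\chi_{2}$ is $(N(2)-1)/3=(4-1)/3=1$, so by the definition of the cubic residue character $\chi_{2}(\pi)\equiv\pi^{1}=\pi\pmod{2}$; since $\chi_{2}(\pi)\in\{1,\omega,\omega^{2}\}$, Lemma \ref{when two numbers are equivalent when they are congruent mod pi lemma} (applied with the prime $2$, whose norm is $\neq 3$) yields $\chi_{2}(\pi)=1$ exactly when $\pi\equiv 1\pmod{2}$. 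Chaining the equivalences,
\begin{equation*}
x^{3}\equiv 2\pmod{\pi}\ \text{is solvable}\iff\chi_{\pi}(2)=1\iff\chi_{2}(\pi)=1\iff\pi\equiv 1\pmod{2},
\end{equation*}
and, since $\{1,\omega\}$ is a $\ZZ$-basis of $D$, reducing the coefficients of $\pi=a+b\omega$ modulo $2$ shows $\pi\equiv 1\pmod{2}$ is the same as $a\equiv 1\pmod{2}$ and $b\equiv 0\pmod{2}$.

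I expect the only delicate points to be the normalization step — one must choose the primary associate of $\pi$, since the displayed condition on $a$ and $b$ is not invariant under multiplication by the six units of $D$ — together with the routine verification that the hypotheses of cubic reciprocity hold (both $2$ and $\pi$ primary, with distinct norms neither equal to $3$). The computational core is painless: because $(D/2D)^{*}$ is a group of order $3$, cubing collapses it onto the identity, so $\chi_{2}(\pi)$ is nothing but $\pi$ read modulo $2$. The one substantive idea is simply to invoke cubic reciprocity so as to replace the hard modulus $\pi$ by the trivial modulus $2$.
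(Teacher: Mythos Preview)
Your proof is correct and follows essentially the same route as the paper: dispose of the rational case via Proposition \ref{Proposition about how every int is a cub res if q=2 mod 3}, then in the non-rational primary case apply cubic reciprocity to swap $\chi_{\pi}(2)$ for $\chi_{2}(\pi)$ and use $(N(2)-1)/3=1$ to read off $\chi_{2}(\pi)\equiv\pi\pmod{2}$. If anything you are more scrupulous than the paper about edge cases (excluding $\pi$ associate to $2$, checking the hypotheses of Theorem \ref{The Law of Cubic Reciprocity}, and normalizing $\pi$ to be primary).
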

\begin{proof}
    If $\pi=q$ is some primary rational prime, then Proposition \ref{Proposition about how every int is a cub res if q=2 mod 3} asserts that every integer is a perfect cube, so every integer is a cubic residue modulo $q$ a primary rational prime. Therefore we need to prove that this result is true for a complex primary prime $\pi$. 

    \indent Let $\pi=a+b\omega$ be a primary prime. By Theorem \ref{The Law of Cubic Reciprocity}, we have that $\chi_{\pi}(2) = \chi_{2}(\pi)$. We evaluate the \textit{RHS} of this equality. We have
    \begin{equation*}
        \pi^{\frac{N(2)-1}{3}} = \pi^{(4-1)/3} = \pi \equiv \chi_{2}(\pi)\pmod{2}. 
    \end{equation*}
    \noindent By Proposition \ref{properties of the cubic residue character}, $\chi_{2}(\pi)=1$, i.e. $\pi$ is a cubic residue modulo $2$, if and only if the congruence $x^{3}\equiv \pi\pmod{2}$ is solvable. This congruence is solvable, however, if and only if $\pi\equiv 1\pmod{2}$, because $\pi$ would no longer be prime if $\pi\equiv 0\pmod{2}$. Therefore $\chi_{2}(\pi)=1$ if and only if $\pi\equiv 1\pmod{2}$. Similarly, we have that $\chi_{\pi}(2)=1$ if and only if $\pi\equiv 1\pmod{2}$. In either case, we thus have that the congruence $x^{3}\equiv 2\pmod{\pi}$ is solvable if and only if $\pi\equiv 1\pmod{2}$. 
\end{proof}
\noindent With this result, we can prove a condition for the solvability of $x^{3}\equiv 2\pmod{p}$, which allows us to characterize the cubic character of $2$. Proposition \ref{Proposition about pi=1 mod 2 for x^3=2 mod pi to be solvable} asserts that the modulus must be a rational primary prime such that $a\equiv 1\pmod{2}$ and $b\equiv 0\pmod{2}$. We use this fact concerning the modulus in the proof of the following due to Gauss. 
\begin{thm}
    Let $p\equiv 1\pmod{3}$. Then the congruence $x^{3}\equiv 2\pmod{p}$ is solvable if and only if there exist $C,D\in\mathbb{Z}$ such that $p=C^{2}+27D^{2}$. 
\end{thm}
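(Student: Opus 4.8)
The plan is to translate the solvability of $x^{3}\equiv 2\pmod{p}$ into a statement about the primary prime of $\mathbb{Z}[\omega]$ lying over $p$, and then to read off the form $C^{2}+27D^{2}$ directly from the coordinates of that prime. First I would use Theorem \ref{classification of primes in Z[omega]} to write $p=\pi\overline{\pi}=N\pi$ with $\pi$ a prime of $\mathbb{Z}[\omega]$, and Lemma \ref{lemma for if Npi=1(mod 3) then one associate of pi is primary} to choose $\pi=a+b\omega$ primary, so $a\equiv 2\pmod 3$ and $b\equiv 0\pmod 3$. Since $\mathbb{Z}[\omega]/\pi\mathbb{Z}[\omega]$ is a field with $p$ elements (Theorem \ref{Theorem: Residue Class Ring D/piD has Npi elements (D=Eisenstein Integers)}) containing $\mathbb{Z}/p\mathbb{Z}$, the two fields coincide; every residue class mod $\pi$ has a rational-integer representative, and for $n\in\mathbb{Z}$ one has $\pi\mid n\Rightarrow p\mid N(n)=n^{2}\Rightarrow p\mid n$. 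It follows that $x^{3}\equiv 2\pmod p$ is solvable in $\mathbb{Z}$ if and only if $x^{3}\equiv 2\pmod{\pi}$ is solvable in $\mathbb{Z}[\omega]$. By Proposition \ref{Proposition about pi=1 mod 2 for x^3=2 mod pi to be solvable} the latter holds exactly when $\pi\equiv 1\pmod 2$, i.e. $a$ odd and $b$ even; and since $p=a^{2}-ab+b^{2}$ is odd, $b$ even forces $a$ odd automatically, so the whole condition reduces to $b\equiv 0\pmod 2$.

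It then remains to show that the primary prime $\pi=a+b\omega$ over $p$ has $b$ even if and only if $p=C^{2}+27D^{2}$ for some $C,D\in\mathbb{Z}$. One direction is completion of the square: if $b$ is even then $b\equiv 0\pmod 6$, say $b=6D$, and $p=a^{2}-ab+b^{2}=a^{2}-6aD+36D^{2}=(a-3D)^{2}+27D^{2}$, so $C=a-3D$ works. For the converse, suppose $p=C^{2}+27D^{2}$. Using $\sqrt{-3}=1+2\omega$, set $\eta=C+3D\sqrt{-3}=(C+3D)+6D\omega$; then $N\eta=(C+3D)^{2}-(C+3D)(6D)+(6D)^{2}=C^{2}+27D^{2}=p$, so by Proposition \ref{some prime pi in D such that its norm is p, then pi is prime} $\eta$ is a prime of $\mathbb{Z}[\omega]$ over $p$, hence an associate of $\pi$ or of $\overline{\pi}$.

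To finish I would run the associate bookkeeping from the proof of Lemma \ref{lemma for if Npi=1(mod 3) then one associate of pi is primary}. Writing $\eta=a'+b'\omega$ with $a'=C+3D$ and $b'=6D$, the six associates of $\eta$ have second coordinates $b'$, $a'-b'$, $-a'$, $-b'$, $b'-a'$, $a'$. Since $p\equiv 1\pmod 3$ forces $C^{2}\equiv 1\pmod 3$, hence $C\not\equiv 0\pmod 3$, the only associates whose second coordinate is divisible by $3$ are $\pm\eta$; exactly one of these has first coordinate $\equiv 2\pmod 3$ and is therefore the primary prime over $p$, yet both $\pm\eta$ have second coordinate $\pm 6D$, which is even. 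Finally, the primary prime over $p$ is determined up to conjugation — the primes over $p$ are the associates of $\pi$ and of $\overline{\pi}$, and conjugation sends a primary $a+b\omega$ to $(a-b)-b\omega$, again primary — and conjugation only negates the second coordinate, so $\pi$ itself has even second coordinate $b$. Combined with the first paragraph, this proves the theorem.

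The step I expect to be the main obstacle is this converse bookkeeping: matching $\eta$ with the correct primary prime and verifying that "second coordinate even" is a genuine invariant of $p$, independent of the sixfold associate ambiguity and of the choice of $\pi$ versus $\overline{\pi}$. By contrast, all the reciprocity content needed — the identity $\chi_{\pi}(2)=\chi_{2}(\pi)$ from Theorem \ref{The Law of Cubic Reciprocity} and its consequence Proposition \ref{Proposition about pi=1 mod 2 for x^3=2 mod pi to be solvable} — is already available, so no further deep input is required; the remaining work is the elementary parity argument and the routine reduction between congruences modulo $p$ and modulo $\pi$.
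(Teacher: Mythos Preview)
Your argument is correct, and in fact it goes further than the paper does: the paper gives no proof of this theorem at all, simply writing ``The proof may be found in Chapter 10 of \cite{Rousseau2012reciprocity}'' and remarking earlier that the result ``uses facts about Jacobi sums that lie beyond the scope of this paper.'' Your proposal shows that no such additional input is needed once cubic reciprocity is available. Everything you invoke --- Theorem \ref{classification of primes in Z[omega]}, Lemma \ref{lemma for if Npi=1(mod 3) then one associate of pi is primary}, Theorem \ref{Theorem: Residue Class Ring D/piD has Npi elements (D=Eisenstein Integers)}, Proposition \ref{Proposition about pi=1 mod 2 for x^3=2 mod pi to be solvable}, and the associate list from the proof of Lemma \ref{lemma for if Npi=1(mod 3) then one associate of pi is primary} --- is already proved in the paper, and the remaining steps (the completion of the square $a^{2}-ab+b^{2}=(a-3D)^{2}+27D^{2}$ when $b=6D$, and the parity bookkeeping on the associates of $\eta=(C+3D)+6D\omega$) are elementary and you have carried them out correctly.

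The part you flagged as the main obstacle is indeed the only place requiring care, and your treatment of it is sound: from $p\equiv 1\pmod 3$ you get $C\not\equiv 0\pmod 3$, so among the six associates of $\eta$ only $\pm\eta$ have second coordinate divisible by $3$; exactly one of these is primary, with second coordinate $\pm 6D$; and since conjugation sends the primary $a+b\omega$ to the primary $(a-b)-b\omega$, the parity of the second coordinate is the same for $\pi$ and $\overline{\pi}$, so the conclusion $2\mid b$ is independent of which of the two primary primes over $p$ you fixed at the outset. Combined with the reduction from congruences modulo $p$ to congruences modulo $\pi$ (which follows from the identification $\mathbb{Z}/p\mathbb{Z}\cong D/\pi D$), this gives a complete proof.
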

\begin{proof}
    The proof may be found in Chapter 10 of \cite{Rousseau2012reciprocity}. 
\end{proof}

\section{A Brief Survey of Biquadratic Reciprocity}
\noindent While Carl Friedrich Gauss had published 8 proofs of quadratic reciprocity by his death, he stated, without proof, cubic and biquadratic reciprocity. Though Gauss did not provide proofs, he stated that their proofs likely involved Gauss sums, a new technique for proving higher reciprocity laws. The proof of cubic reciprocity given in Section 3 is exactly using cubic Gauss sums. In this rather brief section we aim to introduce the fundamentals of the Gaussian integers $\mathbb{Z}[i]$ and outline the biquadratic reciprocity law. 
\subsection{The Statement of Biquadratic Reciprocity}
\noindent As explained in the introduction, there are many connections between cubic and biquadratic reciprocity in that they both utilize finite fields. Throughout the rest of this section we let $D=\ZZ[i]$ denote the Gaussian integers. If some $\alpha\in D$ then $(\alpha)=\alpha D$ is the principal ideal generated by $\alpha$. This is useful in defining the residue class ring later. It is well known that $D$ is a Euclidean domain, i.e. there exists a Euclidean algorithm over $D$. Therefore, an analogue of Euclid's lemma applies such that if $\pi\in D$ is some irreducible element and $\alpha,\beta\in D$, then $\pi|\alpha\beta$ implies that $\pi|\alpha$ or $\pi|\beta$. 

\indent There is also a norm function over $D$ so that $N\alpha=\alpha\overline{\alpha}$. Some $\alpha\in D$ is a unit if and only if $N\alpha=1$. Suppose that $\alpha=a+bi$ is a unit. Then $\alpha|1$, so for some $\beta\in D$ we have $\alpha\beta=1$. Taking norms, we have $N\alpha\beta=N(1)=1$. Now suppose that $N\alpha=1$. Then by the definition of the norm $a^{2}+b^{2}=1$. This is possible only if either $a$ or $b$ is $0$, in which case the units of $D$ are $\pm 1$ and $\pm i$. 

\indent Let $\pi\in D$ be an irreducible. 
\begin{thm}
    The residue class ring $D/\pi D$ is a finite field with $N\pi$ elements. 
\end{thm}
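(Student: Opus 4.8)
The plan is to follow the template of Theorem \ref{Theorem: Residue Class Ring D/piD has Npi elements (D=Eisenstein Integers)}, since $\ZZ[i]$ behaves exactly like $\ZZ[\omega]$ for present purposes. Two things must be established: that $D/\pi D$ is a field, and that it has exactly $N\pi$ elements.

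For the field structure, the quick route is to note that $D=\ZZ[i]$ is a Euclidean domain, hence a PID by Theorem \ref{Every Euclidean domain is a PID}. An irreducible $\pi$ in a PID generates a maximal ideal: if $\pi D\subseteq B\subseteq D$ with $B=\beta D$, then $\beta\mid\pi$, so $\beta$ is either a unit or an associate of $\pi$, forcing $B=D$ or $B=\pi D$. By Proposition \ref{R/A is a field iff A is maximal ideal}, $D/\pi D$ is then a field. More in the spirit of the earlier proof, one may argue directly: given $\alpha\not\equiv 0\pmod{\pi}$ one has $\gcd(\alpha,\pi)=1$ (as $\pi$ is irreducible and $\pi\nmid\alpha$), so the Euclidean algorithm produces $\beta,\gamma\in D$ with $\beta\alpha+\gamma\pi=1$; reducing modulo $\pi$ shows $[\alpha]$ has inverse $[\beta]$, and the remaining field axioms are inherited from $D$.

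For the count, I would split into the three cases of the classification of Gaussian primes (the analogue of Theorem \ref{classification of primes in Z[omega]}): (i) $\pi=q$ a rational prime with $q\equiv 3\pmod{4}$, where $N\pi=q^{2}$; (ii) $N\pi=p$ a rational prime with $p\equiv 1\pmod{4}$; and (iii) $\pi=1+i$, the prime above $2$, with $2=-i(1+i)^{2}$ and $N\pi=2$. In case (i), one shows $\{a+bi:0\le a,b<q\}$ is a complete set of residues mod $q$ by writing an arbitrary $m+ni\in D$ as $m=qs+a$, $n=qt+b$ via the division algorithm, and checking uniqueness: if $a+bi\equiv a'+b'i\pmod{q}$ then $q\mid(a-a')$ and $q\mid(b-b')$, forcing $a=a'$ and $b=b'$; this yields $q^{2}=N\pi$ classes. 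In case (ii), writing $\pi=a+bi$ with $p\nmid b$, for any $m+ni$ pick $c$ with $cb\equiv n\pmod{p}$ so that $m+ni-c\pi$ is congruent mod $\pi$ to a rational integer, which then reduces mod $p$ to one of $\{0,1,\dots,p-1\}$; distinct least residues $r,r'$ stay distinct mod $\pi$ because $\pi\mid(r-r')$ would give $p\mid(r-r')^{2}$ on taking norms. Case (iii) is immediate: the residues are $0$ and $1$. A slicker alternative, if one is willing to use a little linear algebra, is to observe that multiplication by $\pi=a+bi$ on the free $\ZZ$-module $D$ with basis $\{1,i\}$ has matrix $\begin{pmatrix}a&-b\\b&a\end{pmatrix}$ of determinant $a^{2}+b^{2}=N\pi$, so $[D:\pi D]=N\pi$ directly, with no case analysis. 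Either way, once the order is $N\pi$ the ring is a finite field of that order.

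The main obstacle is the same as in the Eisenstein case: verifying that the proposed sets really are complete sets of coset representatives, i.e. the existence-and-uniqueness bookkeeping in case (ii), which is precisely where the norm is used to convert a congruence mod $\pi$ into one mod $p$. Everything else — the field axioms and the division-algorithm reductions in cases (i) and (iii) — transfers essentially verbatim from $\ZZ[\omega]$ to $\ZZ[i]$.
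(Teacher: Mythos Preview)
Your proposal is correct and follows exactly the paper's approach, which merely refers back to Theorem \ref{Theorem: Residue Class Ring D/piD has Npi elements (D=Eisenstein Integers)} and says the argument carries over to $\ZZ[i]$ with the appropriate classification of Gaussian primes. You have actually written out more detail than the paper itself, and your determinant alternative is a pleasant extra that the paper does not mention.
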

\begin{proof}
    The proof of this fundamental result is very similar to Theorem \ref{Theorem: Residue Class Ring D/piD has Npi elements (D=Eisenstein Integers)}. In this proof we use more facts about irreducibles in $D$. 
\end{proof}
\noindent A natural corollary as an analogue to Fermat's Little Theorem easily follows.
\begin{cor}
    If $\pi\nmid \alpha$ then $\alpha^{N\pi-1}\equiv 1\pmod{\pi}$. 
\end{cor}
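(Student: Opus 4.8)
The plan is to deduce this exactly as the analogues of Fermat's Little Theorem were deduced earlier (Corollary \ref{Fermat's Little Theorem} and Theorem \ref{Analogue to Fermat's Little Theorem in D/pi D}): the entire content is already packaged in the statement that $D/\pi D$ is a finite field with $N\pi$ elements. First I would invoke the preceding theorem to conclude that $D/\pi D$ is a finite field of order $N\pi$, so that its multiplicative group $(D/\pi D)^{*}$ is a group of order $N\pi - 1$; by Theorem \ref{multiplicative group of finite field is cyclic} this group is in fact cyclic, though for this corollary cyclicity is not even needed.

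Next I would observe that the hypothesis $\pi \nmid \alpha$ is precisely the statement that the residue class $[\alpha]$ is nonzero in $D/\pi D$, hence a genuine element of $(D/\pi D)^{*}$. Applying Lagrange's Theorem (Theorem \ref{Lagrange's Theorem}) to the cyclic subgroup generated by $[\alpha]$, the order of $[\alpha]$ divides $|(D/\pi D)^{*}| = N\pi - 1$, and therefore $[\alpha]^{N\pi - 1} = [1]$ in $D/\pi D$. Unwinding the definition of congruence in $D$ (namely $\pi \mid \alpha^{N\pi-1} - 1$) then gives $\alpha^{N\pi-1} \equiv 1 \pmod{\pi}$, which is the claim.

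There is no real obstacle here: the substantive work — showing $D/\pi D$ is a field with $N\pi$ elements, and that a finite field's multiplicative group is well-behaved — has already been carried out, the former in the theorem immediately above and the latter in Section 1. The only point requiring a word of care is the translation between the ring-theoretic statement $[\alpha]^{N\pi-1}=[1]$ and the congruence notation $\alpha^{N\pi-1}\equiv 1 \pmod{\pi}$, together with noting that $N\pi - 1 \geq 1$ so that the exponent is a genuine positive integer (which holds since $\pi$ is a nonunit, forcing $N\pi > 1$). The proof is therefore essentially a one-line appeal to Lagrange's Theorem applied to $(D/\pi D)^{*}$, mirroring the Eisenstein-integer case verbatim.
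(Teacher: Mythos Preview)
Your proposal is correct and matches the paper's approach: the paper does not even write out a proof here, simply remarking that this is ``a natural corollary as an analogue to Fermat's Little Theorem'' following from the preceding theorem that $D/\pi D$ is a finite field with $N\pi$ elements, exactly as in the Eisenstein case (Theorem \ref{Analogue to Fermat's Little Theorem in D/pi D}). Your explicit invocation of Lagrange's Theorem on $(D/\pi D)^{*}$ is precisely the intended one-line argument.
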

\noindent The following is very similar to Proposition \ref{Proposition: There exists a unique integer m=0,1,2 analogue to FLT}, and the proof is ultimately identical with consideration for $\ZZ[i]$ instead. 
\begin{prop} \label{Proposition: There exists a unique integer m=0,1,2,3 analogue to FLT biquadratic}
    If $\pi\nmid\alpha$ and the ideal $(\pi)\neq (1+i)$ (the significance of this is due to the fact that $1+i$ is irreducible in $D$), then there exists a unique integer $j=0,1,2,3$ such that 
    \begin{equation*}
        \alpha^{\frac{N\pi-1}{4}}\equiv i^{j}\pmod{\pi}.
    \end{equation*}
\end{prop}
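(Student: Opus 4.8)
The plan is to mirror the proof of Proposition~\ref{Proposition: There exists a unique integer m=0,1,2 analogue to FLT}, replacing the cube roots of unity $\{1,\omega,\omega^{2}\}$ by the fourth roots of unity $\{1,i,-1,-i\}$ and working throughout in $D=\ZZ[i]$. First I would record that $D$ is a Euclidean domain, hence (via the class inclusions of Remark~\ref{Remark: class inclusions}) a UFD and an integral domain, so that an analogue of Euclid's lemma is available: if an irreducible $\pi$ divides a product, it divides one of the factors. I would also invoke the Fermat analogue proved just above, namely $\alpha^{N\pi-1}\equiv 1\pmod{\pi}$ whenever $\pi\nmid\alpha$.

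The first genuine step is to show $4\mid N\pi-1$ under the hypothesis $(\pi)\neq(1+i)$. Since $(D/\pi D)^{*}$ is cyclic of order $N\pi-1$, it suffices to exhibit an element of order $4$, and the natural candidate is the class of $i$. The powers $1,i,-1,-i$ are distinct modulo $\pi$ precisely when $\pi$ divides none of their pairwise differences; each such difference is a unit multiple of one of $1-i$, $2$, $1+i$, and since $(1+i)^{2}=2i$ one has $2=-i(1+i)^{2}$ and $1-i=-i(1+i)$, so every one of these is, up to a unit, a power of the irreducible $1+i$. Because $\pi$ is irreducible and $(\pi)\neq(1+i)$, $\pi$ is not associate to $1+i$ and therefore divides none of $1-i$, $2$, $1+i$; hence the class of $i$ has order exactly $4$, and Theorem~\ref{Lagrange's Theorem} gives $4\mid N\pi-1$. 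This is the step I expect to be the main obstacle, since it requires the explicit factorization of $2$ in $\ZZ[i]$ together with an analogue of Lemma~\ref{when two numbers are equivalent when they are congruent mod pi lemma} for the Gaussian integers, and it is the only place where the excluded prime $1+i$ must be ruled out.

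With $4\mid N\pi-1$ in hand, set $x=\alpha^{(N\pi-1)/4}$, a well-defined element of $D/\pi D$. Then
\begin{equation*}
    \alpha^{N\pi-1}-1 = x^{4}-1 = (x-1)(x-i)(x+1)(x+i),
\end{equation*}
and the Fermat analogue shows $\pi$ divides the left-hand side. By Euclid's lemma $\pi$ divides at least one factor $x-i^{j}$, that is, $\alpha^{(N\pi-1)/4}\equiv i^{j}\pmod{\pi}$ for some $j\in\{0,1,2,3\}$.

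Finally, for uniqueness suppose $\pi$ divided two distinct factors $x-i^{j}$ and $x-i^{k}$ with $j\neq k$; subtracting gives $\pi\mid i^{j}-i^{k}$. As already noted, $i^{j}-i^{k}$ is a unit times one of $1-i$, $2$, $1+i$, none of which is divisible by $\pi$ once $(\pi)\neq(1+i)$. This contradiction shows $\pi$ divides exactly one of the four factors, so the integer $j\in\{0,1,2,3\}$ with $\alpha^{(N\pi-1)/4}\equiv i^{j}\pmod{\pi}$ is unique, which completes the proof.
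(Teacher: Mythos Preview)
Your proof is correct and is precisely the adaptation the paper has in mind: the paper does not write out a proof here but simply remarks that it is ``ultimately identical'' to Proposition~\ref{Proposition: There exists a unique integer m=0,1,2 analogue to FLT} with $\ZZ[i]$ in place of $\ZZ[\omega]$, and your argument carries out exactly that translation, including the preliminary step (analogous to the discussion before Proposition~\ref{Proposition: There exists a unique integer m=0,1,2 analogue to FLT}) of showing the fourth roots of unity are distinct modulo $\pi$ via the factorization $2=-i(1+i)^{2}$ so that $4\mid N\pi-1$.
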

\noindent As such, the biquadratic character is defined as follows. 
\begin{defn}
    If $\pi$ is irreducible and $N\pi\neq 2$, for $\pi\nmid \alpha$ the biquadratic character of $\alpha$ is defined as $\chi_{\pi}(\alpha)=i^{j}$ where the value of $j$ is determined exactly by Proposition \ref{Proposition: There exists a unique integer m=0,1,2,3 analogue to FLT biquadratic}. If $\pi|\alpha$ then we define $\chi_{\pi}(\alpha)=0$. 
\end{defn}

\noindent With this definition we can state biquadratic reciprocity. 

\begin{thm}[The Law of Biquadratic Reciprocity]
    Let $\pi,\lambda\in D$ be relatively prime primary elements. Then
    \begin{equation*}
        \chi_{\pi}(\lambda)\chi_{\lambda}(\pi) = (-1)^{\frac{N\lambda - 1}{4}\cdot \frac{N\pi - 1}{4}}.
    \end{equation*}
\end{thm}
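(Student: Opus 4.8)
The plan is to run the argument in near-perfect parallel with the proof of cubic reciprocity (Theorem \ref{The Law of Cubic Reciprocity}), replacing $\omega$ by $i$ and the cubic Gauss sum identity $g(\chi_\pi)^3 = p\pi$ by a quartic analogue. So the first step is to produce that analogue. For a primary prime $\pi$ with $N\pi = p \equiv 1 \pmod 4$, use the isomorphism $D/\pi D \cong \mathbb{F}_p$ (as in Section 3.5) to extend the biquadratic character $\chi_\pi$ to $\mathbb{F}_p$, and observe that $\chi_\pi^2 = \rho$, the Legendre character mod $p$. Applying Theorem \ref{characters of degree n>2 expression in terms of Gauss and Jacobi sum} with $n = 4$, together with the auxiliary identity $J(\chi_\pi, \rho) = \chi_\pi(-1)\,J(\chi_\pi, \chi_\pi)$ --- which drops out of Theorem \ref{basic properties of the jacobi sum and relation to Gauss sum}(4), Corollary \ref{Corollary to Lemma about value of general Gauss sum}, and $g(\rho)^2 = \rho(-1)p = p$ --- gives $g(\chi_\pi)^4 = p\,J(\chi_\pi,\chi_\pi)^2$. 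By Lemma \ref{Lemma: value of the general Gauss sum} (applied to $\chi_\pi$ and $\chi_\pi^2$) the Jacobi sum has norm $p$, hence is an associate of $\pi$; pinning down the correct associate using the primary normalization --- the biquadratic counterpart of Proposition \ref{proposition stating that the cubic jacobi sum of cubic character equals pi} --- yields $g(\chi_\pi)^4 = p\pi^2$, the exact analogue of Corollary \ref{corollary to proposition relating jacobi sums to Gauss sums}.

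Next I would carry out the same three-case analysis. The case where $\pi = q_1$ and $\lambda = q_2$ are both rational primes $\equiv 3 \pmod 4$ is essentially trivial: as with Proposition \ref{Proposition about how every int is a cub res if q=2 mod 3}, one checks directly that every rational integer coprime to $q$ is a biquadratic residue mod $q$ when $q \equiv 3 \pmod 4$, so both characters equal $1$, while $\frac{N\pi-1}{4}$ and $\frac{N\lambda-1}{4}$ are both even so the right side is also $1$. For the mixed case ($\pi = q$ rational, $\lambda = \pi'$ complex with $N\pi' = p$) and the fully complex case ($N\pi_1 = p_1$, $N\pi_2 = p_2$), I would raise the fundamental relation $g(\chi_\gamma)^4 = p\gamma^2$ to the power $\frac{N\mu-1}{4}$ for the other prime $\mu$, reduce modulo $\mu$ (intermediate binomial terms vanish), recognise the result as a Gauss sum $g_{N\mu}(\chi_\gamma)$, simplify it with Lemma \ref{lemma with some important properties of the Gauss sum and its evaluation}, cancel the Gauss sums by multiplying through by $g(\overline{\chi_\gamma})$ and using $g(\chi_\gamma)g(\overline{\chi_\gamma}) = \chi_\gamma(-1)p$ (Corollary \ref{Corollary to Lemma about value of general Gauss sum}), and finally promote the resulting congruences to genuine equalities of fourth roots of unity via the $\mathbb{Z}[i]$-analogue of Lemma \ref{when two numbers are equivalent when they are congruent mod pi lemma}. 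Wherever a conjugate $\overline{\pi_i}$ is needed I replace it with its unique primary associate, just as the cubic proof does using Lemma \ref{lemma for if Npi=1(mod 3) then one associate of pi is primary}. Chaining the two complex-case identities together, exactly as in the cubic proof, isolates $\chi_{\pi_1}(\pi_2)\chi_{\pi_2}(\pi_1)$.

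The main obstacle --- and the feature genuinely absent from cubic reciprocity --- is tracking the sign $(-1)^{\frac{N\pi-1}{4}\cdot\frac{N\lambda-1}{4}}$. It enters through three channels that must be reconciled: the factors $\chi_\pi(-1) = (-1)^{(p-1)/4}$ produced every time Corollary \ref{Corollary to Lemma about value of general Gauss sum} is invoked (these were harmless in the cubic case, since $\chi_\pi(-1) = 1$ there); the quadratic Gauss sum $g(\rho) = \sqrt p$ hidden inside $g(\chi_\pi)^2$, whose reduction modulo the other prime produces Legendre symbols $\bigl(\frac{p_1}{p_2}\bigr)$ that must be flipped using Theorem \ref{the law of quadratic reciprocity} and its supplement Theorem \ref{supplement to quadratic reciprocity}; and the parities of the exponents $\frac{N\mu-1}{4}$ themselves. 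I expect the cleanest route is to settle the mixed case first (where $(-1)^{(q^2-1)/4} = 1$, so one only needs $\chi_{\pi'}(q)\chi_q(\pi') = 1$) and then, in the fully complex case, to collect all the $\chi(-1)$-factors and the quadratic-reciprocity sign and show they collapse to $(-1)^{\frac{p_1-1}{4}\cdot\frac{p_2-1}{4}}$. Getting the primary normalization of $J(\chi_\pi,\chi_\pi)$ exactly right is non-negotiable here, since an off-by-a-unit error rescales $g(\chi_\pi)^4$ by a fourth root of unity and silently breaks the sign identity.
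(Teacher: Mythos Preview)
The paper does not prove this theorem. Immediately after its statement the paper says that ``to full prove this result, it is necessary to prove consistently more results regarding Jacobi sums than we have in this paper'' and refers the reader to Chapter~9 of \cite{ireland1990classical}. There is therefore no proof in the paper to compare your proposal against.

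That said, your sketch is the standard Gauss-sum route (essentially the one in Ireland--Rosen) and the skeleton is sound. Your derivation of $g(\chi_\pi)^4 = p\,J(\chi_\pi,\chi_\pi)^2$ via Theorem~\ref{characters of degree n>2 expression in terms of Gauss and Jacobi sum} and the auxiliary identity $J(\chi_\pi,\rho)=\chi_\pi(-1)J(\chi_\pi,\chi_\pi)$ is correct, and your three-case breakdown and your identification of the $\chi(-1)$-factors and the embedded quadratic Gauss sum as the sources of the sign are exactly the points where the biquadratic argument diverges from the cubic one. One caution: the step ``pinning down the correct associate \ldots\ yields $g(\chi_\pi)^4 = p\pi^2$'' hides real work --- in the standard normalization one gets $-J(\chi_\pi,\chi_\pi)$ primary rather than $J(\chi_\pi,\chi_\pi)$ itself, and getting this unit exactly right (together with the analogous statement for $\overline\pi$) is precisely the ``consistently more results regarding Jacobi sums'' that the paper alludes to. Your instinct that an off-by-a-unit error here silently breaks the sign is correct, and this is the part of the argument that would need the most care to make rigorous.
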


\noindent While the statement of biquadratic reciprocity is more complex than cubic reciprocity, the mechanisms in its proof are fundamentally identical to that of cubic reciprocity. However, to full prove this result, it is necessary to prove consistently more results regarding Jacobi sums than we have in this paper. Full coverage of biquadratic reciprocity and its details can be found in chapter 9 of \cite{ireland1990classical}.

\subsection{Higher Reciprocity}
\noindent As mentioned in the introduction, Eisenstein reciprocity was the first generalized reciprocity law. However, before obtaining generalized reciprocity, many attempts were made to generalize reciprocity beyond cubic and biquadratic reciprocity. Gauss (who was the first to state potential results for higher reciprocity), Jacobi, and Eisenstein made multiple attempts to no avail. In 1839, Jacobi stated - without proof - special cases of higher reciprocity for 5th, 8th, and 12th degrees, but he was unable to consider more general laws; see \cite{Jacobi1839}. The reason for the difficulty in generalizing reciprocity laws beyond 3rd and 4th degrees is that most sets of complex integers adjoined with increasing roots of unity fail to contain an analogue to the Euclidean Algorithm and do not form a UFD. The cases we have considered in this paper both rely on the fact that $\ZZ[\omega]$ and $\ZZ[i]$ have the aforementioned properties. The existence of these properties allows us to consider their residue class rings and proceed from there.  

\newpage

\section*{Acknowledgements}
    \noindent The author would like to thank Dr. Tamar Avineri at the North Carolina School of Science and Mathematics for being a wonderfully supportive reviewer for this paper throughout the revision process; the author would also like to thank Dr. Tamar Avineri for insightful and useful conversations regarding mathematics and number theory. The author would further like to thank Dr. Simon Rubinstein-Salzedo for helpful insights and suggestions on numerous proofs throughout this paper. Finally, the author would like to thank Dr. Frank Thorne at the University of South Carolina for agreeing to endorse him on the arXiv.

\bibliographystyle{alpha}
\bibliography{biblio.bib}

\begin{thebibliography}{Hau61}

\bibitem[Col77]{collison1977origin}
Mary~Joan Collison.
\newblock The origins of the cubic and biquadratic reciprocity laws.
\newblock {\em Archive for History of Exact Sciences}, 17(1):63--69, 1977.

\bibitem[CR22]{relmat2022quadrec}
Matias Carl~Relyea.
\newblock {\em Proofs and Applications of Quadratic Reciprocity}.
\newblock \href{https://www.academia.edu/83375933/Proofs_and_Applications_of_Quadratic_Reciprocity}{Academia.edu}, 2022.

\bibitem[Gal02]{Gallian_2002}
Joseph~A Gallian.
\newblock {\em Contemporary Abstract Algebra}.
\newblock Houghton Mifflin Company, 5th edition, 2002.

\bibitem[Hau61]{Hausner1961reciprocity}
Alvin Hausner.
\newblock On the quadratic reciprocity theorem.
\newblock {\em Arch. Math.}, 12(1):182--183, dec 1961.

\bibitem[IR90]{ireland1990classical}
Kenneth Ireland and Michael Rosen.
\newblock {\em A Classical Introduction to Modern Number Theory}, volume~84.
\newblock Springer Science \& Business Media, 1990.

\bibitem[Jac39]{Jacobi1839}
C.G.J. Jacobi.
\newblock Ueber die complexen primzahlen, welche in der theorie der reste der 5ten, 8ten und 12ten potenzen zu betrachten sind.
\newblock {\em Journal für die reine und angewandte Mathematik}, 1839(19):314--318, 1839.

\bibitem[Rou12]{Rousseau2012reciprocity}
Suzanne Rousseau.
\newblock {\em Quadratic and Cubic Reciprocity}.
\newblock EWU Masters Thesis Collection, 2012.

\end{thebibliography}

\end{document}